\documentclass[11pt,letterpaper]{amsart}
\usepackage[T1]{fontenc}
\usepackage[latin9]{inputenc}
\usepackage{geometry}
\geometry{verbose,letterpaper,tmargin= 0.5in,bmargin=1in,lmargin=1in,rmargin=1in}
\usepackage{wrapfig}
\usepackage{multicol}
\usepackage{enumerate}   
\usepackage{graphicx}
\usepackage{soul}
\usepackage{xcolor}
\usepackage{amssymb}

\newtheorem{theorem}{Theorem}[section]
\newtheorem{lemma}[theorem]{Lemma}
\newtheorem{proposition}[theorem]{Proposition}

{ \theoremstyle{definition}
}
{ \theoremstyle{remark}
}
\usepackage{placeins}
\setcounter{tocdepth}{1}
\usepackage{cite}
\usepackage{caption}
\usepackage{enumerate}
\usepackage{afterpage}
\usepackage{enumitem}
\usepackage{bmpsize}
\usepackage{hyperref}

\makeatletter
\newcommand*{\rom}[1]{\expandafter\@slowromancap\romannumeral #1@}
\makeatother

\numberwithin{equation}{section}

\newtheorem{thm}{Theorem}
\numberwithin{thm}{subsection}
\newtheorem{lem}[thm]{Lemma}
{ \theoremstyle{definition} \newtheorem{defi}[thm]{Definition}}
\newtheorem{prop}[thm]{Proposition}
\newtheorem{cor}[thm]{Corollary}
\newtheorem{conjecture}[thm]{Conjecture}
{ \theoremstyle{remark} \newtheorem{rem}[thm]{Remark}}

{ \theoremstyle{remark}
\newtheorem{fact}[thm]{Fact}}

\newcommand{\iu}{{i\mkern1mu}}


\def\R{ \textup{Rem}}

\def\tc{ {\tt c}}
\def\td{ {\tt d}}
\def\tq{ {\tt q}}
\def\tM{ {\tt M}}


\def\la{ {\tt a}}
\def\lb{ {\tt b}}
\def\lc{ {\tt c}}
\def\ld{ {\tt d}}
\def\lq{ {\tt q}}
\def\lM{ {\tt M}}
\def\lu{ {\tt u}}
\def\lk{ {\tt k}}

\def\lN{ {\tt N}}
\def\lT{ {\tt T}}
\def\lS{ {\tt S}}


\def\pa{ {a}}
\def\pb{ {b}}
\def\pc{ {c}}
\def\pd{ {d}}
\def\pq{ {q}}


\newcommand{\diam}{\mathbin{\rotatebox[origin=c]{90}{$\diamondsuit$}}}

\def\i{ i}

\def\Ps{ \Psi}

\def\q{ q}
\def\t{{\bf t}}
\def\v{ {\bf v}}

\def\a{\alpha}
\def\b{\beta}

\def\E{\mathbb E_{\mathbb P_N}}

\def\Et{\mathbb E_{\mathbb P^{\t, \v}_N}}

\hyphenation{en-sembles}
\usepackage{tabu}
\usepackage{enumitem}

\DeclareMathOperator*{\residue}{Res}

\begin{document}

\pagestyle{plain}

\title{Log-gases on quadratic lattices via discrete loop equations and q-boxed plane partitions}
\author{Evgeni Dimitrov}

\address[Evgeni Dimitrov]{Department of Mathematics, Massachusetts Institute of Technology, Cambridge, MA, USA. E-mail: edimitro@math.mit.edu}

\author{Alisa Knizel}

\address[Alisa Knizel]{Department of Mathematics, Columbia University,
 New York, NY, USA. E-mail: knizel@math.columbia.edu}

\begin{abstract} We study a general class of log-gas ensembles on (shifted) quadratic lattices. We prove that the corresponding empirical measures satisfy a law of large numbers and that their global fluctuations are Gaussian with a universal covariance. We apply our general results to analyze the asymptotic behavior of a $q$-boxed plane partition model introduced by Borodin, Gorin and Rains. In particular, we show that the global fluctuations of the height function on a fixed slice are described by a one-dimensional section of a pullback of the two-dimensional Gaussian free field.

Our approach is based on a $q$-analogue of the Schwinger-Dyson (or loop) equations, which originate in the work of Nekrasov and his collaborators, and extends the methods developed by Borodin, Gorin and Guionnet to quadratic lattices.
\end{abstract}
\maketitle

\tableofcontents

\section{Introduction}\label{Section1}
\vspace{-1mm}
\subsection{Preface}\label{Section1.1}
A $\beta$-ensemble (or continuous log-gas) is a probability distribution $\mathbb{P}^c_N$ on $N$-tuples of ordered real numbers $x_1 < x_2 < \cdots < x_N$ with density proportional to
\begin{equation} \label{eq:distr1}
  \prod_{1\leq i<j \leq N} \left(x_j-x_i \right)^{\beta} \cdot \prod_{i=1}^{N}\exp( - N V(x_i)), 
\end{equation}
where $V(x)$ is a continuous function called {\em potential}. The study of continuous log-gases for general potentials is a rich subject that is of special interest to random matrix theory, see e.g. \cite{Meh,Forr,AGZ,PS}.  For example, when $V(x) = x^2$ and $\beta = 1,2,4$ distribution (\ref{eq:distr1}) is the joint density of the eigenvalues of random matrices from the Gaussian Orthogonal/Unitary/Symplectic ensembles \cite{AGZ}.

Recently, \cite{BGG} initiated a detailed study of a particular discrete version of (\ref{eq:distr1}) called {\em discrete $\beta$-ensembles} or {\em discrete log-gases}. These are probability distributions depending on a parameter $\theta = \beta/2 > 0$ and a positive real-valued function $w(x;N)$ of the form
\begin{equation} \label{eq:distr2}
\begin{split}
& \mathbb{P}^d_N(\lambda_1, \dots, \lambda_N)  \propto \hspace{-2mm} \prod_{1\leq i<j \leq N} \hspace{-2mm} H_\theta(\lambda_i, \lambda_j)  \prod_{i=1}^{N}w(\lambda_i; N), \mbox{ with } \\
& H_\theta(\lambda_i, \lambda_j) = \frac{\Gamma(\lambda_j - \lambda_i + 1)\Gamma(\lambda_j - \lambda_i + \theta)}{\Gamma(\lambda_j - \lambda_i)\Gamma(\lambda_j - \lambda_i +1-\theta)},
\end{split}
\end{equation}
where $\lambda_i = x_i + \theta (i-1)$ and $x_1 \leq x_2 \leq \dots \leq x_N$ are integers. The interest in these discrete models comes from integrable probability; specifically, due to their connection to uniform random tilings, $(z,w)$-measures, Jack measures, etc. 

In the present paper we consider the following two-parameter generalization of discrete $\beta$-ensembles
\begin{equation}\label{PDef_1}
\begin{split}
&\mathbb{P}_N(\ell_1, \dots, \ell_N) \propto  \prod_{1 \leq i < j \leq N} H^{q,v}_{\theta}(\ell_i, \ell_j)\cdot \prod_{i = 1}^N w(\ell_i;N), \mbox{ with } \\
&H^{q,v}_{\theta}(\ell_i, \ell_j) = q^{-2\theta \lambda_j}  \frac{\Gamma_{q}(\lambda_j - \lambda_i +1) \Gamma_{q} (\lambda_j - \lambda_i+\theta)}{\Gamma_{q} (\lambda_j - \lambda_i) \Gamma_{q} (\lambda_j - \lambda_i + 1 - \theta )} \frac{\Gamma_{q} (\lambda_j + \lambda_i + v + 1) \Gamma_{q} (\lambda_j+ \lambda_i + v + \theta)}{\Gamma_{q} (\lambda_j + \lambda_i + v ) \Gamma_{q} (\lambda_j + \lambda_i + v + 1 - \theta)},
\end{split}
\end{equation}
where $\ell_i = q^{-\lambda_i} + u \cdot q^{ \lambda_i}$, $u = q^v$ and $\lambda_i$ are as in the definition of the discrete $\beta$-ensembles, while $q \in (0,1)$ and $ v\in (1,\infty).$ The measures (\ref{eq:distr2}) are recovered from (\ref{PDef_1}) by setting $u \rightarrow 0$ and $q \rightarrow 1$. We interpret the random vector $(\ell_1, \dots, \ell_N)$ as locations of $N$ particles. If $\theta = 1$ then all particles $\ell_i$ live on the same space $ \mathfrak{Z}:= \{q^{-x} + u \cdot q^x: x \in \mathbb{Z}\}$, and we refer to the set $\mathfrak{Z}$ as a {\em quadratic lattice} in the spirit of \cite{NSU} (note that in this case $H_\theta^{q,v}(\ell_i, \ell_j) = (\ell_i - \ell_j)^2$). For general $\theta$ we call the class of measures \eqref{PDef_1} {\em discrete $\beta$-ensembles on shifted quadratic lattices}.

Our study is motivated by random matrix theory on one side, and by integrable models on the other. We first investigate  $\mathbb{P}_N$ for a general choice of weights $w$ in the {\em multu-cut} and {\em fixed filling fractions} regime. We prove that these systems obey a law of large large numbers under a certain scaling as $N$ goes to infinity and also show that their global fluctuations are asymptotically Gaussian with a {\em universal covariance}. The same phenomenon is present in the case of discrete and continuous log-gases.  Subsequently, we apply our general results to a class of tiling models that was introduced in \cite{BGR} and obtain explicit formulas for their limit shape and global fluctuations. The tiling model we investigate corresponds to a special case of (\ref{PDef_1}) when $\theta = 1$, and we remark that for general $\theta > 0$ the interaction term $H^{q,v}_{\theta}(\ell_i, \ell_j)$ can be linked to {\em Macdonald-Koorwinder polynomials} \cite{Koor92} similarly to how $H_\theta(\lambda_i, \lambda_j)$ in (\ref{eq:distr2}) is linked to Jack symmetric polynomials, see also Remark \ref{RemMacKoor}.

\subsubsection{Log-gases}\label{Section1.1.1} The probability measures from  \eqref{eq:distr1} and \eqref{eq:distr2} have been extensively studied in the past, see \cite{Meh,Forr,AGZ,PS} for $\mathbb{P}^c_N$ and \cite{CLP, J3, J4,Fe, BF} for $\mathbb{P}^d_N$ among many others. 

Under weak assumptions on the potential $V(x)$ or weight function $w(x;N)$, continuous and discrete log-gases exhibit a {\em law of large numbers} as $N \rightarrow \infty$. Specifically, if one forms the (random) empirical measures 
\begin{equation*}
\mu_N = \frac{1}{N} \sum_{i = 1}^N \delta \left( x_i/N \right), \quad \text{where } (x_1,\dots,x_N) \mbox{ is } \mathbb{P}^{c, d}_N-\mbox{distributed},
\end{equation*}
then the measures $\mu_N$ converge weakly in probability to a deterministic measure $\mu$, called the {\em equilibrium measure}. In the continuous case with $V(x) = x^2$ this statement goes back to the work of Wigner \cite{Wi}, and is called {\em Wigner's semicircle law}. The analogous statements for generic $V(x)$ were proved much later, see \cite{JL, BeGu, BPS}. In discrete settings similar law of large numbers type results were obtained in \cite{Fe, J4, J3}. In both cases the equilibrium measure $\mu$ is the solution to a suitable variational problem and one establishes the convergence of $\mu_N$ to $\mu$ by proving large deviation estimates. In essence, $\mu$ maximizes the density (\ref{eq:distr1}) or \eqref{eq:distr2} and the large deviation estimates show that $\mu_N$ concentrate around that maximum.

The next order asymptotics asks about the fluctuations of $\mu - \mu_N$ as $N \rightarrow \infty$. One natural way to analyze this difference is to form the pairings with smooth test functions $f$ and consider the asymptotic behavior of the random variables
\begin{equation}\label{gsnpair}
N \left( \int_{\mathbb{R}} f(x) \mu_N(x) - \int_{\mathbb{R}} f(x) \mu(x) \right), \hspace{ 3mm} \mbox{ as $N \rightarrow \infty$.}
\end{equation}
There is an efficient method, which establishes that the limits of (\ref{gsnpair}) are Gaussian in a very general setup and its key ingredient is the so-called {\em loop equations} (also known as Schwinger-Dyson equations), see \cite{JL, BoGu, S, BoGu2, KS} and references therein. These are functional equations for certain observables of the log-gases \eqref{eq:distr1} that are related to the Stieltjes transforms of the empirical measure $\mu_N$ and and their cumulants.  Since their introduction loop equations have become a powerful tool for studying not only global fluctuations but also {\em local universality} for random matrices \cite{BEY, BFG}.

In \cite{BGG} the authors presented an analogue of the above method for discrete $\beta$-ensembles. They introduced  {\em discrete loop equations} and used them to establish that the limits of (\ref{gsnpair}) for the measures in \ref{eq:distr2}) are Gaussian with a covariance that is the same as in the continuous case for a large class potentials. These discrete loop equations originate in the work of Nekrasov \cite{N} and are also called {\em Nekrasov's equations}. The central limit theorem for (\ref{eq:distr2}) had been previously known for various very specific {\em integrable} choices of the potential, see e.g. \cite{BF, P2, BD}. The main contribution of \cite{BGG} is that it establishes {\em general conditions} on the potential $V(x)$ that lead to the asymptotic Gaussianity of (\ref{gsnpair}). Similarly to the continuous case, discrete loop equations have become a valuable tool to study not only global fluctuations \cite{BGG} but also edge universality for discrete $\beta$-ensembles \cite{GH}.

In the present paper we establish the universality type results for the global fluctuations of discrete $\beta$-ensembles on shifted quadratic lattices (\ref{PDef_1}). To obtain the law of large numbers we use a similar combination of large deviation estimates and variational problems that proved to be successful for $\mathbb{P}_N^{c/d}$. In order to study the next order fluctuations we introduce a new version of discrete loop equations for a quadratic lattice, which we also call Nekrasov's equations, and view the latter as one of the main contributions of this paper. We remark that it is hard to guess that there even exists an analogue of the Nekrasov's equation in this setting, since it is a very subtle equation which reflects some specific algebraic structure of the system. Equipped with these new equations, we establish global central limit theorems for log-gases on a quadratic lattice for a multi-cut general potential by adapting the arguments in \cite{BGG}.

Our main motivation for considering the class of measures $\mathbb{P}_N$ comes from an interesting tiling model introduced in \cite{BGR} which we describe next. 
\subsubsection{The $q$-Racah tiling model} \label{Section1.1.2} Consider a hexagon drawn on a regular triangular lattice, whose side lengths are given by integers $a,b,c \geq 1$, see Figure \ref{S1F1}. We are interested in random tilings of such a hexagon by rhombi, also called lozenges (these are obtained by gluing two neighboring triangles together). There are three types of rhombi that arise in such a way, and they are all colored differently in Figure \ref{S1F1}. This model also has a natural $3D$ interpretation as a boxed plane partition or, equivalently, a random stepped surface formed by a stack of boxes. One can assign to every lattice vertex $(i,j)$ inside the hexagon an integer $h(i, j)$, which reflects the height of the $3D$ stack at that point, see an example in Figure \ref{S1F1}. One typically calls $h$ the {\em height function} and formulates results in terms of it. 

\begin{wrapfigure}{r}{0.3\textwidth}
  \begin{center}
    \includegraphics[width=0.28\textwidth]{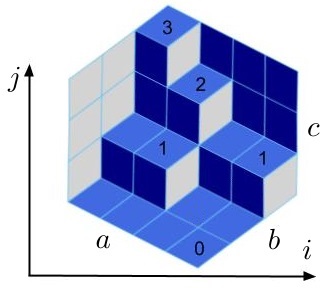}
  \end{center}
  \caption{Tiling of a hexagon and the corresponding height function}
 \label{S1F1}
\end{wrapfigure}
The probability measures on the set of tilings that we consider were introduced in \cite{BGR} and form a $2$-parameter generalization of the uniform distribution. Denoting the two parameters by $q$ and $\kappa$, one defines the weight of a tiling as the product of simple factors $ ( \kappa q^{ j -(c+1)/2 }- q^{-j + (c+1)/2}/\kappa)$ over all horizontal rhombi $\diam,$ where $j$ is the coordinate of the topmost point of the rhombus. The dependence of the factors on the location of the lozenge makes the model {\em inhomogeneous}. Note that the uniform measure on tilings is recovered if one sends $\kappa \rightarrow 0$ and $q \rightarrow 1$. Other interesting cases include $\kappa \rightarrow 0$, then the weight becomes proportional to $q^{-V}$ (here $V$ refers to the number of boxes in the $3D$ interpretation).  In addition, the same way the Hahn orthogonal polynomial ensemble arises in the analysis of uniform lozenge tilings, our measures are related to the $q$-Racah orthogonal polynomials. In this sense, the model goes all the way up to the top of the Askey scheme \cite{KLS}, and we call it the $q$-Racah tiling model. 

We believe that the $q$-Racah tiling model is a source of rich and interesting structures that are worth investigating. The presence of two parameters allows one to consider various limit regimes that lead to quite different behavior of the system as can be seen in Figure \ref{S1F6}. One of the central goals of this paper is to understand the asymptotic behavior of the height function of the $q$-Racah tiling model when the sides of the hexagon become large, and simultaneously $q\rightarrow 1, \kappa\rightarrow \kappa_0$, where $\kappa_0 \in (0,1)$ is fixed, see Figure \ref{S1F2} for a sample tiling in this case. 
\begin{figure}[h]
    \includegraphics[width=0.65\textwidth]{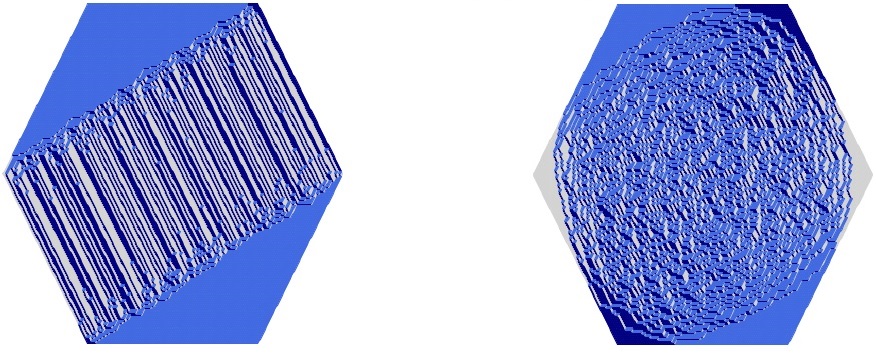}
\caption{ A simulation for $a = 80$, $b = 80$, $c =80$. On the left picture the parameters are $\kappa^2=-1,$ $q=0.8$, and on the right picture the parameters are $\kappa^2=-1,$ $q=0.98.$}
  \label{S1F6}
\end{figure}

It turns out that one can relate one-dimensional sections of the $q$-Racah tiling model to measures from (\ref{PDef_1}) with $\theta = 1$. We will elaborate on this point later in Sections \ref{Section7} and \ref{Section9.2}, but the identification goes as follows. One places a particle in the center of each horizontal lozenge $\diam$ and takes a vertical section of the model; the resulting ``holes'' (positions, where there are no particles) form an $N$-point process. Under a suitable change of variables this point process has the same distribution as (\ref{PDef_1}) for a set of parameters and weight $w$ that depend on the location of the vertical slice. 
Using this identification, our general results for log-gases on (shifted) quadratic lattices imply a law of large numbers and central limit theorem for the height function $h$ of the tiling model.

Informally, our law of large numbers states that there exists deterministic limit shape $\hat{h}$ and the random height functions $h$ concentrate near it with high probability as the parameters of the model scale to their critical values. An important feature of our model is that the limit shape develops {\em frozen facets} where the height function is linear. In addition, the frozen facets are interpolated by a connected disordered {\em liquid region}.  In terms of the tiling a frozen facet corresponds to a region where asymptotically only one type of lozenge is present and in the liquid region one sees lozenges of all three types, see Figure \ref{S1F2}.

\begin{wrapfigure}[15]{r}{0.3\textwidth}
\vspace{-2mm}
    \includegraphics[width=0.28\textwidth]{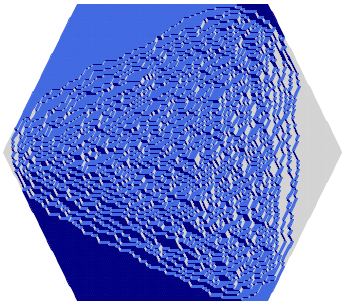}
  \caption{A random tiling of a hexagon of  side lengths $a = 50$, $b = 60$, $c = 40$ for $\kappa^2=0.05$ and $q=0.99$ }
 \label{S1F2}
\vspace{2mm}
\end{wrapfigure}
Similar concentration phenomena for the random height function in the case of the uniform measure and the measure proportional to $q^{-V}$ are well-understood. In particular, in these cases convergence of the random height function to a deterministic function for a large class of domains was established in \cite{JPS,CKP,D,DMB,KO,P1}. The limit shape is given by the unique solution of a suitable variational problem. For the $q$-Racah tiling model we compute the limit shape explicitly introducing a method, which we believe to be novel. This method uses discrete Riemann-Hilbert problems.

The next order asymptotics we obtain show that the one-dimensional fluctuations of the height function around the limit shape are Gaussian with an explicit covariance kernel. An important additional contribution of our work is the introduction of a (rather nontrivial) {\em complex structure} $\Omega$ on the liquid region. The significance of this map is that the fluctuations of $h$ on fixed vertical slices are asymptotically described by the one-dimensional sections of the pullback of the Gaussian free field (GFF for short) on the upper half-plane $\mathbb{H}$ under the map $\Omega$ -- see Theorem \ref{TCLT} for the precise statement. This result admits a natural two-dimensional generalization, which we formulate as Conjecture \ref{GFFConj} in the main text. At this time our methods only provide access to the global fluctuations at fixed vertical sections of the model, and so we cannot establish the full $2D$ result. Nevertheless, we provide some numerical simulations that give evidence for the validity of the conjecture and hope to address it in the future.

The GFF is believed to be a universal scaling limit for various models of random surfaces in $\mathbb{R}^3$. The appearance of the GFF in tiling models with no frozen zones dates back to \cite{K, K1}  and the fluctuations of the liquid region for a random tiling model containing both frozen facets and a liquid region were first studied in \cite{BF}. In case of the uniform measure on domino and lozenge tilings the convergence to the GFF has been established for a wide class of domains in \cite{P2, BufGor, BK}, but there are no results in this direction for more general measures.  One possible reason that explains why the GFF was not recognized in the $q$-Racah tiling model is the rather non-trivial change of coordinates that makes the correct covariance structure appear (see Section \ref{Section8}), and already in the $q^{-V}$ (or $\kappa = 0$) case our result is new. We remark that there is a natural complex coordinate on the liquid region defined by the so-called {\em complex slope}, which in the uniform tiling case is known to be intimately related to the complex structure that gives rise to the GFF. For the $q$-Racah tiling model an expression for the complex slope was obtained in \cite{BGR} and we connect it to our complex structure $\Omega$ through an explicit functional dependence, see Remark \ref{remCS}.

\subsection{Main results}\label{Section1.2}
We present here our main results for the log-gas on a quadratic lattice and forgo stating our results on the $q$-Racah tiling model until the main text -- Section \ref{Section7.2} -- since it requires the introduction of more notation. Moreover, to simplify the discussion in the introduction we will formulate our results for the one-cut case and $\theta=1.$ The general statement of the law of large numbers is given in Theorem \ref{GLLN} and the general statement of the central limit theorem is given in Theorem \ref{CLTfun}.

Let us first explain our regularity assumptions on the parameters and the weight function. We assume we are given parameters $\lq \in (0,1)$, $\lM \geq 1$ and $\lu \in [0, 1)$. In addition, let $q_N \in (0,1)$, $M_N \in \mathbb{N}$ and $u_N \in [0,1)$ be sequences of parameters such that 
\begin{equation}\label{GenParS1}
\mbox{$M_N \geq N-1$ and } \max \left( N^2 \left| q_N - \lq^{1/N} \right|, \left| M_N - N\lM\right|, N|u_N - \lu| \right) \leq A_1, \mbox{ for some $A_1 > 0$.}
\end{equation}

We assume that $w(x;N)$ has the form
$$w(x;N) = \exp\left( - N V_N(x)\right),$$
for a function $V_N$ that is continuous in the intervals $[1 + u_N , q_N^{-M_N} + u_N q_N^{M_N}]$ and such that 
\begin{equation}\label{GenPotS1}
\left| V_N(s) - V(s) \right| \leq A_2 \cdot N^{-1}\log(N), \mbox{ where $V$ is continuous and $|V(s)| \leq A_3$, }
\end{equation}
for some positive constants $A_2, A_3 > 0$. We also require that $V(s)$ is differentiable and for some $A_4 > 0$ there is a bound
\begin{equation}\label{DerPotS1}
\left| V'(s) \right| \leq A_4 \cdot \left[ 1 + \left| \log |s - 1 - \lu | \right|  + | \log |s -  \lq^{-\lM} - \lu \lq^{\lM} ||  \right], \mbox{ for } s \in \left[1 + \lu, \lq^{-\lM} + \lu \lq^{\lM}\right].
\end{equation}
We let $\mathbb{P}_N$ be as in (\ref{PDef_1}) for $q = q_N, q^v =u= u_N, M= M_N, N$ and weight function $w(\cdot) = w(\cdot;N)$.

Our first result is the law of large numbers for the empirical measures $\mu_N$, defined by
\begin{equation*}
\mu_N = \frac{1}{N} \sum_{i = 1}^N \delta \left( \ell_i \right), \quad \text{where } (\ell_1,\dots,\ell_N) \mbox{ is } \mathbb{P}_N-\mbox{distributed}.
\end{equation*}
\begin{thm}\label{GLLN1cut} There is a deterministic, compactly supported and absolutely continuous probability measure $\mu(x) dx$ \footnote{Throughout the paper we denote the density of a measure $\mu$ by $\mu(x)$.}  such that $\mu_N$ concentrate (in probability) near $\mu$.  More precisely, for each Lipschitz function $f(x)$ defined in a real neighborhood of the interval $ [ 1 + \lu, \lq^{-\lM} + \lu \lq^{\lM}]$ and each $\varepsilon > 0$ the random variables
\begin{equation}\label{GLLNeq}
N^{1/2 - \varepsilon} \left|\int_{\mathbb{R}} f(x) \mu_N(dx) -  \int_{\mathbb{R}} f(x)\mu(x)dx\right|
\end{equation}
converge to $0$ in probability and in the sense of moments.
\end{thm}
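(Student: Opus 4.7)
The plan is to follow the large-deviation strategy developed for continuous $\beta$-ensembles by Ben Arous--Guionnet and adapted to the discrete setting in \cite{BGG}. Since $\theta = 1$, the interaction factor simplifies to $H_\theta^{q,v}(\ell_i, \ell_j) = (\ell_i-\ell_j)^2$, so the joint density assumes the familiar log-gas form
\[
\mathbb{P}_N(\vec\ell) \propto \prod_{i<j}(\ell_i-\ell_j)^2 \prod_i e^{-N V_N(\ell_i)},
\]
with particles $\ell_i = q_N^{-\lambda_i} + u_N q_N^{\lambda_i}$ indexed by strictly increasing integers $\lambda_i \in \{0, 1, \ldots, M_N + N - 1\}$ (the case $\theta=1$ forces unit spacing of the $\lambda_i$'s).

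Applying Stirling's formula to the Vandermonde together with hypothesis \eqref{GenPotS1} on $V_N$, I would first show that, uniformly in $\vec\ell$,
\[
\log \mathbb{P}_N(\vec\ell) = -N^2 \mathcal{I}_N(\mu_N) - \log Z_N + O(N\log N),
\]
where $\mathcal{I}_N$ is a discretization of the logarithmic energy
\[
\mathcal{I}(\mu) = -\iint_{x\neq y} \log|x-y|\, \mu(dx)\mu(dy) + \int V(x)\, \mu(dx).
\]
The integrality of the $\lambda_i$'s translates into an upper constraint $\mu_N \le \sigma$, where $\sigma$ is the limiting density on the real line of the shifted quadratic lattice, inherited from the change of variables $\lambda \mapsto q_N^{-\lambda} + u_N q_N^{\lambda}$.

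The equilibrium measure $\mu$ is then identified as the unique minimizer of $\mathcal{I}$ among probability measures supported in $[1+\lu,\lq^{-\lM}+\lu\lq^{\lM}]$ and satisfying $0 \le \mu \le \sigma$. Strict convexity of $\mathcal{I}$ together with weak compactness of the constraint set yields uniqueness, and the Euler--Lagrange inequalities for this constrained variational problem show that $\mu$ is absolutely continuous and compactly supported. Concentration is then obtained by the standard two-sided estimate: (i) a lower bound for $Z_N$ obtained by evaluating the joint density at a deterministic configuration whose empirical measure is a quantile approximation of $\mu$, rounded to the nearest admissible lattice points; and (ii) a matching upper bound on $\mathbb{P}_N(d_W(\mu_N,\mu) > \delta)$ in the $1$-Wasserstein distance, derived by Taylor-expanding $\mathcal{I}$ around $\mu$ and invoking positive-definiteness of its quadratic part on signed measures of zero mean. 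Combined, they give an exponential tail $\mathbb{P}_N(d_W(\mu_N,\mu) > \delta) \le e^{-c N \delta^2}$ (modulo subpolynomial corrections), which for Lipschitz $f$ implies the required convergence at rate $N^{1/2-\varepsilon}$, both in probability and in moments.

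The main technical obstacle I expect is controlling configurations in which many particles cluster near the endpoints $1+\lu$ and $\lq^{-\lM}+\lu\lq^{\lM}$, where $V'$ is only logarithmically bounded by \eqref{DerPotS1}; this requires truncation arguments that use the upper constraint $\sigma$ to cap the accumulation rate. A secondary subtle point is the discrete Stirling expansion of the Vandermonde factor $\prod_{i<j}(\ell_i-\ell_j)^2$ on the non-uniformly spaced quadratic lattice, where the Jacobian of the map $\lambda \mapsto \ell$ must be carefully matched against the continuous logarithmic energy functional. Beyond these two points, the scheme closely follows the template of \cite{BGG}.
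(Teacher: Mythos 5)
Your overall scheme matches the paper's closely: the paper likewise proves the one-cut $\theta=1$ statement as a special case of a multi-cut, general-$\theta$ Theorem (Theorem \ref{GLLN}), with the equilibrium measure defined as the unique maximizer of the logarithmic-energy functional on the constrained set $\Theta$ of measures with density bounded above by the lattice density, a lower bound on $Z_N$ obtained from a quantile configuration rounded to the lattice, and an upper bound from a second-order expansion of $I_V$ around the equilibrium measure. Two remarks are in order, one of which is a genuine imprecision.

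The small gap is your use of the $1$-Wasserstein distance. The Taylor expansion of $I_V$ around $\hat\mu$ does not yield a tail estimate in $d_W$; what the quadratic part of $I_V$ controls (see \eqref{B12}) is the logarithmic-energy pseudo-distance $\mathcal{D}$ defined in \eqref{B2}--\eqref{B3}, which is the $\dot H^{-1/2}$-type seminorm of $\mu_N - \hat\mu$, not its dual-Lipschitz ($W^{-1,1}$) norm. These metrics are not comparable by an elementary inequality, so claiming $\mathbb{P}_N(d_W > \delta) \le e^{-cN\delta^2}$ directly from the expansion is a leap. The paper instead proves Proposition \ref{BP1}, which is an exponential tail for $\mathcal{D}(\tilde\mu_N,\hat\mu)$ with rate $N^2$ (not $N$), and then passes to Lipschitz test functions by Parseval plus Cauchy--Schwarz in Corollary \ref{BC1}: $|\int g\,d(\mu_N - \hat\mu)| \lesssim \|g\|_{1/2}\,\mathcal{D}(\tilde\mu_N,\hat\mu) + \|g\|_{\mathrm{Lip}} N^{-p}$. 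This Fourier step is what makes the bound usable for a single Lipschitz test function; you should replace the Wasserstein claim with the $\mathcal{D}$ bound plus this duality argument. Also note that at $\theta=1$ the Vandermonde is exactly $\prod_{i<j}(\ell_i-\ell_j)^2$ and no Stirling approximation is needed there; the $q$-Gamma asymptotics (Lemma \ref{BLemma} in the paper) are needed only for general $\theta$. The difficulty you flag about clustering near the endpoints where $V'$ is merely logarithmic is handled automatically because the equilibrium density is bounded and the logarithm is integrable; the paper does not need a separate truncation.
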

\begin{rem}
Theorem \ref{GLLN1cut} is a special case of Theorem \ref{GLLN}, where we extend the statement to the multi-cut regime with fixed filling fractions and for general $\theta > 0$. 
\end{rem}

To obtain our central limit theorem we need to impose certain analyticity conditions on the weight $w(x;N)$ that we now detail. We assume that we have an open set $\mathcal{M}  \subset \mathbb{C} \setminus \{0, \pm \sqrt{\lu}\}$, such that for large $N$
$$ \left( \left[q_N^{1} , q_N^{- M_N - 1}\right] \cup \left[u_Nq_N^{M_N + 1} , u_Nq_N^{ - 1}\right] \right) \subset \mathcal{M}.$$
 In addition, we require that for all sufficiently large $N$ there exist analytic functions $\Phi^+_N, \Phi^-_N$ on $\mathcal{M}$ such that for $z \in \mathcal{M}$ and $\sigma_N(z) = z + u_Nz^{-1}$ the following holds
\begin{equation}\label{eqPhiNS1}
\frac{w(\sigma_N(z);N)}{w(\sigma_N(q_Nz);N)}=\frac{ q_N(z^2-u_N) \Phi_N^+(z)}{(q_N^2z^2-u_N) \Phi_N^-(z)}.
\end{equation} 
Moreover, the functions $\Phi_N^{\pm}$ satisfy the following vanishing conditions
$$\Phi_N^+\left(q_N^{-M_N - 1 }\right)=\Phi_N^{-}\left(1\right)= \Phi_N^+\left( u_Nq_N^{ -1}\right)=\Phi_N^{-}\left(u_Nq_N^{M_N} \right)=0.$$
and asymptotic expansion
\begin{equation*}
\begin{split}
&\Phi^{-}_N(z) = \Phi^{-}(z) + \varphi^{-}_N(z) + O \left(N^{-2} \right) \mbox{ and }\Phi^{+}_N(z) = \Phi^{+}(z) + \varphi^{+}_N(z) + O \left(N^{-2} \right),
\end{split}
\end{equation*}
 where $\varphi^{\pm}_N(z) = O(N^{-1})$ and the constants in the big $O$ notation are uniform over $z$ in compact subsets of $\mathcal{M}$. All the aforementioned functions are holomorphic in $\mathcal{M}$.\\

The assumptions in (\ref{eqPhiNS1}) are the analogues of Assumptions 3 and 5 in \cite{BGG}, and similarly to that paper their importance to the analysis comes from the following observation, which is the starting point for our results. We discuss the general $\beta$ setup and the corresponding Nekrasov's equation in Section \ref{Section4}.

\begin{thm}[Nekrasov's equation]\label{NekS1} Suppose that (\ref{eqPhiNS1}) hold and define
\begin{equation} \label{REQS1}
 R_N(z)=\Phi_N^{-}(z)\cdot \E \left[ \prod\limits^N_{i=1} \frac{\sigma_N(q_Nz)-\ell_i}{\sigma_N(z)-\ell_i}\right]+ \Phi_N^{+}(z)\cdot
\E \left[ \prod\limits^N_{i=1} \frac{\sigma_N(z)- \ell_i}{\sigma_N(q_Nz)-\ell_i}\right].
\end{equation} 
Then $ R_N(z)$ is analytic in $\mathcal{M}.$ If $\Phi_N^{\pm}(z)$ are polynomials of degree at most $d$, then so is $ R_N(z)$.
\end{thm}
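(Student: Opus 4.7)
The strategy is to identify the finite set of candidate poles of $R_N(z)$ on $\mathcal{M}$ and exhibit an explicit pairwise cancellation of residues between the two summands, driven by the weight-ratio hypothesis~\eqref{eqPhiNS1} together with the four vanishing conditions on $\Phi_N^\pm$; the polynomial claim then reduces to a growth estimate at $z=0$ and $z=\infty$.

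Using the factorization $\sigma_N(z)-\ell_i=z^{-1}(z-q_N^{-\lambda_i})(z-u_Nq_N^{\lambda_i})$, the first expectation appearing in $R_N$ is a priori a rational function of $z$ with simple poles only at $\{q_N^{-\lambda},u_Nq_N^{\lambda}:\lambda\in\{0,\dots,M_N\}\}$, and the second has poles only at the shifted set $\{q_N^{-\lambda-1},u_Nq_N^{\lambda-1}:\lambda\in\{0,\dots,M_N\}\}$. I would analyse the interior candidate pole $z_0=q_N^{-\lambda}$ with $1\le\lambda\le M_N$ (the branch $z_0=u_Nq_N^{\lambda}$ is entirely parallel). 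The first summand then picks up contributions from configurations $C$ with some $\lambda_j=\lambda$, and the second from configurations $C'$ with some $\lambda_i=\lambda-1$; I pair $C$ with the configuration obtained from it by shifting that particle from $\lambda$ down to $\lambda-1$. Collision pairs (where $\lambda-1$ is already occupied in $C$, or $\lambda$ in $C'$) contribute zero residue automatically, because one of the factors $\sigma_N(q_Nz_0)-\ell_k$ or $\sigma_N(z_0)-\ell_k$ vanishes identically for such configurations, so only genuine shifts need to be matched. A direct computation with the $\sigma_N'$-Jacobians reduces the residue cancellation to the identity
\[
\frac{\mathbb{P}_N(C')}{\mathbb{P}_N(C)}=\frac{(q_N^2z_0^2-u_N)\,\Phi_N^-(z_0)}{q_N(z_0^2-u_N)\,\Phi_N^+(z_0)}\cdot\prod_{i\neq j}\left(\frac{\sigma_N(q_Nz_0)-\ell_i}{\sigma_N(z_0)-\ell_i}\right)^{2}.
\]
Since $\sigma_N(z_0)=\ell_\lambda$ and $\sigma_N(q_Nz_0)=\ell_{\lambda-1}$, the scalar prefactor is precisely the ratio $w(\ell_{\lambda-1};N)/w(\ell_\lambda;N)$ prescribed by~\eqref{eqPhiNS1}, and the remaining product matches the change in the $H^{q,v}_\theta$-interaction between the two configurations via a $\Gamma_q$-telescoping computation tailored to the quadratic-lattice interaction. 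The four boundary candidate poles at $\lambda=0,M_N+1$ on the $q_N^{-\lambda}$ branch and $\lambda=-1,M_N$ on the $u_Nq_N^{\lambda}$ branch each receive a contribution from only one summand, and the four vanishing hypotheses $\Phi_N^-(1)=\Phi_N^+(q_N^{-M_N-1})=\Phi_N^+(u_Nq_N^{-1})=\Phi_N^-(u_Nq_N^{M_N})=0$ are designed precisely to annihilate them. This establishes analyticity of $R_N$ on $\mathcal{M}$.

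Granting analyticity on $\mathcal{M}$, the polynomial claim under the additional hypothesis that $\Phi_N^+$ and $\Phi_N^-$ are polynomials of degree at most $d$ is short. The factorization of $\sigma_N(z)-\ell_i$ shows the two rational expectations have no poles at $0$ or at $\pm\sqrt{u_N}$; direct evaluation gives finite limits $q_N^{\mp N}$ at $z=0$, so $R_N$ extends to an entire function on $\mathbb{C}$. As $z\to\infty$ each factor $(\sigma_N(q_Nz)-\ell_i)/(\sigma_N(z)-\ell_i)\to q_N$, so the two expectations tend to $q_N^{N}$ and $q_N^{-N}$ respectively; hence $R_N(z)=O(|z|^d)$ at infinity, and by Liouville's theorem $R_N$ is a polynomial of degree at most $d$. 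The main obstacle is the $\Gamma_q$-telescoping identity underpinning the residue cancellation: one must verify carefully that a single-coordinate shift $\lambda_j\to\lambda_j-1$ transforms the interaction product $\prod_{k\neq j}H^{q,v}_\theta(\cdot,\cdot)$ by precisely the squared ratio of lattice differences $\prod_{k\neq j}\bigl((\ell_{\lambda-1}-\ell_k)/(\ell_\lambda-\ell_k)\bigr)^2$, which together with~\eqref{eqPhiNS1} closes the identity. Once this algebraic step is in hand, the remainder of the proof is routine residue bookkeeping.
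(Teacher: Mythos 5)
Your proposal is correct and follows essentially the same route as the paper's proof of the general Theorem~\ref{NekGen} (here specialized to $\theta=1$, one cut): identify the finite set of candidate poles on the two branches of the quadratic lattice, pair configurations related by a single-particle shift $\lambda_j\mapsto\lambda_j-1$, verify the pairwise residue cancellation via the weight-ratio hypothesis and the interaction ratio, use the four vanishing conditions on $\Phi_N^{\pm}$ to annihilate the four boundary poles, and finish the polynomial claim with Liouville's theorem after checking the behaviour at $0$, $\pm\sqrt{u_N}$ and $\infty$. One small remark: the step you flag as the ``main obstacle'' (a $\Gamma_q$-telescoping identity) is not actually an obstacle in the case at hand, since for $\theta=1$ one has $H_1^{q,v}(\ell_i,\ell_j)=(1-q)^{-4}(\ell_j-\ell_i)^2$ exactly, so the ratio of interaction products under a one-coordinate shift is literally $\prod_{k\neq j}\bigl((\ell_{\lambda-1}-\ell_k)/(\ell_{\lambda}-\ell_k)\bigr)^2$ with no $\Gamma_q$ manipulation required; the $\Gamma_q$-telescoping computation of \eqref{bigprod}--\eqref{bigprod2} is only needed for general $\theta$.
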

\begin{rem} If $\mu$ denotes the equilibrium measure from Theorem \ref{GLLN1cut}, and $G_\mu(z) = \int_{\mathbb{R}} \frac{\mu(x)dx}{z - x}$ is its Stieltjes transform then as explained in Section \ref{Section4} one has
$$\lim_{N \rightarrow \infty}  \E \left[ \prod\limits^N_{i=1} \frac{\sigma_N(q_Nz)-\ell_i}{\sigma_N(z)-\ell_i}\right] = \exp\left(\mathfrak{G}(z)\right)\mbox{ with }\mathfrak{G}(z) =  \log (\lq) \cdot (z - \lu z^{-1})  \cdot G_\mu(z + \lu z^{-1}).$$
In this sense, the Nekrasov's equation lead to a functional equation for $\mathfrak{G}(z)$, and our central limit theorem is a consequence of a careful analysis of the lower order terms of the above limit. We remark that in \cite{BGG} the expression that appears in the exponent above is directly the Stieltjes transform $G_\mu(z)$ and not a modified version of it as in our case, which increases the technical difficulty of our arguments. The appearance of $\mathfrak{G}$ is a novel feature that comes from working on a quadratic lattice and we give some explanation of it in Remark \ref{GREM}.
\end{rem}

Our central limit theorem requires that the equilibrium measure $\mu$ satisfies Assumption 5 in Section \ref{Section2.1}, which roughly ensures that $\mu$ has a single {\em band} in $[1 + \lu, \lq^{-\lM} + \lu \lq^{\lM}]$. In our context, a band is a maximal interval $(a,b)$ such that $0 < \mu(x) <  f_{\lq}(\sigma_{\lq}^{-1}(x))^{-1}$, where $\sigma_{\lq}(x) = \lq^{-x} + \lu\tq^x$ and $f_{\lq}(x) =\frac{d}{dx} \sigma_{\lq}\left(\lq^{-x} \right)$ (see also Section \ref{Section4.2}). The parameters $\alpha_1, \beta_1$ that appear in the next Theorem \ref{CLTfun1cut} are then precisely the endpoints of this band.

\begin{thm}\label{CLTfun1cut}
Suppose that (\ref{GenParS1}, \ref{GenPotS1}, \ref{DerPotS1},\ref{eqPhiNS1}) and that (technical) Assumption 5 from Section \ref{Section2.2} hold. For $m\geq 1$ let $f_1, \dots, f_m$ be real analytic functions in a neighborhood of $[1 + \lu, \lq^{-\lM} + \lu \lq^{\lM}]$ and define 
$$\mathcal L_{f_i}=N \int_\mathbb{R} f_j(x) \mu_N(dx) -N \mathbb E _{\mathbb P_N}  \left[ \int_\mathbb{R} f_j(x) \mu_N(dx) \right] \mbox{ for $i = 1, \dots, m$}.$$
Then the random variables $\mathcal{L}_{f_i}$ converge jointly in the sense of moments to an $m$-dimensional centered Gaussian vector $X = (X_1,\dots, X_m)$ with covariance
$$Cov(X_i, X_j) = \frac{1}{(2 \pi \iu)^2}\oint_{\Gamma} \oint_{\Gamma } f_i(s)f_j(t) \mathcal C (s, t) ds dt,\text{ where }$$
\begin{equation}\label{QRCOVS1}
\mathcal{C}(s, t) = -\frac{1}{2(s-t)^2} \left(1 - \frac{(s - \alpha_1)(t- \beta_1) + (t - \alpha_1 )(s- \beta_1)}{2\sqrt{(s -\alpha_1 )(s- \beta_1)}\sqrt{(t-\alpha_1 )(t- \beta_1 )}} \right),  \footnote{Throughout the paper, given $a, b \in \mathbb{R}$ with $a<b$, we write $f(z) = \sqrt{(z - a)(z- b)} $ to mean $$f(z) = \begin{cases} \sqrt{z - a} \sqrt{z-b} &\mbox{ when $z \in \mathbb{C} \setminus (-\infty, b]$ }, \\ -\sqrt{a -z }\sqrt{b - z} &\mbox{ when $z \in (-\infty, a)$ }. \end{cases}$$ Observe that in this way $f$ is holomorphic on $\mathbb{C} \setminus [a,b]$, cf. Theorem 2.5.5 in \cite{SS}.}
\end{equation}
where $\alpha_1, \beta_1 \in [1 + \lu, \lq^{-\lM} + \lu \lq^{\lM}]$ are given in Assumption 5 and $\Gamma$ is a positively oriented contour that encloses the interval $[1 + \lu, \lq^{-\lM} + \lu \lq^{\lM}]$.
\end{thm}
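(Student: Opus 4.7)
The plan is to adapt the Nekrasov loop-equation scheme of \cite{BGG} to the quadratic lattice, with the main novelty being that the role of the Stieltjes transform $G_\mu(z)$ is played by the modified transform $\mathfrak{G}(z) = \log(\lq)(z - \lu z^{-1}) G_\mu(z + \lu z^{-1})$. I start from Theorem \ref{NekS1} and introduce the observables
$$A_N(z) = \E \left[ \prod_{i=1}^N \frac{\sigma_N(q_N z) - \ell_i}{\sigma_N(z) - \ell_i}\right], \qquad B_N(z) = \E \left[ \prod_{i=1}^N \frac{\sigma_N(z) - \ell_i}{\sigma_N(q_N z) - \ell_i}\right],$$
so that $R_N(z) = \Phi_N^-(z) A_N(z) + \Phi_N^+(z) B_N(z)$ is analytic in $\mathcal{M}$. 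By Theorem \ref{GLLN1cut}, $A_N \to e^{\mathfrak{G}}$ and $B_N \to e^{-\mathfrak{G}}$; the leading-order analyticity, combined with the boundary conditions $\Phi_N^{\pm}(q_N^{-M_N-1}) = \cdots = 0$, pins down $\mathfrak{G}$ and hence the equilibrium measure $\mu$. Bootstrapping from Theorem \ref{GLLN1cut} using the loop equation, I would first establish the a priori bound $A_N(z) - e^{\mathfrak{G}(z)} = O(N^{-1} \log N)$ uniformly on compact subsets of $\mathcal{M}$ bounded away from the band, mirroring Sections 4--5 of \cite{BGG}.

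I then linearize: write $A_N(z) = e^{\mathfrak{G}(z)}\bigl(1 + N^{-1} \mathcal{A}_N(z) + O(N^{-2})\bigr)$ and analogously for $B_N$, and substitute into Nekrasov's identity. The subleading part, combined with $A_N B_N = 1 + O(N^{-2})$ at the deterministic level, produces a scalar linear equation for the centered fluctuation $\widetilde{\mathcal{A}}_N(z) := \mathcal{A}_N(z) - \E[\mathcal{A}_N(z)]$ of the schematic form
$$\bigl( \Phi^-(z) e^{\mathfrak{G}(z)} - \Phi^+(z) e^{-\mathfrak{G}(z)} \bigr)\, \widetilde{\mathcal{A}}_N(z) \;=\; \text{(analytic in $\mathcal{M}$)} \;+\; \text{(stochastic noise)}.$$
Under Assumption 5 the coefficient on the left vanishes precisely as a square-root at the endpoints of the unique band, once translated to the variable $x = z + \lu z^{-1}$. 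Dividing through reduces the problem to a scalar Riemann--Hilbert problem on $[\alpha_1,\beta_1]$ whose unique solution decaying at infinity is an explicit Cauchy integral involving $\sqrt{(x-\alpha_1)(x-\beta_1)}$.

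The test-function statistic is then recovered from the contour representation
$$\mathcal{L}_{f_i} \;=\; \frac{1}{2 \pi \iu} \oint_\Gamma f_i(x) \cdot N \int_{\mathbb{R}} \frac{\mu_N(dy) - \E \mu_N(dy)}{x - y} \, dx,$$
and substitution of the explicit RH solution, followed by collapsing the resulting double contour integral, yields the universal kernel $\mathcal{C}(s,t)$ in \eqref{QRCOVS1}. I expect the main obstacle to lie exactly at this last step: verifying that all the $\lq$-, $\lu$-, $\Phi^{\pm}$- and $\varphi^{\pm}_N$-dependent data cancel to leave only the universal covariance, which requires delicately exploiting the two-to-one symmetry $z \leftrightarrow \lu z^{-1}$ of $\sigma$ in order to deform $z$-contours back to $x$-contours encircling $[\alpha_1, \beta_1]$. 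This is the true new ingredient compared to \cite{BGG}, since there the loop equation is written directly in the variable where $\mu$ lives.

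For joint convergence in moments, I would iterate the scheme: the $k$-th joint cumulant of $(\mathcal{L}_{f_1}, \ldots, \mathcal{L}_{f_m})$ satisfies a linear equation of the same RH type, with right-hand side of order $N^{-1}$ times cumulants of strictly lower order. An induction on $k$ forces cumulants of order $\geq 3$ to vanish in the limit, which together with the covariance computation identifies the limit as the centered Gaussian vector $X$ with covariance \eqref{QRCOVS1}. Once the a priori bounds and the one-point covariance are in place this last step is essentially mechanical, following Section 6 of \cite{BGG}.
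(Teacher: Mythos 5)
Your proposal follows essentially the same route the paper takes: deform the measure à la \cite{BGG} to access joint cumulants via derivatives in the deformation parameters, feed the deformed measure into the quadratic-lattice Nekrasov equation of Theorem \ref{NekS1}, linearize around the limit $\mathfrak{G}$, divide by the function $Q_\mu(z) = \Phi^-(z) e^{\mathfrak{G}(z)} - \Phi^+(z) e^{-\mathfrak{G}(z)}$ whose square-root vanishing at the band endpoints (Assumption 5) gives the Riemann--Hilbert structure, push the $z$-contour through the change of variables $w = \sigma(z)$ using the two-to-one involution $z \leftrightarrow \lu z^{-1}$, and iterate to kill higher cumulants. This matches the paper's Sections \ref{Section5}--\ref{Section6} (Theorems \ref{MainTechT}, \ref{TGField}, \ref{CLTfun}), and you correctly flag the contour-deformation subtlety coming from $\sigma$ being two-to-one as the genuinely new ingredient relative to the flat-lattice case.

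The one place where you are too casual is the a priori bound $A_N(z) - e^{\mathfrak{G}(z)} = O(N^{-1}\log N)$, which you present as a direct bootstrap from the law of large numbers. That is not immediate: the LLN only gives $O(N^{-1/2+\varepsilon})$, and upgrading to the $O(N^{-1})$ needed to control the remainder $\R_N^{\t,\v}$ in the linearized Nekrasov equation is itself the content of the self-improving-estimate machinery (the paper's Proposition \ref{moments} and the iterated Claim in Section \ref{Section6.2}). The loop is closed by showing that the $O(N^{-1/2})$ bound on moments of $\Delta G_N$ feeds back through the loop equation to a strictly better bound, and iterating $O(\log N)$ times. Without this, the ``stochastic noise'' on the right-hand side of your linearized equation is not controlled tightly enough to conclude that third and higher cumulants vanish. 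You do gesture at this by citing Section 6 of \cite{BGG}, so the omission is a matter of emphasis rather than a genuine gap, but it is worth noting that this is where a large fraction of the technical work actually lives.
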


We emphasize that the covariance in (\ref{QRCOVS1}) depends only on $\alpha_1, \beta_1$, and is not sensitive to other features of the equilibrium measure $\mu$. Furthermore, the covariance is the same as for the continuous log-gases, cf. \cite[Theorem 2.4]{JL} and \cite[Chapter 3]{PS}. Thus, the discreteness of the model is invisible on the level of the central limit theorem, which is consistent with what was observed for the discrete $\beta$-ensembles in \cite{BGG}.

\begin{rem}
Theorem \ref{CLTfun1cut} is a special case of Theorem \ref{CLTfun}, where we extend the statement to the multi-cut regime with fixed filling fractions and general $\theta > 0$. 
\end{rem}
\begin{rem} Observe that the covariance $\mathcal{C}(s,t)$ has no singularity when $s = t$, since the RHS of (\ref{QRCOVS1}) has a finite limit when $s$ tends to $t$. 
\end{rem}

\subsection*{Outline} In Section \ref{Section2} we describe the general framework of our study, the scaling regime we consider and the assumptions on the weight $w$. In Section \ref{Section3} we establish a general law of large numbers as Theorem \ref{GLLN}. Nekrasov's equation is discussed in Section \ref{Section4}. Sections \ref{Section5} and \ref{Section6} contain the proof of Theorem \ref{CLTfun} (our general central limit theorem). A detailed description of the $q$-Racah tiling model is given in Section \ref{Section7} and we give the proof of our results about its random height function in Section \ref{Section8}. Section \ref{Section9} provides the verification that the tiling model fits into the general framework of Section \ref{Section2}. Finally, Section \ref{Section10} contains the asymptotic analysis of the Nekrasov's equation for the tiling model using discrete Riemann-Hilbert problems.

\subsection*{Acknowledgements}  The authors are deeply grateful to Alexei Borodin, Vadim Gorin and Alice Guionnet for very helpful discussions. For the second author the financial support was partially available through NSF grant DMS:1704186 and the project started when the second author was still a PhD student at Massachusetts Institute of Technology. We also want to thank the hospitality of PCMI during the summer of 2017 supported by NSF grant DMS:1441467.

\section{General setup}\label{Section2} In this section we describe the general setting of a multi-cut, fixed filling fractions model that we consider and list the specific assumptions we make about it.

\subsection{Definition of the system} \label{Section2.1}
We begin with some necessary notation. Let $q \in (0,1)$, $M\in \mathbb{Z}_{\geq 0}$, $u \in [0,1)$, $\theta > 0$ and $N \in \mathbb{N}$ be such that $M \geq N-1$. For such parameters we set 
\begin{equation}\label{GenState}
\begin{split}
&X_N = \{  (x_1, \dots, x_N) : x_1\leq x_2 \leq \cdots \leq x_N, x_i \in \mathbb{Z}  \mbox{ and }0  \leq x_i  \leq M - N + 1 \}, \\
&\mathbb{W}^\theta_N = \{ (\lambda_1, \dots, \lambda_N):  \lambda_i = x_i + (i- 1)\cdot\theta, \mbox{ with } (x_1, \dots, x_N) \in X_N\}, \\
&\mathcal{L}^\theta_N = \{ (\ell_1, \dots, \ell_N):  \ell_i = q^{-\lambda_i} + u q^{\lambda_i} , \mbox{ with }(\lambda_1, \dots, \lambda_N) \in \mathbb{W}^\theta_N \}.
\end{split}
\end{equation}
We interpret the elements $(\ell_1, \dots, \ell_N)$ in $\mathcal{L}^\theta_N$ as locations of $N$ particles.
If $\theta = 1$ then all particles $\ell_i$ live on the same space $ GCLT{Z}:= \{q^{-x} + u q^x: x \in \mathbb{Z}\}$, and we refer to the set $\mathfrak{Z}$ as a {\em quadratic lattice} in the spirit of \cite{NSU}. On the other hand,  for general $\theta > 0$ the particle $\ell_k$ lives on an appropriately {\em shifted} quadratic lattice $\mathfrak{Z}_k^\theta :=  \{q^{-x -(k-1)\theta} + u q^{x + (k-1)\theta}: x \in \mathbb{Z}\}$. This is similar to the setup in \cite{BGG}. Throughout the text we will frequently switch from $ \ell_i$'s to $\lambda_i$'s or $x_i$'s without mention using 
\begin{equation}\label{Eqcoord}
x_i \in \mathbb{Z}, \hspace{2mm} \lambda_i = x_i + (i-1)\cdot \theta \in (i-1)\cdot\theta +\mathbb{Z}, \hspace{2mm} \ell_i = q^{-\lambda_i} + u q^{\lambda_i} \in \mathfrak{Z}_i^\theta.
\end{equation}
We typically choose the coordinate system that leads to the most transparent formulas or arguments.

Our goal is to define probability measures on a subset of $\mathcal{L}^\theta_N$, where particles are split into $k$ groups of prescribed sizes, living on $k$ disjoint prescribed segments. We start by fixing $k \in \mathbb{N}$, which denotes the number of segments (or groups). For each $N \in \mathbb{N}$ we take $k$ integers $n_1(N), \dots, n_k(N)$, set $N_j = \sum_{i = 1}^j n_i(N)$ with the convention $N_0 = 0$ and assume $N_k = N$. The numbers $n_i(N)$ indicate the number of particles in each segment (or group). In addition, we suppose that we have $2k$ integers $a_i(N), b_i(N)$ such that  $0 \leq a_i(N) \leq b_i(N) - 1 \leq M-N+1$ for $i = 1, \dots, k$ and $b_i(N) \leq a_{i+1}(N)$ for $i = 1, \dots, k-1$. With the above data we define the state space of our $N$-point process as follows.
\begin{defi}\label{newStates}
The state space $\mathfrak{X}^{\theta}_N$ consists of $N$-tuples $\ell = (\ell_1, \dots, \ell_N) \in \mathcal{L}^\theta_N$ such that
$a_j(N) \leq x_i \leq b_j(N) - 1$, see (\ref{Eqcoord}), whenever $N_{j-1}+1 \leq i \leq N_j$ for $i = 1, \dots, N$ and $j = 1, \dots, k$. For future use we also denote $\alpha_i(N) = q^{-a_i(N) - N_{i-1}\cdot \theta} + u q^{a_i(N) + N_{i-1}\cdot \theta}$, $\beta_i(N) = q^{-b_i(N) - (N_{i} - 1)\cdot\theta} + u q^{b_i(N) + (N_i -1)\cdot\theta}$ and $\beta^-_i(N)$ the largest element in $\mathfrak{Z}^\theta_{N_i}$ less than $\beta_i(N)$ for $i = 1, \dots, N$.
\end{defi}
Utilizing  Definition \ref{newStates} we define a probability measure $\mathbb{P}_N$ on $\mathfrak{X}^\theta_N$ through 
\begin{equation}\label{PDef}
\begin{split}
&\mathbb{P}_N( \ell_1, \dots, \ell_N) = \frac{1}{Z_N} \cdot \prod_{1 \leq i < j \leq N} H(\ell_i, \ell_j)\cdot \prod_{i = 1}^N w(\ell_i;N), \mbox{ where } \\
&H(\ell_i, \ell_j) = q^{-2\theta \lambda_j}  \frac{\Gamma_{q}(\lambda_j - \lambda_i +1) \Gamma_{q} (\lambda_j - \lambda_i+\theta)}{\Gamma_{q} (\lambda_j - \lambda_i) \Gamma_{q} (\lambda_j - \lambda_i + 1 - \theta )} \frac{\Gamma_{q} (\lambda_j + \lambda_i + v + 1) \Gamma_{q} (\lambda_j+ \lambda_i + v + \theta)}{\Gamma_{q} (\lambda_j + \lambda_i + v ) \Gamma_{q} (\lambda_j + \lambda_i + v + 1 - \theta)}.
\end{split}
\end{equation}
Here $Z_N$ is a normalization constant (called the {\em partition function}), $v$ is such that $q^{v} = u$, and $w(x;N)$ is a weight function, which is assumed to be {\em positive} for $x \in \cup_{i = 1}^k [\alpha_i(N), \beta^-_i(N)]$. We recall  
 \begin{equation}\label{qGamma}
\Gamma_q(x) = (1 - q)^{1 - x} \frac{(q;q)_\infty}{(q^x;q)_{\infty}} \mbox{ where } (a;q)_\infty= \prod_{k = 0}^\infty(1- aq^k) \mbox{ and it satisfies }\frac{\Gamma_q(x+1)}{\Gamma_q(x)}= \frac{1 - q^x}{1- q}.
\end{equation}

Let us remark on a couple of properties of $\mathbb P_N$. Firstly, the measure $\mathbb P_N$ when $u = 0$ was considered in \cite{BGG}. Specifically, our measure agrees with equation (82) of that paper with $\lambda_i$ replaced with $\ell_i$ and $w(\ell_i;N)$ replaced with $w(\ell_i;N) \cdot q^{-\theta (N-1) \ell_i}$. In addition, from \cite[Theorem 10.2.4]{AAR}
$$\frac{\Gamma_q(x+\alpha)}{\Gamma_q(x)} = (1 - q)^{-\alpha} \frac{(q^x;q)_{\infty}}{(q^{x+\alpha}; q)_\infty} \sim(1 - q)^{-\alpha} ( 1 - y)^\alpha \mbox{ as $q \rightarrow 1^-$ and $q^x \rightarrow y \in [0,1)$}$$
and setting $q^{-\lambda_i} = y_i$ for $i = 1,\dots, N$ we have
\begin{equation}\label{thetaPN2}
\begin{split}
H(\ell_i, \ell_j)  \sim (1-q)^{- 4\cdot \theta}\cdot  y_j^{-2\theta} ( 1 - y_i y_j^{-1})^{2\theta}   ( 1 - uy_i^{-1} y_j^{-1})^{2\theta} =  (1-q)^{- 4\cdot \theta} \cdot (\ell_j- \ell_i)^{2\theta} ,
\end{split}
\end{equation}
which is why we view $\mathbb P_N$ as a discretization of the general $\beta = 2\theta$ log-gas to a quadratic lattice. The latter is particularly obvious when $\theta =1$, since then we have
\begin{equation*}
\begin{split}
\mathbb P_N (\ell_1,\dots, \ell_N) = \frac{ (1-q)^{- 2 \cdot N(N-1)}}{Z_N} \cdot \prod_{1 \leq i < j \leq N} (\ell_j - \ell_i)^2  \prod_{i = 1}^N w(\ell_i; N).
\end{split}
\end{equation*}
The above connection to log-gases motivates our choice to work with the particles $(\ell_1, \dots, \ell_N)$ and not for example $(\lambda_1, \dots, \lambda_N)$, although most results can be formulated in terms of the latter.
\begin{rem}\label{RemMacKoor}
One way to understand the interaction $H(\ell_i, \ell_j)$ in (\ref{PDef}) is that it is an {\em integrable} extension of the interaction $(\ell_j - \ell_i)^2$ to general $\theta > 0$. This should be viewed as an analogue to how (\ref{eq:distr2}) is a general $\theta > 0$ version of $(\lambda_j - \lambda_i)^2$, and the integrability of that extension can be traced to {\em discrete Selberg integrals} and {\em Jack symmetric polynomials}, where analogous expressions appear, see \cite[Section 1]{BGG}. One source of motivation for why $H(\ell_i, \ell_j)$ is the correct generalization of $(\ell_j - \ell_i)^2$ in the setting of a quadratic lattice comes from a connection to Macdonald-Koorwinder polynomials \cite{Koor92} as we detail below.

Following the notation in \cite{Rains05} we let $K_\mu^{(n)}(\cdot;q,t; t_0, t_1, t_2, t_3)$ denote the $\mathsf{BC}_n$-symmetric Koorwinder polynomial in $n$ variables. In addition, if $\mu \subset (m)^n$ we define $\tilde{\mu} \subset (n)^m$ through $\tilde{\mu}_j = n - \mu_{m - j + 1}'$. Taking the product of $K_\mu^{(n)}$ and $K_{\tilde{\mu}}^{(m)}$ at the principal and dual principal specializations (such products appear in the dual Cauchy identity for Koorwinder polynomials \cite{Mim01}) gives
\begin{equation}\label{MKP}
\begin{split}
&K_\mu^{(n)}(t^{n-i}t_0; q, t; t_0, t_1 ,t_2, t_3) \cdot K_{\tilde \mu}^{(m)}(q^{m-j}t_0; t, q; t_0, t_1 ,t_2, t_3) = \\
& C(n,m,\theta) \cdot \prod_{1 \leq i < j \leq n} H(\ell_i, \ell_j) \cdot \prod_{i = 1}^n W(\ell_i), 
\end{split}
\end{equation}
where $H(\ell_i, \ell_j)$ is as in (\ref{PDef}) with $t = q^{\theta}$, $u = q^v = t_0t_1t_2t_3 / q$ and $C(n,m,\theta)$ is a $\mu$-independent constant. As before we have $\ell_i = q^{-\lambda_i} + u q^{\lambda_i}$ and $\lambda_i = \mu_{n-i + 1} + (i-1) \cdot \theta$ (notice that $\lambda$'s are indexed in increasing order, while $\mu$'s are indexed in decreasing order as is typical for partitions). In addition, we have
$$W(\ell_i) = (1 - q^{2\lambda_i} u)\frac{(q^{1 + \lambda_i}, q^{m - \lambda_i + 1 + \theta (n-1)}, q^{\lambda_i + m + 1 + \theta(n-1)}u; q)_\infty }{(q^{\lambda_i}u; q)_\infty} q^{(n-m - 1/2)\lambda_i + \lambda_i^2/2},$$
where $(a_1, \dots, a_r; q)_\infty = \prod_{k = 1}^r (a_k; q)_\infty$. The obvious parallel between (\ref{MKP}) and (\ref{PDef}) is one of the main reasons we view $H(\ell_i, \ell_j)$ as the correct integrable generalization to $\theta > 0$.
\end{rem}

\subsection{Scaling and regularity assumption}\label{Section2.2}

We are interested in obtaining asymptotic statements about $\mathbb{P}_N$ as $N \rightarrow \infty$. This requires that we scale our parameters $\theta, q,u,M$ in some way and also impose some regularity conditions for the interval endpoints $\alpha_i(N), \beta_i(N)$ and the weight functions $w(x;N)$. We list these assumptions below.\\

{\raggedleft \bf Assumption 1.} We assume that we are given parameters $\theta > 0$ , $\lq \in (0,1)$, $\lM \geq 1$, and $\lu \in [0, 1)$. For future reference we denote the set of parameters $ {\lq}, {\lM}, {\lu}$  that satisfy the latter conditions by ${\tt P}$ and view it as a subset of $\mathbb{R}^3$ with the subspace topology. In addition, we assume that we have a sequence of parameters $q_N \in (0,1)$, $M_N \in \mathbb{N}$ and $u_N \in [0,1)$ such that
\begin{equation}\label{GenPar}
\mbox{$M_N \geq N-1$, } \max \left( N^2\cdot \left| q_N - \lq^{1/N} \right|, \left| M_N - N\lM\right|, N\cdot|u_N - \lu| \right) \leq A_1, \mbox{ for some $A_1 > 0$.}
\end{equation}
The measures $\mathbb{P}_N$ will then be as in (\ref{PDef}) for $q = q_N, u = u_N, M= M_N, \theta$ and $N$.\\

{\raggedleft \bf Assumption 2.} We require that for each $i = 1, \dots, k$ as $N\rightarrow \infty$ we have for some $A_2 > 0$
$$|\alpha_i(N) - \hat{a}_i | \leq   A_2 \cdot N^{-1} \log(N), \hspace{5mm} |\beta_i(N)- \hat{b}_i| \leq  A_2\cdot N^{-1} \log(N), \mbox{ where }$$
$$1 + \lu \leq \hat{a}_1 < \hat{b}_1 < \hat{a}_2 < \cdots < \hat{a}_k < \hat{b}_k \leq \lq^{-\lM - \theta + 1} + \lu \lq^{\lM +\theta - 1}. $$
In addition, we assume that $w(x;N)$ in the intervals $[\alpha_i(N), \beta_i^-(N)]$, $i = 1, \dots, k$ has the form
$$w(x;N) = \exp\left( - N V_N(x)\right),$$
for a function $V_N$ that is continuous in the intervals $[\alpha_i(N), \beta_i^-(N)]$ and such that 
\begin{equation}\label{GenPot}
\left| V_N(s) - V(s) \right| \leq A_3 \cdot N^{-1}\log(N), \mbox{ where $V$ is continuous and $|V(s)| \leq A_4$, }
\end{equation}
for some constants $A_3,A_4 > 0$ .We also require that $V(s)$ is differentiable and for some $A_5 > 0$
\begin{equation}\label{DerPot}
\left| V'(s) \right| \leq A_5 \cdot \left[ 1 + \sum_{i = 1}^k\left| \log |s - \hat{a}_i| \right|  + | \log |s - \hat{b}_i||  \right], \mbox{ for } s \in \left[1 + \lu, \lq^{-\lM - \theta + 1} + \lu \lq^{\lM + \theta - 1}\right].
\end{equation}
\begin{rem}
We believe that one can take more general remainders in the above two assumptions, without significantly influencing the arguments in the later parts of the paper. However, we do not pursue this direction due to the lack of natural examples.
\end{rem}

Let us denote $\sigma_{\lq}(x) = \lq^{-x} + \lu\tq^x$ and observe that the latter is a bijective diffeomorhism from $[0, \lM + \theta - 1]$ to $[1 + \lu,  \lq^{-\lM - \theta + 1} + \lu \lq^{\lM +\theta - 1}]$. Let $f_{\lq}(x) =\frac{d}{dx} \sigma_{\lq}\left(\lq^{-x} \right) $ and note that $f_{\lq}$ is positive on the interval $[0, \lM]$. \\

{\raggedleft \bf Assumption 3.} Set $\hat{n}_i(N) = \frac{n_i(N)}{N}$ for $i = 1,\dots, k$. We will often suppress the dependence of $\hat{n}_i$ on $N$ and we assume that for sufficiently large $N$ these sequences satisfy
$$ A_6 < \hat{n}_i < \theta^{-1} \cdot \left[\sigma_{\lq}^{-1}(\hat{b}_i) -  \sigma_{\lq}^{-1}(\hat{a}_i) \right]- A_6,$$
where $A_6$ is some positive constant. In our future results it will be important that the remainders are uniform over $\hat{n}_i$, satisfying the above conditions.

\begin{rem}\label{eqMR}
The above assumptions will be sufficient to obtain our law of large numbers for $\mathbb{P}_N$. We stated the one-cut $\theta = 1$ case of this law in Theorem \ref{GLLN1cut}. In general, if one assumes that  $\hat{n}_i(N) = \nu_i + O(N^{-1} \log (N))$ for some positive constants $\nu_i$ for $i = 1, \dots, k$, then the sequence of empirical measures $\mu_N := \frac{1}{N} \sum_{i = 1}^N \delta(\ell_i)$  converges to a deterministic measure $\mu$, called the {\em equilibrium measure}. The precise statement detailing this convergence is given in Theorem \ref{GLLN}, and the equilibrium measure turns out to be the maximizer of a certain variational problem -- see Lemma \ref{BL1}. It depends on $\lq, \lu, \theta$, the limiting potential $V$, the endpoints $\hat{a}_i, \hat{b}_i$ from Assumption 2 and the limiting filling fractions $\nu_i$ for $i = 1, \dots, k$.
\end{rem}

We next isolate the assumptions we require for establishing our central limit theorem, starting with the analytic properties of the weight $w(x;N)$. \\

{\raggedleft \bf Assumption 4.}  We assume that we have an open set $\mathcal{M}  \subset \mathbb{C} \setminus \{0, \pm \sqrt{\lu}\}$, such that for large $N$
$$\bigcup_{i = 1}^k \left( \left[q_N^{1-a_i(N) - N_{i-1}\cdot \theta} , q_N^{ - b_i(N) - (N_i - 1)\cdot \theta }\right] \cup \left[u_Nq_N^{b_i(N) + (N_i -1) \cdot \theta } , u_Nq_N^{a_i(N) -1 + N_{i-1}\cdot \theta}\right] \right) \subset \mathcal{M}.$$
 In addition, we require for all large $N$ the existence of analytic functions $\Phi^+_N, \Phi^-_N$ on $\mathcal{M}$ such that 
\begin{equation}\label{eqPhiN}
\begin{split}
&\frac{w(\sigma_N(z);N)}{w(\sigma_N(q_Nz);N)}=\frac{ (q_N^{2}z^2-u_Nq_N^\theta )(z^2-u_N) \Phi_N^+(z)}{(q_N^{\theta}z^2-u_N)(q_N^2z^2-u_N) \Phi_N^-(z)} , \mbox{ },
\end{split}
\end{equation} 
 whenever $\sigma_N(z), \sigma(q_Nz) \in \cup_{i = 1}^k [\alpha_i(N), \beta_i^-(N)]$ where $\sigma_N(z) = z + u_Nz^{-1}$.
Moreover, 
\begin{equation*}
\begin{split}
&\Phi^{-}_N(z) = \Phi^{-}(z) + \varphi^{-}_N(z) + O \left(N^{-2} \right) \mbox{ and } \Phi^{+}_N(z) = \Phi^{+}(z) + \varphi^{+}_N(z) + O \left(N^{-2} \right),
\end{split}
\end{equation*}
 where $\varphi^{\pm}_N(z) = O(N^{-1})$ and the constants in the big $O$ notation are uniform over $z$ in compact subsets of $\mathcal{M}$. All aforementioned functions are holomorphic in $\mathcal{M}$.\\

The next assumption we require is about the equilibrium measure $\mu$, which was discussed in Remark \ref{eqMR}. A convenient way to encode $\mu$ is through its Stieltjes transform
\begin{equation}\label{GmuDef}
G_{\mu}(z) := \int_\mathbb{R} \frac{\mu(x)dx}{z - x}.
\end{equation}
The following two functions $R_\mu(z), Q_\mu(z)$ naturally arise from our discrete loop equations (see Section \ref{Section4.2}) and play an important role in our further analysis
\begin{equation}\label{QRmu}
\begin{split}
&R_{\mu}(z) = \Phi^-(z) \cdot e^{ \theta \log (\lq) (z - \lu z^{-1}) G_{\mu} (z + \lu z^{-1}) }+  \Phi^+(z) \cdot e^{- \theta\log (\lq) (z - \lu z^{-1}) G_{\mu} (z + \lu z^{-1}) }\\
&Q_{\mu}(z) = \Phi^-(z)\cdot e^{ \theta \log (\lq) (z - \lu z^{-1}) G_{\mu} (z + \lu z^{-1}) } -  \Phi^+(z) \cdot e^{-\theta\log (\lq) (z - \lu z^{-1}) G_{\mu} (z + \lu z^{-1}) }
\end{split}
\end{equation}
In Section \ref{Section4.2} we show that $R_\mu(z)$ is analytic, while $Q_\mu(z)$ is a branch of a two-valued analytic function, given by the square root of a holomorphic function on $\mathcal{M}$. Our assumption on $\mu$ is expressed through $Q_\mu$ as follows. \\

{\raggedleft \bf Assumption 5.} For each $N$ let $\hat{\mu}_N$ be the equilibrium measure $\mu$ discussed in Remark \ref{eqMR} for the parameters $\lq$, $\lu$, endpoints $\hat{a}_i, \hat{b}_i$ as in Assumptions 1,2 and filling fractions $\nu_i = \hat{n}_i = n_i(N)/N$,  $i = 1, \dots, k$. Observe that $\hat{\mu}_N$ depends on $N$ only through the filling fractions, in particular in the one-cut case it does not depend on $N$.

Let $Q_{\hat{\mu}_N}$ be as in (\ref{QRmu}) for the measure $\hat{\mu}_N$. Then we require that for all large $N$ there exist real numbers $r_i(N), s_i(N)$ and functions $H_N(z)$ on $\mathcal{M}$ such that
\begin{itemize}
\item $\hat{a}_i \leq r_i(N) < s_i(N) \leq \hat{b}_i$, and there are constants  $\hat{a}_i \leq \hat{r}_i  < \hat{s}_i\leq \hat{b}_i$ such that $r_i(N) - \hat{r}_i = O(N^{-1}\log(N)) = s_i(N) - \hat{s}_i $ for $i = 1, \dots, k$.
\item $Q_{\hat{\mu}_N}(z) = H_N(z) \prod_{i = 1}^k \sqrt{(z + \lu z^{-1} - r_i(N))(z + \lu z^{-1} - s_i(N))}$, with $H_N(z) \neq 0$ in $\mathcal{M}$.
\end{itemize}

\begin{rem} Assumption 5 is the analogue of Assumption 4 in \cite[Section 3]{BGG} for our setting and as discussed there it does not describe a general case. In particular, it implies that $\mu(x)$ has a single interval of support inside each interval $[\hat{a}_i, \hat{b}_i]$. To the authors' knowledge there are no simple conditions on the potential $V$ that ensure that $\mu$ has this property. 
\end{rem}

We will further impose a vanishing condition for the functions $\Phi^{\pm}_N$. We believe that it can be relaxed, but introduce it to simplify our arguments in the text.\\

{\raggedleft \bf Assumption 6.} If $a_i(N), b_i(N), N_i$ are as in Section \ref{Section2.1} then for all $i = 1, \dots, k$ we have
$$\Phi_N^+\left(q_N^{-b_i - (N_i - 1)\theta }\right)=\Phi_N^{-}\left(q_N^{-a_i - N_{i-1}\cdot \theta }\right)= \Phi_N^+\left( u_Nq_N^{ a_i -1+ N_{i-1}\cdot \theta}\right)=\Phi_N^{-}\left(u_Nq_N^{b_i -1+ (N_i -1) \cdot \theta } \right)=0.$$

Finally, we state an assumption, under which one can find an explicit formula for the density of $\mu$ in Remark \ref{eqMR} in terms of $R_\mu$ and $\Phi^{\pm}$ as in (\ref{QRmu}) and Assumption 4, see Lemma \ref{Lsupp}.\\

{\raggedleft \bf Assumption 7. } 
We assume that $V(s)$ is real analytic in a real open neighborhood of $\cup_{i = 1}^k [\hat{a}_i, \hat{b}_i]$ and $\Phi^{+}(x), \Phi^{-}(x)  \in \mathbb{R}$ with $\Phi^{+}(x) \cdot \Phi^{-}(x) > 0$  whenever $x + \lu x^{-1} \in \cup_{i = 1}^k ( \hat{a}_i, \hat{b}_i)$.

\section{Law of large numbers} \label{Section3}
In this section we establish the law of large numbers for the empirical measures $\mu_N$ associated to $\mathbb{P}_N$ from Section \ref{Section2}. In Section \ref{Section3.1} we provide a variational formulation of the equilibrium measure $\mu$, which describes the limit of $\mu_N$. The convergence of $\mu_N$ to $\mu$ is detailed in Theorem \ref{GLLN} and we reduce the proof of the latter to a concentration inequality -- see Proposition \ref{BP1}. This inequality is established in Section \ref{Section3.2} using arguments similar to \cite{BGG}, which in turn go back to \cite{BoGu2} and \cite{MMS}.

\subsection{Convergence of empirical measures}\label{Section3.1} We continue with the same notation as in Section \ref{Section2}.
With $\mathbb{P}_N$ as in (\ref{PDef}) we define the empirical measures $\mu_N$  as
\begin{equation*}
\mu_N = \frac{1}{N} \sum_{i = 1}^N \delta \left( \ell_i \right) \mbox{ where } (\ell_1,\dots,\ell_N) \mbox{ is } \mathbb{P}_N-\mbox{distributed}.
\end{equation*}

The measures $\mu_N$ satisfy the following law of large numbers.
\begin{thm}\label{GLLN} Suppose that Assumptions 1, 2 and 3 from Section \ref{Section2.2} hold. In addition, suppose that $|\hat{n}_i - \nu_i | \leq A_7 \cdot N^{-1} \log(N)$ for some positive constants $A_7$ and $\nu_i$, $i = 1, \dots, k$ such that $\sum_i \nu_i = 1$. Then there is a deterministic probability measure $\mu(x) dx$, depending on $\nu_i$ for $1 \leq i \leq  k$, such that $\mu_N$ concentrate (in probability) near $\mu$.  More precisely, for each Lipschitz function $f(x)$ defined in a real neighborhood of the interval $\cup_{i = 1}^k [ \hat{a}_i, \hat{b}_i]$ and each $\varepsilon > 0$ the random variables
\begin{equation}\label{GLLNeq}
N^{1/2 - \varepsilon} \left|\int_{\mathbb{R}} f(x) \mu_N(dx) -  \int_{\mathbb{R}} f(x)\mu(x)dx\right|
\end{equation}
converge to $0$ in probability and in the sense of moments.
\end{thm}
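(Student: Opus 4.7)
The plan is to reduce Theorem \ref{GLLN} to an exponential concentration inequality for the empirical measure at the level of an effective energy functional, following the strategy used for continuous and discrete log-gases (see \cite{BeGu, J4, BGG, BoGu2, MMS}), but carefully accommodating the non-uniform quadratic lattice on which the $\ell_i$ live.

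First I would extract the effective energy. Combining the $q$-Gamma asymptotic \eqref{qGamma} with the identification $\ell_i = q_N^{-\lambda_i} + u_N q_N^{\lambda_i}$ and Assumption 1 (so that $q_N = \lq^{1/N}(1+O(N^{-2}))$ and $u_N = \lu + O(N^{-1})$), a direct computation shows that
\begin{equation*}
\log \mathbb{P}_N(\ell_1,\dots,\ell_N) \,=\, -N^2\mathcal{E}(\mu_N) \,-\, \log Z_N \,+\, O(N\log N),
\end{equation*}
where the effective energy is
\begin{equation*}
\mathcal{E}(\mu)\,=\, -\theta \iint_{x\neq y}\log|x-y|\,\mu(dx)\mu(dy)\,+\,\int V(x)\,\mu(dx),
\end{equation*}
with $V$ as in Assumption 2. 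The lattice $\mathfrak{Z}^\theta_i$ has local spacing of order $N^{-1}\theta f_{\lq}(\sigma_{\lq}^{-1}(\ell))$ near $\ell$, so any empirical measure satisfies, in the limit, the hard density bound $\mu(x)\le \theta^{-1}f_{\lq}(\sigma_{\lq}^{-1}(x))^{-1}$. I would then set up the variational problem of minimizing $\mathcal{E}$ over compactly supported probability measures on $\cup_{i=1}^k[\hat a_i,\hat b_i]$ subject to the filling-fraction constraints $\mu([\hat a_i,\hat b_i])=\nu_i$ and this density bound. The functional $\mathcal{E}$ is lower semi-continuous and strictly convex on this convex, weak-${*}$ compact constraint set, so there is a unique minimiser $\mu$, the equilibrium measure of the theorem. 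This step should be the content of the referenced Lemma \ref{BL1}.

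The analytic heart of the proof is an exponential concentration inequality of the form
\begin{equation*}
\mathbb{P}_N\bigl( \mathcal{E}(\tilde{\mu}_N) - \mathcal{E}(\mu) \ge \delta\bigr) \,\le\, \exp\bigl(-cN^{2-\varepsilon}\delta + CN\log N\bigr),
\end{equation*}
where $\tilde{\mu}_N$ is the convolution of $\mu_N$ with a narrow bump used to regularise the logarithmic singularity on the diagonal; this is Proposition \ref{BP1}. Following \cite{BoGu2, MMS, BGG}, the bound is obtained by combining a lower bound on $Z_N$ coming from an explicit discrete configuration whose empirical measure approximates $\mu$ with a matching upper bound on the integrand over a neighborhood of each configuration in $\mathfrak X^\theta_N$. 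Strict convexity of $\mathcal{E}$ then transfers the energy concentration to a distance control between $\mu_N$ and $\mu$ in the negative Sobolev $H^{-1/2}$-norm, from which \eqref{GLLNeq} for Lipschitz test functions at scale $N^{1/2-\varepsilon}$ follows after an approximation argument; moment convergence is then obtained from the exponential tail together with the deterministic compact support of $\mu_N$.

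The main obstacle, and the step that genuinely distinguishes this argument from \cite{BGG}, is the construction of the reference configuration in the lower bound for $Z_N$. On a uniform lattice one places particles at equally spaced quantiles of $\mu$; here the lattices $\mathfrak{Z}^\theta_i$ have $x$-dependent density, so the correct discretisation is via quantiles of the pushforward of $\mu$ under $\sigma_{\lq}^{-1}$, after which one must check that the resulting configuration lies in $\mathfrak X^\theta_N$ as prescribed by Definition \ref{newStates} and respects the density upper bound at every scale without paying more than $O(N\log N)$ entropy. Careful bookkeeping of the $N^{-1}\log N$ remainders in Assumptions 1--3 is then required to ensure that the discretisation errors do not pollute the target rate $N^{1/2-\varepsilon}$ appearing in \eqref{GLLNeq}.
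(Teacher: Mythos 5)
Your proposal follows essentially the same route as the paper: the variational characterization of the equilibrium measure (Lemma~\ref{BL1}), an exponential concentration estimate obtained by bounding $Z_N$ below via a quantile configuration built using $\sigma_{\lq}^{-1}$ to account for the non-uniform lattice spacing (this is the paper's Proposition~\ref{BP1}), passage through the $\mathcal{D}$ pseudodistance (the $H^{-1/2}$-type control you describe) and Corollary~\ref{BC1} to test against Lipschitz functions, and finally integration of the exponential tail to get convergence of moments. The only cosmetic difference is that the paper maximises $I_V$ rather than minimising $\mathcal{E}=-I_V$, and writes the concentration directly in terms of $\mathcal{D}(\tilde\mu_N,\hat\mu)$ rather than the energy gap, with these being linked by the inequality $I_V[\tilde\mu_N]-I_V[\hat\mu]\le-\theta\,\mathcal{D}^2(\tilde\mu_N,\hat\mu)+O(N^{-1}\log N)$.
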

The limiting measure $\mu$ is defined as the maximizer of a certain variational problem, described in the following section.

\subsubsection{Variational problem}\label{Section3.1.1}
 Define the functional $I_V[\rho]$ of a measure $\rho$ supported in $\cup_{i = 1}^k [\hat{a}_i, \hat{b}_i]$  via
\begin{equation}\label{DSW}
I_V[\rho] = \theta \cdot \iint\limits_{ x \neq y}\log \left| x - y\right| d\rho(x) d\rho(y)- \int_{\mathbb{R}}V(x) d\rho(x).
\end{equation}
\begin{lem}\label{BL1}
Let $\Theta$ denote the set of absolutely continuous probability measures $\rho(x)dx$ supported on $\cup_{i = 1}^k [\hat{a}_i, \hat{b}_i]$, whose denisty $\rho(x)$ is between $0$ and $\theta^{-1} \cdot f_{\lq}(\sigma_{\lq}^{-1}(x))^{-1}$ and such that
$$\int_{\hat{a}_i}^{\hat{b}_i} \rho(x) dx = \hat{\nu}_i, \mbox{ with $1 \leq i \leq k$,}$$
where $0 < \hat{\nu}_i < \theta^{-1} \cdot \left[ \sigma_{\lq}^{-1} (\hat{b}_i) -  \sigma_{\lq}^{-1} (\hat{a}_i) \right]$, $i = 1, \dots, k$ are such that  $\sum_{i = 1}^k \hat{\nu}_i = 1$ (recall that $\sigma_{\lq}$ and $f_{\lq}$ were defined in Section \ref{Section2.2}). Then the functional $I_V$ has a unique maximum $\hat{\mu}$ on $\Theta$.
\end{lem}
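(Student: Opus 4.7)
The plan is to prove existence via a compactness-plus-upper-semi-continuity argument in the weak topology of measures, and uniqueness via strict concavity of $I_V$ on $\Theta$. Both ingredients are classical in logarithmic potential theory, and the only novelty is the pointwise density constraint $0 \leq \rho(x) \leq \theta^{-1} f_{\lq}(\sigma_{\lq}^{-1}(x))^{-1}$ together with the prescribed filling fractions $\hat{\nu}_i$ on the disjoint intervals $[\hat{a}_i, \hat{b}_i]$.

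First I would verify that $\Theta$ is nonempty, convex, and weakly compact. Nonemptiness uses the strict inequality $\hat{\nu}_i < \theta^{-1}[\sigma_{\lq}^{-1}(\hat{b}_i) - \sigma_{\lq}^{-1}(\hat{a}_i)]$: by the change of variables $y = \sigma_{\lq}^{-1}(x)$, the integral of the cap density $\theta^{-1} f_{\lq}(\sigma_{\lq}^{-1}(x))^{-1}$ over $[\hat{a}_i, \hat{b}_i]$ equals exactly $\theta^{-1}[\sigma_{\lq}^{-1}(\hat{b}_i) - \sigma_{\lq}^{-1}(\hat{a}_i)]$, so scaling the cap density by a constant $c_i \in (0,1)$ on each interval produces a valid element of $\Theta$. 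Convexity is immediate from linearity of the constraints. For weak compactness, the uniform density bound yields uniform integrability of the densities, and since the $[\hat{a}_i, \hat{b}_i]$ are mutually disjoint compact sets one may separate them by open neighborhoods and use the Portmanteau theorem for closed and open sets to conclude that both the pointwise bound and the filling-fraction constraints are preserved under weak limits; thus any weak limit of elements of $\Theta$ remains in $\Theta$. Upper semi-continuity of $I_V$ is standard: $\rho \mapsto \int V\,d\rho$ is weakly continuous since $V$ is bounded and continuous on $\cup_i [\hat{a}_i, \hat{b}_i]$, and for the logarithmic part I would approximate $\log|x-y|$ from above by the continuous functions $\log(|x-y|+\varepsilon)$, exhibiting $I_V$ as an infimum of weakly continuous functionals. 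Existence of a maximizer then follows from the extreme value theorem on the compact space $\Theta$.

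For uniqueness, I would prove strict concavity. For distinct $\rho_1, \rho_2 \in \Theta$, writing $\eta = (\rho_1 - \rho_2)/2$ and expanding $I_V$ directly yields
\[
I_V\!\left[\tfrac{\rho_1+\rho_2}{2}\right] - \tfrac{1}{2}\bigl(I_V[\rho_1] + I_V[\rho_2]\bigr) = -\theta \iint \log|x-y|\,d\eta(x)\,d\eta(y).
\]
The classical fact that the logarithmic energy $\iint \log|x-y|\,d\eta\,d\eta$ is strictly negative for any nonzero compactly supported signed measure $\eta$ of total mass zero with finite energy (which holds here since the densities in $\Theta$ are bounded and supported on a bounded set) then forces the right-hand side to be strictly positive, contradicting the existence of two distinct maximizers. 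The main technical input is this strict negativity of the logarithmic energy, which can be invoked from any standard reference on logarithmic potential theory; all other steps are essentially topological, and the overall structure parallels the argument used in \cite{BGG} for the $\theta$-analogue of this variational problem, which in turn was patterned on the continuous case treated in \cite{BoGu2}.
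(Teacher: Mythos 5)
Your proposal is correct and follows essentially the same approach as the paper: compactness of $\Theta$ together with (upper semi-)continuity of $I_V$ for existence, and strict concavity via the classical negativity of logarithmic energy for mean-zero, compactly supported, finite-energy signed measures for uniqueness. The paper is terser only because it embeds $\Theta$ into a set $\Theta'$ defined by a constant density cap (exploiting the uniform bounds on $f_{\lq}$ over $[0,\lM]$) so that it can cite the proof of Lemma~5.1 in \cite{BGG} directly for compactness and strict concavity, which is precisely what you supply by hand.
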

\begin{proof}
Observe that by our assumption on ${\lu}$ and $\lq$ we know that $\sigma_{\lq}(y)$ is a strictly increasing function, whose derivative $f_{\lq}(y)$ on $[0, \lM]$ lies between $(-\log \lq)(1-{\lu})$ and $(-\log \lq)\lq^{-\lM - \theta + 1}$. 

Let $\Theta'$ be the same as $\Theta$, except that we restrict $0 \leq \rho(x) \leq \frac{1}{\theta \cdot (-\log \lq)\cdot (1-{\lu})}$. From the above argument we conclude that $\Theta$ is a closed convex subset of $\Theta'$. It follows from the proof of Lemma 5.1 in \cite{BGG} that $I_V[\rho]$ is a continuous strictly concave functional on $\Theta'$ and that the latter is compact. It follows that $\Theta$ is convex and compact, and hence $I_V[\rho]$ attains a unique maximum there.
\end{proof}
We call the measure $\hat{\mu}$ from Lemma \ref{BL1} the {\em equilibrium measure}. The significance of $\hat{\mu}$ is that it equals $\mu$ from Theorem \ref{GLLN}, when $\hat{\nu}_i = \nu_i$ for $1 \leq i \leq k$. Proving this fact will be the focus of this and the subsequent sections.\\

For a measure $\rho \in \Theta$ as in Lemma \ref{BL1} define the effective potential $F^V_{\rho}(x)$ through
$$F^V_{\rho}(x) = 2\theta \cdot \int_{\mathbb{R}} \log|x - t| \rho(t)dt - V(x).$$
Applying Theorem 2.1 in \cite{DS} to each interval $[\hat{a}_i, \hat{b}_i]$ we know that there exist real numbers $f_i$ for $1 \leq i \leq k$ such that 
\begin{equation}\label{DSeq}
  \begin{cases}F^V_{\hat{\mu}}(x) - f_i \leq 0 &\mbox{ for a.e. } x\in S_i = \{ \hat{a}_i \leq x \leq \hat{b_i} | 0 \leq \hat{\mu}(x) < \theta^{-1} \cdot f_{\lq}(\sigma_{\lq}^{-1}(x))^{-1}\}, \\ F^V_{\hat{\mu}}(x) - f_i \geq  0 &\mbox{ if } x \in [ \hat{a}_i,\hat{b_i}] \cap \mbox{supp}(\hat{\mu}). \end{cases}
\end{equation}

\subsubsection{Proof of Theorem \ref{GLLN}}\label{Section3.1.2} Our approach to proving Theorem \ref{GLLN} is to reconstruct in our setup the arguments of Section 5 in \cite{BGG} and we begin by introducing some relevant notation. Take any two compactly supported absolutely continuous probability measures with uniformly bounded densities $\nu(x)dx$ and $\rho(x)dx$ and define $\mathcal{D}(\nu(x), \rho(x))$ through
\begin{equation}\label{B2}
\mathcal{D}^2(\nu(x), \rho(x)) = - \int_{\mathbb{R}} \int_{\mathbb{R}} \log|x-y| (\nu(x) - \rho(x))(\nu(y) - \rho(y)) dx dy.
\end{equation}
There is an alternative formula for $\mathcal{D}(\nu(x), \rho(x))$ in terms of Fourier transforms, cf. \cite{BeGu}:
\begin{equation}\label{B3}
\mathcal{D}(\nu(x), \rho(x)) = \sqrt{\int_{0}^\infty \frac{dt}{t} \left| \int_{\mathbb{R}} e^{-{\iu} tx} (\nu(x) - \rho(x)) dx\right|^2 }.
\end{equation}

Fix a parameter $p > 2$ and let $\tilde{\mu}_N$ denote the convolution of the empirical measure $\mu_N$ with the uniform measure on the interval $[0, N^{-p}]$.
With the above notation we can now state the main technical result we require for proving Theorem \ref{GLLN}.
\begin{prop}\label{BP1}
Assume the same notation as in Theorem \ref{GLLN} and let $\hat\mu$ be as in Lemma \ref{BL1} for $\hat{\nu}_i = \nu_i$ for $i = 1, \dots, k$. There exists a constant $C > 0$ such that for all $x > 0$ and $N \geq 2$ 
$$\mathbb{P}_N \left( \mathcal{D}(\tilde{\mu}_N, \hat\mu ) \geq x \right) \leq \exp\left( CN \log(N)^2 - \theta \cdot x^2 N^2\right).$$
The constant $C$ depends on the constants $A_1,\dots, A_7$ in Theorem \ref{GLLN} and Assumptions 1, 2, 3,  as well as $\lq, \lM, \lu, \theta$ and is uniform as the latter vary over compact subsets of ${\tt P}$.
\end{prop}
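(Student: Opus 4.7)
The plan is to follow the energy-comparison strategy of \cite{BoGu2, MMS, BGG}, adapted to the quadratic-lattice interaction $H(\ell_i,\ell_j)$. First I would write out
\[
\log \mathbb P_N(\vec{\ell}) = \sum_{i<j} \log H(\ell_i, \ell_j) - N \sum_i V_N(\ell_i) - \log Z_N,
\]
and then, using the $q$-Gamma asymptotics of (\ref{qGamma}) together with the factorization $\ell_j - \ell_i = (q^{-\lambda_j} - q^{-\lambda_i})(1 - u q^{\lambda_i+\lambda_j})$, expand $\log H(\ell_i, \ell_j) = 2\theta \log|\ell_i - \ell_j| + E_{i,j}$, where the error $E_{i,j}$ absorbs both the multiplicative $(1-q)^{-4\theta}$-type prefactor of (\ref{thetaPN2}) and the contribution of the ``plus'' block $\Gamma_q(\lambda_j+\lambda_i+v+\cdot)/\Gamma_q(\lambda_j+\lambda_i+v+\cdot)$, which is bounded uniformly because $\lambda_j+\lambda_i+v$ is bounded away from $0$. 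The total error summed over all pairs contributes $O(N\log N)$.

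Next I would pass from particles to measures by writing
\[
\sum_{i<j} \log|\ell_i-\ell_j| = \tfrac{N^2}{2} \iint_{x\ne y} \log|x-y|\,\mu_N(dx)\mu_N(dy),
\]
and convolving $\mu_N$ with the uniform measure on $[0,N^{-p}]$ to regularize the diagonal singularity (this is exactly the role of $\tilde\mu_N$). Combined with the bound $|V_N - V| \le A_3 N^{-1}\log N$ from Assumption~2, one obtains
\[
\log \mathbb P_N(\vec\ell) \le N^2\cdot I_V[\tilde\mu_N] - \log Z_N + C N\log(N)^2,
\]
where the $O(N\log(N)^2)$ correction absorbs the self-energy cost of the convolution, the pairwise errors $E_{i,j}$, and the potential discrepancy.

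To match this against $I_V[\hat\mu]$ I would use the identity $I_V[\tilde\mu_N] - I_V[\hat\mu] = -\theta\, \mathcal D^2(\tilde\mu_N,\hat\mu) + \int F^V_{\hat\mu}\, d(\tilde\mu_N - \hat\mu)$ (with $\mathcal D$ as in (\ref{B2})), and split the last integral over the bands $S_i$. Inside each $S_i$, adding and subtracting the constant $f_i$ from (\ref{DSeq}), the piece $\int (F^V_{\hat\mu} - f_i)\,d(\tilde\mu_N-\hat\mu)$ is non-positive on $\mathrm{supp}(\hat\mu)$, and is non-positive against the remaining mass of $\tilde\mu_N$ because the density cap $\tilde\mu_N(x) \le \theta^{-1} f_{\lq}(\sigma_{\lq}^{-1}(x))^{-1} + O(N^{-1})$ (built into the definition of $\Theta$) forces $\tilde\mu_N$ to charge the same ``free'' region where $F^V_{\hat\mu} \le f_i$; the boundary piece $\sum_i f_i(\tilde\mu_N(S_i) - \hat\mu(S_i))$ is $O(N^{-1}\log N)$ by the filling-fraction hypothesis $|\hat n_i - \nu_i| = O(N^{-1}\log N)$. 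Hence
\[
\log\mathbb P_N(\vec\ell) \le N^2 I_V[\hat\mu] - \theta N^2 \mathcal D^2(\tilde\mu_N,\hat\mu) - \log Z_N + CN\log(N)^2.
\]
The complementary lower bound $\log Z_N \ge N^2 I_V[\hat\mu] - C' N\log(N)^2$ is obtained by evaluating the integrand at a deterministic ``equilibrium'' configuration $\vec\ell^{\,*}$ whose empirical measure $N^{-1}\sum \delta_{\ell_i^*}$ approximates $\hat\mu$; the density cap in $\Theta$ and Assumption~3 guarantee that such $\vec\ell^{\,*}$ indeed lies in $\mathfrak X^\theta_N$. Assembling the two bounds and integrating $\exp(\log\mathbb P_N(\vec\ell))$ over $\{\mathcal D(\tilde\mu_N,\hat\mu)\ge x\}$ yields the claim.

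I expect the main obstacle to be the expansion of $\log H(\ell_i,\ell_j)$ on the quadratic lattice with explicit control of constants: unlike the flat-lattice case of \cite{BGG}, here the multiplicative $q^{-2\theta\lambda_j}$ factor and the ``plus'' $\Gamma_q$-block must be handled simultaneously, and all error terms have to be shown uniform as $(\lq,\lM,\lu)$ range over compact subsets of $\tt P$. Propagating these uniformities through the passage from $\mu_N$ to $\tilde\mu_N$ and controlling the boundary behavior at the band endpoints, where $V'$ is only logarithmically integrable (see (\ref{DerPot})), is the delicate part of the proof.
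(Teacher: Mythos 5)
Your high-level strategy is the same as the paper's: pass from $\log\mathbb P_N$ to $I_V[\mu_N]$ via $q$-Gamma asymptotics (Steps 1, 5 in the paper), lower-bound $Z_N$ by an approximate-equilibrium configuration (Steps 2--4), regularize with $\tilde\mu_N$, and then establish the variational inequality $I_V[\tilde\mu_N]-I_V[\hat\mu]\le -\theta\,\mathcal D^2(\tilde\mu_N,\hat\mu)+O(N^{-1}\log N)$ via the Euler--Lagrange conditions (\ref{DSeq}). Up to that point your outline matches.

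However, there is a genuine gap in the core step, namely the saturated regions $S_j''=\{x:\hat\mu(x)=\theta^{-1}f_{\lq}(\sigma_{\lq}^{-1}(x))^{-1}\}$. You assert that the ``density cap $\tilde\mu_N(x)\le\theta^{-1}f_{\lq}(\sigma_{\lq}^{-1}(x))^{-1}+O(N^{-1})$ (built into the definition of $\Theta$)'' lets you conclude $\tilde\mu_N-\hat\mu\le 0$ pointwise on $S_j''$ and push the excess mass into the void. This is false: $\tilde\mu_N$ is the convolution of the atomic measure $\mu_N$ with the uniform measure on $[0,N^{-p}]$, $p>2$, so its density equals $N^{p-1}$ on short intervals and is nowhere constrained by the cap defining $\Theta$. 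There is no pointwise bound of the form you write, and $\tilde\mu_N\notin\Theta$.

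What is true, and what the paper proves in Steps 7--8, is only a tile-averaged bound: after the change of variables $y=\sigma_{q_N^N}^{-1}(x)$ the particle positions $\lambda_i/N$ are spaced by at least $\theta/N$, so the $\tilde\mu_N$-mass of any interval of length $\theta/N$ is at most $N^{-1}+O(N^{-2})$ (the $O(N^{-2})$ accounts for a tile straddling two consecutive particles with $\lambda_{i+1}-\lambda_i=\theta$, handled in (\ref{B18})). One then decomposes the open set $R=\{F^V_{\hat\mu}>f\}$ into tiles of length $\theta/N$ plus short edge pieces, and uses the $O(N^{-1}\log N)$ modulus of continuity of $F^V_{\hat\mu}$ at scale $N^{-1}$, cf. (\ref{BSup}), (\ref{BSup2}), to bound each tile's contribution by $O(N^{-2}\log N)$. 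Without this replacement the inequality $\int_{S''}(F^V_{\hat\mu}-f)(\tilde\mu_N-\hat\mu)\le O(N^{-1}\log N)$ does not follow, and the Proposition is not established. The ``delicate part'' you flag at the end of your outline -- propagating uniformity in the parameters and the boundary behaviour of $V'$ -- is not the obstruction here; the obstruction is that the pointwise cap on $\tilde\mu_N$ simply does not exist, and must be replaced by the tile-averaged bound plus the continuity modulus of $F^V_{\hat\mu}$.
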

The proof of Proposition \ref{BP1} is the focus of Section \ref{Section3.2} below. For now we assume its validity and conclude the proof of Theorem \ref{GLLN}. We start by deducing the following corollary.

\begin{cor}\label{BC1}
Assume the same notation as in Proposition \ref{BP1}. For a compactly supported Lipschitz function $g(x)$  define 
$$ \|g \|_{ 1/2} = \left( \int_{-\infty}^\infty |s| \left| \int_{-\infty}^\infty e^{-{\iu}sx} g(x)dx \right|^2 ds \right)^{1/2}, \hspace{3mm} \|g\|_{\mbox{\em \Small Lip}} = \sup_{ x \neq y} \left|\frac{g(x) - g(y)}{x- y} \right|.$$
Fix any $p > 2$. Then for all $a > 0$, $N \geq 2$ and $g$ we have
\begin{equation}\label{B16}
\mathbb{P}_N \left( \left| \int_{\mathbb{R}} \hspace{-1mm} g(x) \mu_N(dx) -\hspace{-1mm}  \int_{\mathbb{R}}  \hspace{-1mm}  g(x) \hat\mu(dx) \right| \geq a\|g  \|_{ 1/2} + \frac{ \|g\|_{\mbox{\em \Small Lip}}}{N^p} \right) \leq \exp\left( CN \log(N)^2 - 2\theta\pi^2   a^2N^2\right), \hspace{-2mm}
\end{equation}
 where the constant $C$ is as in Proposition \ref{BP1}.
\end{cor}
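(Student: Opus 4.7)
\textbf{Proof plan for Corollary \ref{BC1}.} The strategy is to reduce the Lipschitz pairing bound to the concentration inequality of Proposition \ref{BP1} by inserting the smoothed measure $\tilde{\mu}_N$ and then translating the $\mathcal{D}$--distance into a pairing bound through the Fourier representation \eqref{B3}. I would proceed in three steps.

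\textbf{Step 1 (triangle inequality with the smoothing error).} The first move is to split
\[
\int_\mathbb{R} g(x)\, \mu_N(dx) - \int_\mathbb{R} g(x)\, \hat\mu(dx) = \Big(\int g\, d\mu_N - \int g\, d\tilde\mu_N\Big) + \Big(\int g\, d\tilde\mu_N - \int g\, d\hat\mu\Big).
\]
Since $\tilde\mu_N$ is the convolution of $\mu_N$ with the uniform measure on $[0,N^{-p}]$, the first bracket equals $N^p\int_0^{N^{-p}}\int (g(x)-g(x+t))\,\mu_N(dx)\,dt$, which is bounded in absolute value by $\|g\|_{\mbox{\Small Lip}}\cdot N^{-p}$. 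Consequently, on the event that the left-hand side of \eqref{B16} holds, the second bracket is at least $a\,\|g\|_{1/2}$.

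\textbf{Step 2 (Fourier/Cauchy--Schwarz bound).} I would then estimate the second bracket via Plancherel's identity applied to the real functions $g$ and $\tilde\mu_N-\hat\mu$:
\[
\Big|\int g\, d(\tilde\mu_N-\hat\mu)\Big| = \frac{1}{2\pi}\Big|\int \hat g(s)\,\overline{\widehat{(\tilde\mu_N-\hat\mu)}(s)}\,ds\Big| \leq \frac{1}{2\pi}\int |\hat g(s)|\,\big|\widehat{(\tilde\mu_N-\hat\mu)}(s)\big|\,ds.
\]
Writing the integrand as $\bigl(\sqrt{|s|}\,|\hat g(s)|\bigr)\cdot\bigl(|s|^{-1/2}\,|\widehat{(\tilde\mu_N-\hat\mu)}(s)|\bigr)$ and applying the Cauchy--Schwarz inequality together with the identity $\int_{-\infty}^\infty |s|^{-1}|\widehat{(\tilde\mu_N-\hat\mu)}(s)|^2\,ds = 2\,\mathcal{D}(\tilde\mu_N,\hat\mu)^2$ (which follows from \eqref{B3} and the symmetry $\widehat{\nu-\rho}(-s)=\overline{\widehat{\nu-\rho}(s)}$ for real $\nu-\rho$), I obtain
\[
\Big|\int g\,d(\tilde\mu_N-\hat\mu)\Big| \leq \frac{1}{\pi\sqrt{2}}\,\|g\|_{1/2}\,\mathcal{D}(\tilde\mu_N,\hat\mu).
\]
Hence the inequality from Step 1 forces $\mathcal{D}(\tilde\mu_N,\hat\mu)\geq \pi\sqrt{2}\,a$.

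\textbf{Step 3 (apply Proposition \ref{BP1}).} Finally, I would invoke Proposition \ref{BP1} with $x=\pi\sqrt{2}\,a$, yielding a probability bounded by $\exp\bigl(CN\log(N)^2-\theta\cdot 2\pi^2 a^2\cdot N^2\bigr)$, which is exactly the right-hand side of \eqref{B16}. Monotonicity in $x$ lets me conclude. The main obstacle here is essentially bookkeeping: verifying that $\|g\|_{1/2}$ is finite for compactly supported Lipschitz $g$ (the high-frequency tail $|\hat g(s)|\lesssim |s|^{-1}$ gives $\int |s|\,|\hat g(s)|^2\,ds<\infty$), and keeping the numerical constant $\pi\sqrt{2}$ in the Cauchy--Schwarz step accurate so that $2\theta\pi^2 a^2$ appears in the exponent. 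No new ideas beyond Proposition \ref{BP1} are needed.
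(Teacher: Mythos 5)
Your proof is correct and follows essentially the same route as the paper: triangle inequality to isolate the $\|g\|_{\mathrm{Lip}}N^{-p}$ smoothing error, Parseval plus Cauchy--Schwarz to bound the remaining pairing by $\frac{1}{\pi\sqrt 2}\|g\|_{1/2}\,\mathcal{D}(\tilde\mu_N,\hat\mu)$, and then an application of Proposition~\ref{BP1} with $x=\pi\sqrt2\,a$. Your explicit check that $\int|s|^{-1}|\widehat{(\tilde\mu_N-\hat\mu)}(s)|^2\,ds=2\,\mathcal{D}^2$ via the real symmetry is precisely the content the paper packages into equation~\eqref{B3}, so there is no substantive difference.
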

\begin{proof}
From the triangle inequality we have
$$\left| \int_{\mathbb{R}} g(x) \mu_N(dx) - \int_{\mathbb{R}}  g(x) \mu(dx) \right| \leq \left| \int_{\mathbb{R}} g(x) \mu_N(dx) - \int_{\mathbb{R}}  g(x) \tilde{\mu}_{N}(dx)\right| +  \left| \int_{\mathbb{R}}  g(x) [\tilde{\mu}_N(x) - \mu(x)]dx \right| .$$
The first term is bounded by $\frac{ \|g\|_{\mbox{ \Small Lip}}}{N^p}$ and corresponds to such a term in (\ref{B16}). We will thus focus on estimating the second term.

We denote by $\mathcal{F}[\phi](\xi) := \int_{\mathbb{R}} e^{-{ \iu } x \xi} \phi(x) dx$ the Fourier transform of a function $\phi$. Note that $g, \hat\mu(x), \tilde\mu_N(x)$ all belong to $L^2(\mathbb{R}) \cap L^1(\mathbb{R})$ and so we can use Parseval's formula (see e.g. Theorem 7.1.6. in \cite{Hor}) and the Cauchy-Schwarz inequality to get
$$\left| \int_{\mathbb{R}} g(x) \tilde{\mu}_N(x)dx - \int_{\mathbb{R}}  g(x)\mu(x)dx \right| = (2\pi)^{-1}\left| \int_{\mathbb{R}} \left( \sqrt{|\xi|} \mathcal{F}[g](\xi)\right) \cdot \frac{\mathcal{F}[\tilde\mu_N](\xi) - \mathcal{F}[\hat{\mu}](\xi)}{\sqrt{|\xi|}} d \xi\right| \leq$$
$$ (2\pi)^{-1}\|g\|_{1/2} \cdot \sqrt{\int_{\mathbb{R}} \frac{|\mathcal{F}[{\tilde\mu}_N](\xi) - \mathcal{F}[\hat{\mu}](\xi)|^2}{|\xi|} d\xi} = (2\pi)^{-1}\|g\|_{1/2} \cdot \sqrt{2} \mathcal{D}(\tilde{\mu}_N , \mu) .$$
In the last equality above we used (\ref{B3}). What remains is to use Proposition \ref{BP1}.
\end{proof}

\begin{proof}[Proof of Theorem \ref{GLLN}] Suppose that $f$ and $\varepsilon$ are as in the statement of the theorem, $\mu = \hat{\mu}$ from Proposition \ref{BP1} and assume without loss of generality that $\varepsilon \in (0,1/2)$. Fix $\eta > 0$ and let $h$ be a smooth function, whose support is contained in a $\eta$-neighborhood of $\cup_{i = 1}^k [\hat{a}_i, \hat{b}_i]$ and $h(x) = 1$ on a $\eta/2$-neighborhood of $\cup_{i = 1}^k [\hat{a}_i, \hat{b}_i]$. If we set
$$X_N := N^{1/2 - \varepsilon} \left|\int_{\mathbb{R}} f(x) \cdot h(x) \mu_N(dx) -  \int_{\mathbb{R}} f(x)\cdot h(x) \mu(x)dx\right|,$$
then to prove the theorem we need to show that for each $k \geq 1$ we have $\lim_{N \rightarrow \infty} \mathbb{E}\left[ X_N^k \right] = 0$.

It follows from Corollary \ref{BC1} that there exist positive constants $c_1, c_2$ and $C$ such that for all $a > 0$ and $N \geq 2$ we have
$$\mathbb{P}\left(X_N \geq c_1 \cdot a \cdot N^{1/2 - \varepsilon} + c_2 \cdot N^{-3 } \right) \leq \exp( C N \log(N)^2 - 2\theta\pi^2 a^2 N^2).$$
Using the above inequality and setting $a_N = 2c_2 \cdot N^{-\varepsilon}$ we see that for any $k \geq 1$ we have
\begin{equation*}
\begin{split}
&\mathbb{E}\left[ X_N^k \right] = \int_0^\infty x^{k-1} \cdot \mathbb{P}(X_N \geq x) dx =  \int_0^{a_N}  x^{k-1} \cdot \mathbb{P}(X_N \geq x) dx + \int_{a_N}^{\infty}  x^{k-1} \cdot \mathbb{P}(X_N \geq x) dx \leq\\
&\leq  a_N \cdot \max( a_N^{k-1}, 1) + \exp( C N \log(N)^2 ) \cdot \int_{a_N}^{\infty}  x^{k-1} \cdot \exp\left( - \theta\cdot \frac{\pi^2 x^2 N^{1 + 2 \varepsilon}}{2 c_1^2} \right) dx
\end{split}
\end{equation*}
The last inequality implies that $\lim_{N \rightarrow \infty} \mathbb{E}\left[ X_N^k \right] = 0$.
\end{proof}

\subsection{Proof of Proposition \ref{BP1}}\label{Section3.2}
We begin with a technical result about the asymptotics of the ratio of two $q$-Gamma functions.
\begin{lemma}\label{BLemma}
Suppose that $\theta > 0$, $\lq \in (0,1)$ and $q_N \in (0,1), \alpha_N > 0$ are sequences such that $|q_N^N - \lq| \leq A \cdot N^{-1}$ and $\alpha_N \leq A$ for some $A > 0$. Then for any $x \geq 1$ we have
\begin{equation}\label{RatQG}
\frac{\Gamma_{q_N}(x + \alpha_N + \theta) }{\Gamma_{q_N}(x + \alpha_N)} = (1-q_N)^{-\theta} \cdot (1 -q_N^x)^\theta \cdot \exp \left[O\left(  \frac{N^{-1}}{1 - q_N^x}\right) \right],
\end{equation} 
where the constants in the big $O$ notation depend on $\theta, A$ and $\lq$.
\end{lemma}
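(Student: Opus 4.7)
\emph{Proof plan.} My plan is to reduce the claim to an infinite product and analyze that product via a Taylor series expansion in $n/N$.

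Using the identity (\ref{qGamma}), the ratio factors as
\begin{equation*}
\frac{\Gamma_{q_N}(x+\alpha_N+\theta)}{\Gamma_{q_N}(x+\alpha_N)} = (1-q_N)^{-\theta}\, P_N, \qquad P_N := \prod_{k=0}^\infty \frac{1-q_N^{x+\alpha_N+k}}{1-q_N^{x+\alpha_N+k+\theta}},
\end{equation*}
so it suffices to show $\log P_N = \theta \log(1-q_N^x) + O(N^{-1}/(1-q_N^x))$. Since each $q_N^{x+\alpha_N+k}\in(0,1)$, I would expand each logarithm as a Taylor series in the argument and exchange the order of summation. This exchange is justified by Tonelli's theorem because every summand has a constant sign. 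It yields
\begin{equation*}
\log P_N = -\sum_{n=1}^\infty \frac{q_N^{n(x+\alpha_N)}\,(1-q_N^{n\theta})}{n\,(1-q_N^n)}.
\end{equation*}
Subtracting the analogous expansion $\theta \log(1-q_N^x) = -\sum_{n\geq 1} \theta q_N^{nx}/n$ reorganizes the left-hand side as
\begin{equation*}
\log P_N - \theta \log(1-q_N^x) = -\sum_{n=1}^\infty \frac{q_N^{nx}}{n}\, E_n, \qquad E_n := \frac{q_N^{n\alpha_N}(1-q_N^{n\theta})}{1-q_N^n} - \theta.
\end{equation*}

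The heart of the proof is the uniform bound $|E_n| \leq C\cdot n/N$ for all $n\geq 1$, with $C$ depending only on $\theta, A, \lq$. The hypothesis $|q_N^N - \lq|\leq A/N$ gives $\log q_N = N^{-1}\log\lq + O(N^{-2})$, and hence for any $\gamma\in\{1,\theta,\alpha_N\}$ and $n\leq N$, setting $t=n/N\in(0,1]$, one has $q_N^{n\gamma} = \lq^{t\gamma}(1+O(N^{-1}))$. Since $1-\lq^{t\gamma}$ is bounded below by a positive multiple of $t\gamma$, the additive $O(N^{-1})$ error on $q_N^{n\gamma}$ becomes only an $O(N^{-1})$ \emph{relative} error inside the fraction defining $E_n+\theta$. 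Comparing to the smooth function $F(t):=\lq^{t\alpha_N}(1-\lq^{t\theta})/(1-\lq^t)$ (extended to $t=0$ by $F(0^+)=\theta$), a Taylor expansion at $t=0$ gives $|F(t)-\theta|\leq C_1 t$ on $[0,1]$, so $|E_n| = O(n/N)$ when $n\leq N$. For $n\geq N$, $q_N^n\leq q_N^N$ is bounded away from $1$, so the fraction defining $E_n$ is bounded, giving $|E_n|\leq C_2\leq C_2(n/N)$.

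Finally, summing the geometric series $\sum_{n\geq 1} q_N^{nx} = q_N^x/(1-q_N^x)$ and using the uniform bound,
\begin{equation*}
\left|\log P_N - \theta \log(1-q_N^x)\right| \;\leq\; \frac{C}{N}\sum_{n=1}^\infty q_N^{nx} \;\leq\; \frac{C}{N(1-q_N^x)},
\end{equation*}
which exponentiates to the stated formula (\ref{RatQG}). The main obstacle is the uniform estimate on $E_n$ in the ``Riemann sum'' regime $n\leq N$: one must simultaneously control the error from replacing $q_N^n$ by $\lq^{n/N}$ and the vanishing of $1-q_N^n$ at $n=0$. The trick is to pass to \emph{relative} errors, which are stable under ratios, and reduce the whole small-$n$ analysis to a single Taylor expansion of the smooth function $F(t)$ near $t=0$.
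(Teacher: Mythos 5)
Your proof is correct, but it takes a genuinely different route from the one in the paper. The paper first proves the auxiliary inequality $\frac{(B;q)_\infty}{(Bq^s;q)_\infty}\leq (1-B)^s$ for $s\in[0,1]$ by observing that $f(s)=s\log(1-B)+\sum_k\log(1-Bq^{s+k})-\sum_k\log(1-Bq^k)$ is concave and vanishes at $s=0,1$. It then splits $\theta$ into integer and fractional parts, applies this convexity estimate to each side, and arrives directly at the two-sided sandwich
$(1-q^x)^\theta\leq \frac{(q^{x+\alpha};q)_\infty}{(q^{x+\alpha+\theta};q)_\infty}\leq (1-q^{x+\alpha+\theta})^\theta\leq (1-q^x)^\theta\exp\!\left[\frac{\theta(1-q^{\alpha+\theta})}{1-q^x}\right]$,
finishing with the elementary bound $(1+a)^\theta\leq e^{a\theta}$. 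Your approach instead expands the logarithm of the $q$-Pochhammer ratio as a double power series, interchanges the sums by Tonelli, and bounds each coefficient $E_n$ by comparison to the smooth Riemann-sum profile $F(n/N)$. The delicate point, which you correctly identify and resolve, is that the additive error $|q_N^{n\gamma}-\lq^{n\gamma/N}|$ is $O(n/N^2)$ rather than merely $O(N^{-1})$, which after dividing by $1-\lq^{n\gamma/N}\gtrsim n/N$ yields a \emph{relative} error that is uniformly $O(N^{-1})$ in the regime $n\leq N$; this, plus the Taylor bound $|F(t)-\theta|\leq Ct$ and the trivial estimate for $n>N$, gives $|E_n|=O(n/N)$ and hence the stated remainder after summing the geometric series. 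The paper's convexity argument is shorter, avoids the interchange of sums, and gives an exact lower bound $(1-q^x)^\theta$ for free; your power-series argument is more laborious but more systematic, and would extend with little extra work to produce higher-order asymptotics of the remainder, which the sandwich argument does not readily yield.
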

\begin{proof}
For convenience we drop the dependence on $N$ from the notation. Recall from (\ref{qGamma}) that
$$\frac{\Gamma_q(x + \alpha + \theta) }{\Gamma_q(x + \alpha)} = (1 -q)^{-\theta} \cdot \frac{ (q^{x + \alpha}; q)_\infty}{ (q^{x + \alpha + \theta}; q)_\infty} , \mbox{ where we have } (a;q)_\infty = \prod_{k = 0}^\infty (1 -a q^k).$$
The first term matches the corresponding one in (\ref{RatQG}) and we focus on the second term.

We first observe that if $q,B \in (0,1)$ and $s \in [0,1]$ then
\begin{equation}\label{Beq}
\frac{(B;q)_\infty}{(B q^s; q)_\infty} \leq (1 - B)^s,
\end{equation}
The latter is equivalent to showing that 
$$f(s) := s \log(1 -B) + \sum_{k = 0}^\infty \log( 1 - B q^{s+k}) - \sum_{k = 0}^\infty \log( 1 - B q^k) \geq 0 \mbox{ on [0,1]},$$
which is immediate from the observations: $f(0) = f(1) = 0$ and $f''(s) < 0$ for $s \in (0,1)$.

We next note that
$$ \frac{ (q^{x + \alpha}; q)_\infty}{ (q^{x + \alpha + \theta}; q)_\infty} =  \prod_{i = 1}^{ \lfloor \theta  \rfloor} ( 1 - q^{x+ \alpha + i -1}) \cdot   \frac{ (q^{x + \alpha +  \lfloor \theta  \rfloor}; q)_\infty}{ (q^{x + \alpha + \theta}; q)_\infty} = \prod_{i = 1}^{ \lfloor \theta  \rfloor + 1} ( 1 - q^{x+ \alpha + i -1}) \cdot   \frac{ (q^{x + \alpha +  \lfloor \theta  \rfloor + 1}; q)_\infty}{ (q^{x + \alpha + \theta}; q)_\infty}.$$
Combining the latter with (\ref{Beq}) we conclude that 
\begin{equation*}
(1 - q^{x}) ^\theta \leq \frac{ (q^{x + \alpha}; q)_\infty}{ (q^{x + \alpha + \theta}; q)_\infty} \leq (1 - q^{x + \alpha + \theta})^\theta \leq (1 - q^{x}) ^\theta \cdot \exp\left[ \frac{\theta(1 - q^{\alpha +\theta}) }{1 - q^x} \right],
\end{equation*}
where in the last inequality we used that $q^x \leq 1$ and the trivial inequality $(1 + a)^\theta \leq e^{ a \theta}$, for $a > 0$. The latter tower of inequalities implies our desired estimate.
\end{proof}

In the remainder of this section we present the proof of Proposition \ref{BP1}, using appropriately adapted arguments from \cite{BGG}. For clarity we split it into several steps and we outline them here. In Step 1 we relate the formula for $\mathbb{P}_N$ to the value of the functional $I_V$ from Section \ref{Section3.1.1} at the empirical measure $\mu_N$. In Steps 2, 3 and 4 we obtain a lower bound for the partition function ${Z}_N$. In the fifth step we replace the empirical measure $\mu_N$ with its convolution with the uniform measure on $[0, N^{-p}]$ with $p > 2$, and reduce the statement of the proposition to establishing a certain upper bound on $I_V[\mu_N] - I_V[\tilde{\mu_N}].$ In Steps 6,7 and 8 we establish the desired upper bound by employing the variational characterization of $\hat{\mu}$ from Section \ref{Section3.1.1}. \\

{\raggedleft {\bf Step 1.}} We recall for the reader's convenience equation (\ref{PDef}) below
\begin{equation}\label{BD1}
\begin{split}
&\mathbb{P}_N( \ell_1, \dots, \ell_N) = \frac{1}{Z_N} \cdot \prod_{1 \leq i < j \leq N} H(\ell_i, \ell_j)\cdot \prod_{i = 1}^N w(\ell_i;N), \mbox{ where } \\
&H(\ell_i, \ell_j) = q^{-2\theta \lambda_j}  \frac{\Gamma_{q}(\lambda_j - \lambda_i +1) \Gamma_{q} (\lambda_j - \lambda_i+\theta)}{\Gamma_{q} (\lambda_j - \lambda_i) \Gamma_{q} (\lambda_j - \lambda_i + 1 - \theta )} \frac{\Gamma_{q} (\lambda_j + \lambda_i + v + 1) \Gamma_{q} (\lambda_j+ \lambda_i + v + \theta)}{\Gamma_{q} (\lambda_j + \lambda_i + v ) \Gamma_{q} (\lambda_j + \lambda_i + v + 1 - \theta)}.
\end{split}
\end{equation}
where we drop the dependence on $N$ from the notation. The goal of this section is to show
\begin{equation}\label{B5}
\mathbb{P}_N(\ell_1,\dots,\ell_N) = {Z}_N^{-1} \cdot (1 - q)^{-2\theta N(N-1)} \cdot \exp \left( N^2 I_V\left[\mu_N \right] + O(N\log(N) ) \right).
\end{equation}
Notice that the definition of $I_V$ in (\ref{DSW}) makes sense for discrete (atomic) measures -- here it is important that we integrate over $x\neq y$ as otherwise the integral would be infinite, and so the RHS of (\ref{B5}) is well-defined and finite.

From Assumption 2 in Section \ref{Section2.2}, we know that  $w(\ell_i;N) = \exp( - N V(\ell_i) + O(\log N))$, and to conclude the proof of (\ref{B5}) what remains is to show that
\begin{equation}\label{B51}
 \prod_{1 \leq i < j \leq N} H(\ell_i, \ell_j) = (1 - q)^{-2\theta N(N-1)} \prod_{1 \leq i < j \leq N} (\ell_j - \ell_i)^{2\theta} \cdot \exp \left[ O(N \log(N)) \right].
\end{equation}
From Lemma \ref{BLemma} we know that
\begin{equation}\label{B52}
 \prod_{1 \leq i < j \leq N} H(\ell_i, \ell_j) = (1 - q)^{-2\theta N(N-1)} \prod_{1 \leq i < j \leq N} (\ell_j - \ell_i)^{2\theta} \cdot \exp \left[ O\left(  \frac{N^{-1}}{1 - q^{\theta (j-i)}}\right)\right],
\end{equation}
where we used that $\lambda_j - \lambda_i \geq \theta \cdot (j-i)$ by assumption. On the other hand,
\begin{equation}\label{B53}
\sum_{1 \leq i < j \leq N}  \frac{N^{-1}}{1 - q^{\theta (j-i)}} \leq \frac{1}{1 - q^\theta} \cdot \sum_{i = 1}^N \frac{1 - q^\theta}{1 - q^{i \theta}} \leq \frac{1}{1 - q^\theta} \cdot  \sum_{i = 1}^N \frac{1}{i \cdot c_0} \leq C \cdot N \log(N),
\end{equation}
where $c_0 > 0$ is a universal lower bound of $q^N$. Equations (\ref{B52}) and (\ref{B53}) imply (\ref{B51}).\\

{\raggedleft {\bf Step 2.}} The goal of this and the next two steps is to obtain the following lower bound 
\begin{equation}\label{B6}
Z_N \geq  (1 - q)^{-2\theta N(N-1)} \cdot  \exp\left( N^2  I_V\left[\hat\mu\right] + O(N\log(N)^2 ) \right).
\end{equation}
In this step we construct a particular element $\hat{\ell} = (\hat{\ell}_1, \dots, \hat{\ell}_N) \in \mathfrak{X}^\theta_N$ that depends on $\hat\mu$, and then  in view of (\ref{B5}) we have the immediate lower bound
\begin{equation}\label{B61}
Z_N \geq (1 - q)^{-2\theta N(N-1)} \cdot \exp\left( N^2  I_V\left[\mbox{mes}\left[\hat{\ell}_1, \cdots, \hat{\ell}_N\right] \right] + O(N\log(N) ) \right),
\end{equation}
where  $ \mbox{mes}\left[\hat{\ell}_1, \cdots, \hat{\ell}_N \right] = \frac{1}{N} \sum_{i = 1}^N \delta ( \hat{\ell}_i)$. 

Let $y_i$, $i = 1,\dots,N$ be quantiles of $ \sigma_{\lq}^{-1}\circ \hat{\mu}$ defined through 
$$\int_0^{y_i} \left[ \sigma_{\lq}^{-1}\circ \hat{\mu} \right](x)dx = \frac{i - 1/2}{N}, \hspace{3mm} 1 \leq i \leq N.$$
Since $\left[ \sigma_{\lq}^{-1}\circ \hat{\mu} \right](x) \leq \theta^{-1}$ we have $\theta (y_{i+1} - y_i) \geq N^{-1}$. Arguing as in the proof of \cite[Proposition 5.6]{BGG} we can find an element $\hat{\lambda} = (\hat{\lambda}_1, \dots, \hat{\lambda}_N) \in \mathbb{W}^{\theta}_N$ such that:
\begin{enumerate}
\item if $\hat{\lambda}_i = \hat{x}_i + (i-1) \cdot \theta$, then $a_i(N) \leq \hat{x}_i \leq b_i(N)$;
\item there is a constant $U$ (independent of $N$) such that $| N y_i - \hat{\lambda}_i| \leq U$ for all $1 \leq i \leq N$ except for $O(\log (N))$ ones.
\end{enumerate}
We then define $\hat{\ell}$ through $\hat{\ell}_i = q^{-\hat{\lambda}_i} + u q^{\hat{\lambda}_i}$ and note that the first condition above ensures $\hat{\ell} \in \mathfrak{X}^\theta_N$.\\

{\raggedleft {\bf Step 3.}} The goal of this step is to show that 
\begin{equation}\label{B71}
N^2  I_V\left[\mbox{mes}\left[\hat{\ell}_1, \cdots, \hat{\ell}_N\right] \right] = N^2  I_V\left[ \hat{\mu} \right]+ O(N\log(N)^2 ).
\end{equation}
Clearly, (\ref{B71}) and (\ref{B61}) give (\ref{B6}).

Setting $\sigma_{N}(x) = q^{-x} + u q^x$ we see that to show (\ref{B71}) it suffices to have the following equalities
\begin{equation}\label{B8}
\sum_{1\leq  i < j \leq N} \hspace{-2mm} \log \left( \sigma_N(\hat{\lambda}_j) - \sigma_N(\hat{\lambda}_i) \right) = N^2 \iint\limits_{s < t}  \log(t - s )\hat \mu(t) \hat\mu(s) dt ds + O( N \log (N)^2),
\end{equation}
\vspace{-5mm}
\begin{equation}\label{B9}
N \sum_{i = 1}^N V( \sigma_N(\hat{\lambda}_i) ) = N^2 \int_{\mathbb{R}} V(t) \hat\mu(t) dt + O( N \log (N)).
\end{equation}
We defer the proof of (\ref{B8}) to Step 4 and focus on showing (\ref{B9}). 

Set $z_i = \sigma_{\lq}(y_i)$ for $i = 1,\dots, N$ and observe that $\int_{z_{i}}^{z_{i+1}} \hat\mu(t)dt = N^{-1}$ for $i = 1, \dots,  N-1$. Then
\begin{equation}\label{B101}
 \sum_{i = 1}^N V\left(\sigma_N(\hat{\lambda}_i) \right) =  N \sum_{i = 1}^{N-1} \int_{z_i}^{z_{i+1}} V\left(\sigma_N(\hat{\lambda}_i) \right)\hat\mu(t)dt + O(1).
\end{equation}

Let $I$ be the set of indices such that $\sigma_N(\hat{\lambda}_i), z_i, z_{i+1}$ for $i \in I$ are all inside $\cup_{j = 1}^k [\hat{a}_j, \hat{b}_j]$ and at least $N^{-1}$ away from the complement of this set, and such that $| N y_i - \hat{\lambda}_i| \leq U$ from Step 2 holds. From Assumption 2  on $\hat{a}_j, \hat{b}_j$ we conclude that $N - |I|= O(\log N)$. Note that for $i \in I$ 
\begin{itemize}
\item  $z_i = \sigma_N(\hat{\lambda}_i) + O(N^{-1})$;
\item $V(z_i) - V\left(\sigma_N(\hat{\lambda}_i) \right) = (z_i- \sigma_N(\hat{\lambda}_i))V'(s) = O \left( N^{-1} \log (N)\right)$, where we used the mean value theorem and $V'(s) = O(\log (N))$ from Assumption 2.
\end{itemize}
In view of the above (\ref{B101}) implies
\begin{equation}\label{B111}
 \sum_{i = 1}^N V\left(\sigma_N(\hat{\lambda}_i) \right)  = N \sum_{i  \in I} \int_{z_i}^{z_{i+1}} V(z_i)\hat\mu(t)dt + O(\log (N)).
\end{equation}

A second application of the mean value theorem leads to
\begin{equation*}
\sum_{i = 1}^N V\left(\sigma_N(\hat{\lambda}_i) \right) =  N \sum_{i  \in I} \int_{z_i}^{z_{i+1}}\left[ V(t) + \left(t - z_i \right) V'(\kappa(t)) \right]\hat\mu(t)dt + O(\log (N)).
\end{equation*}
where $\kappa(t)$ is a point inside $\cup_{j = 1}^k [\hat{a}_j, \hat{b}_j]$ at least $N^{-1}$ from the complement of this set. Arguing as before that $V'(\kappa(t)) = O(\log (N))$, we see that
\begin{equation}\label{B121}
\begin{split}
&\sum_{i = 1}^N V\left(\sigma_N(\hat{\lambda}_i) \right)  =  N \sum_{i  \in I} \int_{z_i}^{z_{i+1}}\left[ V(t) + \left(z_{i + 1} - z_i \right) O(\log(N)) \right]\hat\mu(t)dt + O(\log (N)) =\\
& N \sum_{i  \in I} \int_{z_i}^{z_{i+1}} V(t) \hat\mu(t)dt + N  \frac{O(\log(N))}{N} \sum_{i = 2}^{N-2} \left(z_{i+1} - z_i \right) =  N \int_{\mathbb{R}} V(t) \hat\mu(t)dt + O(\log(N)).
\end{split}
\end{equation}
The second equality above follows from the definition of $z_i$ as quantiles of $\hat\mu$, and the last one follows from the fact that $\hat\mu\left( \cup_{i \in I} [z_i, z_{i+1}]\right) = |I|/N = 1 - O(N^{-1} \log(N))$ and $V = O(1)$ on the support of $\hat \mu$. Clearly, (\ref{B121}) implies (\ref{B9}).\\

{\raggedleft {\bf Step 4.}} In this step we prove (\ref{B8}) and start by showing that
\begin{equation}\label{B7}
\sum_{1\leq  i < j \leq N}  \log \left( \sigma_{N}(\hat{\lambda}_j) - \sigma_{N}(\hat{\lambda}_i)  \right) \leq N^2 \iint\limits_{s < t}  \log(t - s )\hat \mu(t) \hat\mu(s) dt ds + O( N \log (N)^2).
\end{equation}

If $I$ is as in Step 3 then we observe that 
\begin{equation}\label{B701}
\sum_{1\leq  i < j \leq N}  \log \left( \sigma_{N}(\hat{\lambda}_j) - \sigma_{N}(\hat{\lambda}_i)  \right)  =\sum_{\substack{  i  < j; i,j \in I} }\log \left( \sigma_{N}(\hat{\lambda}_j) - \sigma_{N}(\hat{\lambda}_i)  \right)  + O( N \log (N)^2).
\end{equation}
Indeed, the two sums differ by $O(N \log (N))$ summands, each of order $O(\log(N))$.

As discussed in Step 3 we have that $ \sigma_{N}(\hat{\lambda}_i) = z_i + O(N^{-1})$ for $i \in I$. It follows, that we can find a positive constant $C$ such that
\begin{equation}\label{B702}
\begin{split}
&\sum_{\substack{  i  < j; i,j \in I} } \log \left( \sigma_{N}(\hat{\lambda}_j) - \sigma_{N}(\hat{\lambda}_i)  \right) \leq \sum_{\substack{  i < j;  i,j \in I} } \log \left(z_j- z_i + CN^{-1} \right) \leq\\
& N^2 \sum_{\substack{  i < j;  i,j \in I} } \int_{z_{i-1}}^{z_i} \int_{z_{j}}^{z_{j+1}} \log \left(t- s + CN^{-1} \right)\hat \mu(t) \hat\mu(s) dt ds =\\
& N^2 \iint\limits_{s < t}  \log \left(t- s + CN^{-1} \right)\hat \mu(t) \hat\mu(s) dt ds + O(N \log(N)^2).
\end{split}
\end{equation}
In going from the first to the second line we used that $z_i$ are quantiles of $\hat{\mu}$ and the monotonicity of $\log$. In going from the second to the third, we note that the set difference over which the two integrals are taken has $\hat \mu \times \hat\mu$ measure $O(N^{-1} \log N)$ and the integrand is $O(\log (N))$ there.

 We see that to conclude (\ref{B7}) it suffices to show that 
$$ \sum_{j = 1}^k \hspace{2mm}\iint\limits_{\hat{a}_j \leq v < w \leq \hat{b}_j}   \log \left( \frac{w - v  + CN^{-1}}{w - v}\right) dw dv =  O\left(\frac{\log(N)}{N}\right).$$
The above is now immediate from the observation that for $c \geq 0$ we have
$$\int_{a}^{b} \int_{v }^{b}   \log \left(w - v  + c\right) dw dv = \int _{a}^{b}[ (b - v + c)( \log( b - v + c) - 1) - c\log(c) + c ] dv= $$
$$ c [ 1- \log(c)] r  + \frac{c^2 [ 1 - 2 \log(c)] + (r+c)^2  [2 \log(r + c) - 1]}{4} - b r + r^2/2 + cr,$$
where $r = b - a$. The above identities show (\ref{B7}) and the reverse inequality can be established in an analogous way, which proves (\ref{B8}).\\

{\raggedleft {\bf Step 5.} } In this step we show that we can replace $\mu_N$ in (\ref{B5}) with its convolution with the uniform measure on $[0, N^{-p}]$ with $p > 2$, denoted by $\tilde{\mu}_N$. For that we extend $V$ to $\cup_{j = 1}^k [\hat{a}_j,  \hat{b}_j + N^{-p}]$ by setting $V(x + \hat{b}_j) = V(\hat{b}_j)$ for $x \in [0, N^{-p}]$ and take two independent random variables $u, \tilde{u}$ uniformly distributed on $[0, N^{-p}]$. Then we have
\begin{equation}
\begin{split}
&I_V[ \tilde{\mu}_N] = \mathbb{E}_{u, \tilde{u}}  \iint  \log\left| x + u-  y - \tilde{u}  \right| {\mu}_N(dx) {\mu}_N(dy)-\mathbb{E}_{u} \int V(x+u){\mu}_N(dx)  = \\
& I_V[ \mu_N] + \frac{1}{N}\cdot \mathbb{E}_{u, \tilde{u}} \int \log |  u-   \tilde{u}| \mu_N (dx) - \mathbb{E}_u \int [V(x+u) - V(x)]{\mu}_N(dx)  \\
& + \mathbb{E}_{u, \tilde{u}} \iint_{x \neq y} \log \left|  \frac{ x + u- y- \tilde{u}}{x-  y}\right| \mu_N (dx)\mu_N (dy)  = I_V[\mu_N] + O(N^{-1} \log(N)),
\end{split}
\end{equation}
where the last equality follows from the conditions on $V$ from Assumption 2. The above shows that we can replace $\mu_N$ with $\tilde{\mu}_N$ in (\ref{B5}) without affecting the statement. Combining the latter with the lower bound of ${Z}_N$ from (\ref{B6}), we conclude that there exists a constant $C' > 0$ such that 
\begin{equation}\label{B10}
\mathbb{P}_N(\ell_1, \dots, \ell_N) \leq \exp\left( C'N \log(N)^2 \right) \cdot \exp \left( N^2 (I_V(\tilde{\mu}_N) - I_V(\hat\mu))\right).
\end{equation}

We next claim that we have the following inequality
\begin{equation}\label{B11}
I_V[\tilde{\mu}_N] - I_V[\hat \mu]  \leq - \theta \cdot \mathcal{D}^2(\tilde{\mu}_N , \hat\mu) + O(N^{-1} \log(N)).
\end{equation}
We defer the proof of the above to the next step. In what follows we assume its validity and finish the proof of the proposition. It follows from (\ref{B10}) and (\ref{B11}) that for some $C'' > 0$ we have
$$\mathbb{P}_N(\ell_1, \dots, \ell_N) \leq \exp( C'' N \log(N)^2) \exp \left( -\theta \cdot N^2 \mathcal{D}^2(\tilde{\mu}_N , \hat\mu ) \right).$$
Notice that the number of $N$-tuples $ \ell_1 < \cdots < \ell_N $ in $\mathfrak{X}^\theta_N$ is at most $\binom{M_N}{N}$. Since  $M_N = O(N)$ we see that for some $C > 0$ and all $x > 0$ we have
$$\mathbb{P}_N\left(\mathcal{D}(\tilde{\mu}_N , \hat \mu)  \geq x \right) \leq \binom{M_N}{N} \exp \left(C''N\log(N)^2 - \theta \cdot N^2x^2\right)  \leq \exp\left( CN \log(N)^2 - \theta \cdot x^2 N^2\right).$$
This is the desired estimate.\\

{\raggedleft {\bf Step 6.}} In this and the next two steps we establish (\ref{B11}). By definition of $\mathcal{D}$ we have
\begin{equation}\label{B12}
\begin{split}
&I_V[\tilde{\mu}_N] - I_V[\hat \mu] =- \theta \cdot \mathcal{D}^2 (\tilde{\mu}_N, \hat\mu ) + \int_{\mathbb{R}}F^V_{\hat\mu}(x) \left( \tilde\mu_N(x) - \hat\mu(x)\right)dx = \\
& -\theta \cdot \mathcal{D}^2 (\tilde{\mu}_N, \hat \mu) + \sum_{j = 1}^k \int_{\hat{a}_j}^{\hat{b}_j} [F^V_{\hat\mu}(x)- f_i] \left( \tilde\mu_N(x) - \hat\mu(x)\right)dx + O(N^{-1} \log(N)),
\end{split}
\end{equation}
where we recall that $F^V_{\hat\mu}$ and $f_i$ were defined in Section \ref{Section3.1.1}. The extra $O(N^{-1} \log(N))$ comes from two sources. Firstly, there is additional mass of $ \tilde\mu_N$ that lies outside of $\cup_{j = 1}^k [\hat{a}_j, \hat{b}_j]$ and we are excluding. The second source comes from the fact that the mass of $\mu_N$ and $\hat\mu$ on each $[\hat{a}_j, \hat{b}_j]$ are not exactly the same (thus the integral over the constant $f_i$ is not zero). The first issue is resolved by Assumption 2 on the endpoints $\alpha_i(N), \beta_i(N)$, which estimates the missing weight as $O(N^{-1} \log(N))$. The second issue is resolved by our assumption that $|\hat{n}_i - \nu_i| =O(N^{-1} \log(N))$.

We recall from Section \ref{Section3.1.1} that $ S_j = \{ \hat{a}_j \leq x \leq \hat{b_j} | 0 \leq \hat{\mu}(x) < \theta^{-1} \cdot f_{\lq}(\sigma_{\lq}^{-1}(x))^{-1}\}$ and also set $S_j' = \{\hat{a}_j \leq x \leq \hat{b}_j| \hat{\mu}(x) = 0 \}$ and $S_j'' = \{\hat{a}_j \leq x \leq \hat{b}_j |  \hat\mu(x) = \theta^{-1} \cdot f_{\lq}(\sigma_{\lq}^{-1}(x))^{-1} \}$. In view of (\ref{B12}) it suffices to show for each $j = 1,\dots, k$ that 
\begin{equation}\label{B13}
\begin{split}
&\int_{S_j \setminus S_j'}[F^V_{\hat\mu}(x)- f_j] \left( \tilde\mu_N(x) - \hat\mu(x)\right)dx = 0, \hspace{2mm} \int_{S_j'}[F^V_{\hat\mu}(x)- f_j] \left( \tilde\mu_N(x) - \hat\mu(x)\right)dx \leq 0, \\ &\int_{S_j''}[F_{\hat\mu}(x)- f_j] \left( \tilde\mu_N(x) - \hat\mu(x)\right)dx \leq 0 +  O(N^{-1}\log(N)).
\end{split}
\end{equation}

In what follows we fix $j \in \{1, \dots, k\}$ and show (\ref{B13}), dropping the dependence on $j$ from all the notaiton. Let $L$ be the subset of points $[\hat{a}, \hat{b}]$ for which the Lebesgue differentiation theorem for $\hat\mu$ holds. From (\ref{DSeq}) we know that a.e. on $(S \setminus S')\cap L$ the function $F^V_{\hat\mu}(x) - f$ vanishes, this proves the first equality in (\ref{B13}), since $L$ is of full Lebesgue measure. We next observe that a.e. on $S'$ we have $ F^V_{\hat\mu}(x)- f \leq 0$ and $\hat\mu(x) = 0$ --- this proves the second inequality in (\ref{B13}). 

Let us denote by $R = \{ x \in [\hat{a},\hat{b}]: F^V_{\hat\mu}(x) > f \}$ and observe that
$$\int_{S'' }[F^V_{\hat\mu}(x)- f] \left( \tilde\mu_N(x) - \hat\mu(x)\right)dx = \int_{R\cap S''}[F^V_{\hat\mu}(x)- f] \left( \tilde\mu_N(x) -  \theta^{-1}\cdot f_{\lq}(\sigma_{\lq}^{-1}(x))^{-1}\right)dx.$$
To see the latter we first observe that on $S''$ we have $\hat\mu(x) = \theta^{-1}\cdot f_{\lq}(x)^{-1}$. In addition, we know that a.e. point in $S''$ belongs to the support of $\hat\mu$, and so by (\ref{DSeq}) a.e. on $S''$ we have that $F^V_{\hat\mu}(x) -f \geq 0$. Finally, we can remove the points of equality as they do not contribute to the integral. Next,
$$ \int_{R\cap S''}[F^V_{\hat\mu}(x)- f] \left( \tilde\mu_N(x) - \theta^{-1} \cdot f_{\lq}(\sigma_{\lq}^{-1}(x))^{-1}\right)dx =  \int_{R}[F^V_{\hat\mu}(x)- f] \left( \tilde\mu_N(x) - \theta^{-1} \cdot f_{\lq}(\sigma_{\lq}^{-1}(x))^{-1}\right)dx.$$
The above follows from the fact that $R \cap (S'')^c$ has zero Lebesgue measure, which we know from (\ref{DSeq}). We have thus reduced the proof of the proposition to establishing
\begin{equation}\label{B14}
\int_{R}[F^V_{\hat\mu}(x)- f] \left( \tilde\mu_N(x) - \theta^{-1} \cdot f_{\lq}(\sigma_{\lq}^{-1}(x))^{-1}\right)dx \leq 0 + O(N^{-1}\log(N)).
\end{equation}

{\raggedleft {\bf Step 7.}} In this and the next step we establish (\ref{B14}). We start by noting that if $\kappa =\lq^{-\lM - \theta }$ then because $\hat\mu(x)$ is bounded we have
\begin{equation}\label{BSup}
\sup_{x,y \in \mathbb{R}: |x- y| \leq \kappa N^{-1}} \left|F^V_{\hat\mu}(x) -F^V_{\hat\mu}(y) \right| =  O\left( N^{-1}\log(N)\right).
\end{equation}
In particular, the above implies that $F^V_{\hat\mu}$ is continuous and so $R$ is an open set. We denote $\sigma_{q_N^N}(y) = q_N^{-Ny} + u_N q_N^{Ny}$ and perform the change of variables $x =\sigma_{q_N^N} (y)$ to rewrite the LHS in (\ref{B14}) as
$$\int_{\sigma_{q_N^N}^{-1}(R)}\left[F^V_{\hat\mu}\left(\sigma_{q_N^N}(y) \right)- f\right] \left(\tilde{\mu}_N(\sigma_{q_N^N}(y)) -  \theta^{-1} \cdot f_{\lq}\left(\sigma_{\lq}^{-1}(\sigma_{q_N^N}(y))\right)^{-1}\right)\sigma'_{q_N^N}(y) dy.$$

We observe that by Assumption 1 we have
$$f_{\lq}\left(\sigma_{\lq}^{-1}(\sigma_{q_N^N}(y))\right)^{-1} \cdot \sigma'_{q_N^N}(y)  = 1 + O(N^{-1}).$$ 
 If we set $O = \sigma_{q_N^N}^{-1}(R)$ and $\tilde{\rho}_N(y) = \tilde{\mu}_N(\sigma_{q_N^N}(y)) \cdot \sigma'_{q_N^N}(y)$ we may rewrite the LHS in (\ref{B14}) as
$$\int_{O}\left[F^V_{\hat\mu}\left(\sigma_{q_N^N}(y) \right)- f\right] \left(\tilde{\rho}_N(y) - \theta^{-1}\right)dy + O(N^{-1}).$$

Since $\sigma_{q_N^N}(y)$ is an invertible diffeomorphism we know that $O$ is an open subset of $\left[\sigma_{q_N^N}^{-1}(\hat{a}), \sigma_{q_N^N}^{-1}(\hat{b})\right]$. In particular, we can find a collection of disjoint open intervals $(s_i, t_i), i \in J$ with $J$ countable such that $O = \cup_{i \in J} (s_i,t_i)$, upto the endpoints $\sigma_{q_N^N}^{-1}(\hat{a}), \sigma_{q_N^N}^{-1}(\hat{b})$.

Since the sum of the lengths of these intervals is at most $\lM - 1 + \theta$ we have finitely many such that $t_i - s_i > \theta \cdot N^{-1}$. Let us further subdivide such segments into segments of length exactly $\theta \cdot N^{-1}$, which are contained in $(t_i,s_i)$ as well as edge segments $(s_i,x], [y, t_i)$ with length at most $\theta \cdot N^{-1}$. In this way we obtain a finite collection $\{K_i:= [r_i, r_i + \theta /N]\}_{i \in J_1}$ and a countable collection $\{ [c_i, d_i ]\}_{i \in J_2}$ of intervals such that 
\begin{equation}\label{B15}
\begin{split}
\int_{O}[F^V_{\hat\mu}\left(\sigma_{q_N^N}(y) \right)- f] \left(\tilde{\rho}_N(y) -\theta^{-1}\right)dy = &\sum_{i \in J_1}  \int_{K_i} \left[F^V_{\hat\mu}\left(\sigma_{q_N^N}(y) \right)- f\right] \left(\tilde{\rho}_N(y) -\theta^{-1}\right)dy + \\
& + \sum_{i \in J_2}   \int_{c_i}^{d_i}\left[F^V_{\hat\mu}\left(\sigma_{q_N^N}(y) \right)- f\right] \left(\tilde{\rho}_N(y) -\theta^{-1}\right)dy
\end{split}
\end{equation}
and also $d_i - c_i \leq \theta / N$, and at least one of the points $c_i,d_i$ is a boundary point of $O$. Our goal for the remainder is to show that the sums over $J_1$ and $J_2$ are both dominated by $0 + O(N^{-1}\log(N))$. This would conclude the proof of (\ref{B14}).\\

Notice that by the continuity of $F^V_{\hat\mu}\left(\sigma_{q_N^N}(y)\right)$ and the definition of $O $, we know that on boundary points of this set we have that $F^V_{\hat\mu}\left(\sigma_{q_N^N}(y)\right)= f$. In particular, for the sum  over $J_2$ in (\ref{B15})
$$\left|F^V_{\hat\mu}\left(\sigma_{q_N^N}(y)\right)- f\right| \leq \sup_{\substack{0 \leq x,y \leq \lM + \theta \\ |x- y| \leq  N^{-1}}} \left|F^V_{\hat\mu}\left(\sigma_{q_N^N}(y)\right) - F^V_{\hat\mu}\left(\sigma_{q_N^N}(y)\right) \right| .$$
Since $\sigma'_{q_N^N}(x) \leq 2\kappa$ on $[0,\lM +\theta]$ for large $N$, we conclude from (\ref{BSup}) that
\begin{equation}\label{BSup2}
\sup_{\substack{0 \leq x,y \leq \lM +\theta \\ |x- y| \leq  N^{-1}}} \left|F^V_{\hat\mu}\left(\sigma_{q_N^N}(x)\right) - F^V_{\hat\mu}\left(\sigma_{q_N^N}(y)\right) \right| \leq  \sup_{\substack{x,y \in \mathbb{R} \\ |x- y| \leq  2\kappa N^{-1}}} \hspace{-3mm}  \left|F^V_{\hat\mu}(x) - F^V_{\hat\mu}(y)\right| = O(N^{-1} \log(N)).
\end{equation}
We conclude that the sum over $J_2$ in (\ref{B15}) is bounded in absolute value by
$$O(N^{-1}\log(N)) \cdot \sum_{i \in J_2}   \int_{c_i}^{d_i} \left(\tilde{\rho} _N(y) + \theta^{-1}\right)dy = O(N^{-1}\log(N)) .$$
We are left with estimating the sum over $J_1$, which we do in the next step.\\

{\raggedleft {\bf Step 8.}}  To conclude the proof what remains is to show
\begin{equation}\label{B16}
 \sum_{m \in J_1}  \int_{r_m}^{r_m + \theta/N}\left[F^V_{\hat\mu}\left(\sigma_{q_N^N}(y) \right)- f\right] \left(\tilde{\rho}_N(y) -\theta^{-1}\right)dy  \leq 0 + O(N^{-1}\log(N)).
\end{equation}

We first recall that by definition $\tilde{\mu}_N(x) = \sum_{i = 1}^N N^{p - 1} \cdot {\bf 1}[ \ell_i , \ell_i + N^{-p}](x),$
where $\ell = (\ell_1, \dots, \ell_N) \in \mathfrak{X}^{\theta}_N$ and ${\bf 1}K$ stands for the indicator of the set $K$. In particular, 
$$\tilde{\rho}_N(y) =  \sum_{i = 1}^N N^{p - 1}\cdot  {\bf 1} A_i (y) \cdot \sigma'_{q_N^N}(y), \mbox{ where } A_i = \left[ \lambda_i/N , \sigma_{q_N^N}^{-1}(\ell_i + N^{-p}) \right]$$
and $\lambda_i$ are such that $\ell_i = q_N^{-\lambda_i} + u_N q_N^{\lambda_i}$. 

Since $\lambda_{i+1} - \lambda_i \geq \theta$, we know that each interval $K_m=  [r_m, r_m + \theta/ N]$ intersects at most two of the intervals $A_i$. If it intersects at most one we know that 
$$ \tilde\rho_N(K_m) \leq \int_{A_i} \tilde{\rho}_N(y) dy = 1/N.$$
 If it intersects two then they must be $A_i$ and $A_{i+1}$ for some $i$ such that $\lambda_{i+1} - \lambda_i = \theta$. Let us note that $\sigma'_{q_N^N}(y) = \sigma'_{q_N^N}(\lambda_i/N) + O(N^{-1})$ whenever $y \in [\lambda_i/N, (\lambda_i + 2\theta)/N]$. In addition, we have 
\begin{equation*}
\begin{split}
\sigma_{q_N^N}^{-1}(\ell_i + N^{-p}) &= \lambda_i/N + N^{-p} \cdot \left[\frac{d}{dx}\sigma_{q_N^N}^{-1}\right](\ell_i) + O(N^{-2p}) \mbox{ and }\\
\sigma_{q_N^N}^{-1}(\ell_{i+1} + N^{-p}) &= \lambda_{i+1}/N + N^{-p} \cdot \left[\frac{d}{dx}\sigma_{q_N^N}^{-1}\right](\ell_i) + O(N^{-p - 1}).
\end{split}
\end{equation*}
Combining the above estimates, and setting $\gamma = N^{-p} \cdot \left[\frac{d}{dx}\sigma_{q_N^N}^{-1}\right](\ell_i) $, we see that
\begin{equation}\label{B18}
\begin{split}
&\tilde\rho_N(K_m) = \int\limits_{K_m} N^{p - 1} \left[  {\bf 1} A_i (y) +   {\bf 1} A_{i+1} (y)  \right]\cdot \sigma'_{q_N^N}(y)dy = \\
&\int_{K_m} N^{p - 1} \left[  {\bf 1}[\lambda_i/N, \lambda_i/N + \gamma ] (y) +   {\bf 1} [\lambda_{i+1}/N, \lambda_{i+1}/N + \gamma]  (y)  \right] \cdot \sigma'_{q_N^N}(\lambda_i/N)dy +O(N^{-2}).
\end{split}
\end{equation}
The key observation is that the integral in the second line of (\ref{B18}) is precisely $\sigma'_{q_N^N}(\lambda_i/N) \cdot N^{p - 1} \cdot \gamma = N^{-1}$. Consequently, we obtain the estimate 
$$ \tilde\rho_N(K_m) \leq N^{-1} + O(N^{-2}) \mbox{ for each } m \in J_1.$$

Combining the latter with the fact that $F^V_{\hat\mu}\left(\sigma_{q_N^N}(r_m)\right) \geq  f$ we see that
\begin{equation*}
\begin{split}
&\int_{K_m}[F^V_{\hat\mu}\left(\sigma_{q_N^N}(y)\right)- f] \left(\tilde{\rho}_N(y) -\theta^{-1}\right)dy \leq \int_{K_m}\left[F^V_{\hat\mu}\left(\sigma_{q_N^N}(y)\right) - F^V_{\hat\mu}\left(\sigma_{q_N^N}(r_m)\right)  \right] \left(\tilde{\rho}_N(y) -\theta^{-1}\right)dy \\
& + O(N^{-2})\leq  \sup_{\substack{0 \leq x,y \leq \lM + \theta \\ |x- y| \leq  N^{-1}}}  2N^{-1}\left|F^V_{\hat\mu}\left(\sigma_{q_N^N}(x)\right) -F^V_{\hat\mu}\left(\sigma_{q_N^N}(y)\right)\right|  + O(N^{-2}) =  O( N^{-2} \log(N)) ,
\end{split}
\end{equation*}
where the last equality follows from (\ref{BSup2}). Combining the above estimates over $m \in J_1$ we get
$$ \sum_{m \in J_1}  \int_{K_m}\left[F^V_{\hat\mu}\left(\sigma_{q_N^N}(y) \right)- f\right] \left(\tilde{\rho}_N(y) -\theta^{-1}\right)dy  \leq 0 +  O( N^{-2} \log(N)) \cdot |J_1|.$$
The above implies (\ref{B16}) since $(\theta/N)\cdot  |J_1|\leq \lM + \theta - 1$. 

\section{Nekrasov's equations}\label{Section4}
In this section we present the main algebraic component in our arguments, which we call the {\em Nekrasov's equations} -- Theorem \ref{NekGen}. In Section \ref{Section4.2} we study the asymptotics of this equation and explain how it gives rise to a functional equation for the equilibrium measure from Section \ref{Section3}.

\subsection{Formulation}\label{Section4.1}

As explained earlier the measure $\mathbb{P}_N$ in (\ref{PDef}) can be understood as a discretization of the continuous $\beta$ log-gas to shifted quadratic lattices. In \cite{BGG} the authors consider a different discretization (called {\em discrete $\beta$-ensembles}) where the particles occupy (appropriately shifted) integer lattices. They manage to obtain results about the global fluctuations of these particle systems and their analysis is based on appropriate discrete versions of the Schwinger-Dyson equations, which they also call the Nekrasov's equations. More recently, in \cite{GH} the same Nekrasov's equations were used to prove rigidity and edge universality for the models in \cite{BGG}.

Motivated by the success of the Nekrasov's equations for the discrete $\beta$-ensembles, we develop appropriate $q$-analogues that are applicable for the measures (\ref{PDef}). The key result is given below and can be understood as a version of the Nekrasov's equation for shifted quadratic lattices.

\begin{thm}\label{NekGen} Let $\mathbb{P}_N$  be a probability distribution on $\mathfrak{X}^\theta_N$ as in (\ref{PDef}). Let $\mathcal{M} \subset \mathbb{C}$ be open and
$$\bigcup_{i = 1}^k \left( \left[q^{1-a_i - N_{i-1}\cdot \theta} , q^{ - b_i - (N_i - 1)\cdot \theta }\right] \cup \left[uq^{b_i + (N_i -1) \cdot \theta } , uq^{a_i -1 + N_{i-1}\cdot \theta}\right] \right) \subset \mathcal{M}.$$
 Suppose there exist two functions $\Phi^+(z)$ and $\Phi^-(z)$ that are analytic in $\mathcal{M}$ and such that
\begin{equation}\label{eq:phiV2}
\begin{split}
&\frac{w(\sigma(z);N)}{w(\sigma(qz);N)}=\frac{ (q^{2}z^2-uq^\theta )(z^2-u) \Phi^+(z)}{(q^{\theta}z^2-u)(q^2z^2-u) \Phi^-(z)} , \mbox{ whenever $\sigma(z), \sigma(qz) \in \cup_{i = 1}^k [\alpha_i, \beta_i^-]$ },
\end{split}
\end{equation} 
where $\sigma(z) = z + u z^{-1}$. We also assume that $\Phi^{\pm}$ satisfy for each $i \in \{1,\dots, k\}$
\begin{equation}\label{eq:phiV22}
\Phi^+\left(q^{-b_i - (N_i - 1)\theta }\right)=\Phi^{-}\left(q^{-a_i - N_{i-1}\cdot \theta }\right)= \Phi^+\left( uq^{ a_i -1+ N_{i-1}\cdot \theta}\right)=\Phi^{-}\left(uq^{b_i -1+ (N_i -1) \cdot \theta } \right)=0.
\end{equation}
If we define
\begin{equation} \label{REQV2}
\tilde{R}(z)=\Phi^{-}(z)\cdot \E\left[ \prod\limits^N_{i=1} \frac{\sigma(q^{\theta}z)-\ell_i}{\sigma(z)-\ell_i}\right]+ \Phi^{+}(z)\cdot \E \left[ \prod\limits^N_{i=1} \frac{\sigma(q^{1-\theta}z)- \ell_i}{\sigma(qz)-\ell_i}\right]
\end{equation} 
then $ \tilde{R}(z)$ is analytic in the same complex neighborhood $\mathcal{M}.$ Moreover, if $\Phi^{\pm}(z)$ are polynomials of degree at most $d$, then so is $ \tilde{R}(z)$.
\end{thm}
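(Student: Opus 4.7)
My strategy mirrors the discrete loop equation argument of \cite{BGG}: although each expectation in (\ref{REQV2}) is a rational function with many apparent simple poles in $\mathcal{M}$, I will show that the combination $\tilde R(z)$ has none. Using the factorization
\begin{equation*}
\sigma(z) - \ell_j \;=\; z^{-1}(z - q^{-\lambda_j})(z - u q^{\lambda_j}),
\end{equation*}
the first expectation $\E[P_1]$, with $P_1(z) = \prod_i (\sigma(q^{\theta}z) - \ell_i)/(\sigma(z) - \ell_i)$, has potential simple poles only at $z = q^{-\lambda}$ and $z = u q^{\lambda}$ for values $\lambda$ attained by some particle in an admissible configuration. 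Similarly, $\E[P_2]$ has potential poles at $z = q^{-\lambda-1}$ and $z = u q^{\lambda-1}$. The four extremal lattice points of each interval $[\alpha_i(N), \beta_i^-(N)]$ give poles killed outright by the four vanishing conditions in (\ref{eq:phiV22}), which are tailored precisely to those endpoints.

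\textbf{Interior cancellation via configuration shifts.} All remaining potential poles are interior, and I will show they cancel by pairing configurations. Fix an interior pole $z_0 = q^{-\lambda}$ (so $\lambda$ is strictly inside the allowed range for the $j$-th group). For every $\ell \in \mathfrak{X}^\theta_N$ with $\lambda_j = \lambda$, let $\ell^{\downarrow}$ be the configuration obtained by the shift $x_j \mapsto x_j - 1$, so $\lambda_j^{\downarrow} = \lambda - 1$. Then $\ell^{\downarrow} \in \mathfrak{X}^\theta_N$, the pole of $P_2(z; \ell^{\downarrow})$ at $z = q^{-(\lambda_j^{\downarrow}+1)} = z_0$ matches the pole of $P_1(z; \ell)$ at $z_0$, and $\ell \mapsto \ell^{\downarrow}$ is a bijection between the relevant subsets of $\mathfrak{X}^\theta_N$. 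I plan to prove
\begin{equation*}
\mathbb{P}_N(\ell)\,\Phi^{-}(z_0)\,\underset{z=z_0}{\mathrm{Res}}\, P_1(z;\ell) \;+\; \mathbb{P}_N(\ell^{\downarrow})\,\Phi^{+}(z_0)\,\underset{z=z_0}{\mathrm{Res}}\, P_2(z; \ell^{\downarrow}) \;=\; 0,
\end{equation*}
which, summed over configurations, removes the pole of $\tilde R$ at $z_0$. The second family of poles at $z_0 = u q^{\lambda}$ is handled by an analogous pairing, shifting $x_j \mapsto x_j + 1$.

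\textbf{The key algebraic identity.} The ratio $\mathbb{P}_N(\ell^{\downarrow})/\mathbb{P}_N(\ell)$ splits into a weight piece $w(\ell_j^{\downarrow})/w(\ell_j) = w(\sigma(qz_0))/w(\sigma(z_0))$, which by (\ref{eq:phiV2}) equals exactly $(q^{\theta}z_0^2 - u)(q^2 z_0^2 - u)\Phi^{-}(z_0)/[(q^2 z_0^2 - u q^{\theta})(z_0^2 - u)\Phi^{+}(z_0)]$, and an interaction piece $\prod_{i \neq j} H(\ell_i, \ell_j^{\downarrow})/H(\ell_i, \ell_j)$ that collapses via $\Gamma_q(x+1)/\Gamma_q(x) = (1-q^x)/(1-q)$ into a finite product of elementary factors in $q^{\pm\lambda_i}, q^{\pm\lambda_j}, q^{\theta}, u$. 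On the other side, the residues of $P_1, P_2$ at $z_0$, computed via the quadratic factorization above, are likewise explicit products of the same type. The cancellation reduces to a term-by-term matching: the ratio $\Phi^{+}(z_0)/\Phi^{-}(z_0)$ in the weight piece absorbs the $\Phi^{\pm}$ prefactors in the residue identity, while the four quadratic factors in (\ref{eq:phiV2}) pair up exactly with corresponding factors coming from residues of $(\sigma(z) - \ell_j)^{-1}$ and $(\sigma(qz) - \ell_j^{\downarrow})^{-1}$.

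\textbf{Polynomial claim and main obstacle.} Outside $\mathcal{M}$ the functions $P_1, P_2$ are already manifestly analytic (all rational-function poles lie inside $\mathcal{M}$), so analyticity on $\mathcal{M}$ upgrades to rationality on all of $\mathbb{C}$ with singularities possibly only at $0$ and $\infty$. As $z \to \infty$ one has $P_1(z) \to q^{\theta N}$ and $P_2(z) \to q^{-\theta N}$; as $z \to 0$ one has $\sigma(z) \sim u/z$ and both $P_j$ again tend to finite limits. Hence $\tilde R$ is regular at $0$ and grows at $\infty$ like a polynomial of the same degree as $\Phi^{\pm}$, establishing the polynomial degree statement. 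The principal obstacle is the algebraic matching in the third paragraph: the interaction $H$ has a \emph{doubled} $q$-Gamma structure (factors in $\lambda_j - \lambda_i$ and in $\lambda_j + \lambda_i + v$) absent from the integer-lattice setting of \cite{BGG}, and the quadratic factorization of $\sigma(z) - \ell_j$ produces two poles per particle. Both features rigidly dictate the precise rational form on the right-hand side of (\ref{eq:phiV2}), which is exactly what is needed for the residue cancellation to succeed on both families of poles simultaneously.
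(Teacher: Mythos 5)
Your overall strategy---locating the possible poles of $\tilde R$ via the factorization $\sigma(z)-\ell_j = z^{-1}(z-q^{-\lambda_j})(z-uq^{\lambda_j})$, and then cancelling each residue by pairing a configuration $\ell$ with a shifted configuration $\ell^{\downarrow}$, matching the $q$-Gamma interaction ratio against (\ref{eq:phiV2})---is exactly the structure of the paper's proof, and the "key algebraic identity" you isolate is indeed the heart of the matter. The weak point is the pairing step. You claim $\ell \mapsto \ell^{\downarrow}$ is a bijection between $\{\ell : \lambda_j(\ell) = \lambda\}$ and $\{\ell : \lambda_j(\ell) = \lambda-1\}$ whenever $\lambda$ is "interior." This is not true. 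The shift $x_j \mapsto x_j - 1$ is blocked whenever the neighbor already sits at $x_{j-1}=x_j$, and symmetrically not every configuration with $\lambda_j=\lambda-1$ lies in the image (the shift $x_j\mapsto x_j+1$ can collide with particle $j+1$). So even at a nominally interior lattice point you can have an unpaired residue, and you must prove that this unpaired contribution vanishes on its own. In the paper this is done by observing that when particle $j-1$ blocks the downward shift, the factor $\sigma(q^{\theta-m})-\ell_{j-1}$ appearing in the residue of the first product already vanishes (and the analogous factor from particle $j+1$ kills the second product when the upward shift is blocked).

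Relatedly, your statement that "the four extremal lattice points of each interval" are "killed outright by the four vanishing conditions" is incomplete. The vanishing conditions (\ref{eq:phiV22}) are calibrated only to the \emph{first} and \emph{last} particle of each group, i.e., $\Phi^-(q^{-a_i - N_{i-1}\theta})=0$ handles the boundary pole for particle $N_{i-1}+1$, and so on. For a middle particle $j$ with $N_{i-1}+1<j<N_i$ sitting at $x_j = a_i$, there is still a pole at $z_0 = q^{-a_i-(j-1)\theta}$ with no downward shift available, and $\Phi^{-}(z_0)$ need not vanish there. What kills that residue is again adjacency: $a_i \le x_{j-1}\le x_j = a_i$ forces $x_{j-1}=a_i$, so the numerator factor $\sigma(q^{\theta-m})-\ell_{j-1}$ vanishes. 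The clean way to organize all of this is the paper's: fix all particles except $j$, form the pair $(\ell^+,\ell^-)$ differing only in $\lambda_j\in\{m,m-1\}$, set the corresponding residue contribution to zero whenever the configuration lies outside $\mathfrak{X}^\theta_N$, and then prove the pairwise sum vanishes in \emph{every} case---both when both configurations are admissible (using the algebraic identity you sketch) and when one of them is not (using the $\Phi^{\pm}$ vanishing conditions or the adjacency factors, depending on which particle and which boundary is involved). With this bookkeeping in place your plan carries through, and your argument for the polynomial-degree statement (regularity at $z=0$ from $\sigma(z)\to\infty$ and bounded growth at infinity) is correct.
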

\begin{proof}
As usual, see (\ref{Eqcoord}) we set $q^{-\lambda_i} + u q^{\lambda_i} = \ell_i$ for $i = 1,\dots, N$. Then we have
\begin{equation}\label{NekPeqV1}
\begin{split}
&\frac{\sigma(q^{\theta} z)- \ell_i}{\sigma(z)-\ell_i}=\frac{1}{q^{\theta}(q^{-\lambda_i}-uq^{\lambda_i})}\cdot (q^{\theta}z-q^{-\lambda_i})(q^\theta z-uq^{\lambda_i})\left(\frac{1}{z-q^{-\lambda_i}}-\frac{1}{z-uq^{\lambda_i}} \right)\\
&\frac {\sigma(q^{1-\theta}z)-\ell_i}{\sigma(qz)-\ell_i}=\frac{q^{\theta}}{(q^{-\lambda_i}-uq^{\lambda_i})}\cdot (q^{1-\theta}z-q^{-\lambda_i})(q^{1-\theta}z-uq^{\lambda_i})\left(\frac{1}{q z-q^{-\lambda_i}}-\frac{1}{q z-uq^{\lambda_i}}
\right).
\end{split}
\end{equation}
From the above we see that the possible singularities of $ \tilde{R}(z)$ in $\mathcal{M}$ are simple poles at points $q^{-m}$  and  $u q^{m}$ whenever $\lambda_i = m$ in the first line of (\ref{NekPeqV1}) and $\lambda_i = m-1$ in the second line of (\ref{NekPeqV1}). 

We will separately compute the residue contribution coming from each $i = 1, \dots, N$, which we fix for the remainder. We also let $j \in \{1, \dots, k\}$ be the unique index such that $N_{j - 1} + 1 \leq i \leq N_{j}$. By definition, we know that $\lambda_i$ varies in the set $\{C, C+1 ,\dots, D\}$, where $C = (i-1)\cdot \theta + a_j$ and $D = (i-1)\cdot \theta + b_j - 1$. If $m$ lies in $\{C, C+1 ,\dots, D\}$, we see that the residue at $q^{-m}$ is given by
\begin{equation}\label{residueAB1}
\sum\limits_{\ell \in \mathfrak{X}^\theta_N| \lambda_i=m} A(\ell, i,m) + \sum\limits_{\ell \in\mathfrak{X}^\theta_N| \lambda_i=m - 1} B(\ell,i,m) ,
\end{equation}
where
\begin{equation*}\label{FormAB1}
\begin{split}
&A(\ell,i,m)  =  \frac{\Phi^{-}(q^{-m}) (q^{\theta}q^{-m}-q^{-m})(q^\theta q^{-m}-uq^{m}) }{q^{\theta}(q^{-m}-u q^{m})} \left[\mathbb P_N(\ell) \prod \limits_{j\neq i}\frac{\sigma( q^{\theta-m})-\ell_j}{\sigma(q^{-m})-\ell_j} \right]\\
&B(\ell,i,m) =\frac{\Phi^{+}(q^{-m}) q^{\theta-1} (q^{1-\theta}q^{-m}-q^{-m+1})(q^{1-\theta}q^{-m}-uq^{m-1})}{(q^{-m+1}-uq^{m-1})}\cdot\left[ \mathbb P_N(\ell)\prod \limits_{j\neq i }\frac{\sigma(q^{1-\theta-m})-\ell_j}{\sigma( q^{1-m})-\ell_j}\right].
\end{split}
\end{equation*}

Let us fix $\ell_1, \dots, \ell_{i-1}$ and $\ell_{i +1}, \dots, \ell_N$ and set $\ell^+= (\ell_1,\dots,\ell_{i-1}, q^{-m} + uq^{m} , \ell_{i+1},\dots, \ell_N)$, $\ell^- =  (\ell_1,\dots,\ell_{i-1}, q^{-m+1} + uq^{m-1} , \ell_{i+1},\dots, \ell_N)$ -- notice that $\ell^+, \ell^-$ are not necessarily in $\mathfrak{X}^\theta_N$. We claim that $A(\ell^+,i,m) + B(\ell^-,i,m) = 0$, where we set $A$ and $B$ to be zero if the argument is not in $\mathfrak{X}^\theta_N$. If true, we would obtain that the sum in (\ref{residueAB1}) is zero and so $R$ is analytic near $q^{-m}$. The latter statement is clear if both $\ell^{\pm} \not\in \mathfrak{X}^\theta_N$, hence we assume at least one of them belongs to the state space.

If $m = a_j + (i-1)\cdot \theta$ then $B(\ell^-,i,m)  = 0$ since $\lambda_i = a_j - 1 + (i-1)\cdot \theta$ (and so  $\ell^- \not \in \mathfrak{X}^\theta_N$). In addition, $A(\ell^+,i,m) = 0$, since either $i = N_{j-1} + 1$ and then $\Phi^-(q^{-m}) = \Phi^-(q^{- a_j - N_{j-1}\theta}) = 0$ or $i \geq N_{j-1} + 2$ and then $\lambda_{i - 1} = a_j + (i-2)\cdot \theta$ so that the factor $\left(\sigma(q^{\theta-m}) - \ell_{i - 1}\right)$ vanishes. Similarly, we have $B(\ell^-,i,m)  = 0 = A(\ell^+,i,m) $ if $m = b_j +  (i-1)\cdot \theta$ . If $\lambda_{i-1} = m-\theta$, we know that $B(\ell^-,i,m)  = 0$, since $\ell^- \not \in \mathfrak{X}^\theta_N$, but also $A(\ell^+,i,m) = 0$ as it has the factor $\left(\sigma( q^{\theta-m})-\ell_{i-1}\right)$. Similarly, we have  $B(\ell^-,i,m)  = 0 = A(\ell^+,i,m) $ if $\lambda_{i+1} = m + \theta$. We may thus assume that $\lambda_{i-1} \leq m - 1 -\theta < m + 1 + \theta \leq \lambda_{i+1}$, $a_j + 1 \leq m - (i-1)\theta \leq b_j- 1$ and that $\ell^{\pm}\in \mathfrak{X}^\theta_N$.

We next observe that
\begin{align*}
\frac{(q^{\theta}q^{-m}-q^{-m})(q^\theta q^{-m}-uq^{m}) }{q^{\theta}(q^{-m}-u q^{m})}\cdot &
\frac{(q^{-m+1}-uq^{m-1}) }{q^{\theta-1} (q^{1-\theta}q^{-m}-q^{-m+1})(q^{1-\theta}q^{-m}-uq^{m-1})}=\\
& =-\frac{(q^{\theta}z^2-u)(q^2z^2-u)}{(q^{2}z^2-u q^{\theta})(z^2-u)}\Big |_{z=q^{-m}}.
\end{align*}
 Therefore, from the definition of $\Phi^+$ and $\Phi^{-}$ we get
\begin{equation}\label{ABQuot}
\frac{A(\ell^{+},i,m)}{B(\ell^-,i,m)}=-\frac{\mathbb{P}_N(\ell^{+})w(\sigma(q^{-m +1});N)}{\mathbb{P}_N(\ell^{-})w(\sigma(q^{-m});N)} \prod \limits_{j\neq i}\left[\frac{\sigma( q^{\theta-m})-\ell_j}{\sigma(q^{-m})-\ell_j} \cdot \frac{\sigma( q^{1-m})-\ell_j}{ \sigma(q^{1-\theta-m})-\ell_j}\right].
\end{equation}
Our goal for the remainder is to show that the right side in (\ref{ABQuot}) is equal to $-1$.\\

In view of (\ref{PDef}) we have that $\frac{\mathbb{P}_N(\ell^{+})w(\sigma(q^{-m +1});N)}{\mathbb{P}_N(\ell^{-})w(\sigma(q^{-m});N)}$ equals
\begin{equation}\label{bigprod}
\begin{split}
&\prod_{1 \leq l < i}  q^{-2\theta m}  \frac{\Gamma_{q}(m - \lambda_l +1) \Gamma_{q} (m - \lambda_l+\theta)}{\Gamma_{q} (m - \lambda_l) \Gamma_{q} (m - \lambda_l + 1 - \theta )}\cdot  q^{2 \theta (m-1)} \frac{\Gamma_{q} (m - \lambda_l- 1) \Gamma_{q} (m - \lambda_l - \theta )}{\Gamma_{q}(m - \lambda_l) \Gamma_{q} (m - \lambda_l+\theta - 1)} \times \\
&\prod_{1 \leq l < i}  \frac{\Gamma_{q} ( m + \lambda_l + v + 1) \Gamma_{q} ( m + \lambda_l + v + \theta)}{\Gamma_{q} (m+ \lambda_l+ v ) \Gamma_{q} ( m + \lambda_l + v+ 1 - \theta)} 
\cdot  \frac{\Gamma_{q} (m + \lambda_l- 1+ v ) \Gamma_{q} ( m + \lambda_l+ v - \theta)} {\Gamma_{q} ( m  + \lambda_l+ v ) \Gamma_{q} ( m + \lambda_l+ v + \theta - 1)} \times  \\
& \prod_{ i < j \leq N}  q^{-2\theta \lambda_j} \frac{\Gamma_{q}(\lambda_j - m +1) \Gamma_{q} (\lambda_j -m+\theta)}{\Gamma_{q} (\lambda_j - m) \Gamma_{q} (\lambda_j - m + 1 - \theta )} \cdot  q^{2\theta \lambda_j}  \frac{\Gamma_{q} (\lambda_j - m + 1) \Gamma_{q} (\lambda_j - m + 2 - \theta )}{\Gamma_{q}(\lambda_j - m+2) \Gamma_{q} (\lambda_j -m+1 + \theta)}\\
& \prod_{ i < j \leq N}  \frac{\Gamma_{q} (\lambda_j + m + v + 1) \Gamma_{q} (\lambda_j + m +v + \theta)}{\Gamma_{q} (\lambda_j +m+ v) \Gamma_{q} (\lambda_j + m+ v + 1 - \theta)} 
\cdot  \frac{\Gamma_{q} (\lambda_j +m - 1+ v) \Gamma_{q} (\lambda_j + m+ v - \theta)} {\Gamma_{q} (\lambda_j + m + v) \Gamma_{q} (\lambda_j + m + v + \theta - 1)}  .
\end{split}
\end{equation}
Using (\ref{qGamma}) and that $q^v = u$ we can rewrite (\ref{bigprod}) as
\begin{equation}\label{bigprod2}
\begin{split}
&\prod_{1 \leq l < i}  q^{-2\theta}  \frac{(1 - q^{m - \lambda_l})(1 - q^{m - \lambda_l+\theta - 1})}{(1 - q^{m - \lambda_l - 1})(1 - q^{m - \lambda_l -\theta})} \times  \frac{(1 - uq^{m + \lambda_l})(1 - uq^{m+ \lambda_l+\theta - 1})}{(1 - uq^{m + \lambda_l - 1})(1 - uq^{m + \lambda_l -\theta})} \\
& \prod_{ i < j \leq N}  \frac{(1 - q^{\lambda_j - m})(1 - q^{\lambda_j - m + 1 - \theta})} {(1 - q^{\lambda_j - m +1})(1 - q^{ \lambda_j - m+\theta})} \times \frac{(1 - uq^{\lambda_j + m})(1 - uq^{\lambda_j + m + 1 - \theta})} {(1 - uq^{\lambda_j + m +1})(1 - uq^{ \lambda_j + m+\theta})} .
\end{split}
\end{equation}
We next observe that for $l < i$ we have
$$q^{-2\theta}  \frac{(1 - q^{m - \lambda_l})(1 - q^{m - \lambda_l+\theta - 1})}{(1 - q^{m - \lambda_l - 1})(1 - q^{m - \lambda_l -\theta})} \times  \frac{(1 - uq^{m + \lambda_l})(1 - uq^{m+ \lambda_l+\theta - 1})}{(1 - uq^{m + \lambda_l - 1})(1 - uq^{m + \lambda_l -\theta})} = $$
$$  \frac{(q^{-m} - q^{ - \lambda_l})(q^{1-m -\theta} - q^{ - \lambda_l })}{(q^{1-m} - q^{ - \lambda_l })(q^{\theta-m} - q^{ - \lambda_l})} \times  \frac{(1 - uq^{m + \lambda_l})(1 - uq^{m+ \lambda_l+\theta - 1})}{(1 - uq^{m + \lambda_l- 1})(1 - uq^{m + \lambda_l -\theta})} = $$
$$\frac{[\sigma(q^{-m})-\sigma(q^{-\lambda_l})] \cdot [ \sigma(q^{1-\theta-m})-\sigma(q^{-\lambda_l})]}{[\sigma( q^{1-m})-\sigma(q^{-\lambda_l})] \cdot [\sigma( q^{\theta-m})-\sigma(q^{-\lambda_l})]  },$$
where we used that $(z + u z^{-1} - t - ut^{-1}) = (z- t)(1 - uz^{-1}t^{-1})$.

One similarly establishes that for $i < j$ we have
$$ \frac{(1 - q^{\lambda_j - m})(1 - q^{\lambda_j - m + 1 - \theta})} {(1 - q^{\lambda_j - m +1})(1 - q^{ \lambda_j - m+\theta})} \times \frac{(1 - uq^{\lambda_j + m})(1 - uq^{\lambda_j + m + 1 - \theta})} {(1 - uq^{\lambda_j + m +1})(1 - uq^{ \lambda_j + m+\theta})} = $$
$$\frac{(q^{-\lambda_j} - q^{- m})(q^{-\lambda_j} - q^{- m + 1 - \theta})} {(q^{-\lambda_j} - q^{ - m +1})(q^{-\lambda_j} - q^{  - m+\theta})} \times \frac{(1 - uq^{\lambda_j + m})(1 - uq^{\lambda_j + m + 1 - \theta})} {(1 - uq^{\lambda_j + m +1})(1 - uq^{ \lambda_j + m+\theta})} = $$
$$\frac{[\sigma(q^{-\lambda_j}) - \sigma(q^{-m})] \cdot [\sigma(q^{-\lambda_j})- \sigma(q^{1-\theta-m})]}{[\sigma(q^{-\lambda_j}) - \sigma( q^{1-m})] \cdot [\sigma(q^{-\lambda_j}) -\sigma( q^{\theta-m})]  }.$$
The last two calculations together with (\ref{bigprod2}) show the right side of (\ref{ABQuot}) is equal to $-1$ as desired. This proves that $\tilde{R}$ is analytic near $q^{-m}$. One can use analogous arguments to show that $R$ is also analytic near the points $u q^{m}$ and so on all of $\mathcal{M}$. 

Notice that if $\Phi^{\pm}(z)$ are polynomials of degree at most $d$ then $\tilde R(z)$ is entire from the first part of the theorem, which grows as $O(|z|^d)$ as $|z|\rightarrow
\infty$. By Liuoville's theorem $\tilde R(z)$ is a polynomial of degree at most $d$.
\end{proof}

\begin{rem}
Theorem \ref{NekGen} also holds if $u = 0$, where (\ref{eq:phiV2}) is replaced with $\frac{w(z;N)}{w(qz;N)}=q^{\theta}\cdot \frac{ \Phi^+(z)}{\Phi^-(z)}$ and the second two equalities in  (\ref{eq:phiV22}) are removed . In this case (\ref{NekPeqV1}) only produces possible poles at $z = q^{-m}$. From here the proof proceeds in the same way and can be found as \cite[Theorem 4.2]{BGG}.
\end{rem}

\subsection{Asymptotics of the Nekrasov's equations}\label{Section4.2} In this section we derive some properties of the equilibrium measure $\mu$ and $R_\mu, Q_\mu$ from (\ref{QRmu}) using the asymptotics of the Nekrasov's equation (\ref{REQV2}) as $N \rightarrow \infty$ under Assumptions 1-4 and 6-7. We assume the same notation as in Section \ref{Section2.2}.
\begin{lem}\label{AnalRQ}
Suppose that Assumptions 1-4 and 6 from Section \ref{Section2.2} hold. Then the functions $R_\mu$ and $Q_\mu^2$ from (\ref{QRmu}) are analytic on $\mathcal{M}$. If $\Phi^{\pm}_N$ are polynomials of degree at most $d$ then so is $R_\mu$ and $Q_\mu^2$ is a polynomial of degree at most $2d$. If Assumption 7 also holds then $R_\mu$ and $Q_\mu^2$ are real analytic on $\mathcal{M} \cap \mathbb{R}$.
\end{lem}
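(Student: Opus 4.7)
\emph{Plan overview.} The starting point is Theorem \ref{NekGen}: applied with $q=q_N, u=u_N, M=M_N$ from Assumption 1 and the functions $\Phi^{\pm}_N$ from Assumption 4 (whose vanishing conditions are exactly the content of Assumption 6), it produces an \emph{analytic} function $\tilde R_N(z)$ on $\mathcal{M}$ given by \eqref{REQV2}, which is polynomial of degree $\le d$ whenever the $\Phi^{\pm}_N$ are. The plan is to show $\tilde R_N \to R_\mu$ locally uniformly on $\mathcal{M}$; this immediately gives the analyticity of $R_\mu$ and the polynomial-degree bound, while the corresponding statements for $Q_\mu^2$ will follow from the algebraic identity
\begin{equation*}
Q_\mu(z)^2 \;=\; R_\mu(z)^2 \,-\, 4\,\Phi^{+}(z)\,\Phi^{-}(z),
\end{equation*}
an instance of $(a-b)^2=(a+b)^2-4ab$ applied with $a=\Phi^{-}(z)e^{A}$, $b=\Phi^{+}(z)e^{-A}$, where $A=\theta\log(\lq)(z-\lu z^{-1})\,G_\mu(z+\lu z^{-1})$.

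\emph{Step 1: pointwise convergence off the bands.} Set $\mathcal{M}'' := \{z \in \mathcal{M} : \sigma(z)=z+\lu z^{-1}\notin \cup_{i=1}^{k}[\hat a_i,\hat b_i]\}$. Since Assumption 2 forces $\hat a_1 > 2\sqrt{\lu}$, the set $\mathcal{M}\setminus\mathcal{M}''$ is contained in $\mathbb{R}$, so $\mathcal{M}''$ is open and dense in $\mathcal{M}$. On compact subsets of $\mathcal{M}''$, $|\sigma_N(z)-\ell_i|$ is bounded below uniformly in $N$ and in the realization (since every $\ell_i$ lies in $\cup_i[\alpha_i(N),\beta_i^{-}(N)]$, which converges to $\cup_i[\hat a_i,\hat b_i]$). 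Taylor-expanding each factor in \eqref{REQV2} and using $q_N^\theta-1=\theta\log(\lq)/N+O(N^{-2})$ from Assumption 1, I obtain
\begin{equation*}
\sum_{i=1}^{N}\log\frac{\sigma_N(q_N^\theta z)-\ell_i}{\sigma_N(z)-\ell_i} \;=\; \theta\log(\lq)\,(z-\lu z^{-1})\cdot\frac{1}{N}\sum_{i=1}^{N}\frac{1}{\sigma_N(z)-\ell_i} \,+\, O(N^{-1}),
\end{equation*}
uniformly for $z$ in such a compact and for every realization. Theorem \ref{GLLN} applied to the Lipschitz test function $x\mapsto(\sigma(z)-x)^{-1}$ gives $\frac{1}{N}\sum_i(\sigma_N(z)-\ell_i)^{-1}\to G_\mu(\sigma(z))$ in probability and in moments, and bounded convergence then upgrades this to convergence of the expectation of the exponentiated sum to $\exp\bigl(\theta\log(\lq)(z-\lu z^{-1})G_\mu(\sigma(z))\bigr)$. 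The identical argument handles the second expectation in \eqref{REQV2} (with the opposite sign), and combined with $\Phi^{\pm}_N\to\Phi^{\pm}$ locally uniformly (Assumption 4) this yields $\tilde R_N(z)\to R_\mu(z)$ pointwise, and then locally uniformly, on $\mathcal{M}''$.

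\emph{Step 2: extension to all of $\mathcal{M}$ and conclusions for $R_\mu$ and $Q_\mu^2$.} Because $\mathcal{M}\setminus\mathcal{M}''$ lies on $\mathbb{R}$, any compact $K\subset\mathcal{M}$ can be enclosed by a cycle $\gamma\subset\mathcal{M}''$. Applying the maximum modulus principle to the \emph{full} function $\tilde R_N$ (analytic on $\mathcal{M}$ by Theorem \ref{NekGen}) gives $\sup_{K}|\tilde R_N|\le\sup_\gamma|\tilde R_N|$, and the right-hand side is bounded uniformly in $N$ by Step 1. Hence $\{\tilde R_N\}$ is a normal family on $\mathcal{M}$; Vitali--Porter combined with uniqueness of analytic continuation from $\mathcal{M}''$ then extends the locally uniform convergence $\tilde R_N\to R_\mu$ to all of $\mathcal{M}$ with an analytic limit, which is the desired analytic extension of $R_\mu$. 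The polynomial degree bound passes to the limit, and the algebraic identity from the plan overview (valid on $\mathcal{M}''$ and then on $\mathcal{M}$ by analytic continuation) gives analyticity of $Q_\mu^2$ on $\mathcal{M}$ together with $\deg Q_\mu^2\le 2d$.

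\emph{Step 3: real analyticity, and the main difficulty.} Under Assumption 7, the defining ratio \eqref{eqPhiN} forces $\Phi^{\pm}_N$ to be real on $\mathcal{M}\cap\mathbb{R}$, so $\tilde R_N$ satisfies the Schwarz-reflection symmetry $\tilde R_N(\bar z)=\overline{\tilde R_N(z)}$; this symmetry is inherited by the locally uniform limit $R_\mu$, which is therefore real on $\mathcal{M}\cap\mathbb{R}$ and hence real analytic there, and the algebraic identity transfers the same conclusion to $Q_\mu^2$. The genuine difficulty of the whole argument lies in Step 2: the individual summands $\Phi^{-}_N\cdot\mathbb{E}[\cdots]$ and $\Phi^{+}_N\cdot\mathbb{E}[\cdots]$ in \eqref{REQV2} are \emph{unbounded} near the real points where $\sigma_N(z)$ coincides with an admissible particle position, so one cannot pass to the limit summand-by-summand. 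The argument thus relies essentially on the pole-cancellation built into Theorem \ref{NekGen} (which makes the full sum $\tilde R_N$ globally analytic) together with the fact that the singular set of each summand is confined to the real axis, where maximum modulus can reduce boundedness on all of $\mathcal{M}$ to boundedness on cycles kept away from $\mathbb{R}$.
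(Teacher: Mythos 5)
Your proof takes the same route as the paper's: use Theorem \ref{NekGen} to get analyticity of $\tilde R_N$ on $\mathcal{M}$, a first-order Taylor expansion of the products in \eqref{REQV2} together with Theorem \ref{GLLN} and bounded convergence to get $\tilde R_N\to R_\mu$ on compacts of $\mathcal{M}\setminus\sigma^{-1}(\cup_i[\hat a_i,\hat b_i])$, and then pass this to all of $\mathcal{M}$, with the degree bound and $Q_\mu^2=R_\mu^2-4\Phi^+\Phi^-$ following. Your Step 2 (maximum modulus around small cycles enclosing the bands, then Vitali--Porter) usefully makes explicit the extension across the bands that the paper leaves as a one-line inference, so Steps 1--2 are fine.

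Step 3, however, contains a genuine error. You assert that Assumption 7 together with the ratio \eqref{eqPhiN} ``forces $\Phi^{\pm}_N$ to be real on $\mathcal{M}\cap\mathbb{R}$.'' That inference does not go through: the relation \eqref{eqPhiN} only pins down the quotient $\Phi^+_N/\Phi^-_N$; replacing $(\Phi^+_N,\Phi^-_N)$ by $(\lambda\Phi^+_N,\lambda\Phi^-_N)$ for any nonzero complex constant $\lambda$ leaves \eqref{eqPhiN} intact while destroying reality, so the ratio cannot force reality of the individual factors. Moreover, Assumption 7 speaks about $\Phi^{\pm}$ (the limits), not $\Phi^\pm_N$, so the Schwarz-reflection symmetry should be checked directly for the objects appearing in \eqref{QRmu}: the reality of $\Phi^\pm$ on the bands is part of Assumption 7's hypothesis, and $G_\mu$ is a Stieltjes transform of a real measure, hence $\overline{G_\mu(\bar w)}=G_\mu(w)$; these two facts give $\overline{R_\mu(\bar z)}=R_\mu(z)$ and the same for $Q_\mu^2$ directly, without detouring through $\tilde R_N$ and without needing any reality of $\Phi^\pm_N$. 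The conclusion is correct, but the chain of reasoning you wrote is not.
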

\begin{proof}
We observe that by Assumptions 4. and 6. the Nekrasov's equation (\ref{REQV2}) holds and so
\begin{equation} \label{analRQ1}
\tilde{R}_N(z):=\Phi_N^{-}(z)\cdot \E\left[ \prod\limits^N_{i=1} \frac{\sigma_N(q_N^{\theta}z)-\ell_i}{\sigma_N(z)-\ell_i}\right]+ \Phi_N^{+}(z)\cdot \E \left[ \prod\limits^N_{i=1} \frac{\sigma(q_N^{1-\theta}z)- \ell_i}{\sigma(q_Nz)-\ell_i}\right]
\end{equation} 
defines an analytic function on $\mathcal{M}$. For $\mu_N$ as in Section \ref{Section3.1} define
$$\mathfrak{G}_N^{d}(z):= N \log q_N \cdot (z - u_Nz^{-1})  \cdot \int_{\mathbb{R}} \frac{\mu_N(dx)}{z + u_Nz^{-1} - x}.$$

One readily observes that
\begin{equation*} 
\begin{split}
&\prod\limits^N_{i=1}\frac{\sigma_N(q_N^{\theta} z)-\ell_i}{\sigma_N(z)- \ell_i}=\exp \left[\theta  \mathfrak{G}_N^d(z)+ O(N^{-1}) \right], \hspace{2mm} \prod\limits^N_{i=1} \frac{\sigma_N(q_N^{1-\theta}z)- \ell_i}{\sigma_N(q_Nz)-\ell_i} = \exp \left[-\theta\mathfrak{G}_N^d(z)+ O(N^{-1})\right].
\end{split}
\end{equation*}
where the constants in the big $O$ notation are uniform as $z$ varies over compact subsets of $\mathcal{M} \setminus \left\{ \cup_{i = 1}^k [\hat{a}_i, \hat{b}_i] \right\}$. In addition, by Theorem \ref{GLLN} we know that $\mathfrak{G}_N^{d}(z)$ converges in probability to $G_{\mu} (z + \lu z^{-1}) $. An application of the Bounded convergence theorem and Assumption 4 implies that 
\begin{equation}\label{RLimNek}
\lim_{N \rightarrow \infty}\tilde{R}_N(z) = R_\mu(z),
\end{equation}
where the convergence is uniform over  compact subsets of $\mathcal{M} \setminus \left\{ \cup_{i = 1}^k [\hat{a}_i, \hat{b}_i] \right\}$. Since $\tilde{R}_N(z)$ are analytic in $\mathcal{M}$ we conclude the same is true for $R_\mu(z)$. Next, since $Q^2_\mu(z) = R_\mu^2(z) - 4 \Phi^+(z) \Phi^-(z)$, we conclude from Assumption 4 that $Q^2_\mu(z)$ is also analytic in $\mathcal{M}$. The real analyticity of $R_\mu$ and $Q_\mu^2$ is a consequence of the one assumed for $\Phi^{\pm}$ in Assumption 7.

If $\Phi^{\pm}_N$ are polynomials of degree at most $d$ then by Theorem \ref{NekGen} we know that so is $\tilde{R}_N(z)$. The uniform convergence of $\tilde{R}_N(z)$ over compact sets in $\mathcal{M} \setminus \left\{ \cup_{i = 1}^k [\hat{a}_i, \hat{b}_i] \right\}$ is equivalent to the convergence of the coefficients of the polynomials, and so $R_\mu(z)$ is a polynomial of degree at most $d$. Finally, the same argument shows $\Phi^{\pm}(z)$ are polynomials of degree at most $d$ and $Q^2_\mu(z) = R_\mu^2(z) - 4 \Phi^+(z) \Phi^-(z)$ is a polynomial of degree at most $2d$
\end{proof}

Our next goal is to give a formula for the equilibrium measure $\mu$ in Theorem \ref{GLLN} in terms of the functions $R_\mu$ and $\Phi^{\pm}$ but we first introduce some notation that will be useful. From Assumption 7 we know that $V$ is real analytic in an open neighborhood of $\cup_{i = 1}^k[\hat{a}_i, \hat{b}_i]$ and from \cite{Kui} we conclude that $\mu$ has a continuous density on each interval $[\hat{a}_i, \hat{b}_i]$. Borrowing terminology from \cite{BKMM}, each of the intervals $[\hat{a}_i, \hat{b}_i]$ is split into three types of regions:
\begin{enumerate}
\item Maximum (with respect to inclusion) closed intervals where $\mu(x) = 0$ are called {\em voids}.
\item Maximal open intervals where $0 < \mu(x) < \theta^{-1} \cdot f_{\lq}(\sigma_{\lq}^{-1}(x))^{-1}$  are called {\em bands} (recall $f_{\lq}$ was defined in Section \ref{Section2.2}).
\item Maximal closed  intervals where $\mu(x) = \theta^{-1} \cdot f_{\lq}(\sigma_{\lq}^{-1}(x))^{-1}$ are called {\em saturated regions}.
\end{enumerate}

\begin{lem}\label{Lsupp} Suppose that Assumptions 1-4 and 6-7 from Section \ref{Section2.2} hold. Then $\mu$ has density
\begin{equation}\label{eqMForm}
 \mu(y_0 + \lu y_0^{-1}) =  \frac{1}{\theta \log (\lq)\pi (y_0 - \lu y_0^{-1}) } \cdot \mbox{arccos} \left( \frac{R_\mu(y_0)}{2 \sqrt{\Phi^-(y_0)  \Phi^+(y_0)}}\right)\footnote{Throughout the paper we denote by $\mbox{arccos}(x)$ the function, which is $\pi$ on $(-\infty, -1]$, $0$ on $[1 ,\infty)$, and the usual arccosine function on $(-1,1)$.},
\end{equation}
for $y_0 \geq 1$ such that $y_0 + \lu y_0 \in \cup_{i = 1}^k [\hat{a}_i, \hat{b}_i]$ and $0$ otherwise. 
\end{lem}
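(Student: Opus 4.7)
The strategy is to recover $\mu(x)$ from the boundary values of its Stieltjes transform via Sokhotski-Plemelj, and to identify those boundary values algebraically through the quadratic identity encoded by $(R_\mu,Q_\mu)$. Indeed, (\ref{QRmu}) gives $\Phi^{-}(z)\,e^{2\mathfrak{G}(z)} - R_\mu(z)\,e^{\mathfrak{G}(z)} + \Phi^{+}(z) = 0$ where $\mathfrak{G}(z) = \theta \log(\lq)(z-\lu z^{-1})G_\mu(z+\lu z^{-1})$, and so $2\Phi^{-}e^{\mathfrak{G}} = R_\mu + Q_\mu$ with $Q_\mu^2 = R_\mu^2 - 4\Phi^{+}\Phi^{-}$, both functions being (real) analytic on $\mathcal{M}$ by Lemma \ref{AnalRQ} and Assumption 7. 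Since $y_0\geq 1>\sqrt{\lu}$, the map $z\mapsto z+\lu z^{-1}$ has positive derivative $1-\lu y_0^{-2}$ at $y_0$, hence locally sends the upper half-plane to itself; combining this with the Sokhotski-Plemelj jump of $G_\mu$ gives
\begin{equation*}
\mathrm{Im}\,\mathfrak{G}(y_0+i0^+) \;=\; -\pi\theta\log(\lq)\,(y_0-\lu y_0^{-1})\,\mu(y_0+\lu y_0^{-1}),
\end{equation*}
a nonnegative quantity because $\log(\lq)<0$.

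The core computation is carried out in a \emph{band}, where $\mu$ is continuous (by Assumption 7 and \cite{Kui}) and strictly lies in $(0,\theta^{-1}f_{\lq}(\sigma_{\lq}^{-1}(x))^{-1})$. Then $\mathrm{Im}\,\mathfrak{G}(y_0+i0^+)\in(0,\pi)$ and $e^{\mathfrak{G}(y_0+i0^+)}$ is not real, forcing $Q_\mu^2(y_0)<0$. Using Assumption 7 to make $\sqrt{\Phi^{+}(y_0)\Phi^{-}(y_0)}>0$ unambiguous and taking the branch $Q_\mu(y_0+i0^+) = i\sqrt{4\Phi^{+}\Phi^{-}-R_\mu^2}$, a direct calculation yields
\begin{equation*}
\bigl|2\Phi^{-}(y_0)\,e^{\mathfrak{G}(y_0+i0^+)}\bigr|^2 \;=\; R_\mu^2 + \bigl(4\Phi^{+}\Phi^{-}-R_\mu^2\bigr) \;=\; 4\Phi^{+}(y_0)\Phi^{-}(y_0),
\end{equation*}
so that, after absorbing the sign of $\Phi^{-}(y_0)$ into the normalization,
\begin{equation*}
\mathrm{Im}\,\mathfrak{G}(y_0+i0^+) \;=\; \arg\!\bigl(R_\mu+i\sqrt{4\Phi^{+}\Phi^{-}-R_\mu^2}\bigr) \;=\; \arccos\!\left(\frac{R_\mu(y_0)}{2\sqrt{\Phi^{+}(y_0)\Phi^{-}(y_0)}}\right).
\end{equation*}
Combined with the Sokhotski-Plemelj identity above, this is precisely (\ref{eqMForm}) on bands.

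For the remaining regions I will verify directly that $R_\mu/(2\sqrt{\Phi^{+}\Phi^{-}})$ exits the interval $(-1,1)$. On a \emph{void}, $\mu\equiv 0$ on a neighborhood, so $\mathfrak{G}$ extends real analytically and $e^{\mathfrak{G}(y_0)}>0$; the AM-GM inequality applied to $R_\mu = \Phi^{-}e^{\mathfrak{G}}+\Phi^{+}e^{-\mathfrak{G}}$ together with the sign-coherence $\Phi^{+}\Phi^{-}>0$ from Assumption 7 gives $R_\mu(y_0)/(2\sqrt{\Phi^{+}\Phi^{-}})\geq 1$, so $\arccos(\cdot)=0$ and both sides of (\ref{eqMForm}) vanish. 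On a \emph{saturated region}, the identification $\mu(x) = \theta^{-1}f_{\lq}(\sigma_{\lq}^{-1}(x))^{-1} = [-\theta\log(\lq)(y_0-\lu y_0^{-1})]^{-1}$ forces the arccos in (\ref{eqMForm}) to equal $\pi$, and one checks this by showing $R_\mu(y_0)/(2\sqrt{\Phi^{+}\Phi^{-}})\leq -1$, which again reduces to an AM-GM estimate after accounting for the sign of $e^{\mathfrak{G}}$ relative to $\Phi^{\pm}$.

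The main obstacle is branch selection: one must track consistently the sign of $\sqrt{\Phi^{+}\Phi^{-}}$ and of the imaginary root $i\sqrt{4\Phi^{+}\Phi^{-}-R_\mu^2}$ as $y_0$ moves along a band and across the endpoints dividing different region types. This is handled using the uniform convergence established in the proof of Lemma \ref{AnalRQ} on compacts of $\mathcal{M}\setminus\cup_i[\hat{a}_i,\hat{b}_i]$ to propagate a distinguished branch from $|z|\to\infty$ (where $\mathfrak{G}(z)\to 0$ because $G_\mu(w)\sim 1/w$), together with Assumption 7, which ensures that $\sqrt{\Phi^{+}\Phi^{-}}$ admits a single-valued real-analytic determination on each interval $(\hat{a}_i,\hat{b}_i)$, and the continuity of $\mu$ across void/band/saturated boundaries, which forces the two one-sided branch assignments to agree.
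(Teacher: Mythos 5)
Your band argument is essentially the paper's: both pass to boundary values of the Stieltjes transform through the local biholomorphism $z\mapsto z+\lu z^{-1}$, feed $e^{\mathfrak{G}}$ into the quadratic $\Phi^-X^2-R_\mu X+\Phi^+=0$, identify $e^{\mathfrak{G}(y_0+i0^+)}$ with the complex root, and read off the argument. The routes diverge on the voids and saturated regions, and that is where your proposal has a genuine gap.

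Your argument for a void is: $e^{\mathfrak{G}(y_0)}>0$, and AM--GM plus $\Phi^+\Phi^->0$ give $R_\mu/(2\sqrt{\Phi^+\Phi^-})\geq 1$. But AM--GM plus the sign of the product only yield $|R_\mu|/(2\sqrt{\Phi^+\Phi^-})\geq 1$; whether the ratio lands in $[1,\infty)$ or $(-\infty,-1]$ is governed by the \emph{common} sign of $\Phi^{\pm}(y_0)$, because $R_\mu=\Phi^-e^{\mathfrak{G}}+\Phi^+e^{-\mathfrak{G}}$ inherits that sign once $e^{\mathfrak{G}}>0$. Assumption~7 fixes $\Phi^+\Phi^->0$ on each $(\hat{c}_i,\hat{d}_i)$ but says nothing about the individual signs, and Assumption~6 forces $\Phi^{\pm}$ to have real zeros, so you cannot take the sign for granted. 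If $\Phi^{\pm}<0$ on a void, your chain yields $R_\mu/(2\sqrt{\Phi^+\Phi^-})\leq -1$ and $\arccos(\cdot)=\pi\neq 0$, contradicting $\mu\equiv 0$. The same sign ambiguity infects your saturated-region step. The paper avoids this entirely: it first records the band dichotomy (\ref{bandequiv}) --- band $\Leftrightarrow$ ratio $\in(-1,1)$ --- and then argues on a residual interval $[s,t]$ (where $g\in\{0,\pi\}$ by reality of the roots) via \emph{continuity at a band endpoint}. The filling-fraction hypothesis guarantees some band abuts $[s,t]$, the IVT keeps the ratio on a single side of $\pm 1$ throughout $[s,t]$, continuity of the ratio forces it to equal $\pm 1$ at the abutting endpoint, and continuity of $g$ across that endpoint then fixes $g\equiv 0$ or $g\equiv\pi$ on all of $[s,t]$. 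This makes no use of the individual signs of $\Phi^{\pm}$.

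Your closing paragraph does mention "continuity of $\mu$ across void/band/saturated boundaries," but it is framed as a branch-selection footnote for $\sqrt{\Phi^+\Phi^-}$ and for the sign of the imaginary root, not as the argument that actually decides which of the two real alternatives occurs. Propagating a branch of $e^{\mathfrak{G}}$ in from $|z|\to\infty$ tells you nothing about the sign of $\Phi^{\pm}$ on the real slit, so as written this does not close the gap. To repair the proof you would need to make the endpoint-continuity reasoning explicit and load-bearing, exactly as the paper does.
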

\begin{proof}
We will assume that $\lu > 0$, the case $\lu = 0$ is simpler and can be handled similarly. As discussed earlier, Assumption 7 implies $\mu(x)$ is continuous on each interval $[\hat{a}_i, \hat{b}_i]$. By assumption there are unique $\hat{d}_i > \hat{c}_i \geq 1$ such that $\sigma(\hat{c}_i) = \hat{a}_i$ and $\sigma(\hat{d}_i) = \hat{b}_i$ for $i = 1, \dots, k$ where $\sigma(z) = z + \lu z^{-1}$. Consequently, $\sigma^{-1} ( [\hat{a}_i, \hat{b}_i]) = [\hat{c}_i, \hat{d}_i] \cup [u \hat{d}_i^{-1}, u \hat{c}_i^{-1}]$ for $i = 1, \dots, k$ and all $2k$ of the latter intervals are disjoint. Let
$$\psi(y) := \begin{cases} \mu(\sigma(y)) &\mbox{ if $y \in \cup_{i = 1}^k \left\{ [\hat{c}_i, \hat{d}_i] \cup [u \hat{d}_i^{-1}, u \hat{c}_i^{-1}] \right\},$} \\ 0 &\mbox{ else}.\end{cases}$$
It follows from (\ref{GmuDef}) that
\begin{equation*}
\begin{split}
(z - \lu z^{-1})G_{\mu}(z + \lu z^{-1}) =   \sum_{i = 1}^k\int_{\hat{c}_i}^{\hat{d}_i} \psi(y)(1 - \lu y^{-2}) \left[ \frac{z}{z-y} -\frac{\lu z^{-1}}{\lu z^{-1}-y} \right]dy.
\end{split}
\end{equation*}
 Using that 
$$\frac{z}{z - y} = \frac{y}{z- y} + 1, \hspace{2mm} \psi(y) = \psi(\lu y^{-1}) \mbox{ and }\sum_{i = 1}^k \int_{\lu \hat{c}_i^{-1}}^{\lu \hat{d}_i^{-1}} \psi(y) (1 - \lu y^{-2}) dy  = 1  \mbox{ we get }$$
\begin{equation*}
\begin{split}
&\theta \log (\lq) (z - \lu z^{-1})G_{\mu}(z + \lu z^{-1})  = \theta \log (\lq) +F(z) \mbox{, where } F(z):= \int_{\mathbb{R}} \frac{\theta \log (\lq) \psi(y)|y - \lu y^{-1}|}{z-y}dy.
\end{split}
\end{equation*}
 Using \cite[Theorem 2.1]{Hille} and \cite[Chapter 5, Theorem 93]{Tit} we conclude that $F(x + \iu y)$ defines a regular function for $y > 0$ and 
\begin{equation}\label{tit}
\begin{split}
&\lim_{ \varepsilon \rightarrow 0^+} F(x + \iu \varepsilon) = f(x) - \iu g(x) \mbox{ for a.e. $x \in \mathbb{R}$, where $f,g \in L^2(\mathbb{R})$ are given by}\\
&g(x) =  \theta \log (\lq)\pi \cdot \psi(x)|x - \lu x^{-1}| \mbox{ and } f(x) = - P\int_{\mathbb{R}} \frac{g(t)}{t - x} dx,
\end{split}
\end{equation}
and $P$ means that we take the integral in the principal value sense. Since $g$ is continuous on $\cup_{i = 1}^k [\hat{c}_i, \hat{d}_i]$, we can apply \cite[Chapter 4]{Mush} and conclude that $f$ is continuous on  $\cup_{i = 1}^k (\hat{c}_i, \hat{d}_i)$.\\

Let us take $z = y_0 + \iu \varepsilon$ with $y_0 \in \cup_{i = 1}^k (\hat{c}_i, \hat{d}_i)$  and let $\varepsilon$ converge to $0^+$ in  (\ref{QRmu}). This gives
$$R_{\mu}(y_0) = \Phi^-(y_0) e^{ \theta \log (\lq) }\cdot \exp\left(f(y_0) - \iu g(y_0) \right)  +  \Phi^+(y_0) \cdot e^{-\theta\log (\lq) } \exp\left(- f(y_0) + \iu g(y_0)\right) .$$
The above defines a quadratic equation for $\exp\left(f(y_0) - \iu g(y_0) \right) $ and we conclude that 
\begin{equation}\label{solQE}
\{ \exp\left(f(y_0) \pm \iu g(y_0) \right) \} = \left\{ \frac{ R_\mu(y_0) \pm \sqrt{  R^2_\mu(y_0) - 4\Phi^-(y_0) \Phi^+(y_0) }}{2 \Phi^-(y_0)  e^{ \theta \log (\lq)}} \right\},
\end{equation}
where the square root is with respect to the principal branch and assumed in $\mathbb{H}$ for negative values.

Suppose that $y_0 \in (\hat{c}_i, \hat{d}_i)$ and $\frac{R_\mu(y_0) }{2\sqrt{\Phi^-(y_0) \Phi^+(y_0) }} \in (-1, 1)$. Then the numbers in (\ref{solQE}) are complex conjugates with non-zero imaginary part, and we have
$$\exp\left(f(y_0) +\iu g(y_0) \right)  = \frac{ R_\mu(y_0) + \iu \sqrt{ - R^2_\mu(y_0) + 4\Phi^-(y_0) \Phi^+(y_0)  }}{2 \Phi^-(y_0)  e^{ \theta \log (\lq)}}, \mbox{ since both lie in $\mathbb{H}$.}$$
Taking the argument on both sides of the above equation we see that 
\begin{equation}\label{bandeq}
g(y_0) =  \mbox{arccos} \left( \frac{R_\mu(y_0)}{2 \sqrt{\Phi^-(y_0)  \Phi^+(y_0)}}\right) \in (0,\pi).
\end{equation}
The above computation also shows that
\begin{equation}\label{bandequiv}
\mbox{ $y_0 + \lu y_0^{-1}$ belongs to a band of $\mu$ in $[\hat{a}_i, \hat{b}_i]$  if and only if $\frac{R_\mu(y_0) }{2\sqrt{\Phi^-(y_0) \Phi^+(y_0) }} \in (-1, 1)$.}
\end{equation}

If $y_0 \in (\hat{c}_i, \hat{d}_i)$ and $\frac{R_\mu(y_0) }{2\sqrt{\Phi^-(y_0) \Phi^+(y_0) }} \geq 1$ then the numbers in (\ref{solQE}) are real and so $g(y_0) = 0$ or $\pi$, i.e. $y_0 + \lu y_0^{-1}$ belongs to a void or saturated region in $[\hat{a}_i, \hat{b}_i]$ for the measure $\mu$, which we denote by $[s,t]$. Notice that $[s,t] \neq [\hat{a}_i, \hat{b}_i]$ by our assumption on the filling fractions $\nu_i$. This implies that there is a band of $\mu$ in $[\hat{a}_i, \hat{b}_i]$ either ending at $s + \lu s^{-1}$ or starting from $t + \lu t^{-1}$. By continuity of $g$ and (\ref{bandequiv}) we see that $g(s) = 0$ or $g (t ) = 0$, which implies that $g(y_0) = 0$. A similar argument shows that $g(y_0) = \pi$ if $y_0 \in (\hat{c}_i, \hat{d}_i)$ and $\frac{R_\mu(y_0) }{2\sqrt{\Phi^-(y_0) \Phi^+(y_0) }} \leq -1$. Combining the above statements with the definition of $g$ concludes the proof of the lemma.
\end{proof}

We end the section by making a remark about the function $\mathfrak{G}_N^{d}(z)$ from the proof of Lemma \ref{AnalRQ}, whose exponent is the observable we obtain from the Nekrasov's equation.
\begin{rem}\label{GREM} Let us assume for simplicity that $k = 1$ and set 
$$\psi_N = \frac{1}{2N} \sum_{i = 1}^N \left( \delta( q^{-\lambda_i}) + \delta( u q^{\lambda_i}) \right),$$ 
where $\lambda_i$ is such that $q^{-\lambda_i} + u q^{\lambda_i} = \ell_i$ and we have dropped the dependence of $q$ and $u$ on $N$ to ease notation.
Then we have that 
$$(z - uz^{-1}) \int_{\mathbb{R}} \frac{\mu_N(dx)}{ z + uz^{-1} - x} = \int_{\mathbb{R}} \frac{ (z - uz^{-1}) \psi_N(dy)}{ z + uz^{-1} - y - u y^{-1}} =  \int_{\mathbb{R}} \psi_N(dy)\left[\frac{z}{z-y} -\frac{uz^{-1}}{uz^{-1}-y} \right].$$
 Using that $\frac{z}{z - y} = \frac{y}{z- y} + 1$ we get 
$$ \int_{\mathbb{R}} \psi_N(dy)\left[\frac{z}{z-y} -\frac{uz^{-1}}{uz^{-1}-y} \right] =  \int_{\mathbb{R}} \psi_N(dy)\left[\frac{y}{z-y} -\frac{y}{uz^{-1}-y} \right]  =  $$
$$ \int_{\mathbb{R}} \psi_N(dy)\left[\frac{y}{z-y} -\frac{uy^{-1}}{uz^{-1}-uy^{-1}} \right] = 1 + \int_{\mathbb{R}}\frac{2y \cdot \psi_N(dy)}{z-y}.$$
The above computation shows that, upto a constant and negligible error, the observable $\mathfrak{G}^d_N(z)$ we obtain from the Nekrasov's equation is the Stieltjes transform of the (deformed) empirical measure
$$\frac{1}{N} \sum_{i = 1}^N  q^{-\lambda_i}\cdot \delta( q^{-\lambda_i})  + \frac{1}{N} \sum_{i = 1}^N   u q^{\lambda_i} \cdot \delta( u q^{\lambda_i}).$$ 
In \cite{BGG} the Nekrasov's equation produced the exponent of the usual Stieltjes transform for the underlying particle system as an observable and the vanishing conditions on $\Phi^{\pm}$ 
the authors assumed, correspond to boundary conditions for that system. In our case, we see that in a sense we have two copies of particles sitting at $q^{-\lambda_i}$ and $uq^{\lambda_i}$ and the vanishing assumptions in Theorem \ref{NekGen} play the role of boundary conditions for each copy. The authors are not aware of such a phenomenon ocurring in other systems and would like to have a better conceptual understanding for its appearance. 
\end{rem}

\section{Central limit theorem: Part I} \label{Section5} 
Our goal in this and the next section is to study using Nekrasov's equation the fluctuations of the empirical measures $\mu_N$, for which we proved the law of large numbers in Section \ref{Section3}. In Section \ref{Section5.1} we introduce a $2m$-parameter deformation of the measures $\mathbb{P}_N$ and describe a certain map $\Upsilon_v$. In Section \ref{Section5.2} we state the  main technical result of the section -- Theorem \ref{MainTechT}, and deduce some corollaries from it. In Section \ref{Section5.3} we explain how to employ Nekrasov's equation for the deformed measures.  In Section \ref{Section5.4} we give  the proof of Theorem \ref{MainTechT} modulo a certain asymptotic statement in equation (\ref{S8}), whose proof is the focus of Section \ref{Section6}. 

\subsection{Deformed measure}\label{Section5.1}
We adopt the same notation as in Section \ref{Section2.2} and assume that Assumptions 1-6 hold. Introduce the usual random empirical measures $\mu_N$ on $\mathbb{R}$ through
\begin{equation}\label{M1}
\mu_N = \frac{1}{N} \sum_{i = 1}^N \delta \left(\ell_i\right), \mbox{ where $(\ell_1,\dots,\ell_N)$ is $\mathbb{P}_N$ - distributed}.
\end{equation}
We also define the (continuous, deterministic) probability measures
\begin{equation}\label{M2}
\begin{split}
\hat\mu_N \mbox{ as in Assumption 5.}
\end{split}
\end{equation}
 It follows from Corollary \ref{BC1} that $\mu_N - \hat\mu_N$ converges weakly in probability to $0$. Our goal is to understand the fluctuations of $\mu_N - \hat\mu_N$.

Let us introduce the Stieltjes transforms of $\mu_N$ and $\hat\mu_N$ through
\begin{equation}\label{RegG}
 G^d_N(z)=\int\limits_{\mathbb{R}}\frac{\mu_N(dx)}{z-x} \text{ and } G^c_N(z)=\int\limits_{\mathbb{R}}\frac{\hat\mu_N(dx)}{z-x}. 
\end{equation}
Observe that the above formulas make sense whenever $z$ does not lie in the support of the measures, and they define holomorphic functions there. Our study of  $\mu_N - \hat\mu_N$ goes through understanding $G_N^d(z ) - G_N^c(z )$ as $N\rightarrow \infty$. For that we introduce a deformed version of $\mathbb{P}_N$ following an approach that is similar to the one in \cite{BGG}.\\

Take $2m$ parameters $\t = (t_1,\dots,t_m)$, $\v = (v_1,\dots,v_m)$ such that $v_a + t_a - y \neq 0$ for all $a = 1,\dots,m$ and all $y \in \cup_{j = 1}^k [\hat{a}_j, \hat{b}_j]$, and let the deformed distribution $\mathbb P^{\t, \v}_N$  be defined through
\begin{equation} \label{eq:distrgen_deformed}
\mathbb P^{\t, \v}_N(\ell_1,\dots ,\ell_N)=Z(\t, \v)^{-1}\prod_{1\leq i<j \leq N}(\ell_i-\ell_j)^2 
\prod_{i=1}^{N}\left[w(\ell_i;N)\prod^k_{a=1}  \left(  \hspace{-1mm} 1+ \frac{t_a}{v_a-\ell_i} \right) \right].
\end{equation}
 If $m = 0$ we have $\mathbb P_N^{\t, \v} = \mathbb{P}_N$ is the undeformed measure. In general, $\mathbb P_N^{\t, \v}$ may be a complex-valued measure but we always choose the normalization constant $Z(\t, \v)$ so that $\sum_{\ell \in \mathfrak{X}} \mathbb P^{\t, \v}_N(\ell) = 1$. In addition, we require that the numbers $t_a$ are sufficiently close to zero so that $Z(\t, \v) \neq 0$. 

Let us denote 
\begin{equation}\label{DeltaGDef}
\Delta G_N(z) = N(G^d_N(z) - G^c_N(z)), \mbox{ where $(\ell_1,\dots,\ell_N)$ is $\mathbb{P}^{\t, \v}_N$ - distributed}. 
\end{equation}
Abusing notation we will suppress the dependence on $\t,\v$ of $\mu_N, G_N^d$ and $\Delta G_N$ when we replace $ \mathbb P_N$ with $ \mathbb P^{\t, \v}_N$ in (\ref{M1}) and use the same letters. It will be clear from context, which formula we mean. 

The definition of the deformed measure $\mathbb P^{\t, \v}$ is motivated by the following observation. 
\begin{lem}\label{LemCum1} Let $\xi$ be a bounded random variable. For any $m\geq 1$ the $m$th mixed derivative 
\begin{equation}\label{eq:derivative_k}
\frac{\partial^m}{\partial t_1 \cdots \partial t_m}\mathbb
E_{\mathbb P^{\t, \v}_N}\left[\xi\right]\bigg\rvert_{t_a=0, 1\leq a\leq m}
\end{equation}
is the joint cumulant of the $m + 1$ random variables $\xi , NG^d_N(v_1),\dots ,NG^d_N(v_m)$ with respect to $\mathbb P_N$.
\end{lem}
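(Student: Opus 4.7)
The plan is to view the deformation as a tilting of $\mathbb{P}_N$ whose first-order $t_a$-variation produces exactly $NG^d_N(v_a)$, and then to invoke the standard identity that mixed partial derivatives of the logarithm of a joint moment generating function compute joint cumulants. No real obstacle is anticipated; the argument is essentially formal.

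First I would rewrite the Radon--Nikodym derivative in the form
\[
\frac{d\mathbb{P}^{\t,\v}_N}{d\mathbb{P}_N}(\ell) \;=\; \frac{g(\ell,\t)}{\mathbb{E}_{\mathbb{P}_N}[g(\ell,\t)]}, \qquad g(\ell,\t)\;:=\;\prod_{a=1}^{m} g_a(\ell,t_a), \quad g_a(\ell,t_a)\;:=\;\prod_{i=1}^N\!\left(1+\frac{t_a}{v_a-\ell_i}\right),
\]
which is immediate from comparing \eqref{PDef} and \eqref{eq:distrgen_deformed}; the partition-function ratio $Z_N/Z(\t,\v)$ equals the reciprocal of $\mathbb{E}_{\mathbb{P}_N}[g(\ell,\t)]$ by the normalization $\sum_\ell \mathbb{P}^{\t,\v}_N(\ell)=1$. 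The key observation is that $g_a(\ell,0)=1$ and
\[
Y_a(\ell) \;:=\; \frac{\partial g_a}{\partial t_a}\bigg|_{t_a=0} \;=\; \sum_{i=1}^N\frac{1}{v_a-\ell_i} \;=\; NG^d_N(v_a),
\]
so the first-order $t_a$-expansion of $g$ produces precisely the observables appearing in the statement.

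Finally I would introduce the auxiliary function
\[
F(s,\t) \;:=\; \log\mathbb{E}_{\mathbb{P}_N}\!\left[e^{s\xi}\,g(\ell,\t)\right],
\]
which is analytic for $(s,\t)$ in a neighborhood of the origin, since $\xi$ is bounded and $\mathfrak{X}^\theta_N$ is finite. Differentiating once in $s$ at $s=0$ gives $\partial_s F|_{s=0}=\mathbb{E}_{\mathbb{P}^{\t,\v}_N}[\xi]$, so the left side of \eqref{eq:derivative_k} equals the mixed partial $\partial_s\partial_{t_1}\!\cdots\!\partial_{t_m} F\big|_{s=\t=0}$. To identify this quantity with a joint cumulant, I write $\log g_a(\ell,t_a)=t_a Y_a(\ell)+h_a(\ell,t_a)$ with $h_a=O(t_a^2)$, so that
\[
F(s,\t) \;=\; \log \mathbb{E}_{\mathbb{P}_N}\!\left[\exp\!\Big(s\xi \,+\, \sum_{a=1}^m t_a Y_a \,+\, \sum_{a=1}^m h_a(\ell,t_a)\Big)\right].
\]
Because each $h_a$ depends only on the single variable $t_a$ and vanishes to second order there, it contributes nothing to $\partial_s\partial_{t_1}\!\cdots\!\partial_{t_m}\big|_{s=\t=0}$. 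Hence the mixed partial equals the analogous derivative of $\log\mathbb{E}_{\mathbb{P}_N}\!\big[\exp(s\xi+\sum_a t_a Y_a)\big]$, which by the classical moment--cumulant relation
\[
\kappa(X_0,\ldots,X_m)\;=\;\partial^{m+1}_{s_0,\ldots,s_m}\Big|_{\mathbf{s}=0}\log\mathbb{E}\!\left[e^{s_0 X_0+\cdots+s_m X_m}\right]
\]
is exactly the joint cumulant $\kappa_{\mathbb{P}_N}(\xi, Y_1, \ldots, Y_m)$. Substituting $Y_a = NG^d_N(v_a)$ completes the proof.
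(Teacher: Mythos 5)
Your proof is correct and follows essentially the same route as the paper: both identify the left side of \eqref{eq:derivative_k} with a mixed derivative of a log-moment-generating function and both rely on the second-order agreement $\exp(t\,NG^d_N(v)) = \prod_{i=1}^N(1+t/(v-\ell_i)) + O(t^2)$. The only cosmetic difference is that the paper differentiates first in $t_0$ to produce the ratio of expectations, whereas you keep everything in exponential form and dispose of the $O(t_a^2)$ remainders $h_a$ explicitly; the underlying argument is identical.
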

\begin{rem}
The above result is analogous to Lemma 2.4 in \cite{BGG}, which in turn is based on earlier related work in random matrix theory \cite{Mi,Ey}. We present a proof below for the sake of completeness.
\end{rem}
\begin{proof}
One way to define the joint cumulant of $m+1$ bounded random variables $\xi_0, \dots, \xi_m$ is through
$$ \frac{\partial^{m+1}}{\partial t_0 \partial t_1 \cdots \partial t_m}  \log \left(\mathbb{E} \exp \left( \sum_{ i = 0}^m t_i \xi_i\right) \right) \Bigg\rvert_{t_i=0, 0\leq i\leq m}. $$
Performing the differentiation with respect to $t_0$ we can rewrite the above as
$$ \frac{\partial^{m}}{ \partial t_1 \cdots \partial t_m} \frac{\mathbb{E}  \left[\xi_0\exp \left( \sum_{ i = 1}^m t_i \xi_i\right) \right] }{\mathbb{E}  \left[\exp \left( \sum_{ i = 1}^m t_i \xi_i\right) \right]} \Bigg \rvert_{t_i=0, 1\leq i\leq m}. $$
Setting $\xi_0 = \xi$ and $\xi_i = NG^d_N(v_i)$ for $i = 1, \dots, m$ and observing that 
$$\exp \left( t NG^d_N(z)\right) = \prod_{i = 1}^N \left(1+ \frac{t}{z-\ell_i} \right) + O(t^2) \mbox{ as $t \rightarrow 0$, }$$
we obtain the desired statement.
\end{proof}

In the remainder of this section we introduce some notation from the theory of hyperelliptic integrals. We will require the latter to formulate our main result in the next section. 

Fix $k$ simple positively oriented contours $\gamma_1, \dots, \gamma_k$ such that each $\gamma_j$ encloses the segment $[\hat{a}_j, \hat{b}_j]$ (and thus also $[r_j, s_j]$ from Assumption 5) for $j = 1, \dots, k$. We assume that $\gamma_j$ are pairwise disjoint and do not enclose each other.

Let $P(z) = p_0 + p_1 z + \dots + p_{k-2} z^{k-2}$ be a polynomial of degree at most $k -2$, and define 
\begin{equation}\label{Omega}
\Omega: P(z) \rightarrow \left( \frac{1}{2\pi \iu} \oint_{\gamma_1} \frac{P(z)dz}{\prod_{j = 1}^k \sqrt{(z - r_j)(z - s_j)}}, \cdots, \frac{1}{2\pi \iu} \oint_{\gamma_k} \frac{P(z)dz}{\prod_{j = 1}^k \sqrt{(z - r_j)(z - s_j)}} \right).
\end{equation}
Notice that the sum of the integrals in (\ref{Omega}) equals (minus) the residue of $ \frac{P(z)dz}{\prod_{j = 1}^k \sqrt{(z - r_j)(z - s_j)}}$ at infinity, which is zero. Therefore, $\Omega$ defines a linear map between $(k-1)$-dimensional vector spaces. The map $\Omega$ is rather complicated, but it is known to be an isomorphism of vector spaces for $k \geq 2$ (see \cite[Section 2.1]{Dub}).

Using $\Omega$ we can now define a different map $\Upsilon_z$ as follows. The map $\Upsilon_z$ is defined in terms of the $k$ contours $\gamma_j$ and the points $r_j, s_j$ for $j = 1, \dots, k$. It is a linear map on the space of continuous functions $f(z)$ on $\gamma = \cup_{j = 1}^k \gamma_j$, whose integral over $\gamma$ is zero and is given by
\begin{equation}\label{Upsilon}
\Upsilon_z[f] = f(z) + \frac{P(z)}{\prod_{j = 1}^k \sqrt{(z - r_j)(z - s_j)}},
\end{equation}
where $P(z)$ is the unique polynomial of degree at most $k-2$ such that for each $j = 1, \dots, k$ we have
$$\oint_{\gamma_j} \Upsilon_z[f] dz = 0.$$
The polynomial $P(z)$ can be evaluated in terms of the map $\Omega$ via
\begin{equation}\label{Omegatop}
P = \Omega^{-1} \left(- \frac{1}{2\pi \iu} \oint_{\gamma_1} f(z) dz, \cdots, - \frac{1}{2\pi \iu} \oint_{\gamma_k} f(z) dz \right).
\end{equation}
We emphasize that the map $f \rightarrow \Upsilon_z[f]$ does not depend on $\t,\v$. 

We will require several properties of $\Upsilon_z$, which can easily be deduced from the above definitions. We summarize them in the following proposition without proof.
\begin{proposition}\label{PropUpsilon} The function $\Upsilon_z$ satisfies the following properties:
\begin{enumerate}
\item it is Lipschitz continuous in the uniform norm on the contours $\gamma_j$, $j = 1,\dots, k$;
\item if $\tilde{P}(z)$ is a polynomial of degree at most $k-2$ then $\Upsilon_z\left[ \frac{\tilde{P}(z)}{\prod_{j = 1}^k \sqrt{(z - r_j)(z - s_j)}}\right] = 0$;
\item if $\Upsilon^N_z$ is defined in terms of $r_j(N), s_j(N)$ and $r_j(N) - r_j = O(N^{-1}\log(N)) = s_j (N) - s_j $ for $j = 1, \dots, k$ then for any $f$ we have $\Upsilon^N_z[f] - \Upsilon_z[f] = O(N^{-1} \log(N))$.
\end{enumerate}
\end{proposition}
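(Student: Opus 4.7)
The plan is to derive all three properties directly from the explicit formula
$$\Upsilon_z[f] = f(z) + \frac{P_f(z)}{W(z)}, \qquad W(z) := \prod_{j=1}^k \sqrt{(z-r_j)(z-s_j)},$$
where $P_f$ is the unique polynomial of degree at most $k-2$ determined by (\ref{Omegatop}). The key geometric input throughout is that $\gamma = \gamma_1 \cup \cdots \cup \gamma_k$ is compact and uniformly separated from all branch points $\{r_j, s_j\}$ (since $\gamma_j$ encloses $[\hat a_j, \hat b_j] \supset [r_j, s_j]$), so $W(z)$ is holomorphic and bounded above and below on $\gamma$. All three statements then reduce to finite-dimensional linear algebra plus routine uniform estimates.

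For (1), I will use that $f \mapsto \Upsilon_z[f]$ is linear, so Lipschitz continuity reduces to boundedness. The map $f \mapsto P_f$ factors as the linear map $f \mapsto \bigl(-(2\pi\iu)^{-1}\oint_{\gamma_j} f(z)\,dz\bigr)_{j=1}^k$, which is bounded by $|\gamma|/(2\pi)$ times $\|f\|_\infty$, composed with $\Omega^{-1}$, which is a bounded operator between $(k-1)$-dimensional spaces since $\Omega$ is an isomorphism. Evaluating the resulting polynomial on the compact set $\gamma$ and dividing by $W$, which is bounded below there, yields $\|\Upsilon_z[f]\|_{\infty,\gamma} \leq C\|f\|_{\infty,\gamma}$ for a suitable constant $C$.

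For (2), I will take $f(z) = \tilde P(z)/W(z)$ with $\deg \tilde P \leq k-2$ and first check that $f$ lies in the domain of $\Upsilon_z$: since $W(z) \sim z^k$ at infinity, the integrand decays like $|z|^{-2}$ there, and by residue calculus $\oint_\gamma f\,dz = -\operatorname{Res}_{z=\infty} f = 0$. The choice $P = -\tilde P$ then makes $f(z) + P(z)/W(z) \equiv 0$, which trivially has all contour integrals equal to zero. Uniqueness of the defining polynomial (which is precisely where the isomorphism property of $\Omega$ from \cite{Dub} enters; the case $k=1$ is vacuous) forces $P_f = -\tilde P$ and hence $\Upsilon_z[f] \equiv 0$.

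For (3), let $W_N$, $\Omega_N$, $P_f^N$ denote the analogous objects built from $r_j(N), s_j(N)$. Since $\gamma$ is fixed and for large $N$ remains uniformly separated from both the original and perturbed branch points, a first-order Taylor expansion yields
$$W_N(z) - W(z) = O(N^{-1}\log N), \qquad \frac{1}{W_N(z)} - \frac{1}{W(z)} = O(N^{-1}\log N)$$
uniformly for $z \in \gamma$. Integrating this expansion against the monomial basis $1, z, \dots, z^{k-2}$ on each $\gamma_j$ gives $\Omega_N - \Omega = O(N^{-1}\log N)$ as operators on the $(k-1)$-dimensional polynomial space. Because the invertible locus is open and matrix inversion is smooth there, $\Omega_N^{-1} - \Omega^{-1} = O(N^{-1}\log N)$ for large $N$; noting that the right-hand side of (\ref{Omegatop}) is the same for $P^N_f$ and $P_f$ (as $f$ is fixed), we deduce $P^N_f - P_f = O(N^{-1}\log N)$ in coefficients. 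Substituting these bounds into $\Upsilon^N_z[f] - \Upsilon_z[f] = P^N_f(z)/W_N(z) - P_f(z)/W(z)$ and using uniform bounds on $\gamma$ gives the claimed estimate.

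The main obstacle is essentially bookkeeping rather than genuine difficulty: the only mildly delicate step is (3), where one must verify that the rate $O(N^{-1}\log N)$ propagates through the inversion $\Omega \mapsto \Omega^{-1}$ and that $\Omega_N$ remains invertible for large $N$. Both follow immediately from $\|\Omega_N - \Omega\| \to 0$ and the standard fact that matrix inversion is locally Lipschitz on the open set of invertible operators.
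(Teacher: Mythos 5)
Your proof is correct. The paper actually states Proposition \ref{PropUpsilon} without giving a proof, deeming it ``easily deduced from the above definitions,'' and your argument is the natural direct verification this implies: linearity together with boundedness of the finite-dimensional isomorphism $\Omega^{-1}$ (and $W$ bounded away from zero on the compact set $\gamma$) for (1); the vanishing of $\oint_\gamma \tilde{P}/W$ by the residue at infinity plus the observation that $P=-\tilde{P}$ works and uniqueness then identifies $P_f$ for (2); and propagation of the $O(N^{-1}\log N)$ perturbation of the branch points through $W$, then through the finitely many integrals defining $\Omega$, and finally through matrix inversion (locally Lipschitz on the open set of invertible operators) for (3).
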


\subsection{Main result} \label{Section5.2} At this time we isolate the main technical result we prove about $\Delta G_N(z)$ and deduce a couple of easy corollaries from it. We continue with the notation from the previous section.

\begin{thm}\label{MainTechT} Suppose Assumptions 1-6 in Section \ref{Section2.2} hold. Let $U : = \mathbb{C} \setminus \{ \cup_{j = 1}^k [\hat{a}_j, \hat{b}_j] \cup \{ \pm 2\sqrt{\lu} \} \}$ and $\Gamma = \cup_{j = 1}^k \gamma_j \subset U$, where each $\gamma_j$  is a positively oriented contour that encloses the segment $[\hat{a}_j, \hat{b}_j]$ for $j = 1, \dots, k$,  $\gamma_j$ are pairwise disjoint and do not enclose each other. We set $U^u$ to be the single unbounded component of $U \setminus \Gamma$.

Fix $m \in \mathbb{N}$ and $v_0,\dots,v_m \in U^u$. For $m \geq 2$ we have
\begin{equation}\label{MT0}
 \frac{\partial^m}{\partial t_1\cdots \partial t_m} \mathbb E_{\mathbb P^{\t, \v}_N}\left[\Delta G_N(v_0) \right]\bigg\rvert_{t_a=0, 1\leq a\leq m}= O(N^{-1}\log(N)),
\end{equation}
while for $m = 1$
 \begin{equation}\label{MT1}
\begin{split}
&\frac{\partial}{\partial t_1}  \Et \left[\Delta {G}_N(v_0)\right]  \bigg\rvert_{t_1=0} = O(N^{-1} \log (N)) + \\
&\Upsilon_{v_0} \left[ \frac{1}{4\theta \pi \iu \cdot  \prod_{j = 1}^k \sqrt{\left(v_0-   \hat{r}_j\right)\left(v_0  -  \hat{s}_j\right)}} \oint_{\Gamma} \frac{  \prod_{j = 1}^k \sqrt{\left(z-   \hat{r}_j\right)\left(z  -  \hat{s}_j\right)} }{(z - v_1 )^2(z - v_0  )} dz.\right]
\end{split}
\end{equation}
In the above $\Upsilon_{v_0}$ is as in (\ref{Upsilon}) for the contours $\gamma_j$ and the points $\hat{r}_j, \hat{s}_j$ for $j = 1, \dots, k$ as in Assumption 5. 
Finally, the constants in the big $O$ notation are uniform as $v_0, v_1, \dots, v_k$ vary over compact subsets of $U^u$.
\end{thm}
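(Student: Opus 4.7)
I would prove Theorem \ref{MainTechT} by asymptotic analysis of Nekrasov's equation (Theorem \ref{NekGen}) applied to the deformed measure $\mathbb P_N^{\t,\v}$. Multiplying the weight by $\prod_a(1 + t_a/(v_a - x))$ is equivalent to passing from $\Phi_N^\pm$ to deformed functions $\Phi_N^{\pm,\t,\v}$ that differ from $\Phi_N^\pm$ by a rational factor in $\sigma(z) = z + u_Nz^{-1}$ with poles at the preimages of $v_a$; this deformation is holomorphic in $\t$ and reduces to $\Phi_N^\pm$ at $\t=0$. The associated observable
$$\tilde R_N^{\t,\v}(z) = \Phi_N^{-,\t,\v}(z)\,\Et\!\left[\prod_{i=1}^N \frac{\sigma(q_N^\theta z) - \ell_i}{\sigma(z) - \ell_i}\right] + \Phi_N^{+,\t,\v}(z)\,\Et\!\left[\prod_{i=1}^N \frac{\sigma(q_N^{1-\theta}z) - \ell_i}{\sigma(q_N z) - \ell_i}\right]$$
is analytic on $\mathcal M$. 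Using that the law of large numbers (Theorem \ref{GLLN}) transports to $\mathbb P_N^{\t,\v}$ with the same equilibrium measure $\hat\mu_N$ up to $O(N^{-1}\log N)$, I would expand each product with $G_N^d = G_N^c + N^{-1}\Delta G_N$ and Taylor expand around the deterministic part. The leading term reproduces the equation of Lemma \ref{Lsupp} for $\hat\mu_N$, and the subleading piece gives, after taking expectations and multiplying by $N$, a scalar master equation of the form
$$\theta \log(\lq)\,(z - \lu z^{-1})\,Q_{\hat\mu_N}(z)\,\Et\!\bigl[\Delta G_N(\sigma(z))\bigr] + \mathcal G_N^{\t,\v}(z) = \text{analytic on } \mathcal M,$$
where $\mathcal G_N^{\t,\v}(z)$ collects contributions from $\Phi_N^{\pm,\t,\v} - \Phi^\pm$, from $V_N - V$, and from higher cumulants of $\Delta G_N$.

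To extract $\Et[\Delta G_N(v_0)]$, I invoke Assumption 5 to write $Q_{\hat\mu_N}(z) = H_N(z)\prod_{j=1}^k \sqrt{(\sigma(z) - r_j(N))(\sigma(z) - s_j(N))}$ with $H_N$ nonvanishing. Dividing and passing to the $w = \sigma(z)$ variable represents $\Et[\Delta G_N]$ as a contour integral along $\Gamma$, but the square-root endpoint singularities obstruct a naive inversion; they are cancelled by the unique polynomial correction of degree at most $k-2$ chosen so that each integral over $\gamma_j$ vanishes, which is precisely the operator $\Upsilon_{v_0}$ of (\ref{Upsilon}). Proposition \ref{PropUpsilon}(3) absorbs the $O(N^{-1}\log N)$ error from replacing $r_j(N), s_j(N)$ by $\hat r_j, \hat s_j$. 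Differentiating this master identity $m$ times in $t_1,\dots,t_m$ and setting $\t = 0$, Lemma \ref{LemCum1} identifies the left-hand side with a mixed cumulant. For $m=1$ the derivative hits $\Phi_N^{\pm,\t,\v}$ and produces a double pole at $z = v_1$; tracking the resulting residue through $\Upsilon_{v_0}$ gives the contour integral in (\ref{MT1}), while all remaining pieces contribute to the $O(N^{-1}\log N)$ remainder. For $m\geq 2$, each term surviving the differentiation is itself a mixed cumulant of bounded observables $G_N^d(v_i)$ which can be fed back into the same master identity, giving (\ref{MT0}) by induction on $m$.

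The main obstacle, which is addressed in Section \ref{Section6}, is justifying the above Taylor expansion of $\tilde R_N^{\t,\v}$ uniformly in $z$ and bounding the cumulant remainders \emph{before} performing the inversion --- in particular, showing that the mixed cumulants of $\Delta G_N$ appearing in $\mathcal G_N^{\t,\v}$ are of order $O(\log N)$ rather than the trivial $O(N)$. This cannot be deduced from Theorem \ref{GLLN} alone and instead requires a bootstrap: a coarse a priori bound on $\Et[\Delta G_N]$ (obtainable from Proposition \ref{BP1} and a crude version of the master equation) is fed back into the Nekrasov expansion to yield the sharper estimate, which in turn controls the next cumulant, and so on, mirroring the loop-equation strategy of \cite[Sections 6--7]{BGG}. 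The added complication specific to the quadratic lattice is that, as highlighted in Remark \ref{GREM}, the observable produced by Nekrasov's equation is the deformed Stieltjes transform $(z - \lu z^{-1}) G_N^d(\sigma(z))$ rather than $G_N^d(z)$ itself, so every step must carry the Jacobian $(z - \lu z^{-1})$ through the change of variables $z \leftrightarrow \sigma(z)$ and the inversion of $Q_{\hat\mu_N}$.
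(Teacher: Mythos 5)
Your proposal captures the paper's overall strategy closely: apply Nekrasov's equation to the deformed measure $\mathbb P_N^{\t,\v}$ with the deformed $\Phi_N^{\pm,\t,\v}$, Taylor expand the observables around the equilibrium data $(R_N,Q_N)$ to obtain a master identity (the paper's (\ref{S3})), divide by $Q_{\hat\mu_N}$ and integrate over the doubled preimage contours $\Gamma' = \Gamma_1 \cup \Gamma_2$ (needed because $\sigma$ is two-to-one), invert via the operator $\Upsilon$ of (\ref{Upsilon}), and then differentiate in the $t_a$'s using Lemma \ref{LemCum1}, with remainders controlled by a self-improving bootstrap. The one place your route diverges from the paper's is the $m\ge 2$ case: you propose induction on $m$, feeding cumulants of surviving terms back into the master identity. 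The paper's Step 3 in Section \ref{Section5.4} is more direct and non-inductive: once Fact \ref{fact_6} has controlled the remainder $\R^{\t,\v}_N$, the only residual $\t$-dependence in (\ref{S7}) lives in $P_N^{\t,\v}(v_0)$ and in $\Psi^\pm_N$, and $\Psi^\pm_N$ is by construction a sum of terms each depending on a single $t_a$, so its mixed partial in two or more $t_a$'s vanishes identically; applying $\Upsilon^N_{v_0}$ then annihilates the polynomial piece $P_N^{\t,\v}/D_N$ via Property (2) of Proposition \ref{PropUpsilon}, giving (\ref{MT0}) in one step. Your proposed induction could likely be made to work but is heavier. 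The genuine bootstrap is confined to Proposition \ref{moments} in Section \ref{Section6.2}, whose target is a uniform $O(1)$ bound on $\E[\prod_a |\Delta G_N(v_a)|]$ (rather than $O(\log N)$), obtained from the a priori $O(N^{n/2+1/2})$ concentration estimate of Step 1 by iterating a self-improvement that gains a factor $N^{-1/2}$ per pass; this in turn yields the $O(N^{-2})$ estimate of Fact \ref{fact_6} via (\ref{StartHARD}).
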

\begin{rem} We will prove Theorem \ref{MainTechT} for the case $\lu > 0$. The case $\lu = 0$ can be handled with minor modifications of the argument. 
\end{rem}

\begin{thm} \label{TGField}
Assume the same notation as in Theorem \ref{MainTechT}. As $N \rightarrow \infty$, the random field $N(G^d_N(z)-\mathbb E_{\mathbb P_N} \left[G^d_N(z)\right]), z \in U$, converges  (in the sense of joint moments, uniformly in $z$ in compact subsets of $U$) to a centered
complex Gaussian random field with second moment
\begin{equation}\label{eq:GField}
\lim_{N\rightarrow \infty} N^2 \left(\mathbb E_{\mathbb P_N} \left[G^d_N(z_1) G^d_N(z_2)\right]-\mathbb E_{\mathbb P_N} \left[G^d_N(z_1)\right] \mathbb E_{\mathbb P_N} \left[G^d_N(z_2)\right] \right)=: \mathcal C_\theta(z_1, z_2), \mbox{ where }
\end{equation}

\begin{equation}\label{eq:var}
\begin{split}
 \mathcal C_\theta(z_1, z_2) = \theta^{-1} \cdot \Upsilon_{z_2} \Bigg[&-\frac{1}{2(z_1-z_2)^2} +\prod_{j = 1}^k\frac{\sqrt{(z_1 - \hat{r}_j)(z_1 - \hat{s}_j)}}{\sqrt{(z_2 - \hat{r}_j)(z_2 - \hat{s}_j)}} \times \\
&\Bigg( \frac{1}{(z_1 - z_2)^2} - \frac{1}{2 (z_1 - z_2)} \sum_{j = 1}^k \left( \frac{1}{z_1 - \hat{r}_j} +  \frac{1}{z_1 - \hat{s}_j}\right)\Bigg) \Bigg].
\end{split}
\end{equation}
\end{thm}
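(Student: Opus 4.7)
The strategy is to deduce Theorem \ref{TGField} from Theorem \ref{MainTechT} via the method of cumulants. By Lemma \ref{LemCum1} applied with $\xi = \Delta G_N(v_0) = N G^d_N(v_0) - N G^c_N(v_0)$, the mixed derivative $\frac{\partial^m}{\partial t_1\cdots \partial t_m}\mathbb{E}_{\mathbb{P}_N^{\t,\v}}[\Delta G_N(v_0)]\big|_{\t=0}$ equals the joint $\mathbb P_N$-cumulant of the $m+1$ variables $\Delta G_N(v_0), N G^d_N(v_1),\ldots, N G^d_N(v_m)$. For $m\geq 1$ joint cumulants of two or more variables are translation invariant in each argument, so subtracting the deterministic $N G^c_N(v_0)$ does not change the value; hence this derivative equals the joint cumulant of $\xi_N(v_0),\xi_N(v_1),\ldots,\xi_N(v_m)$, where $\xi_N(z):=N(G^d_N(z)-\mathbb E_{\mathbb P_N}[G^d_N(z)])$. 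By (\ref{MT0}) all joint cumulants of $\xi_N$ of order $\geq 3$ tend to $0$ uniformly on compacts in $(U^u)^m$. Since convergence of all joint cumulants is equivalent to convergence of all joint moments via the moment--cumulant combinatorial identities, and since Gaussian moments are precisely those whose cumulants of order $\geq 3$ vanish, this establishes joint-moment convergence of $\xi_N$ on $U^u$ to a centered Gaussian random field whose covariance is the limit in (\ref{MT1}).

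To extend the convergence from $U^u$ to the full domain $U$, observe that $\xi_N(z)$ is holomorphic on $U$ (being a finite sum of $(z-\ell_i)^{-1}$ with $\ell_i\in\cup_j[\hat a_j,\hat b_j]$ inside $\Gamma$). Representing $\xi_N(z)$ for $z$ in the bounded components of $U\setminus\Gamma$ by Cauchy's integral formula against a contour in $U^u$, the uniform-on-compacts joint-moment bounds obtained above propagate through this formula, and a standard normal-family argument extends joint-moment convergence (and hence Gaussianity of the limit) to compact subsets of all of $U$.

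It remains to identify the covariance. Writing $R(z):=\prod_{j=1}^k\sqrt{(z-\hat r_j)(z-\hat s_j)}$, the argument above together with (\ref{MT1}) gives, for $z_1,z_2\in U^u$,
\[
\mathcal C_\theta(z_2,z_1) = \Upsilon_{z_2}\!\left[\frac{1}{4\theta\pi\iu\, R(z_2)}\oint_\Gamma \frac{R(z)\,dz}{(z-z_1)^2(z-z_2)}\right].
\]
Since $z_1,z_2$ lie in the unbounded component of $U\setminus\Gamma$ and all branch cuts of $R$ lie inside $\Gamma$, the contour $\Gamma$ may be deformed outward to infinity without crossing any branch cut. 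The residue theorem then reduces the bracket to a sum of contributions from the simple pole at $z_2$, the double pole at $z_1$, and the point $\infty$. A direct calculation gives
\[
\operatorname{Res}_{z=z_2}\!\frac{R(z)}{(z-z_1)^2(z-z_2)}=\frac{R(z_2)}{(z_2-z_1)^2},\qquad \operatorname{Res}_{z=z_1}\!\frac{R(z)}{(z-z_1)^2(z-z_2)}=\frac{R'(z_1)}{z_1-z_2}-\frac{R(z_1)}{(z_1-z_2)^2}.
\]
From $R(z)(z-z_1)^{-2}=O(z^{k-2})$ as $z\to\infty$ and the expansion $(z-z_2)^{-1}=\sum_{n\geq 0}z_2^n z^{-n-1}$, the residue at infinity is, as a function of $z_2$ with $z_1$ fixed, a polynomial of degree at most $k-2$. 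Its contribution to the bracket is therefore of the form $P(z_2)/R(z_2)$ with $\deg P\leq k-2$, and Proposition \ref{PropUpsilon}(2) shows that $\Upsilon_{z_2}$ annihilates it. Substituting the remaining two residues, using the logarithmic-derivative identity
\[
\frac{R'(z_1)}{R(z_1)}=\frac{1}{2}\sum_{j=1}^k\left(\frac{1}{z_1-\hat r_j}+\frac{1}{z_1-\hat s_j}\right),
\]
and collecting terms inside $\Upsilon_{z_2}$ yields exactly the formula (\ref{eq:var}).

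The principal technical content is Theorem \ref{MainTechT} itself, whose proof occupies Sections \ref{Section5.3}--\ref{Section6}. Given that input, the main obstacle in the present argument is verifying that the residue at infinity is a polynomial of degree at most $k-2$ in $z_2$ (so that the $\Upsilon_{z_2}$ operator eliminates it) and carrying through the algebraic simplification that produces (\ref{eq:var}); everything else (passage from cumulants to moments, vanishing of higher cumulants, extension from $U^u$ to $U$) is standard.
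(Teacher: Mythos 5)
Your overall strategy is identical to the paper's: apply Lemma \ref{LemCum1} to identify the $t$-derivatives as joint cumulants, note that cumulants of order $\geq 2$ are shift-invariant, invoke (\ref{MT0}) to kill third and higher cumulants, and compute the covariance from (\ref{MT1}) by residues. Two remarks.

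First, the paragraph on extending the convergence from $U^u$ to $U$ via Cauchy's integral formula and a normal-family argument is unnecessary. The contours $\gamma_j$ in Theorem \ref{MainTechT} are free parameters that may be taken as close to $\cup_j[\hat a_j,\hat b_j]$ as desired (the limit on the right of (\ref{MT1}) is independent of this choice because the integrand is holomorphic between any two admissible contours). Hence any fixed compact $K\subset U$ is contained in $U^u$ for a suitable $\Gamma$, and (\ref{MT0}), (\ref{MT1}) already apply directly on $K$. Your explicit treatment of the residue at infinity, and the observation that its contribution has the form $P(z_2)/R(z_2)$ with $\deg P\leq k-2$ so that Proposition \ref{PropUpsilon}(2) annihilates it, is a useful elaboration that the paper leaves implicit.

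Second, and more importantly, your claim that collecting the residues at $z_1$ and $z_2$ "yields exactly the formula (\ref{eq:var})" is not correct: the computation does not reproduce (\ref{eq:var}) as printed. Writing $R(z)=\prod_j\sqrt{(z-\hat r_j)(z-\hat s_j)}$, your own residues give
\[
\frac{-1}{2\theta R(z_2)}\Bigl[\tfrac{R(z_2)}{(z_1-z_2)^2}+\tfrac{R'(z_1)}{z_1-z_2}-\tfrac{R(z_1)}{(z_1-z_2)^2}\Bigr]
=\theta^{-1}\Bigl[-\tfrac{1}{2(z_1-z_2)^2}+\tfrac{R(z_1)}{2R(z_2)(z_1-z_2)^2}-\tfrac{R'(z_1)}{2R(z_2)(z_1-z_2)}\Bigr],
\]
whereas (\ref{eq:var}) has $\frac{R(z_1)}{R(z_2)(z_1-z_2)^2}-\frac{R'(z_1)}{R(z_2)(z_1-z_2)}$ without the factors $\tfrac12$. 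One can check directly that your residue formula, not (\ref{eq:var}), is the one that matches Remark \ref{meq1} (and (\ref{QRCOVS1}) in the Introduction): for $k=1$, using $R'(z_1)R(z_1)=\tfrac12\bigl((z_1-\hat r_1)+(z_1-\hat s_1)\bigr)$ one verifies $2R(z_1)^2-2R(z_1)R'(z_1)(z_1-z_2)=(z_1-\hat r_1)(z_2-\hat s_1)+(z_2-\hat r_1)(z_1-\hat s_1)$, giving exactly the one-cut covariance of Remark \ref{meq1}; (\ref{eq:var}) evaluated at $k=1$ gives twice the product term and in fact blows up as $z_1\to z_2$, contradicting the remark after Theorem \ref{CLTfun1cut}. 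So (\ref{eq:var}) is almost certainly missing an overall $\tfrac12$ in front of the $\prod_j(\cdot)$ factor. Your residue calculation is correct; you should have noted the discrepancy with the printed formula rather than asserting exact agreement.
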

\begin{rem}
Since $\overline{G_N(z)} = G_N(\overline{z})$, we can use (\ref{eq:var}) to completely characterize the asymptotic covariance of the (recentered) random field $G_N(z)$.
\end{rem}
\begin{rem}\label{meq1} When $m = 1$ the covariance $\mathcal{C}_\theta(z_1, z_2)$ can be written down explicitly as
$$\mathcal{C}_\theta(z_1, z_2) = -\frac{\theta^{-1}}{2(z_1-z_2)^2} \left(1 - \frac{(z_1 - \hat{r}_1)(z_2- \hat{s}_1) + (z_2 - \hat{r}_1 )(z_1- \hat{s}_1)}{2\sqrt{(z_1 -\hat{r}_1 )(z_1- \hat{s}_1)}\sqrt{(z_2 -\hat{r}_1 )(z_2- \hat{s}_1 )}} \right).$$
When $m = 2$ one can also find an explicit form for $\mathcal{C}_\theta(z_1, z_2)$, involving the values of complete elliptic integrals, but we do not pursue it here, cf. \cite{BDE}.
\end{rem}
\begin{rem}
In the continuous log-gas setting the covariance has the same form as (\ref{eq:var}), cf. \cite{J4, S, BoGu2}. A similar result also holds for the discrete $\beta$-ensembles in \cite{BGG}.
\end{rem}

\begin{proof} Fix $v_0, \dots, v_m \in U$ and $\Gamma$ as in Theorem \ref{MainTechT} so that $v_0, \dots, v_m \in U^u$. Setting $\xi = \Delta G_N(v_0)$  in Lemma \ref{LemCum1} we know that the joint cumulant of $ \Delta G_N(v_0), NG_N^d(v_1) , \dots, NG_N^d(v_m)$ is given by
$$\frac{\partial^m}{\partial t_1\partial t_2\cdots \partial t_m}\mathbb E_{\mathbb P^{\t, \v}_N}\left[ \Delta G_N(v_0)\right]\bigg\rvert_{t_a=0, 1\leq a\leq m}.$$
Since cumulants remain unchanged under constant shifts, we see that the above formula is also the joint cumulant of $N(G^d_N(v_i)-\mathbb E_{\mathbb P_N} \left[G^d_N(v_i)\right])$ for $i = 0, \dots, m$. From Theorem \ref{MainTechT} we see that as $N \rightarrow \infty$ all $3$rd and higher order cumulants vanish, which proves the asymptotic Gaussianity of the field $N(G^d_N(z)-\mathbb E_{\mathbb P_N} \left[G^d_N(z)\right])$.

As $N(G^d_N(z)-\mathbb E_{\mathbb P_N} \left[G^d_N(z)\right])$ are centered for each $N$ so is the limiting field. From (\ref{MT1}) we also have the following formula for the limiting covariance (which is the second joint cumulant) of $N(G^d_N(z_1)-\mathbb E_{\mathbb P_N} \left[G^d_N(z_1)\right])$ and $N(G^d_N(z_2)-\mathbb E_{\mathbb P_N} \left[G^d_N(z_2)\right])$ for $z_1, z_2 \in U^u$
$$ \Upsilon_{z_2} \left[ \frac{1}{4 \theta \pi \iu \cdot  \prod_{j = 1}^k \sqrt{\left(z_2 -   \hat{r}_j\right)\left(z_2  -  \hat{s}_j\right)}} \oint_{\Gamma} \frac{  \prod_{j = 1}^k \sqrt{\left(z-   \hat{r}_j\right)\left(z  -  \hat{s}_j\right)} }{(z - z_1 )^2(z - z_2  )} dz\right].$$
 Evaluating and adding the (minus) residues at $z = z_1$ and $z = z_2$ we obtain (\ref{eq:var}).
\end{proof}

\begin{thm}\label{CLTfun}
Assume the same notation as in Theorem \ref{MainTechT}. For $m\geq 1$ let $f_1, \dots, f_m$ be real analytic functions in $U$ and define 
$$\mathcal L_{f_i}=N \int_\mathbb{R} f_j(x) \mu_N(dx) -N \mathbb E _{\mathbb P_N}  \left[ \int_\mathbb{R} f_j(x) \mu_N(dx) \right] \mbox{ for $i = 1, \dots, m$}.$$
Then the random variables $\mathcal{L}_{f_i}$ converge jointly in the sense of moments to an $m$-dimensional centered Gaussian vector $X = (X_1,\dots, X_m)$ with covariance
$$Cov(X_i, X_j) = \frac{1}{(2 \pi \iu)^2}\oint_{\Gamma} \oint_{\Gamma } f_i(s)f_j(t) \mathcal C_\theta (s, t) ds dt \mbox{ with $C_\theta(s,t)$ as in (\ref{eq:var}).}$$
\end{thm}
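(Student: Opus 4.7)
The plan is to reduce the joint CLT for linear statistics to the joint moment convergence of the Stieltjes transform field already established in Theorem \ref{TGField}. Since each $f_i$ is real analytic on $U = \mathbb{C} \setminus \{ \cup_{j=1}^k [\hat a_j, \hat b_j] \cup \{\pm 2\sqrt{\lu}\}\}$, I can choose the contours $\gamma_1,\dots,\gamma_k$ (and hence $\Gamma = \cup_j \gamma_j$) inside a common domain of holomorphy for $f_1,\dots,f_m$ in such a way that $\Gamma$ encloses $\cup_{j=1}^k [\alpha_j(N), \beta_j^-(N)]$ for all $N$ sufficiently large; this is possible by Assumption~2. Because all particles $\ell_i$ are then surrounded by $\Gamma$, Cauchy's integral formula gives
\begin{equation*}
\mathcal L_{f_i} = \frac{1}{2\pi \iu} \oint_\Gamma f_i(z) \bigl(NG^d_N(z) - N \mathbb E_{\mathbb P_N}[G^d_N(z)]\bigr)\, dz.
\end{equation*}

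The next step is to work at the level of joint cumulants, which is more convenient than working with moments directly in this Gaussian setting. By multilinearity of cumulants and the above integral representation,
\begin{equation*}
\mathrm{Cum}\bigl(\mathcal L_{f_{i_1}}, \ldots, \mathcal L_{f_{i_r}}\bigr) = \frac{1}{(2\pi \iu)^r} \oint_\Gamma \!\!\cdots \oint_\Gamma \prod_{s=1}^r f_{i_s}(z_s) \cdot \mathrm{Cum}\bigl(N G^d_N(z_1), \ldots, NG^d_N(z_r)\bigr)\, dz_1 \cdots dz_r,
\end{equation*}
where I used that cumulants are invariant under deterministic shifts. Theorem \ref{TGField} asserts precisely that the centered field $N(G^d_N(z) - \mathbb E_{\mathbb P_N}[G^d_N(z)])$ converges jointly in moments to a centered Gaussian field with covariance $\mathcal C_\theta(z_1,z_2)$, uniformly on compact subsets of $U$. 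In particular, all joint cumulants of order $r \geq 3$ of the $NG^d_N(z_s)$ tend to $0$ uniformly on the compact set $\Gamma^r$, while the order-two cumulant tends uniformly to $\mathcal C_\theta(s,t)$ on $\Gamma \times \Gamma$.

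Passing the limit inside the finite-dimensional contour integral (justified by the uniform convergence on $\Gamma^r$) then yields
\begin{equation*}
\lim_{N \to \infty} \mathrm{Cum}\bigl(\mathcal L_{f_{i_1}}, \ldots, \mathcal L_{f_{i_r}}\bigr) = 0 \quad \text{for } r \geq 3,
\end{equation*}
and simultaneously
\begin{equation*}
\lim_{N \to \infty} \mathrm{Cum}(\mathcal L_{f_i}, \mathcal L_{f_j}) = \frac{1}{(2\pi\iu)^2} \oint_\Gamma \oint_\Gamma f_i(s) f_j(t)\, \mathcal C_\theta(s,t)\, ds\, dt.
\end{equation*}
Since the $\mathcal L_{f_i}$ are centered by construction and all their joint cumulants of order $\geq 3$ vanish in the limit, the classical cumulant criterion identifies the limit as a centered Gaussian vector with the stated covariance matrix, which proves the theorem in the sense of moments. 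The only genuine technical point is the exchange of limits and contour integrations, and this is immediate from the uniform moment convergence in Theorem \ref{TGField}; the substantive work is already encoded in Theorems \ref{MainTechT} and \ref{TGField}, so here the argument is essentially a formal consequence.
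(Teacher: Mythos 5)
Your argument is correct and matches the paper's proof in all essential respects: both use the contour-integral representation $\mathcal L_{f_i} = \frac{1}{2\pi\iu}\oint_\Gamma f_i(z)\,N\bigl(G^d_N(z)-\mathbb E_{\mathbb P_N}[G^d_N(z)]\bigr)\,dz$, pass the limit through the finite contour integral using the uniform moment convergence from Theorem \ref{TGField}, and conclude Gaussianity by the vanishing of third and higher cumulants. The only difference is cosmetic — you push cumulant multilinearity through the integral directly, while the paper first writes the joint moment formula and then remarks that cumulants are linear combinations of moments.
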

\begin{proof}
Observe that when $f$ is real analytic in $U$ we have for all large $N$
$$\mathcal{L}_f = \frac{N}{2\pi \iu} \oint_\Gamma f(z) (G_N(z) - \E\left[ G_N(z) \right] )dz,$$
where $\Gamma$ is as in Theorem \ref{MainTechT}. Therefore, for any joint moment of $\mathcal{L}_{f_i}$ we have
\begin{equation}\label{QRCLTeq1}
 \E \left[ \mathcal{L}_{ f_{i_1} }\hspace{-2mm}\cdots \mathcal{L}_{ f_{i_k} }\right] = \frac{1}{(2\pi \iu)^k} \oint_\Gamma \cdots \oint_\Gamma \hspace{-1mm}  \E   \hspace{-1.5mm}\left[\prod_{m = 1}^k  \hspace{-1.5mm} N(G_N(z_m) - \E\left[ G_N(z_m) \right] )\right] \hspace{-1mm}  \prod_{m = 1}^k  \hspace{-1mm} f_{i_m}(z_m)   dz_m. 
\end{equation}
Since cumulants of centered random variables are linear combinations of moments and vice versa, we conclude that all third and higher order cumulants of $\mathcal{L}_{f_i}$ vanish as $N \rightarrow \infty$ (here we used Theorem \ref{TGField}, which implies the third and higher order joint cumulants of $ N(G_N(z_i) - \E\left[ G_N(z_i) \right] )$ vanish uniformly when $z_i \in \Gamma$). This proves the Gaussianity of the limiting vector $X$. Since $\mathcal{L}_{f_i}$ are centered for each $N$ the same is true for $X$. To get $Cov(X_i,X_j)$ we can set $k = 2$, $i_1 = i$ and $i_2 = j$ in (\ref{QRCLTeq1}) and send $N \rightarrow \infty$. In view of (\ref{eq:GField}) we conclude that
$$Cov(X_i, X_j) = \frac{1}{(2 \pi \iu)^2}\oint_{\Gamma} \oint_{\Gamma } f_i(s)f_j(t) \mathcal C_\theta (s, t) ds dt, \mbox{ where $\mathcal{C}_\theta(s,t)$ is as in (\ref{eq:var}). }$$

\end{proof}

\subsection{Application of Nekrasov's equation}\label{Section5.3}
 In this section we begin the proof of Theorem \ref{MainTechT} emphasizing the contribution of the Nekrasov's equation. In what follows we use the same notation as in the previous section and Section \ref{Section2.2}, dropping the dependence on $N$ from parameters.

The first key observation we make is that $\mathbb P^{\t, \v}_N$ satisfies Nekrasov's equation with
\begin{equation}\label{PhiN}
\begin{split}
&\Phi_N^{+, \t, \v}(z)=\Phi^+_N(z) \prod^m_{a=1}\left[(v_a+t_a-\sigma_N(z))(v_a-\sigma_N(qz))\right] ,\\
&\Phi_N^{-, \t, \v}(z)=\Phi^-_N(z)\prod^m_{a=1}\left[(v_a+t_a-\sigma_N(qz))(v_a-\sigma_N(z))\right], \mbox{ where }\\
\end{split}
\end{equation}
$\Phi^{\pm}_N$ are as in Assumption 4 and we recall that $\sigma_N(z) = z + uz^{-1}$. Notice that $\Phi_N^{\pm, \t, \v}(z)$ are also analytic in $\mathcal{M}$. Denoting the RHS of the Nekrasov's equation for the measure $\mathbb P^{\t, \v}_N$ by $R_N^{\t, \v}(z)$ we see from Theorem \ref{NekGen} that $R_N^{\t, \v}(z)$ is analytic in $\mathcal{M}$. 

Using $q = \tq^{1/N} + O(N^{-2})$ and $u = \lu + O(N^{-1})$ we obtain asymptotic expansions for $\Phi_N^{\pm, \t, \v}$
\begin{equation}\label{S1}
\begin{split}
&\Phi_N^{\pm, \t, \v}(z)=[ \Phi^\pm(z)  +\Ps_N^\pm(z)] \prod^m_{a=1}\left[(v_a+t_a- \sigma(z))(v_a-\sigma(z) )\right] + \R^{\pm, \t, \v}_{1,N}(z), \\
& \Ps_N^+(z)  := \varphi^{+}_N(z) -   \Phi^+(z) \cdot \sum_{a = 1}^m \left[\frac{(\log \lq/N) \left(z -\lu z^{-1} \right) - (\lu - u)z^{-1}}{v_a - \sigma(z)} - \frac{ (\lu - u)z^{-1}}{v_a + t_a - \sigma(z)} \right] ,\\ 
& \Ps_N^-(z)  :=  \varphi^{-}_N(z) -   \Phi^-(z) \cdot \sum_{a = 1}^m \left[\frac{(\log \lq/N) \left(z -\lu z^{-1} \right) - (\lu - u)z^{-1}}{v_a + t_a - \sigma(z)} - \frac{ (\lu - u)z^{-1}}{v_a  - \sigma(z)} \right],
\end{split}
\end{equation}
where $\sigma(z) = z + \lu z^{-1}$and $\R^{\pm, \t, \v}_{1,N}(z) = O(N^{-2})$ uniformly over compact subsets of $\mathcal{M}$.

The second important observation is that we have the following asymptotic expansion
\begin{equation} \label{S2}
\begin{split}
&\prod\limits^N_{i=1}\frac{\sigma_N(q^{\theta} z)-\ell_i}{\sigma_N(z)- \ell_i}=\exp \left[\theta  \mathfrak{G}_N^c(z)+ \theta  \Delta \mathfrak{G}_N(z)+W^-_N(z)+\R^{-}_{2,N}(z) \right],\\
&\prod\limits^N_{i=1} \frac{\sigma_N(q^{1-\theta}z)- \ell_i}{\sigma_N(qz)-\ell_i} = \exp \left[-\theta\mathfrak{G}_N^c(z)- \theta\Delta \mathfrak{G}_N(z)+W^+_N(z)+\R^{+}_{2,N}(z) \right],
\end{split}
\end{equation}
\begin {equation}\label{eq:gothic_G}
\begin{split}
\mbox{ where } &\mathfrak{G}_N^{d/ c}(z)= N \log q \cdot (z - \lu z^{-1})  \cdot G^{d/c}_N(z + \lu  z^{-1}),\\
&\Delta \mathfrak{G}_N(z) = \mathfrak{G}^d_N(z) - \mathfrak{G}^c_N(z) \mbox{ with $G_N^c$ and $G_N^d$ as in (\ref{RegG});}
\end{split}
\end{equation}
\begin {equation}\label{eq:Ws}
\begin{split}
&W^-_N(z) =  \theta \log q   \left[ z (\theta/2) \cdot  \partial_z \mathfrak{G}^d_N(z) + (u - \lu) N \cdot \left[ \partial_z G^c_N(z + \lu z^{-1}) -   z^{-1}G_N^c (z + \lu z^{-1}) \right] \right],\\
&W^+_N(z) =  \theta \log q   \left[ z (\theta/2 - 1)   \partial_z \mathfrak{G}^d_N(z)  - (u - \lu) N  \left[ \partial_z G^c_N(z + \lu z^{-1}) -   z^{-1}G_N^c (z + \lu z^{-1}) \right] \right].
\end{split}
\end{equation}
The remainders $\R^{\pm}_{2,N}(z) = O(N^{-2})$ are uniform in $z$ on compact subsets of $\mathcal{O}$, which is the inverse image of $U$ under the map $z \rightarrow z + \lu z^{-1}$. Explicitly, if $\hat{c}_j, \hat{d_j}$ are the points in $[1, \lq^{-\lM}]$ with 
$\hat{c}_j+ \lu \hat{c}_j^{-1} = \hat{a}_j \mbox{ and }\hat{d}_j+ \lu \hat{d}^{-1}_j = \hat{b}_j $ then $\mathcal{O} := \mathbb C\setminus \left\{  \{0,  \sqrt{\lu}, -\sqrt{\lu} \} \cup  \cup_{j = 1}^k [\hat{c}_j, \hat{d}_j] \cup \cup_{j = 1}^k [\lu \hat{d}^{-1}_j, \lu \hat{c}^{-1}_j] \right\}$.

The third observation we require comes from Assumption 5 and Lemma \ref{AnalRQ}, which imply:
\begin{equation}\label{RFE}
\begin{split}
&\mbox{ If }R_N(z) := \Phi^-(z)  e^{\theta \mathfrak{G}^c_N(z)} +  \Phi^+(z)  e^{- \theta\mathfrak{G}^c_N(z)},\mbox{ \hspace{1mm} }Q_N(z) := \Phi^-(z)  e^{\theta\mathfrak{G}^c_N(z)} -  \Phi^+(z)  e^{-\theta\mathfrak{G}^c_N(z)}, \\
&\mbox{ then $R_N$ is analytic in $\mathcal{M}$ and } Q_N(z) = H_N(z) \cdot \prod_{j = 1}^k \sqrt{\left(\sigma(z) -   r_j \right)\left(\sigma(z)  - s_j \right)}, 
\end{split}
\end{equation}
where $H_N$ is analytic and non-vanishing in $\mathcal{M}$.\\

We detail the consequence of the above three observation. Nekrasov's equation for $\mathbb P^{\t, \v}_N$ reads
$$R_N^{\t, \v}(z) =\Phi_N^{-, \t, \v}(z) \cdot \Et  \left[\prod\limits^N_{i=1} \frac{\sigma_N(q^{\theta}z)-\ell_i}{\sigma_N(z)-\ell_i} \right] + \Phi_N^{+, \t, \v}(z) \cdot\Et \left[\prod\limits^N_{i=1} \frac{\sigma_N(q^{1-\theta}z)- \ell_i}{\sigma_N(qz)-\ell_i}\right].$$
Combining the above with (\ref{S1}), (\ref{S2}) and (\ref{RFE}) we conclude that
\begin{equation}\label{S3}
\begin{split}
&R_N^{\t, \v}(z) =\R^{\t, \v}_{N}(z)+\prod^m_{a=1}[(v_a+t_a-\sigma(z))(v_a-\sigma(z))]\times {\Big [} \Et \left[\theta\Delta \mathfrak{G}_N(z)\right] \cdot  Q_N(z) \\
&+ \Ps^-_N(z)e^{\theta\mathfrak{G}^c_N(z)}  +  \Ps^+_N(z)e^{-\theta\mathfrak{G}^c_N(z)}+  \tilde{W}_N(z) + R_N(z)\Big{]}, \mbox{ where }
\end{split}
\end{equation}
\begin{equation}\label{XDef2}
\begin{split}
& \tilde{W}_N(z)  = \Phi^-(z)e^{\theta\mathfrak{G}^c_N(z)}\tilde{W}^-_N(z) + \Phi^+(z)e^{-\theta\mathfrak{G}^c_N(z)}\tilde{W}^+_N(z) \mbox{, with }\\
&\tilde{W}^-_N(z) =  \theta \log q   \left[ z (\theta/2)   \partial_z \mathfrak{G}^c_N(z) + (u - \lu) N \cdot \left[ \partial_z G^c_N(z + \lu z^{-1}) -   z^{-1}G_N^c (z + \lu z^{-1}) \right]\right], \\
&\tilde{W}^+_N(z) =  \theta \log q   \left[ z (\theta/2 - 1)   \partial_z \mathfrak{G}^c_N(z)  - (u - \lu) N  \left[ \partial_z G^c_N(z + \lu z^{-1}) -   z^{-1}G_N^c (z + \lu z^{-1}) \right] \right],\\
& \R^{\t, \v}_{N}(z) = \R^{+, \t, \v}_{1,N}(z) \Et \left[ \prod\limits^N_{i=1}\frac{\sigma(q^{\theta} z)-\ell_i}{\sigma(z)- \ell_i}\right] +  \R^{-, \t, \v}_{1,N}(z) \Et \left[\frac{\sigma_N(q^{1-\theta}z)- \ell_i}{\sigma_N(qz)-\ell_i} \right] + \\
&\prod^m_{a=1}[(v_a+t_a-\sigma(z))(v_a-\sigma(z))] \times \left[ A_N + B_N + C_N + D_N \right] \mbox{ and }
\end{split}
\end{equation}
\begin{equation*}
\begin{split}
A_N = & \hspace{2mm} \Phi^-(z) \cdot \Et \left[ \prod\limits^N_{i=1}\frac{\sigma_N(q^{\theta} z)-\ell_i}{\sigma_N(z)- \ell_i} - e^{\theta\Delta \mathfrak{G}_N(z) + \theta\mathfrak{G}^c_N(z)} - e^{\theta\mathfrak{G}^d_N(z)}\cdot \tilde{W}^-_N(z)\right] + \\
& \hspace{2mm} \Phi^+(z) \cdot \Et \left[\frac{\sigma_N(q^{1-\theta}z)- \ell_i}{\sigma_N(qz)-\ell_i} - e^{-\theta\Delta \mathfrak{G}_N(z) - \theta\mathfrak{G}^c_N(z)} -e^{-\theta\mathfrak{G}^d_N(z)} \cdot  \tilde{W}^+_N(z)\right], 
\end{split}
\end{equation*}
\begin{equation*}
\begin{split}
 B_N = &  \hspace{2mm} \Phi^-(z) \tilde{W}^-_N(z)  \cdot \Et \left[  e^{\theta\mathfrak{G}^d_N(z)}  -  e^{\theta\mathfrak{G}^c_N(z)} \right]  \Phi^+(z)  \tilde{W}^+_N(z)  \cdot \Et \left[  e^{-\theta\mathfrak{G}^d_N(z)}- e^{-\theta\mathfrak{G}^c_N(z)} \right],
\end{split}
\end{equation*}
\begin{equation*}
\begin{split}
C_N =  &  \hspace{2mm}\Phi^-(z)\cdot e^{ \theta\mathfrak{G}^c_N(z)} \cdot \Et \left[ e^{\theta\Delta \mathfrak{G}_N(z)}  - \theta\Delta \mathfrak{G}_N(z) - 1 \right] + \\
& \Phi^+(z)\cdot e^{ -\theta\mathfrak{G}^c_N(z)} \cdot \Et \left[e^{-\theta\Delta \mathfrak{G}_N(z)} +  \theta\Delta \mathfrak{G}_N(z) - 1 \right],
\end{split}
\end{equation*}
\begin{equation*}
\begin{split}
D_N =  &  \hspace{2mm} \Psi^-_N(z) \cdot\Et \left[\frac{\sigma_N(q^{\theta} z)-\ell_i}{\sigma_N(z)- \ell_i}  - e^{\theta\mathfrak{G}^c_N(z)} \right] + \Psi^+_N(z) \cdot\Et \left[\prod\limits^N_{i=1}\frac{\sigma_N(q^{1-\theta}z)- \ell_i}{\sigma_N(qz)-\ell_i} - e^{-\theta\mathfrak{G}^c_N(z)} \right] .
\end{split}
\end{equation*}

Let $\gamma_1, \dots, \gamma_k$ be as in the statement of the theorem and $v_0, \dots, v_m$ lie outside of $\Gamma = \cup_{j = 1}^k \gamma_j$. We let $\gamma'_j, \gamma''_j$ for $j = 1,\dots, k$ be a positively oriented contours such that for each $j = 1, \dots, k$
\begin{itemize}
\item $\gamma'_j$  encloses the interval $[\hat{c}_j, \hat{d}_j] $ and excludes the points $\pm \sqrt{\lu }$ and $0$,
\item $\sigma(\gamma'_j)$ is contained in the bounded component of $\mathbb{C} \setminus \gamma_j$ ,
\item $\gamma''_j:= \omega(\gamma'_j)$, where $\omega(z) = \frac{\lu}{z}$,
\item $\{\gamma'_j, \gamma''_j \}_{j = 1}^k$ are all disjoint and contained in $\mathcal{M}$.
\end{itemize}
The existence of such contours is ensured by our assumptions on $\gamma_j$. Observe that by construction $\gamma''_j$ is a positively oriented contour that also excludes the points $\pm \sqrt{\lu}$ and $0$, and encloses the interval $[\lu \hat{d}^{-1}_j, \lu \hat{c}^{-1}_j]$. For convenience we let $\Gamma_1 = \cup_{j = 1}^k \gamma'_j$ and $\Gamma_2 = \cup_{j = 1}^k \gamma''_j$.

 We divide both sides of \eqref{S3} by 
$$ 2 \pi \iu \cdot z\cdot (v_0  -\sigma(z)) \cdot \prod^m_{a=1}[(v_a+t_a-\sigma(z))(v_a-\sigma(z))] \cdot H_N(z)$$
and integrate over $\Gamma' : = \Gamma_1 \cup \Gamma_2$. Note that $R_N^{\t, \v}(z)$ and $ R_N(z)$ are both holomorphic inside the contours $\Gamma_1, \Gamma_2$ and so the integrals of the corresponding terms vanish. From the rest we get
\begin{equation}\label{S5}
\begin{split}
&\frac{1}{2\pi \iu} \oint_{\Gamma'}\frac{H_N(z)^{-1} z^{-1} Q_N(z)}{v_0 -\sigma(z)} \Et \left[\theta \Delta \mathfrak{G}_N(z)\right] dz =  \\
&\frac{-1}{2\pi \iu} \oint_{\Gamma'} \frac{H_N(z)^{-1} z^{-1}dz}{v_0 -\sigma(z)}\left[  \frac{ \R^{\t, \v}_{N}(z)}{\prod^m_{a=1}[(v_a+t_a-\sigma(z))(v_a-\sigma(z))] } \right]  \\
&+ \frac{-1}{2\pi \iu}  \oint_{\Gamma'} \frac{H_N(z)^{-1} z^{-1} dz}{ v_0 -\sigma(z) }\left[ \Ps^-_N(z)e^{\theta \mathfrak{G}^c_N(z)} +  \Ps^+_N(z)e^{-\theta\mathfrak{G}^c_N(z)}  + \tilde{W}_N(z) \right] 
\end{split}
\end{equation}
Equation (\ref{S5}) can be viewed as the main output of our application of the Nekrasov's equation. In the following section we use it to deduce Theorem \ref{MainTechT}. 

\subsection{Concluding the proof of Theorem \ref{MainTechT}}\label{Section5.4}
In this section we present the remainder of the proof of Theorem \ref{MainTechT}. Our arguments below will require a certain asymptotic bound -- see (\ref{S8}), which will be established in Section \ref{Section6}. For clarity we split the proof into several steps.\\

{\raggedleft {\bf Step 1.}} Our goal in this step is to rewrite (\ref{S5}) into a form that is more useful for our analysis.

Using the formula for $Q_N$ from (\ref{RFE}) and that $\Delta \mathfrak{G}_N(z) = \log q \cdot (z - \lu z^{-1}) \Delta G_N(\sigma(z))$ from (\ref{eq:gothic_G})  we see that the RHS of (\ref{S5}) equals
$$\frac{\log q }{2\pi \iu} \oint_{\Gamma'}\frac{\prod_{j = 1}^k\sqrt{\left(\sigma(z) -   r_j\right)\left(\sigma(z)  -  s_j\right)}}{v_0  -\sigma(z) } \cdot (1 -  \lu z^{-2})  \cdot \Et \left[\theta\Delta {G}_N(\sigma(z))\right] dz $$
We perform a change of variables $\sigma(z) = w$ to rewrite the RHS of (\ref{S5}) as
$$2 \cdot \frac{\log q }{2\pi \iu} \oint_{\Gamma}\frac{\prod_{j = 1}^k\sqrt{\left(w -   r_j\right)\left(w  -  s_j\right)}}{v_0  -w }  \cdot \Et \left[\theta\Delta {G}_N(w)\right] dw, $$
where we used that $\sigma(\gamma_j') = \sigma(\gamma_j'')$ are contained $\gamma_j$ and we can deform the image to the latter without affecting the value of the integral by Cauchy's theorem.

Note that $\Et \left[\Delta {G}_N(w)\right]$ is analytic outside of the contour of integration and decays like $1/w^2$ when $|w| \rightarrow \infty$. Therefore, we can compute the integral as (minus) the residues at $w = v_0$ and $z = \infty$. The residue at $v_0$ is given by
$$ -2\cdot \log q  \cdot \prod_{j = 1}^k\sqrt{\left(v_0-   r_j\right)\left(v_0  -  s_j\right)} \cdot\Et \left[\theta\Delta {G}_N(v_0)\right],$$
while the residue at $\infty$ is a polynomial $P^{\t,\v}_N(v_0)$ of degree at most $k-2$ in $v_0$, whose coefficients are rational functions in $\t, \v$. Substituting the above in (\ref{S5}) we obtain the formula
\begin{equation}\label{S7}
\begin{split}
&\Et \left[\theta \Delta {G}_N(v_0)\right] =  \frac{NP^{\t,\v}_N(v_0) }{D_N(v_0)}+  \frac{N}{2\pi \iu} \oint_{\Gamma'} \frac{-D_N(v_0)^{-1}dz}{zH_N(z)(v_0 -\sigma(z))}\left[   \frac{ \R^{\t, \v}_{N}(z)}{W^{\t, \v}_m( z) } \right] + \\
&\frac{N}{2\pi \iu} \oint_{\Gamma'} \frac{-D_N(v_0)^{-1}dz}{z H_N(z)(v_0 -\sigma(z))}\left[ \Ps^-_N(z)e^{\theta \mathfrak{G}^c_N(z)}  +  \Ps^+_N(z)e^{-\theta \mathfrak{G}^c_N(z)} + \tilde{W}_N(z) \right],
\end{split}
\end{equation}
where 
\begin{equation}\label{S75}
\begin{split}
&D_N(v)= 2N  \log q   \prod_{j = 1}^k \sqrt{\left(v -   r_j\right)\left(v -  s_j\right)} \mbox{ and }  W^{\t, \v}_m( z) = \prod^m_{a=1}[(v_a+t_a-\sigma(z))(v_a-\sigma(z))].
\end{split}
\end{equation}

{\raggedleft {\bf Step 2.}} In this step we isolate an asymptotic estimate that we require to finish the proof.

\begin{fact}\label{fact_6} For each $m \geq 0$ we have
\begin{equation}\label{S8}
\frac{\partial^m}{\partial t_1 \cdots \partial t_m}   \frac{1}{2\pi \iu} \oint_{\Gamma'} \frac{D_N(v_0)^{-1}dz}{zH_N(z)(v_0- \sigma(z))}\left[ \frac{ \R^{\t, \v}_{N}(z)}{W^{\t, \v}_m( z) } \right] \Bigg\rvert_{t_a=0, 1 \leq a \leq m} = O(N^{-2}),
\end{equation}
where the constant in the big $O$ notation is uniform as $v_0, v_1,\dots,v_k$ vary over compacts in $U^u$.
\end{fact}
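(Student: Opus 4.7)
The plan is to decompose $\R^{\t,\v}_N(z)$ following \eqref{XDef2} as the sum of the two $\R^{\pm,\t,\v}_{1,N}$-contributions together with the four terms $W^{\t,\v}_m(z) \cdot A_N$, $W^{\t,\v}_m(z) \cdot B_N$, $W^{\t,\v}_m(z) \cdot C_N$, $W^{\t,\v}_m(z) \cdot D_N$, and show that each contributes $O(N^{-2})$ to the integral in \eqref{S8} after the $m$-fold $\t$-derivative is computed and $\t$ is set to $0$, uniformly in $z \in \Gamma'$. The outer prefactor $D_N(v_0)^{-1}(zH_N(z)(v_0-\sigma(z))W^{\t,\v}_m(z))^{-1}$ stays bounded on $\Gamma'$: $D_N(v_0) \to 2 \log(\lq) \prod_j \sqrt{(v_0-\hat r_j)(v_0-\hat s_j)}$, which is nonzero for $v_0 \in U^u$, while $H_N$ is non-vanishing on $\mathcal{M}$ by Assumption 5 and both $v_a$ and $0$ stay off $\sigma(\Gamma')$ by the construction of $\Gamma'$.

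The two $\R^{\pm,\t,\v}_{1,N}$-contributions are $O(N^{-2})$ directly by Assumption 4, because the associated expectations stay bounded uniformly on $\Gamma'$ (whose $\sigma$-image is bounded away from $\mathrm{supp}(\mu_N) \subset \cup_j[\hat a_j, \hat b_j]$). For $C_N$, the crucial point is that $\Delta\mathfrak{G}_N = \log q \cdot (z-\lu z^{-1}) \Delta G_N(\sigma(z))$ carries a $\log q = O(N^{-1})$ prefactor, so even the crude moment bound $\Et[|\Delta G_N(z)|^2] = O((\log N)^2)$ obtainable from Corollary \ref{BC1} yields $\Et[(\Delta\mathfrak{G}_N)^2] = O(N^{-2}(\log N)^2)$, which controls the quadratic Taylor remainder defining $C_N$. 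The term $D_N$ gains one factor $N^{-1}$ from $\Ps_N^\pm = O(N^{-1})$ (visible in \eqref{S1}), and the expected difference $\Et[\prod - e^{\pm\theta\mathfrak{G}^c_N}]$ is expanded as $e^{\pm\theta\mathfrak{G}^c_N}(\pm\theta\,\Et[\Delta\mathfrak{G}_N] + \Et[W_N^\pm] + O(\Et[(\Delta\mathfrak{G}_N)^2]) + O(N^{-2}))$, which is $O(N^{-1}\log N)$ provided the mean $\Et[\Delta G_N]$ is suitably controlled; $B_N$ is handled analogously, gaining $N^{-1}$ from $\tilde W^\pm_N$. Finally, $A_N$ is a second-order Taylor remainder of $\prod = \exp(\theta\mathfrak{G}^d_N + W^-_N + \R_{2,N}^-)$ beyond its linearization in $W_N^\pm$, and combines the contributions $(W^\pm_N)^2 = O(N^{-2})$, $\R^\pm_{2,N} = O(N^{-2})$, and $W^-_N - \tilde W^-_N = O(\log q \cdot \partial_z \Delta\mathfrak{G}_N)$ to yield the required $O(N^{-2})$.

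The $\t$-differentiation is handled by the Leibniz rule: derivatives act either on the explicit $\t$-dependence of $\R^{\t,\v}_N/W^{\t,\v}_m(z)$ (introducing uniformly bounded factors like $(v_a - \sigma(z) + t_a)^{-1}$ from $W^{\t,\v}_m$ and from $\Ps^\pm_N$ and $\Phi^{\pm,\t,\v}_N$ in \eqref{PhiN} and \eqref{S1}), or on the $\Et$-expectations, where Lemma \ref{LemCum1} converts them into joint cumulants of the relevant random variables with $NG^d_N(v_1), \dots, NG^d_N(v_m)$ under $\mathbb P_N$. Combining the smallness estimates above with polynomial-in-$\log N$ bounds on these cumulants (derivable inductively from Proposition \ref{BP1} and polarization of moment bounds) produces the desired $O(N^{-2})$ estimate uniformly in $z \in \Gamma'$ and in the deformation parameters near $\t = 0$.

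The main obstacle is the sharpened first-moment bound $\Et[\Delta G_N(v_0)] = O(\log N)$ needed for the estimates on $B_N$ and $D_N$ to close. Raw concentration from Proposition \ref{BP1} produces only $|\Delta G_N| = O(\sqrt{N\log N})$ with overwhelming probability, which is far too weak. The sharpening is obtained by a bootstrap based on \eqref{S7}: one inserts the rough bound into the identity with $m = 0$, tracks the (weaker) contributions of $A_N, B_N, C_N, D_N$ under this crude input, and extracts an improved estimate on $\Et[\Delta G_N(v_0)]$; substituting this back into the variance and higher-cumulant estimates and iterating a fixed finite number of times produces the target $O(N^{-2})$ bound in \eqref{S8}. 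Executing this iteration with uniformity in $z \in \Gamma'$, in $v_0, \dots, v_m$, and in $\t, \v$, while simultaneously controlling the analyticity of all expressions through the contour deformations, constitutes the technical heart of Section \ref{Section6} and parallels the strategy used in \cite{BGG}.
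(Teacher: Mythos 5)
Your overall strategy mirrors the paper's: reduce Fact~\ref{fact_6} to moment estimates on $\Delta G_N$ (the paper's Proposition~\ref{moments}), exploit the $\log q = O(N^{-1})$ suppression in $\Delta\mathfrak{G}_N$, and obtain those moment estimates via a self-improving bootstrap built on~\eqref{S7}. However, your treatment of $C_N$ contains an error that your own final paragraph contradicts. You assert that $\Et[|\Delta G_N(z)|^2] = O((\log N)^2)$ is ``obtainable from Corollary~\ref{BC1}'' --- but it is not. Corollary~\ref{BC1} only gives a nontrivial bound on $|\int g\,d\mu_N - \int g\,d\hat\mu_N|$ at scale $a \gtrsim (\log N)/\sqrt{N}$ (the exponent $CN\log(N)^2 - 2\theta\pi^2 a^2 N^2$ must be negative), so that $|\Delta G_N| = N\,|\cdots| \lesssim \sqrt{N}\log N$ with high probability and $\E[|\Delta G_N|^2] = O(N(\log N)^2)$, off by a factor of $N$. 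That only gives $\E[(\Delta\mathfrak{G}_N)^2] = O(N^{-1}(\log N)^2)$, which does not control $C_N/W^{\t,\v}_m$ to $O(N^{-2})$. The $O(1)$ moment bound is precisely what the bootstrap produces, and it is needed for all four terms $A_N, B_N, C_N, D_N$, not just $B_N$ and $D_N$ as your last paragraph suggests. As written, your $C_N$ paragraph claims the key estimate is free from~Corollary~\ref{BC1} while your final paragraph (correctly) says it is not, and the quantity you declare to be free is the harder one.

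Your description of the bootstrap is also too thin to count as an argument. The paper's self-improving Claim gains a factor $N^{-1/2}$ per iteration \emph{at the cost of restricting the range of moments} (from $4n+4$ down to $4n$), which forces the a~priori estimate~\eqref{WAE} to hold for many moments at once and bounds the number of iterations; moreover, the Cauchy--Schwarz step~\eqref{CSI} fails to improve in the boundary case $M = 2m_1+2$ and requires the separate refinement of Step~5. These structural features are why the iteration terminates at $O(1)$; asserting ``iterating a fixed finite number of times produces the target'' without engaging with them names the difficulty rather than resolving it.
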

The proof of Fact \ref{fact_6} will be presented in Section \ref{Section6}. In the remainder of the section we assume its validity and finish the proof of Theorem \ref{MainTechT}.\\

{\raggedleft {\bf Step 3.}} Let us fix $m \geq 1$, differentiate both sides of (\ref{S7}) with respect to $t_1, \dots, t_m$ and set $t_a = 0$ for $a = 1, \dots, m$. Using (\ref{S8}) we get
\begin{equation}\label{S9}
\begin{split}
&\partial_{t_1}\cdots \partial_{t_m}  \Et \left[ \theta\Delta {G}_N(v_0)\right]  \Bigg\rvert_{t_a=0, 1 \leq a \leq m} \hspace{-15mm}=   \partial_{t_1}\cdots \partial_{t_m}     \Bigg[\frac{NP^{\t,\v}_N(v) }{D_N(v_0)}+ \frac{1}{2\pi \iu} \oint_{\Gamma'} \frac{-D_N(v_0)^{-1}dz}{zH_N(z) (v_0-\sigma(z))}\\
&\times \left[ N\Ps^-_N(z)e^{\theta \mathfrak{G}_N^c(z)}  +  N\Ps^+_N(z)e^{-\theta \mathfrak{G}_N^c(z)}+ \tilde{W}_N(z) \right]\Bigg] \Bigg\rvert_{t_a=0, 1 \leq a \leq m}   + O(N^{-1}).
\end{split}
\end{equation}

The only functions in (\ref{S9}), which depend on $\t$ are $P^{\t,\v}_N(v), \Ps^{\pm}_N(z)$, see (\ref{S1}). Since any mixed partial derivatives of $\Ps_N^{\pm}(z)$ vanish, we conclude that for $m \geq 2$ we have
\begin{equation}\label{S91}
\partial_{t_1}\cdots \partial_{t_m} \mathbb E_{\mathbb P^{\t, \v}_N}\left[\theta \Delta G_N(v_0) \right]\bigg\rvert_{t_a=0,
1\leq a\leq m}=  \partial_{t_1}\cdots \partial_{t_m}   \Bigg[\frac{NP^{\t,\v}_N(v_0) }{D_N(v_0)}\Bigg] \Bigg\rvert_{t_a=0, 1 \leq a \leq m}   \hspace{-7mm} +  O(N^{-1}).
\end{equation}

We may now apply $\Upsilon^N_{v_0}$ from (\ref{Upsilon}) for the contours $\gamma_1, \dots, \gamma_k$ and the points $r_j, s_j$ for $j = 1, \dots, k$ to both sides of the above equation. Indeed, we notice that the integral of $G_N^d$ around $\gamma_i$ is deterministic and equals $ n_i(N)/ N$. On the other hand, the integral of $G_N^c$ around $\gamma_i$ equals the total mass of $\hat\mu_N(x)$ inside $\gamma_i$, which is $ n_i(N)/N$ by assumption. We conclude that the integral of $\Delta G_N(v) $ around each loop $\gamma_i$ vanishes. The integral over the first term on the right side vanishes by Property (2) in Proposition \ref{PropUpsilon}. By linearity, we see that the integral over the term represented by $O(N^{-1})$ over $\Gamma$ must also vanish. Applying $\Upsilon^N_{v_0}$ and using Property (1) in Proposition \ref{PropUpsilon} we get
$$\partial_{t_1}\cdots \partial_{t_m} \mathbb E_{\mathbb P^{\t, \v}_N}\left[\Delta G_N(v_0) \right]\bigg\rvert_{t_a=0,
1\leq a\leq m}=   O(N^{-1}),$$
which proves the case $m \geq 2$.

If $m = 1$
\begin{equation}\label{SQ1}
\begin{split}
&\partial_{t_1}  \Et \left[\theta \Delta {G}_N(v_0)\right]  \Bigg\rvert_{t_1=0} = O(N^{-1}) + \partial_{t_1}\frac{NP^{\t,\v}_N(v_0) }{D_N(v_0)}  \Bigg\rvert_{t_1=0}+ \frac{-D_N(v_0)^{-1}}{2\pi \iu} \oint_{\Gamma'} f(z) dz,\\
&\mbox{ where }f(z) :=   \frac{\Phi^-(z)e^{\theta\mathfrak{G}_N^c(z)} \left[(1 - \lu z^{-2}) \log \lq   - (\lu - u) z^{-2} \right] - \Phi^+(z) e^{-\theta\mathfrak{G}_N^c(z)} (\lu - u) z^{-2}}{H_N(z)(v_0-\sigma(z))(v_1 - \sigma(z))^2}.
\end{split}
\end{equation}
Applying (\ref{RFE}) we obtain
$$\frac{-D_N(v_0)^{-1}}{2\pi \iu} \oint_{\Gamma'} f(z) dz =  \frac{-D_N(v_0)^{-1}}{2\pi \iu} \oint_{\Gamma'}  \frac{ 2^{-1} \log \lq [R_N(z) + Q_N(z)](1 - \lu z^{-2}) -  (\lu - u) z^{-2} R_N(z)}{H_N(z)(v_0-\sigma(z))(v_1 - \sigma(z))^2}dz .$$
Notice that the terms with $R_N(z)$ integrate to $0$ by analyticity, and so we may remove them. Substituting $Q_N(z)$ from (\ref{RFE}) and $D_N(v_0)$ from (\ref{S75}) we get
\begin{equation*}
\begin{split}
\frac{-D_N(v_0)^{-1}}{2\pi \iu} \oint_{\Gamma'} f(z) dz =  \frac{-1}{8\pi \iu\cdot  \prod_{j = 1}^k \sqrt{\left(v_0 -   r_j\right)\left(v_0  -  s_j\right)}} \times \\
\oint_{\Gamma'}  \frac{ \prod_{j = 1}^k \sqrt{\left(\sigma(z) -   r_j \right)\left(\sigma(z)  -  s_j \right)}}{(v_0-\sigma(z))(v_1 - \sigma(z))^2} \cdot  (1 - \lu z^{-2})dz .
\end{split}
\end{equation*}
We perform the change of variables $w = \sigma(z)$ and deform the resulting contours to $\Gamma$ to obtain
\begin{equation}\label{S92}
\begin{split}
&\partial_{t_1}  \Et \left[\theta \Delta {G}_N(v_0)\right]  \Bigg\rvert_{t_1=0} = O(N^{-1}) + \frac{1}{4\pi \iu \cdot  \prod_{j = 1}^k \sqrt{\left(v_0 -   r_j\right)\left(v_0  -  s_j\right)}}  \times \\
& \oint_{\Gamma}  \frac{ \prod_{j = 1}^k \sqrt{\left(w -   r_j \right)\left(w  -  s_j\right)}}{(w - v_0)(v_1 - w)^2}dw +  \partial_{t_1} \frac{NP^{\t,\v}_N(v_0) }{D_N(v_0)}  \Bigg\rvert_{t_1=0}.
\end{split}
\end{equation}

As before we can apply $\Upsilon^N_{v_0}$ to both sides of the above equation. The only difference with respect to the $m \geq 2$ case is the second term on the right side. Notice that it is analytic in the unbounded component of $\Gamma$ and decays like $|v_0|^{-k - 1}$ as $|v_0| \rightarrow \infty$. Consequently, there is no residue at infinity and the integral over $\Gamma$ is zero. Arguing as in the case $m\geq 2$ we get
\begin{equation*}\label{SQ2}
\begin{split}
&\partial_{t_1} \Et \left[\theta \Delta {G}_N(v_0)\right]  \Bigg\rvert_{t_1=0} \hspace{-4mm} = O(N^{-1}) + \\
& \Upsilon^N_{v_0} \left[\frac{1}{4\pi \iu \cdot  \prod_{j = 1}^k \sqrt{\left(v_0 -   r_j\right)\left(v_0  -  s_j\right)}} \oint_{\Gamma}  \frac{ \prod_{j = 1}^k \sqrt{\left(w-   r_j\right)\left(w  -  s_j\right)}}{(w - v_0)(v_1 - w)^2} dw \right].
\end{split}
\end{equation*}
Finally, we can replace $r_j, s_j$ with $\hat{r}_j, \hat{s}_j$ and $\Upsilon_{v_0}^N$ with $\Upsilon_{v_0}$, which produces an error $O(N^{-1}\log (N))$ by Assumption 5 and Property (3) in Proposition \ref{PropUpsilon}.

\section{Central limit theorem: Part II}\label{Section6}
In this section we will prove (\ref{S8}), which is the missing ingredient necessary to complete the proof of Theorem \ref{MainTechT}. In what follows we will continue to use the same notation as in Section \ref{Section5}. Before we go into the main argument we introduce a bit of additional notation and isolate a basic result, which will be used several times throughout.

If $X_1,\dots,X_n$ are bounded random variables, we denote by $M_c( X_1, \dots, X_n)$ their joint cumulant. From Lemma \ref{LemCum1} we know that for any bounded random variable $\xi$ we have
\begin{equation}\label{CumEq1}
\frac{\partial^n \Et [ \xi] }{\partial t_1 \cdots \partial t_n}  \Bigg\rvert_{t_a=0, 1 \leq a \leq n} \hspace{-5mm}= M_c(\xi, NG_N(v_1),\dots,NG_N(v_n)) = M_c(\xi, \Delta G_N(v_1),\dots,\Delta G_N(v_n)),
\end{equation}
where the second equality follows from the fact that cumulants are unchanged under shifts. To ease notation later in the text we set for a subset $A = \{a_1, \dots, a_k\} \subset \{1, \cdots, n\}$ 
$$M_N(\xi | v_a, A) := M_c(\xi, \Delta G_N(v_{a_1}), \dots, \Delta G_N(v_{a_k})).$$

\subsection{Estimating the remainders}\label{Section6.1}
In this section we reduce (\ref{S8}) to the following statement, whose proof is given in Section \ref{Section6.2}.
\begin{prop}\label{moments} Assume the notation from Theorem \ref{MainTechT}. If $l \geq 1$ and $v_1,\dots,v_l \in U$ then
\begin{equation}
\E \left[ \prod_{a = 1}^l  \left| \Delta G_N(v_a) \right| \right] = O(1),
\end{equation}
where the constant in the big $O$ notation is uniform as $v_1,\dots,v_l$ vary in compact subsets of $U$.
\end{prop}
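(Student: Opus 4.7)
The plan is to use Hölder's inequality to reduce the claim to bounding $\mathbb{E}[|\Delta G_N(v)|^l]$ for each $l \geq 1$ and each $v$ in a fixed compact $K \subset U$, and then combine a concentration estimate with a bootstrap driven by Nekrasov's equation. As a starting point, I apply Corollary \ref{BC1} to a smooth compactly supported cutoff of $x \mapsto 1/(v-x)$, whose $\|\cdot\|_{1/2}$ and Lipschitz norms are uniformly bounded for $v \in K$. This yields the sub-Gaussian tail $\mathbb{P}(|\Delta G_N(v)| \geq t) \leq \exp(C N \log^2 N - c t^2)$ once $t$ exceeds a threshold of order $\sqrt{N} \log N$, and integrating $l t^{l-1}$ against it furnishes the weak but uniform preliminary bound $\mathbb{E}[|\Delta G_N(v)|^l] = O((\sqrt{N}\log N)^l)$; in particular $\mathbb{E}[|\Delta G_N(v)|^2] = O(N\log^2 N)$.

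The bootstrap that sharpens this to $O(1)$ invokes Nekrasov's equation. Applying Theorem \ref{NekGen} to the undeformed measure $\mathbb{P}_N$ and running the contour-integration manipulation of Section \ref{Section5.3} with $m = 0$, $\t = 0$ produces an identity of the form $\mathbb{E}[\theta \Delta G_N(v_0)] = (\text{explicit } O(1) \text{ term}) + \mathcal{E}_N(v_0)$, where the error $\mathcal{E}_N(v_0)$ is the contour integral of the remainder $\R^{0,\v}_N(z)$ in \eqref{XDef2} against analytic factors of size $O(N^{-1})$. The only nontrivial piece of $\R^{0, \v}_N$ is $C_N$: using $|e^x - 1 - x| \leq \tfrac{1}{2} x^2 e^{|x|}$ together with the deterministic bound $|\Delta \mathfrak{G}_N(z)| \leq C$ (which follows from $|\Delta G_N(\sigma(z))| \leq C N$ whenever $z$ ranges over a compact subset of $\mathcal{O}$ bounded away from the support), one obtains $|C_N(z)| \leq C \cdot \mathbb{E}[|\Delta G_N(\sigma(z))|^2]/N^2$. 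Substituting the weak bound $\mathbb{E}[|\Delta G_N|^2] = O(N\log^2 N)$ gives $\mathcal{E}_N(v_0) = O(\log^2 N / N)$, hence $\mathbb{E}[\Delta G_N(v_0)] = O(1)$ uniformly for $v_0 \in K$.

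For higher moments and joint cumulants, I would iterate the same scheme for the deformed measure $\mathbb{P}^{\t,\v}_N$, which satisfies Nekrasov's equation with the modified functions $\Phi_N^{\pm, \t, \v}$ of \eqref{PhiN}. Differentiating the analogous identity $l$ times in $\t$ at $\t = 0$ and invoking Lemma \ref{LemCum1} expresses the joint cumulant $M_c(\Delta G_N(v_0), \Delta G_N(v_1), \dots, \Delta G_N(v_l))$ as an explicit contour integral plus a remainder which, after the Leibniz/Faà di Bruno expansion of $\partial^l_{\t} C_N$, is a combinatorial sum over partitions of $\{1, \dots, l\}$ of products of lower-order joint moments of $\Delta G_N$, weighted by analytic factors of size $O(N^{-1})$. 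Induction on $l$, using at each step the previously established $O(1)$ bounds on lower-order cumulants together with the crude weak bound above for the top-order one, then shows that every joint cumulant of $\Delta G_N$ is $O(1)$ uniformly on compact subsets of $U$. Expanding joint moments as sums over set partitions of joint cumulants gives $\mathbb{E}[\prod_a |\Delta G_N(v_a)|] = O(1)$, and Hölder's inequality closes the loop.

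The main obstacle is the bookkeeping of the induction step: each $\partial_{t_a}$ hitting the exponentials inside $\mathbb{E}[e^{\pm \theta \Delta \mathfrak{G}_N} - 1 \mp \theta \Delta \mathfrak{G}_N]$ produces, via Lemma \ref{LemCum1} and the chain rule, a sum over set partitions whose terms must be organized so that the two quadratic-order cancellations in the $C_N$ factor survive differentiation and leave only products of joint moments controllable by the induction hypothesis; this is analogous to the expansion carried out in \cite{BGG} for discrete $\beta$-ensembles. A secondary point is that the contour $\Gamma'$ entering the bootstrap must be chosen so that every $v_0, \dots, v_l \in K$ lies in the unbounded component of its complement, which follows from the flexibility in the choice of $\Gamma$ together with the compactness of $K \subset U$.
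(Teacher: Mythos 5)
Your overall plan---H\"older reduction to diagonal moments, Corollary~\ref{BC1} for a polynomial-in-$N$ a priori bound, then a bootstrap driven by Nekrasov's equation interpreted through Lemma~\ref{LemCum1}---is exactly the paper's route (Steps 1--5 of Section~\ref{Section6.2}). The first step of your bootstrap (the $m=0$ case, giving $\mathbb{E}[\Delta G_N(v_0)] = O(1)$ from the weak second-moment bound) is also correct in substance.

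The gap is in the induction step. You write that after $l$ $t$-derivatives the remainder is ``a combinatorial sum over partitions of $\{1,\dots,l\}$ of products of \emph{lower-order} joint moments of $\Delta G_N$,'' and that one can then induct on $l$ using ``the previously established $O(1)$ bounds on lower-order cumulants together with the crude weak bound above for the top-order one.'' This is not what happens. The dominant piece of the remainder is (after $l$ derivatives, see~\eqref{StartHARD}) a joint cumulant of the form $M_N\bigl(\xi_N(z)\,\Delta G_N(\sigma(z))^2 \mid v_a, \{1,\dots,l\}\bigr)$, which when expanded via the cumulant--moment dictionary involves moments of $\Delta G_N$ of order up to $l+2$, not $l$. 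So in bounding the $l$-th cumulant you must control the $(l+2)$-th moment \emph{well}; the crude bound $O(N^{(l+2)/2+1/2})$ on the $(l+2)$-th moment only yields, after dividing by $N^2$, an error of order $N^{l/2 - 1/2}$, which is far from $O(1)$ for $l\geq 2$. A literal induction on $l$ from below therefore stalls immediately at the second step.

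This is precisely why the paper's Step~2 formulates the bootstrap as a \emph{two-parameter} self-improving estimate: assuming $\E[\prod_{a=1}^m|\Delta G_N(v_a)|] = O(1) + O(N^{m/2+1-M/2})$ for all $m\leq 4n+4$, one derives the same bound with exponent $M+1$ but only for $m\leq 4n$. The range of controllable orders shrinks by $4$ while the quality of the estimate improves by a factor of $N^{-1/2}$, and to end with an $O(1)$ bound at a fixed order $l$ one must start the iteration at a much larger order $4\cdot 2l + 4$. Reassembling moments from cumulants also requires the odd-order Cauchy--Schwarz device of~\eqref{CSI} together with the special treatment of the boundary case $M=2m_1+2$ in Step~5, none of which is automatic. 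Your proposal recognizes that ``bookkeeping of the induction step'' is ``the main obstacle,'' but the way you phrase the induction hypothesis (control of lower orders plus a crude bound on the top order) would not close: you need good control \emph{two orders above} the one you are proving, so the induction must in effect run downward from a larger starting order, which is the shrinking-range scheme of the paper.
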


We assume the validity of Proposition \ref{moments} and proceed with the proof of (\ref{S8}). Our goal is to prove that for $m \geq 0$ we have 
\begin{equation}\label{RemKILL}
 \frac{\partial^m}{\partial t_1 \cdots \partial t_m}  \left[ \frac{ N^2 \cdot \R^{\t, \v}_{N}(z)}{W^{\t, \v}_m( z) }\right]  \Bigg\rvert_{t_a=0, 1 \leq a \leq m} = O\left(1\right)
\end{equation}
uniformly as $z$ and $v_1, \dots, v_m$ vary over $\Gamma'$ and compacts in $U^u$ respectively.  This implies (\ref{S8}).

In view of (\ref{XDef2}) we have
\begin{equation}\label{DefY}
\begin{split}
 \frac{ N^2 \cdot \R^{\t, \v}_{N}(z)}{W^{\t, \v}_m( z) } = \hspace{2mm}&  \Et \left[ \xi_N(z) \cdot \Delta G_N(\sigma(z))^2 \right] + \Et\left[ \xi'_N(z) \cdot \partial_z  \Delta G_N(\sigma(z)) \right]  + \\
&  \Et \left[  \mathfrak{c}_N(z; \t,\v)  \right] +\Et \left[  \mathfrak{c}'_N(z; \t,\v) \cdot  \Delta G_N(\sigma(z)) \right].
\end{split}
\end{equation} 
In (\ref{DefY}) $\xi_N(z), \xi'_N(z)$ are random analytic functions in $z$, which do not depend on $\t,\v$ and that are $O(1)$ uniformly over compacts in $\mathcal{O}\cap \mathcal{M}$ and $N$, recall that $\mathcal{O}$ is the inverse image of $U$ under the map $z \rightarrow z + \lu z^{-1}$. In addition, $\mathfrak{c}_N(z; \t,\v),\mathfrak{c}'_N(z; \t,\v)  $ are  linear combinations of random analytic function in $z$, independent  of $\t,\v$,  that are also $O(1)$ uniformly over compacts in $\mathcal{O} \cap \mathcal{M}$. The coefficients of these linear combination are infinitely differentiable functions in $t_i$, whose derivatives, evaluated at $t_1 = \dots = t_m = 0$, are all uniformly bounded as $v_1,\dots,v_m$ vary over compacts in $U$, $z$ varies over compacts in $\mathcal{O} \cap \mathcal{M}$ and $\left| \sigma(z)- v_i\right|$ for $i = 1,\dots, m$ are bounded away from $0$.

We now fix $m \geq 1$ and differentiate both sides of (\ref{DefY}) with respect to $t_1,\dots,t_m$ and set $t_1= \dots = t_m = 0$ (the case $m = 0$ will be treated separately). For the terms involving random variables we use (\ref{CumEq1}) to rewrite the result as a cumulant. Observe that we need to apply Leibniz rule when we differentiate  $\Et \left[\mathfrak{c}_N(z; \t,\v) \right] $  or $\Et \left[\mathfrak{c}'_N(z; \t,\v) \right]$; therefore, we will obtain a sum depending on how many times we differentiated one of the coefficients in $\mathfrak{c}_N(z; \t,\v)$ or $\mathfrak{c}'_N(z; \t,\v)$ and how many times the expectations $\Et $. We obtain the following result
\begin{equation}\label{StartHARD}
\begin{split}
&\frac{\partial^m}{\partial t_1 \cdots \partial t_m}  \left[ \frac{ N^2\cdot \R^{\t, \v}_{N}(z)}{W^{\t, \v}_m( z) } \right]  \Bigg\rvert_{t_a=0, 1 \leq a \leq m} = M_N\left( \xi_N(z)  \Delta G_N(\sigma(z))^2| v_a, \{1, \dots,m\}\right)   + \\
&M_N\left(\xi'_N(z)  \partial_z  \Delta G_N(\sigma(z)) |v_a, \{1, \dots, m\} \right) +  \sum_{A \subset \{1,\dots,m\}}  \big[ M_N\left(  \partial_A \mathfrak{c}_N(z; {\bf 0},\v)|v_a, A^c \right)+ \\
&M_N(  \partial_A\mathfrak{c}'_N(z; {\bf 0},\v) \cdot \Delta G_N(\sigma(z)) |v_a,  A^c) \big]. 
\end{split}
\end{equation}
Using that cumulants are linear combinations of moments and Proposition \ref{moments},  we conclude that each term in (\ref{StartHARD}) is $O\left(1\right)$ uniformly as $v_1,\dots,v_m$ vary over compacts in $U$, $z$ varies over compacts in $\mathcal{O} \cap \mathcal{M}$, $\left| \sigma(z)- v_i\right|$ for $i = 1,\dots, m$ are bounded away from $0$ and $N \rightarrow \infty$. One might be cautious about the term involving $\partial_z  \Delta G_N(\sigma(z))$; however, by Cauchy's Theorem the uniform moment bound we have for $\Delta G_N(\sigma(z))$ implies one for its derivative.

Since $\Gamma' \subset \mathcal{O}\cap \mathcal{M}$ we conclude (\ref{RemKILL}) for the case $m \geq 1$. If $m = 0$, then (\ref{DefY}) reads
\begin{equation}\label{Spec0}
\begin{split}
 N^2\cdot \R_{N}(z) = \hspace{2mm} &\E \left[ \xi_N(z) \cdot \Delta G_N(\sigma(z))^2 \right] + \E\left[ \xi'_N(z)\cdot \partial_z  \Delta G_N(\sigma(z)) \right] + \\
&\E \left[  \mathfrak{c}_N(z)  \right]   + \E \left[  \mathfrak{c}'_N(z)  \Delta G_N(\sigma(z)) \right] .
\end{split}
\end{equation}
Combining that $\xi_N(z), \xi'_N(z),  \mathfrak{c}_N(z)$ and $\mathfrak{c}'_N(z)$ are uniformly bounded over compacts in $\mathcal{O} \cap \mathcal{M}$ with Proposition \ref{moments} we conclude that (\ref{Spec0}) is $O(1)$ as $N \rightarrow \infty$. This proves  (\ref{RemKILL}) for all $m \geq 0$.

\subsection{Self-improving estimates and the proof of Proposition \ref{moments}}\label{Section6.2}
In this section we prove Proposition \ref{moments}. For clarity we split the proof into several steps.\\

{\raggedleft {\bf Step 1.}} In the first step we derive a weak a priori estimate on $\E \left[ \prod_{a = 1}^m  \left| \Delta G_N(v_a) \right| \right]$, which will be iteratively improved in the steps below until we reach the desired estimate of the proposition. More precisely, we show that for each $n \in \mathbb{N}$, compact subset $K \subset U$ and $v_1,\dots,v_n \in K$ we have
\begin{equation}\label{WAE}
\E \left[  \prod_{i = 1}^n \left|\Delta G_N(v_i) \right| \right] =O \left( N^{n/2 + 1/2} \right),
\end{equation}
where the constant in the big $O$ notation depends on $K$ and $n$.

Recall from Section \ref{Section5.1} that $\Delta G_N(v) =N\left(G^d_N(v) - G^c_N(v)\right)$, where
$$ G^d_N(v)=\int\limits_{\mathbb{R}}\frac{\mu_N(dx)}{v-x} \text{ and } G^c_N(v)=\int\limits_{\mathbb{R}}\frac{\hat\mu_N(dx)}{v-x}.$$
Using H{\"o}lder's inequality, we can reduce (\ref{WAE}) to showing that for all $v \in K$ we have
\begin{equation}\label{WAE2}
\E \left[  N^n \left|G^d_N(v) - G^c_N(v)\right|^n \right] =O \left( N^{n/2 + 1/2} \right)
\end{equation}
Fix $\eta > 0$ small enough so that the $\eta$ neighborhood of $S = \cup_{j = 1}^k [ \hat{a}_j, \hat{b}_j]$ is disjoint from $K$. Let $h(x)$ be a smooth function, whose support is inside the $\eta$-neighborhood of $S$, and such that $h(x) = 1$ on an $\eta/2$-neighborhood of $S$. Note that for all $N$ sufficiently large we have that $\mu_N$ and $\hat\mu_N$ are both supported in the $\eta/2$-neighborhood of $S$. Setting $g(x) = (v - x)^{-1}$ we have
$$G^d_N(v) - G^c_N(v) = \int_{\mathbb{R}} g(x)h(x) \mu_N(dx) - \int_{\mathbb{R}} g(x)h(x) \mu(dx).$$ 
 We can apply Corollary \ref{BC1} for the function $g \cdot h$ with $a = r \cdot N^{1/2n - 1/2}$, $r > 0$ and $p = 3$ to get
$$\mathbb{P}_N \left( \left| G^d_N(v) - G^c_N(v) \right| \geq c_1 r N^{-1/2 + 1/2n} + c_2 N^{-3} \right) \leq \exp\left( CN \log(N) - 2\theta \pi^2 r^2 N^{1+1/n}\right),$$
which implies (\ref{WAE2}).\\

{\raggedleft {\bf Step 2.}} In this step we reduce the proof of the proposition to the establishment of the following self-improvement estimate claim.\\

{\raggedleft {\bf Claim:}} Suppose that for some $n, M \in \mathbb{N}$ we have that 
\begin{equation}\label{assume}
\E \left[ \prod_{a = 1}^m \left| \Delta G_N(v_a) \right| \right] = O(1) + O\left(N^{m/2 + 1 - M/2}\right) \mbox{ for } m = 1,\dots, 4n + 4,
\end{equation}
where the constaints in the big $O$ notations are uniform as $v_a$ vary over compact subsets of $U$ for $a = 1,\dots,4n+4$. Then we have
\begin{equation}\label{deduce}
\E \left[ \prod_{a = 1}^m \left| \Delta G_N(v_a) \right| \right] = O(1) + O\left(N^{m/2 + 1 - (M+1)/2}\right) \mbox{ for } m = 1,\dots, 4n.
\end{equation}
The proof of the above claim will be established in the following steps. For now we assume its validity and conclude the proof of the proposition.\\

Notice that (\ref{WAE}) implies that (\ref{assume}) holds for the pair $n = 2l$ and $M = 1$. The conclusion is that (\ref{assume}) holds for the pair $n = 2l -1$ and $M = 2$. Iterating the argument an additional $l$ times we conclude that (\ref{assume}) holds with $n = l-1$ and $M = l+2$, which implies the proposition. \\

{\raggedleft {\bf Step 3.}} In this step we prove that 
\begin{equation}\label{Y8}\begin{split}
&M_N(\Delta G_N(v_0) | v_a, \{1, \dots, m\}) = O(1) + O\left(N^{m/2 + 1 - M/2}\right)\\ &\mbox{ for $m = 1,\dots,4n + 2$} 
 \mbox{ and }  \E \left[\Delta {G}_N(v_0)\right] = O(1) + O\left(N^{ 1 - M/2}\right), \mbox{ where }
\end{split}
\end{equation}
the constants in the big $O$ notation are uniform over $v_0,\dots,v_m$ in compact subsets of $U$. \\

We start by fixing a compact subsets $\mathcal{V} \subset U $, which is invariant under conjugation and let $\Gamma' = \Gamma_1 \cup \Gamma_2$ be as in Section \ref{Section5.3} with $\sigma(\Gamma_1)$ and the bounded components of $U \setminus \sigma(\Gamma_1)$ disjoint from $\mathcal{V}$. For  $m=  1,\dots, 4n + 2$, we differentiate both sides of (\ref{S7}) with respect to $t_1,\dots,t_m$ and set $t_1= \dots = t_m = 0$. 
Combining (\ref{CumEq1}), (\ref{S91}) and (\ref{S92}) the result we obtain is
\begin{equation}\label{Y1}
\begin{split}
&M_N(\Delta G_N(v_0) |v_a, \{1, \dots, m\}) =  \frac{NP^{m}_N(v_0) }{D_N(v_0)}+  \frac{{\bf 1}_{\{m = 1\} }}{4\pi \iu \cdot  \prod_{j = 1}^k \sqrt{\left(v_0 -   r_j\right)\left(v_0  -  s_j\right)}}  \times \\
& \oint_{\sigma(\Gamma_1)}  \frac{ \prod_{j = 1}^k \sqrt{\left(w -   r_j \right)\left(w  -  s_j \right)}}{(w - v_0)(w - v_1)^2}dw +  \frac{-D_N(v_0)^{-1}}{2\pi \iu} \oint_{\Gamma'} \frac{N\cdot \R^{m}_{N}(z)dz} {zH_N(z)(v_0 -\sigma(z))}, \mbox{ where }
\end{split}
\end{equation} 
\begin{equation}\label{RemandP}
\R^{m}_{N}(z) = \frac{\partial^m}{\partial t_1 \cdots \partial t_m}   \frac{ \R^{\t, \v}_{N}(z)}{W^{\t, \v}_m( z) }   \Bigg\rvert_{t_a=0, 1 \leq a \leq m}  P^{m}_N(v_0) = \frac{\partial^m}{\partial t_1 \cdots \partial t_m} P^{\t, \v}_N(v_0)  \Bigg\rvert_{t_a=0, 1 \leq a \leq m}.
\end{equation}

By the same arguments following (\ref{S92}) we may apply the map $\tilde{\Upsilon}^N_{v_0}$ from (\ref{Upsilon}) for the contours $\sigma(\gamma'_1), \dots, \sigma(\gamma'_k)$ and the points $r_j, s_j$ for $j = 1, \dots, k$ to both sides of (\ref{Y1}) to get
\begin{equation}\label{Y111}
\begin{split}
&M_N(\Delta G_N(v_0) |v_a, \{1, \dots, m\}) =  \tilde{\Upsilon}^N_{v_0} \left[  \frac{-D_N(v_0)^{-1}}{2\pi \iu} \oint_{\Gamma'} \frac{N\cdot \R^{m}_{N}(z)dz} {zH_N(z)(v_0 -\sigma_N(z))}\right] +\\
&\tilde{\Upsilon}^N_{v_0} \left[  \frac{{\bf 1}_{\{m = 1\} }}{4\pi \iu \cdot  \prod_{j = 1}^k \sqrt{\left(v_0 -   r_j\right)\left(v_0  -  s_j\right)}}   \oint_{\sigma(\Gamma_1)}  \frac{ \prod_{j = 1}^k \sqrt{\left(w -   r_j \right)\left(w  -  s_j \right)}}{(w - v_0)(v_1 - w)^2}dw \right].
\end{split}
\end{equation}

Combining (\ref{Y111}) and an application of $\tilde{\Upsilon}^N_{v_0}$ to both sides of (\ref{S7}) we get
\begin{equation}\label{Y2}
\begin{split}
&M_N(\Delta G_N(v_0) |v_a, \{1, \dots, m\}) = O(1) +  \tilde{\Upsilon}^N_{v_0} \left[\frac{-D_N(v_0)^{-1}}{2\pi \iu} \oint_{\Gamma'} \frac{ N\cdot \R^{m}_{N}(z)  dz}{zH_N(z)(v_0 -\sigma(z))} \right], \\
&\\
&\E \left[\Delta {G}_N(v_0)\right] = O(1) +  \tilde{\Upsilon}^N_{v_0} \left[  \frac{-D_N(v_0)^{-1}}{2\pi \iu} \oint_{\Gamma'} \frac{N \cdot \R_{N}(z)dz}{zH_N(z)(v_0 -\sigma(z))} \right].
\end{split}
\end{equation}
 The constants in the big $O$ notation are uniform over $v_0, v_1,\dots,v_m$ in compact subsets of $\mathcal{V}$. \\

At this time we recall (\ref{StartHARD}), which states that for $m = 1,\dots,4n + 2$ we have
\begin{equation*}
\begin{split}
&N^2 \cdot  \R^{m}_{N}(z) =  \sum_{A \subset \{1,\dots,m\}}  \big[ M_N\left(  \partial_A \mathfrak{c}_N(z; {\bf 0},\v)|v_a, A^c \right)+ M_N(  \partial_A\mathfrak{c}'_N(z; {\bf 0},\v) \cdot \Delta G_N(\sigma(z)) |v_a,  A^c) \big] + \\
&M_N\left( \xi_N(z)  \Delta G_N(\sigma(z))^2| v_a, \{1, \dots,m\}\right)   +  M_N\left(\xi'_N(z)  \partial_z  \Delta G_N(\sigma(z)) |v_a, \{1, \dots,m\} \right). 
\end{split}
\end{equation*}
Recall that $\xi_N(z)$, $\xi_N'(z)$, $ \partial_A\mathfrak{c}_N(z; {\bf 0},\v)$ and $ \partial_A\mathfrak{c}'_N(z; {\bf 0},\v)$ are all $O(1)$ if $v_1, \dots, v_m \in \mathcal{V}$ and $z \in \Gamma'$. The latter and (\ref{assume}) imply
\begin{equation}\label{Y6}
\R^{m}_{N}(z) = O\left(N^{-2}\right) +  O\left(N^{m/2  - M/2}\right), \text{ for } m = 1,\dots,4n + 2.
\end{equation}
By combining (\ref{Spec0}) and (\ref{assume}) we get that (\ref{Y6}) holds for $m = 0$ as well. Finally, (\ref{Y2}), (\ref{Y6})  and Property (1) in Proposition \ref{PropUpsilon} together imply (\ref{Y8}).\\

{\raggedleft {\bf Step 4.}} In this step we will establish the validity of (\ref{deduce}) except for a single case, which will be handled separately in the next step.

Notice that by H{\" o}lders inequality we have
$$ \sup_{v_1,\dots,v_{m} \in \mathcal{V} }  \E \left[ \prod_{a= 1}^{m} \left| \Delta G_N(v_a) \right|\right] \leq \sup_{v \in \mathcal{V}} \E \left[\left|\Delta G_N(v)  \right|^m \right],$$
and so to finish the proof it suffices to show that for $m = 1,\dots,4n$ we have
\begin{equation}\label{DIM}
\E \left[\left|\Delta G_N(v)  \right|^m \right] = O(1) + O \left( N^{m/2 + 1/2 - M/2}\right).
\end{equation}

Since centered moments are linear combinations of products of joint cumulants, we deduce from the first line in (\ref{Y8}) that for $m = 1,\dots,4n + 2$ we have
\begin{equation}\label{Y9}
\sup_{v_0,\dots,v_{m-1} \in \mathcal{V}} \E \left[ \prod_{a= 0}^{m-1} \left(\Delta G_N(v_a) - \E [ \Delta G_N(v_a) ]\right) \right] = O(1) + O\left(N^{(m-1)/2 + 1 - M/2}\right).
\end{equation}
 Combining the latter with the first and second lines of (\ref{Y8}) we see that
\begin{equation}\label{Y10}
\sup_{v_0,\dots,v_{m-1} \in \mathcal{V}} \E \left[ \prod_{a= 0}^{m-1} \Delta G_N(v_a) \right] = O(1) + O\left(N^{(m-1)/2 + 1 - M/2}\right).
\end{equation}
If $m = 2m_1$ then we can set $v_0 = \cdots v_{m_1 - 1} = v$ and $v_{m_1} = \cdots = v_{2m_1- 1} = \overline{v}$ in (\ref{Y9}), which yields
\begin{equation}\label{Y11}
\sup_{v \in \mathcal{V}} \E \left[  \left| \Delta G_N(v) \right|^{m} \right] = O(1) + O\left(N^{m/2 + 1/2 - M/2}\right) \mbox{ for $m = 2, 4,6,\dots, 4n + 2 $ .} 
\end{equation}

We next let $m = 2m_1 + 1$ be odd and notice that by the Cauchy-Schwarz inequality and (\ref{Y11}) 
\begin{equation}\label{CSI}
\begin{split}
&\sup_{v \in \mathcal{V}} \E \left[   \left| \Delta G_N(v) \right|^{2m_1 + 1} \right]  \leq \sup_{v \in \mathcal{V}} \left[   \E \left[  \left| \Delta G_N(v) \right|^{2m_1 + 2} \right]\right]^{1/2} \cdot  \left[   \E \left[  \left| \Delta G_N(v) \right|^{2m_1 } \right]\right]^{1/2} = \\
& O(1) + O\left(N^{m_1 + 1 - M/2}\right) + O\left( N^{m_1/2 + 3/4 - M/4}\right)  .
\end{split}
\end{equation}
We note that the bottom line of (\ref{CSI}) is $O(1) + O \left( N^{m_1 + 1 - M/2}\right)$ except when $M = 2m_1 + 2$, since
$$m_1/2 + 3/4 - M/4 \leq \begin{cases} m_1 + 1  - M/2 &\mbox{ when $M \leq 2m_1 + 1$,}  \\ 0 &\mbox{ when $M \geq 2m_1 + 3$.}\end{cases}$$ 
Consequently, (\ref{Y11}) and  (\ref{CSI}) together imply (\ref{DIM}) except when $M = 2m_1 + 2$ and $m = 2m_1 +1$. We will handle this case in the next step.\\

{\raggedleft {\bf Step 5.}} In this last step we will show that (\ref{DIM}) holds even when $M = 2m_1 + 2$ and $4n > m = 2m_1 +1$. In the previous step we showed in (\ref{Y11}) that $\sup_{v \in \mathcal{V}} \E \left[  \left| \Delta G_N(v) \right|^{2m_1 + 2} \right] = O\left(N^{1/2}\right)$, and below we will improve this estimate to 
\begin{equation}\label{FC1}
\sup_{v \in \mathcal{V}} \E \left[  \left| \Delta G_N(v) \right|^{2m_1 + 2} \right] = O(1).
\end{equation}
The trivial inequality $x^{2m_1+2} + 1 \geq |x|^{2m_1+1}$ together with (\ref{FC1}) implies 
$$ \sup_{v \in \mathcal{V}} \E \left[  \left| \Delta G_N(v) \right|^{2m_1 + 1} \right] = O(1).$$
 Consequently, we have reduced the proof of the claim to establishing (\ref{FC1}).\\

Let us list the relevant estimates we will need 
\begin{equation}\label{FC2}
\begin{split}
& \E \left[ \prod_{a = 1}^{2m_1 + 4} \left| \Delta G_N(v_a) \right| \right] =  O\left(N^{3/2}\right),\hspace{2mm} \E \left[ \prod_{a = 1}^{2m_1 + 2} \left| \Delta G_N(v_a) \right| \right] = O\left(N^{1/2}\right),\hspace{2mm}\\
&\E \left[ \prod_{a = 1}^{j} \left| \Delta G_N(v_a) \right| \right] =  O(1) \mbox{ for $0 \leq j \leq 2m_1$,} \hspace{2mm} \E \left[ \prod_{a = 1}^{2m_1 + 3} \left| \Delta G_N(v_a) \right| \right] = O\left(N\right).
\end{split}
\end{equation}
The above identities follow from (\ref{Y11}) and (\ref{CSI}). All constants are uniform over $v_a \in \mathcal{V}$. 
Below we feed the improved estimates of (\ref{FC2}) into Steps 3. and 4., which would ultimately yield (\ref{FC1}). \\

In Step 3. we note that we have the following version of (\ref{Y6}) 
\begin{equation}\label{Y12}
\R_N^{m}(z) = O\left(N^{-1}\right) \mbox{ whenever  $0 \leq m \leq 2m_1 + 1$.}
\end{equation}
The latter statement follows from  (\ref{StartHARD}) for $2m_1 + 1  \geq m \geq 1$ and (\ref{Spec0}) for $m = 0$, combined with
$$\E \left[ \prod_{a = 1}^{m+2}  \left| \Delta G_N(v_a) \right| \right] = O(N) \mbox{ for $m = 0, \dots, 2m_1 + 1$} ,$$
 the latter being a consequence of (\ref{FC2}). Using (\ref{Y12}) instead of (\ref{Y6}) in Step 3. we obtain the following improvement over (\ref{Y8})
\begin{equation}\label{Y13}\begin{split}
M_c(\Delta G_N(v), \dots, \Delta G_N(v_m)) = O(1), \text{ for } m = 1,\dots, 2m_1 + 1
 \text{ and }  \E \left[\Delta {G}_N(v)\right] = O(1) .
\end{split}
\end{equation}

We next repeat the arguments in Step 4. and note that by using (\ref{Y13}) in place of (\ref{Y8}) we obtain the following improvement over (\ref{Y10})
\begin{equation}\label{Y14}
\sup_{v_0,\dots,v_{2m_1 +1} \in \mathcal{V}} \E \left[ \prod_{a= 0}^{2m_1 + 1} \Delta G_N(v_a) \right] = O(1).
\end{equation}
Setting $v_0 = \cdots v_{m_1 } = v$ and $v_{m_1 + 1} = \cdots = v_{2m_1 + 1} = \overline{v}$ in (\ref{Y14}) we get (\ref{FC1}).

\section{$q$-Racah tiling models and ensembles}\label{Section7}
As discussed in Section \ref{Section1} our main motivation for studying discrete log-gases on shifted quadratic lattices comes from the $q$-Racah tiling model that was introduced in \cite{BGR}. In Section \ref{Section7.1} we give a formal definition of the model and in Section \ref{Section7.2} we state the main results we prove about it in Theorems \ref{TLLN} and \ref{TCLT}. In Section \ref{Section7.3} we explain how the model is related to a certain random particle system that we call the $q$-Racah ensemble and state a law of large numbers and central limit theorem for the latter as Theorems \ref{BT1} and \ref{QRCLT} in Section \ref{Section7.4}.

\subsection{The $q$-Racah tiling model}\label{Section7.1}

\subsubsection{Lozenge tilings}\label{Section7.1.1}
\begin{wrapfigure}[9]{r}{0.3\textwidth}
\vspace{-12mm}
  \begin{center}
    \includegraphics[width=0.28\textwidth]{S1-1}
  \end{center}
  \caption{Tiling of a $3\times 3\times 3$ $\text{ }$ $\text{ }$$\text{ }$$\text{ }$$\text{ }$$\text{ }$$\text{ }$$\text{ }$$\text{ }$$\text{ }$$\text{ }$$\text{ }$$\text{ }$$\text{ }$$\text{ }$$\text{ }$$\text{ }$$\text{ }$hexagon}
 \label{S7F1}
\end{wrapfigure}
Denote by $\Omega_{a \times b \times c}$ the set of all tilings of a hexagon  with side lengths $a, b, c$ by rhombi (or alternatively boxed plane partitions), see Figure \ref{S7F1}. Denote the horizontal rhombi by $\diam$ and  introduce coordinate axes $(i, j)$. Given two parameters $q$ and $\kappa$ we define the probability of an element $\mathcal{T} \in \Omega_{a \times b \times c}$ through
\begin{equation}\label{DefEq1}
\begin{split}
 &\hspace{-40mm}\mathbb{P}(\mathcal T )=\frac{w(\mathcal T)} {\sum\limits_{ \mathcal S \in \Omega_{a \times b \times c} } w(\mathcal
  S)} \mbox{, where } w(\mathcal T)=\prod\limits_{\diam \in \mathcal
  T} w(\diam), \mbox{ and }\\
& \hspace{-40mm} w(\diam)=  \kappa^2 q^{ j -(c+1)/2 }- q^{-j + (c+1)/2}.
\end{split}
\end{equation}
In the above formula the product is over all horizontal lozenges $\diam$ that belong to $\mathcal{T}$ and $j$ denotes the $j$-th coordinate of the topmost point of $\diam$. We call the probability measure in (\ref{DefEq1}) the $q$-{\em Racah tiling model}.

It was shown in \cite{BGR} that the {\em partition function} (the sum of all weights $w(\mathcal{T})$ or the normalization term in (\ref{DefEq1})) has a nice {\em product form}, which generalizes the famous MacMahon formula for the number of boxed plane partitions \cite{St}. Note that the number of horizontal rhombi in all tilings of a given hexagon is the same, hence $\mathbb{P}$ is invariant under multiplication of $w(\diam)$ by a constant.

In order for (\ref{DefEq1}) to define an honest probability measure, one requires that the weights $w(\mathcal{T})$ be non-negative. This imposes certain restrictions on the parameters $q, \kappa$ and there are three possible cases that lead to positive weights:
\begin{enumerate}[label=(\roman*)]
\item {\em imaginary $q$-Racah case:} $q$ is a positive real number and $\kappa$ is a purely imaginary number;
\item {\em real $q$-Racah case:} $q$ is a positive real number and $\kappa$ is a real number that cannot lie inside the interval $\left[q^{-a + 1/2}, q^{(b+c-1)/2} \right]$ if $q > 1$ or the interval $\left[q^{(b+c-1)/2}, q^{(a-1)/2} \right]$ if $q < 1$;
\item {\em trigonometric $q$-Racah case:} $q$ and $\kappa$ are complex numbers on the unit circle, i.e. $q = e^{\i \alpha}$, $\kappa = e^{\i \beta}$, where $\alpha,\beta$ must be such that $-\alpha(b + c - 1)/2 + \beta$ and $\alpha(a - 1/2) + \beta$ must lie in the same interval of the form $[\pi k, \pi (k+1)]$, $k \in \mathbb{Z}$.
\end{enumerate}
The names of the above cases are related to those of the classical orthogonal polynomials that appear in the analysis. In this paper, we will only consider the real $q$-Racah case with $q \in (0,1)$ and $\kappa \in \left[0, q^{(b+c - 1)/2 } \right)$ although most of our arguments can be extended to other cases.

If we let $\kappa \rightarrow 0$ then we get the $q$-{\em Hahn case} $w(\diam) = q^{-j}.$
In this case the probability of a plane partition is proportional to $q^{-V}$, where $V$ denotes the {\em volume} of the plane partition, i.e. the number of cubes that it contains. If we send $\kappa \rightarrow \infty$ we get that the probability of a partition is proportional to $q^{V}$. In this sense, one can view our model as an interpolation between the models $q^{V}$ and $q^{-V}$. Finally, if one sends $\kappa \rightarrow 0$ and $q \rightarrow 1$, one recovers the uniform measure on boxed plane partitions.

\subsubsection{Particle configurations}\label{Section7.1.2}
In what follows we describe an alternative formulation of our model that is more suitable for stating our results. We perform a simple affine transformation of the hexagon and lozenges, detailed in Figure \ref{S1F3}.
 \begin{figure}[h]
\includegraphics[width=0.75\linewidth]{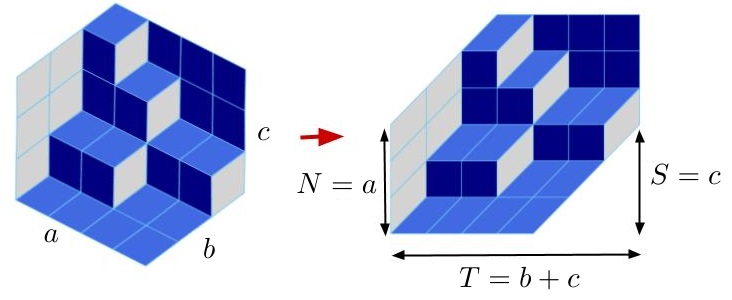}
\caption{Affine transformation of lozenges}
  \label{S1F3}
\end{figure}  

Let us introduce new parameters $N,T,S$ that are related to $a,b,c$ through $N = a$, $T = b+c$ and $S = c$. Each tiling in $\Omega_{a \times b \times c}$ naturally corresponds to a family of $N$ non-intersecting up-right paths as shown in Figure \ref{S1F4}. For each $0 \leq t \leq T$ we draw a vertical line through the point $(t, 0)$ and denote by $x^t_1 < x^t_2< \cdots < x^t_N$ the intersection of the line with the $N$ up-right paths. We interpret the intersection points as particles and will typically use the same letter to refer to a particle and its location. In this way, we can view a tiling as an $N$-point (or particle) configuration, which varies in time $t = 0,\dots, T$. Observe that when $t = 0$ the configuration consists of the points $\{0,1, \dots, N-1\}$ and when $t = T$ the configuration consists of the points $\{S, S + 1, \dots, S + N - 1\}$.

\begin{figure}[h]
\includegraphics[width=0.40\linewidth]{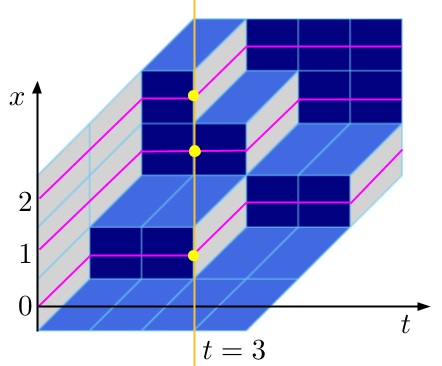}
\caption{Modified hexagon and up-right path configuration (in purple). The yellow dots are the particles at time $t = 3$ and we have $x^t_1 = 1, x^t_2 = 3$ and $x^t_3 = 4$}
  \label{S1F4}
\end{figure}

Given a random configuration $\{x^t_k\}$ we define the random {\em height function}
\begin{equation}\label{htFT}
h:  \mathbb{Z}_{\geq 0} \times \left(\mathbb{Z} + \frac{1}{2} \right)  \rightarrow \mathbb{Z}_{\geq 0} \mbox{ as } h(t,s) =  | \{ k \in \{1, \dots, N\} : x_t^k < s\}|.
\end{equation}
In terms of the tiling in Figure \ref{S1F4} the height function is defined at the vertices of rhombi, and it counts the number of particles below a given vertex. The latter definition is in agreement with the standard three-dimensional interpretation of the tiling as a stack of boxes \cite{K2}.

\subsection{Main results for the $q$-Racah tiling model}\label{Section7.2}
Our results are about the global fluctuations of a random lozenge tiling with distribution (\ref{DefEq1}) when the parameters $q,\kappa$ and the sizes of the hexagon $N,T,S$ scale in a particular fashion that we detail below.

\begin{defi}\label{ParScale}
We assume that we are given real numbers ${\tt N}, {\tt T}, {\tt S}, {\tt q} $ and ${\tt k} $ such that 
$${\tt N}, {\tt T}, {\tt S}, {\tt q} > 0, \hspace{5mm} \lk \geq 0, \hspace{5mm} {\tt q } < 1, \hspace{5mm}  {\tt N} < {\tt T}, \hspace{5mm} {\tt S} < {\tt T},\hspace{5mm}   {\tt k}^2{\tt q}^{ - {\tt T}} < 1.$$
Given such a choice of parameters and $\varepsilon \in (0,1)$ we let $\mathbb{P}_\varepsilon$ be the probability measure in (\ref{DefEq1}) with 
$$q = {\tt q }^{\varepsilon} + O(\varepsilon^2),  \hspace{5mm} N = {\tt N} \varepsilon^{-1} + O (1) , \hspace{5mm} T = {\tt T} \varepsilon^{-1} + O(1), \hspace{5mm} S = {\tt S} \varepsilon^{-1} + O(1),\hspace{5mm}   \kappa = {\tt k} + O(\varepsilon).$$
\end{defi}

\subsubsection{Limit shape}
Our first result concerns the hydrodynamic limit of the height function $h$, with distribution $\mathbb{P}_{\varepsilon}$, under the parameter scaling in Definition \ref{ParScale} when $\varepsilon$ converges to zero. On a macroscopic scale the random height function concentrates around a deterministic {\em limit shape}, i.e.
\begin{equation}\label{heightConv}
\varepsilon \cdot h\left( \lfloor x \varepsilon^{-1} \rfloor , \lfloor y \varepsilon^{-1} \rfloor +  1/2 \right) \rightarrow \hat{h}(x,y) \mbox{ as $\varepsilon \rightarrow 0^+$},
\end{equation}
where $(x,y)$ are the new global continuous coordinates, $\hat{h}(x,y)$ is the function whose graph is the limit shape and the convergence is in probability. The new coordinates $(x,y)$ are assumed to belong to the limiting hexagon $\mathcal{P}$, which is parametrized by $\lN, \lS, \lT$ the same way that our discrete hexagon was parametrized by $N,S,T$; see the central part of Figure \ref{S1F5}. The limit shape can then be understood as a continuous function on $\mathcal{P}$ and we describe it next.

\begin{figure}[h]

 \includegraphics[width=1\linewidth]{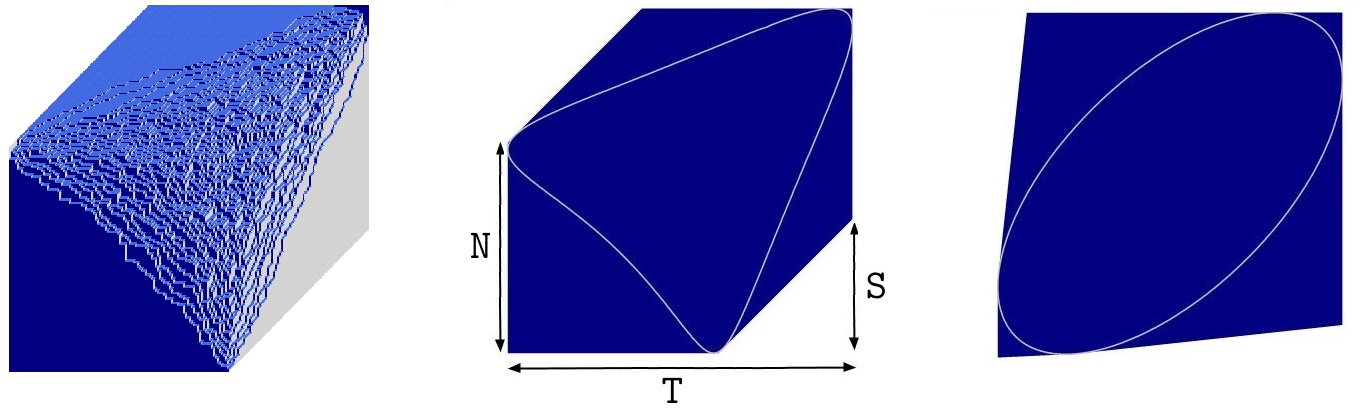}
\caption{The left part shows a simulation of a tiling. The middle part shows the hexagon $\mathcal{P}$ and the liquid region $\mathcal{D}$ is the region inside the gray curve. The right part denotes the image of $\mathcal{P}$ and $\mathcal{D}$ under the map $(x,y) \rightarrow  (\lq^{-x}, \lq^{-y} + \lk^2 \lq^{-\lS - x + y})$ }
  \label{S1F5}
\end{figure}

With parameters as in Definition \ref{ParScale} we define $\phi$ as
$$\phi(x,y) = \mbox{arccos} \left( \frac{\left(\lq^{-\lN} - 1\right)\left(1 - \lq^{-\lN-\lT}\right)\left( \lq^{-2x}- \lk^2 \lq^{-y - \lS } \right)^2 + A +B}{2 \sqrt{A B}} \right), \mbox{ where }$$ 
$$A = \left( \lq^{-x} - \lq^{-\lS - \lN}\right) \left( \lq^{-x} - \lk^2\lq^{-\lT }\right) \left( \lq^{-x}- \lq^{-y - \lN }\right) \left( \lq^{-x} - \lk^2\lq^{-y - \lS }\right) ,$$
$$B = \lq^{-2\lN - \lT} \left( \lq^{-x} - 1\right) \left( \lq^{-x} - \lk^2\lq^{- t + \lN }\right) \left( \lq^{-x} - \lq^{-y - \lS + \lT}\right) \left( \lq^{-x} - \lk^2\lq^{-\lS + \lN}\right).$$
If the expression inside the arccosine is greater than $1$, then we set $\phi = 0$ and if it less than $-1$, then we set $\phi = \pi$. In terms of the above function $\phi$ we define the limit shape $\hat{h}$ as
\begin{equation}\label{limitshapeH}
\hat{h}(x,y) = \frac{1}{\pi} \int_0^y \phi(x, u) du, \hspace{2mm} \mbox{ for }(x,y) \in \mathcal{P}.
\end{equation}
With the above notation we can state our limit shape result.
\begin{thm}\label{TLLN} Suppose that  ${\tt N}, {\tt T}, {\tt S}, {\tt q}$, ${\tt k}$ and $\mathbb{P}_\varepsilon$ are as in Definition \ref{ParScale} and that $h$ is distributed according to $\mathbb{P}_\varepsilon$. Then for any $(x,y) \in \mathcal{P}$ and $\eta > 0$ we have
\begin{equation}\label{LLNH}
\lim_{\varepsilon \rightarrow 0^+} \mathbb{P}_\varepsilon\left( \left| |\varepsilon \cdot  h\left( \lfloor x \varepsilon^{-1} \rfloor , \lfloor y \varepsilon^{-1} \rfloor +  1/2 \right) -  \hat{h}(x,y) \right| > \eta \right)  = 0.
\end{equation}
\end{thm}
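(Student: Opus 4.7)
}

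The plan is to reduce the limit-shape statement to an application of the general law of large numbers (Theorem \ref{GLLN}) for log-gases on shifted quadratic lattices, together with the explicit density formula of Lemma \ref{Lsupp}. The first step is to fix a macroscopic horizontal coordinate $x \in (0,\lT)$ and study the vertical slice at time $t = \lfloor x \varepsilon^{-1}\rfloor$. As indicated in Section \ref{Section1.1.2} and to be carried out in detail in Section \ref{Section9.2}, the positions of the ``holes'' in this slice (equivalently, the particles $x^t_k$ in the notation of Section \ref{Section7.1.2}) are conjugated through the substitution $\ell_i = q^{-\lambda_i} + u q^{\lambda_i}$, for an appropriate shift $u = u(x,{\tt k},{\tt q},{\tt S})$, into an $N$-point configuration distributed according to the measure $\mathbb{P}_N$ of (\ref{PDef}) with $\theta = 1$ and with a weight function $w(\cdot;N)$ obtained by collecting all the factors of $w(\diam) = \kappa^2 q^{j-(c+1)/2} - q^{-j+(c+1)/2}$ that depend only on the slice coordinate.

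The second step is to verify that the parameter scaling of Definition \ref{ParScale} puts the above family of measures within the framework of Section \ref{Section2.2}. Concretely, one takes $\lq = {\tt q}^{\lN}$ in the sense of Assumption 1, sets $\lM$ and the endpoints $\hat{a}_1,\hat{b}_1$ in terms of $\lT,\lS,\lN$ and the slice coordinate $x$, and checks that $w(\ell;N) = \exp(-N V_N(\ell))$ for a continuous and bounded $V_N$ converging to an explicit limit $V$ satisfying the regularity bound (\ref{DerPotS1}). Once Assumptions 1--3 are validated, Theorem \ref{GLLN} gives convergence in probability of the empirical measure of the slice to a deterministic measure $\mu_x(d\ell)$ supported on $[\hat a_1,\hat b_1]$. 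Since the rescaled height function $\varepsilon \cdot h(\lfloor x\varepsilon^{-1}\rfloor, \lfloor y\varepsilon^{-1}\rfloor + 1/2)$ is, up to a uniformly small error, the counting function of this empirical measure evaluated at a threshold determined by $y$, the statement (\ref{LLNH}) reduces to the identification
\begin{equation*}
\hat{h}(x,y) \;=\; \int_{\Lambda(x,y)}^{\hat b_1} \mu_x(\ell)\, d\ell \qquad \text{for an explicit } \Lambda(x,y),
\end{equation*}
equivalently to showing that $\partial_y \hat h(x,y) = \pi^{-1}\phi(x,y)$ coincides with the density of $\mu_x$ after the appropriate change of variables.

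The third and key step is the explicit computation of $\mu_x$. Here the algebraic input is Nekrasov's equation in the form of Theorem \ref{NekS1}: for the $q$-Racah weight one can compute the ratio $w(\sigma_N(z);N)/w(\sigma_N(q_N z);N)$ in closed form and extract polynomial functions $\Phi^\pm_N(z)$ satisfying (\ref{eqPhiNS1}) and the vanishing conditions of Assumption 6; taking $N\to\infty$ yields polynomial limits $\Phi^\pm(z)$ whose roots are exactly the four factors $\lq^{-\lS-\lN},\lk^2\lq^{-\lT},\lq^{-y-\lN},\lk^2\lq^{-y-\lS}$ (resp.\ their $\omega$-duals) that appear in the factor $A$ (resp.\ $B$) of the statement of the theorem. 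The asymptotic Nekrasov identity (\ref{REQS1}) then produces an algebraic equation for $R_\mu(z)$, and since $\Phi^\pm$ are polynomials of known degree, Lemma \ref{AnalRQ} forces $R_\mu$ to be a polynomial of the same degree. Its coefficients are pinned down by the filling fraction condition $\mu_x([\hat a_1,\hat b_1]) = 1$ together with Assumption 7, a task that I plan to carry out via the discrete Riemann--Hilbert method of Section \ref{Section10}; this is the main obstacle, since it is not a priori obvious that $R_\mu$ can be written in closed form and the combinatorics of identifying the correct root for a $6$th-degree polynomial equation is delicate.

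Once $R_\mu$ is known explicitly, the final step is a direct computation: substituting $\Phi^\pm$ and $R_\mu$ into Lemma \ref{Lsupp} gives the density of $\mu_x$ as $(\theta\log\lq \cdot \pi (y_0-\lu y_0^{-1}))^{-1}\arccos(R_\mu(y_0)/2\sqrt{\Phi^-\Phi^+})$. The quantity inside the arccosine, after the change of variables $\ell = \sigma_{\lq}(y_0)$ and collecting terms with the aid of the factorization $(z+\lu z^{-1}-t-\lu t^{-1}) = (z-t)(1-\lu z^{-1}t^{-1})$ used already in the proof of Theorem \ref{NekGen}, should reduce exactly to the rational expression
\[
\frac{(\lq^{-\lN}-1)(1-\lq^{-\lN-\lT})(\lq^{-2x}-\lk^2\lq^{-y-\lS})^2+A+B}{2\sqrt{AB}}
\]
that defines $\phi(x,y)$. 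Differentiating the resulting identity with respect to $y$ and integrating back yields (\ref{limitshapeH}), completing the proof. The outcome of this plan is that the hard analytic content lies entirely in Theorem \ref{GLLN} and in the Riemann--Hilbert extraction of $R_\mu$, while everything else is bookkeeping of the substitution $\ell = q^{-\lambda}+u q^\lambda$ in the geometry of the hexagon.
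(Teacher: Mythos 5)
Your proposal takes essentially the same route as the paper: reduce a vertical slice of the tiling to a log-gas on a quadratic lattice, invoke the general law of large numbers (Theorem~\ref{GLLN}), use the density formula of Lemma~\ref{Lsupp}, and pin down the polynomial $R_\mu$ by the discrete Riemann--Hilbert analysis of Section~\ref{Section10}. The paper organizes this by first isolating a free-standing law of large numbers for the $q$-Racah ensemble (Theorem~\ref{BT1}, proved in Section~\ref{Section9.2}) and then deducing Theorem~\ref{TLLN} from it via Theorem~\ref{TilingtoParticle}; one detail you omit is that the slice-to-ensemble dictionary depends on the position of the slice, so the paper passes to a subsequence and handles four cases from Theorem~\ref{TilingtoParticle} separately (your ``appropriate shift $u$'' is case-dependent). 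You also gloss over the need to approximate the indicator $\mathbf{1}_{\{r < y\varepsilon^{-1}\}}$ by Lipschitz functions, which is required because Theorem~\ref{GLLN} applies to Lipschitz test functions.

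There is one conceptual wrinkle in your Step 3. You say the coefficients of $R_\mu$ are ``pinned down by the filling fraction condition together with Assumption~7''; this is not quite the mechanism. In the paper, Nekrasov's equation guarantees $\tilde{R}_N$ is a degree-$4$ polynomial, and $R_\mu$ is obtained as its limit (via~(\ref{RLimNek})); the Riemann--Hilbert analysis of Section~\ref{Section10} computes that limit explicitly (Fact~\ref{fact_7}, proved as~(\ref{proofFact7})) by analyzing the matrix $A_N(z) = m_N(\sigma(qz)) D(z) m_N^{-1}(\sigma(z))$ built from $q$-Racah orthogonal polynomials and using the $q$-difference equation to determine its off-diagonal entries. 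The filling fraction does not enter this determination. Also, your remark about ``identifying the correct root for a $6$th-degree polynomial equation'' misstates the degrees: $\Phi^{\pm}$ and $R$ are degree $4$, $Q^2$ is degree $8$ (Lemma~\ref{LemmaQF}). These are corrections to the exposition rather than to the strategy, which is sound.
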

\begin{rem}
The formula for $\phi(x,y)$ was derived in Theorem 8.1 in \cite{BGR}. We remark that while an explicit formula for the limit shape was obtained in \cite{BGR}, it was not proved that the height function actually converges to it.  Theorem \ref{TLLN} constitutes a proof of this fact. 
\end{rem}

An important feature of our model is that the limit shape develops {\em frozen facets} where the function $\hat{h}(x,y)$ is linear. In terms of the tiling a frozen facet corresponds to a region where asymptotically only one type of lozenge is present. In addition, there is a connected open {\em liquid region} $\mathcal{D} \subset \mathcal{P}$, which interpolates the facets. Explicitly, the liquid region $\mathcal{D}$ is given by the set of points $(x,y) \in \mathcal{P}$ where the expression inside the arccosine in the definition of $\phi$ is in $(-1,1)$, i.e.
$$\mathcal{D} = \left\{ (x,y) \in \mathcal{P}:  \left[\left(\lq^{-\lN} - 1\right)\left(1 - \lq^{-\lN-\lT}\right)\left( \lq^{-2x}- \lk^2 \lq^{-y - \lS } \right)^2 + A +B\right]^2 <  4 AB \right\}.$$

If $(x,y) \in \mathcal{D}$ then the limiting height function $\hat{h}$ is curved near $(x,y)$: asymptotically inside the liquid region one observes all three types of lozenges, see e.g. \cite{CLP, KO, K2} for further discussion regarding frozen and liquid regions in related contexts. In addition, the local distribution of the tiling near  $(x,y) \in \mathcal{D}$ is described asymptotically by a certain ergodic translation-invariant Gibbs measure on lozenge tilings of the whole plane. Such a measure is unique up to fixed proportions of lozenges of all three types \cite{SheffGibbs}, and these proportions depend on the slope of $\hat{h}$ at the point $(x,y)$. We refer the reader to \cite{BGR} for a more detailed discussion of this fact for the model we consider, and also to \cite{SheffGibbs, K2, OR, KOS } for analogous results in general dimer models.

\subsubsection{Central limit theorem} Before stating our central limit theorem for the measures $\mathbb{P}_\varepsilon$ we introduce a transformation of our particle configuration from Section \ref{Section7.1}. This transformation is (in some sense) the natural way to view the particle system, and it allows us to identify its global asymptotic fluctuations with a $1D$ section of the two-dimensional Gaussian free field (GFF for short).

Given a point configuration $\{ (t, x_k^t)\}$ we define a new point configuration $\{(U,V)\}$ through
\begin{equation}\label{transform}
U(t,k) = q^{-t} \mbox{ and } V(t,k) = q^{-x^t_k} + \kappa^2 q^{x^t_k - S - t}  \mbox{ for $0 \leq t \leq T$ and $1 \leq k \leq N$.}
\end{equation}
Similarly to before, we define a random height function for the new particle system
\begin{equation}\label{htFP}
\mathcal{H}:  \left\{ q^0, q^{-1}, \dots, q^{-T} \right\} \times \mathbb{R} \rightarrow  \mathbb{Z}_{\geq 0 } \mbox{ as } \mathcal{H}(q^{-t}, v) =  | \{ k \in \{1, \dots, N\} : V(t, k) < v\}|.
\end{equation}
One can formulate an equivalent statement to Theorem \ref{TLLN} for the height function $\mathcal{H}$. I.e. there will be asymptotically a deterministic limiting height function $\hat{\mathcal{H}}$, near which $\mathcal{H}$ concentrates with high probability. Moreover, if we set $\sigma_{\lq}(x,y) = (\lq^{-x}, \lq^{-y} + \lk^2 \lq^{-\lS - x + y})$ then we have the explicit relationship $\hat{h}(x,y) = \hat{\mathcal{H}}\left( \sigma_{\lq}(x,y) \right)$. 

The function $\sigma_{\lq}$ maps the liquid region $\mathcal{D}$ bijectively to a new region $\mathcal{D}'$, parametrized through
$$\mathcal{D}' =  \left\{ (u,v) \in \mathbb{R}^2: \tilde{Q}(u,v) < 0 \right\},$$
where $\tilde{Q}(u,v) = \tilde{A}  u^2 + \tilde{B} v^2 + \tilde{C} uv + \tilde{D}u + \tilde{E}v +\tilde{F}$, and $ \tilde{A}, \tilde{B}, \tilde{C}, \tilde{D}, \tilde{E},\tilde{F}$ are explicitly computable constants that depend on $\lq, \lS, \lT, \lN$ and $\lk$. Consequently, $\mathcal{D}'$ is an ellipse, see the right of Figure \ref{S1F5}.

Our next goal is to define a {\em complex structure} on the limit shape surface --- this is a bijective diffeomorphism $\Omega : \mathcal{D}' \rightarrow \mathbb{H}$. The significance of this map is that the fluctuations of $\mathcal{H}$ will be asymptotically described by the pullback of the GFF on $\mathbb{H}$ under the map $\Omega$. The function $\Omega(u,v)$ is algebraic and it satisfies the following quadratic equation
\begin{equation}\label{ComplexStruct}
\begin{split}
a_2 \Omega^2  + a_1 \Omega + a_0 = 0, \mbox{ where }
\end{split}
\end{equation}
$a_2, a_1, a_0$ are explicit {\em linear }functions of $u$ and $v$ and are such that $a_1^2 - 4a_2a_0 = \lq^{2\lN}\cdot \tilde{Q}(u,v)$ (see Section \ref{Section8.2} for the details). Whenever $(u,v) \in \mathcal{D}'$ the polynomial (\ref{ComplexStruct}) has two complex conjugate roots and we define $\Omega(u,v)$ to be the the one that lies in $\mathbb{H}$. 

We are now ready to state our main theorem for the $q$-Racah tiling model, giving the asymptotics of the global $1D$ fluctuations of $\mathbb{P}_\varepsilon$ in terms of the two-dimensional Gaussian free field. In Section \ref{Section8.1} we recall the definition and basic properties of the GFF.
\begin{thm}\label{TCLT}Suppose that  ${\tt N}, {\tt T}, {\tt S}, {\tt q}$, ${\tt k}$ and $\mathbb{P}_\varepsilon$ are as in Definition \ref{ParScale} and that $\mathcal{H}$ is as in (\ref{htFP}) for the distribution $\mathbb{P}_\varepsilon$. Fix $u \in (1,\lq^{-\lT})$ and let $t(\varepsilon)$ be a sequence of integers such that $q^{-t(\varepsilon)} = u + O(\varepsilon)$. Then the centered random height function
$$\sqrt{\pi} \left( \mathcal{H}(q^{-t}, v) - \mathbb{E}_{\mathbb{P}_\varepsilon}\left[ \mathcal{H}(q^{-t}, v) \right] \right) $$
converges to the $1$d section of the pullback of the Gaussian free field with Dirichlet boundary conditions on the upper half-plane $\mathbb{H}$ with respect to the map $\Omega$ in the following sense: For any set of polynomials $f_i \in \mathbb{R}[x]$ for $i = 1,\dots,m$ the joint distribution of
\begin{equation}\label{TCLTeq1}
\int_{\mathbb{R}} \sqrt{\pi} \left(\mathcal{H}(q^{-t}, v) - \mathbb{E}_{\mathbb{P}_\varepsilon} \left[ \mathcal{H}(q^{-t}, v) \right] \right) f_i(v)dv, \hspace{5mm} i = 1,\dots,m,
\end{equation}
converges to the joint distribution of the similar averages 
$$\int_{a(u)}^{b(u)} \mathcal{F}(\Omega(u, y))f_i(y)dy,\hspace{5mm} i = 1,\dots,m$$
of the pullback of the GFF. In the above formula $a(u), b(u)$ are the $v$-coordinates of the two points where the vertical line through $u$ intersects the ellipse $\tilde{Q}(u,v) = 0$.

Equivalently, the variables in (\ref{TCLTeq1}) converge jointly to a Gaussian vector $(X_1, \dots, X_m)$ with mean zero and covariance
\begin{equation}
\begin{split}
&\mathbb{E}[X_i X_j] =  \int_{a(u)}^{b(u)} \int_{a(u)}^{b(u)} f_i(x) f_j(y) \left(  - \frac{1}{2\pi} \log \left| \frac{\Omega(u,x) - \Omega(u,y)}{\Omega(u,x) - \overline{\Omega}(u,y)}\right| \right) dx dy.
\end{split}
\end{equation}
\end{thm}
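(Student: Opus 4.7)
First, I would reduce the theorem to the general central limit theorem of Theorem \ref{CLTfun} via the particle-system description of the tiling. Fix the vertical slice $t$ (so $u=q^{-t}$); as sketched in Section \ref{Section1.1.2} (and to be carried out in Section \ref{Section9.2}) the positions at which a horizontal lozenge is \emph{absent} along this slice form an $N$-point configuration whose joint law is of the form $\mathbb P_N$ in (\ref{PDef}) with $\theta=1$, $q^v=\kappa^2 q^{-S-t}$, $k=1$, $M=T-1$, and a weight $w$ inherited from the $q$-Racah weight (\ref{DefEq1}). The change of variables (\ref{transform}) only reparametrises the common quadratic lattice, so it is equivalent to work with $\mathbb P_N$ directly.

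Second, I would verify that Assumptions 1--7 of Section \ref{Section2.2} hold under Definition \ref{ParScale}. The scaling (\ref{GenPar}) is immediate with $\lq,\lT,\lu=\lk^2\lq^{-\lS-t}$; the ratio (\ref{eqPhiN}) computed from the $q$-Racah weight gives polynomial $\Phi^\pm$ of small degree with the boundary vanishings of Assumption 6 built in; Assumptions 2, 3, 7 are direct; and the single-band Assumption 5 follows from combining (\ref{eqMForm}) with the explicit limit shape (\ref{limitshapeH}), which identifies the band endpoints of the equilibrium density on the slice $u$ with exactly $a(u)$ and $b(u)$. Theorem \ref{CLTfun} then produces joint Gaussianity of linear statistics $\mathcal L_{g_1},\dots,\mathcal L_{g_m}$ of the particle system with the one-cut covariance $\mathcal C(s,t)$ from Remark \ref{meq1}, built out of $a(u),b(u)$.

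Third, I would transfer this to the height function. Since $\mathcal H(q^{-t},\cdot)$ is (up to an additive constant) the counting function of the particles $\{V(t,k)\}$, for any polynomial $f$ on $[a(u),b(u)]$ integration by parts gives $\int f(v)(\mathcal H-\mathbb E[\mathcal H])\,dv=-\sum_k\bigl(F(V(t,k))-\mathbb E[F(V(t,k))]\bigr)$ with $F'=f$, modulo error terms that vanish in the limit because the extremal particles concentrate near $a(u),b(u)$ by Theorem \ref{GLLN}. This identifies the variables in (\ref{TCLTeq1}) with $-\sqrt\pi\,\mathcal L_{F_i}$ and expresses their limiting covariance as
\[\pi\int_{a(u)}^{b(u)}\!\!\int_{a(u)}^{b(u)}F_i(s)F_j(t)\,\mathcal C(s,t)\,ds\,dt.\]

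Fourth, and this is the main obstacle, I must identify that double integral with the claimed GFF kernel
\[-\frac1{2\pi}\int_{a(u)}^{b(u)}\!\!\int_{a(u)}^{b(u)}f_i(x)f_j(y)\log\Big|\tfrac{\Omega(u,x)-\Omega(u,y)}{\Omega(u,x)-\overline{\Omega(u,y)}}\Big|\,dx\,dy.\]
The algebraic input is that $\Omega(u,\cdot)$ is a Joukowski-type conformal map: from (\ref{ComplexStruct}) the discriminant $a_1^2-4a_2a_0$ equals $\lq^{2\lN}\tilde Q(u,v)$, which vanishes precisely at $v=a(u),b(u)$, so $\Omega(u,\cdot)$ sends $\mathbb C\setminus[a(u),b(u)]$ conformally onto a half-plane with a single slit on $\mathbb R$. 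Two integrations by parts in $x,y$ convert the logarithmic kernel on the right into one with a $(\Omega(u,x)-\Omega(u,y))^{-2}$ singularity; a direct computation using the implicit derivative $\partial_v\Omega=-(\partial_v a_1\cdot \Omega+\partial_v a_0)/(2a_2\Omega+a_1)$ extracted from (\ref{ComplexStruct}) should then match this with $\mathcal C(x,y)$ up to the prefactor absorbed by the $\sqrt\pi$ normalisation. The nontrivial point, compared with the standard uniform-tiling picture, is that $\Omega$ is defined implicitly by a quadratic whose coefficients depend on both $u$ and $v$ rather than by an explicit Joukowski formula; carrying out this matching, to be performed in Section \ref{Section8} using Lemma \ref{Lsupp} to rewrite $\mathcal C$ in a form compatible with the quadratic (\ref{ComplexStruct}), is the principal computational task of the proof.
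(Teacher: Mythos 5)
Your overall architecture matches the paper's: reduce the height-function statement to the particle-system CLT on the vertical slice, verify Assumptions 1--7, apply Theorem~\ref{CLTfun} (the paper routes this through Theorem~\ref{QRCLT}), and transfer to $\mathcal{H}$ by integration by parts exactly as you describe, which the paper does in (\ref{CLTeq1})--(\ref{CLTeq3}). Two remarks on the reduction. First, the parameter identification you give ($M=T-1$, $u=\lk^2\lq^{-\lS-t}$) is not quite right: the correct identification depends on which of the four cases of Theorem~\ref{TilingtoParticle} the slice $t$ falls into, and one must pass to a subsequence to fix one of them, as the paper does. Second, Assumption~5 does not come from the limit shape formula (\ref{limitshapeH}); it comes from the explicit factorization of $Q^2$ in Lemma~\ref{LemmaQF} and (\ref{QSquareRoot}), which is logically prior to writing down the limit shape. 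These are fixable but should be flagged.

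The real divergence from the paper is in your Step~4, the identification of the one-cut covariance $\mathcal{C}$ with the GFF kernel pulled back by $\Omega$. You propose to integrate by parts twice in the log kernel to obtain a $(\Omega(u,x)-\Omega(u,y))^{-2}$ singularity and then match it with $\mathcal{C}$ by differentiating (\ref{ComplexStruct}) implicitly. That is the ``standard'' conformal-structure route, but it is not what the paper does, and it is materially harder: the implicit derivative $\partial_v\Omega$ is unwieldy because all three coefficients $a_0,a_1,a_2$ depend on both $u$ and $v$, one must carefully track branches of the square root when crossing the slit $[a(u),b(u)]$, and it is not evident that the ratio simplifies to the Chebyshev kernel without further input. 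The paper avoids all of this via two ideas you do not supply. It proves the contour identity (\ref{GFF2}) by deforming the $\Gamma$ contour onto the segment $[a(u),b(u)]$, letting the dogbone radii shrink, and integrating by parts \emph{once}; this expresses the contour integral of $\mathcal C$ as an integral of an explicit logarithmic expression. It then proves (\ref{GFF3}), which identifies that expression with $-\log\bigl|(\Omega_1-\Omega_2)/(\Omega_1-\overline{\Omega}_2)\bigr|$, by using (\ref{OmegaLog}): there the authors apply a judiciously chosen automorphism $\phi$ of $\mathbb{H}$ that maps $\Omega(u,v)$ to the closed form $v\lq^{\lN}(\lq^{-\lT}-1)-\sqrt{D(u,v)}$, after which the log-ratio collapses to an elementary algebraic identity. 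That automorphism trick is the key algebraic ingredient that makes the matching tractable; without it your Step~4 as written is a program, not a proof, and you should not expect a bare implicit-differentiation computation to reproduce $\mathcal{C}$ cleanly.
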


As discussed in Section \ref{Section1} the GFF is assumed to be a universal scaling limit in tiling models, which motivates its appearance in our setup. Another reason one might expect to see the GFF in our tiling model comes from its connection to the $\beta$-log gas with $\beta = 2$. We will elaborate this idea later in Section \ref{Section9}, but essentially there is a natural way to view the particle configuration on a fixed vertical slice as a discrete log-gas on a quadratic gases. Log-gases appear naturally in random matrix theory, and in that context there are several models that are known to converge to the GFF \cite{BorW, BorW2, BGJ}. 

We end the section by remarking that Theorem \ref{TCLT} admits a natural two-dimensional generalization, which we formulate as Conjecture \ref{GFFConj} in Section \ref{Section8.4}. At this time our methods only provide access to the global fluctuations at fixed vertical sections of the model, and so we cannot establish the full $2D$ result. Nevertheless, we provide some numerical simulations that give evidence for the validity of the conjecture.

\subsection{The $q$-Racah ensemble}\label{Section7.3} In this section we define the $q$-{\em Racah ensemble}. 
\begin{defi}\label{qrw}
Let $q\in(0,1),$ $M\in \mathbb{Z}_{\geq 0},$ $\a,$ $\b,$ $\delta\in \mathbb R$ and $\gamma=q^{-M-1}.$ For $x \in \{0, 1, \dots, M\}$ 
we introduce the following weight function
\begin{equation} \label{eq:w_qR}
w^{qR}(x)=\frac{(\a q,\b\delta q,\gamma q,\gamma \delta
 q;q)_x}{(q,\a^{-1}\gamma \delta q, \b^{-1}\gamma q, \delta q;q )_x}\frac{(1-\gamma\delta q^{2x+1})}{(\a\b q)^{x}(1-\gamma\delta q)},
\end{equation}
 where 
$(y_1,\dots, y_i;q)_k=(y_1;q)_k\cdots (y_i;q)_k$, and $(y;q)_k=(1-y)(1-yq)\cdots (1-yq^{k-1})$ is the $q$-Pochhammer symbol.
\end{defi}
 \begin{rem} The weight $w^{qR}$ is the weight function of the $q$-Racah orthogonal polynomials, see e.g. \cite[Section 3.2]{KS98}. One can more generally have $\a=q^{-M-1}$ or $\b\delta=q^{-M-1}$ instead of $\gamma=q^{-M-1}$. Our choice is dictated by the fact that under the substitutions $\gamma=q^{-M-1}$ and $\delta=0$ the $q$-Racah weight reduces to the $q$-Hahn weight. 
\end{rem}   

With the above notation we can define the $q$-Racah ensemble as follows.
\begin{defi} \label{DefQR} Fix $N \in \mathbb{N}$ and let $\alpha, \beta, \gamma, \delta, q$ and $M$ be as in Definition \ref{qrw} with $M \geq N-1$. Denote by $\mathfrak{X}$ the collection of $N$-tuples of integers
$$\mathfrak{X} = \{ (\lambda_1, \dots, \lambda_N) \in \mathbb{Z}^N : 0\leq \lambda_1<\lambda_2<\dots<\lambda_N\leq M \}.$$
The $q$-Racah ensemble is a probability measure $\mathbb{P}^{qR}$ on the set $\mathfrak{X}$, given by
\begin{equation} \label{eq:distr}
\mathbb{P}^{qR}(\lambda_1,\dots ,\lambda_N)= \frac{1}{Z(N,M, \alpha, \b, \gamma, \delta,q)} \prod_{1\leq i<j \leq N} \left(\sigma(q^{-\lambda_i})-\sigma(q^{-\lambda_j}) \right)^2 \cdot \prod_{i=1}^{N}{w^{qR}(\lambda_i)},
\end{equation}
where  $\sigma(z) = z + \gamma\delta q z^{-1}$ and $Z$ is a normalization constant that makes the sum over $\mathfrak{X}$ equal to $1$.
\end{defi} 

Observe that for general choice of parameters the expressions in (\ref{eq:distr}) need not be non-negative. Consequently, we need to restrict the space of parameters so that $\mathbb{P}^{qR}$ is an honest probability measure. We isolate one possible choice that accomplishes this in the following definition.
\begin{defi}\label{ParSetQR}
We assume that the parameters $\alpha, \beta, \gamma, \delta, q \in \mathbb{R}$ and $M, N \in \mathbb{Z}$ are such that 
$$ M \geq N-1 \geq 0, \hspace{2mm} 1 > q > 0, \hspace{2mm} \alpha, \beta > 0, \hspace{2mm} \delta \geq 0, \hspace{2mm} \gamma = q^{-M-1}, \hspace{2mm} 1 > \beta \delta, \hspace{2mm} \beta \geq \gamma, \hspace{2mm} \alpha \geq \gamma.$$
One readily verifies that the above choice of parameters makes (\ref{eq:distr}) non-negative on all of $\mathfrak{X}$.
\end{defi}

We end this section by detailing the connection between $\mathbb{P}^{qR}$ and the measure on tilings from Section \ref{Section7.1} in the following theorem. 
\begin{thm}\label{TilingtoParticle}
Fix $a,b,c \geq 1$ and set $N = a$, $T = b+c$ and $S =c$. Let $\mathbb{P}$ denote the probability distribution of (\ref{DefEq1}) with parameters $q \in (0,1)$ and $\kappa \in \left[0,q^{(T-1)/2} \right)$. Fix $t \in \{0,1, \dots, T\}$ and let $(x^t_1, \dots, x^t_N)$ denote the random $N$-point configuration of Section \ref{Section7.1.2}. We have that 
\begin{enumerate}
\item if $t<S $ and $t < T - S$ then the distribution of $(x^t_1, \dots, x^t_N)$ is $\mathbb{P}^{qR}$ with $M = t + N - 1$, $\alpha = q^{-S-N}$, $\beta = q^{S - T - N}$, $\gamma = q^{-t - N}$ and $\delta = \kappa^2 q^{-S + N}$;
\item if $S- 1 < t < T-S+1$ then the distribution of $(x^t_1, \dots, x^t_N)$ is $\mathbb{P}^{qR}$ with $M = S + N - 1$, $\alpha = q^{-t-N}$, $\beta = q^{t - T - N}$, $\gamma = q^{-S - N}$ and $\delta = \kappa^2 q^{-t + N}$;
\item if $ T-S+1 < t < S$ then the distribution of $(T - t -S + x^t_1, \dots, T - t -S +  x^t_N)$ is $\mathbb{P}^{qR}$ with $M = T-S+N - 1$, $\alpha = q^{-T - N + t}$, $\beta = q^{ - t - N}$, $\gamma = q^{-T - N + S}$ and $\delta = \kappa^2 q^{-T + t + N}$;
\item if $S- 1 < t$ and $T-S-1 < t$ then the distribution of $(T - t -S + x^t_1, \dots, T - t -S +  x^t_N)$ is $\mathbb{P}^{qR}$ with $M = T-t+N - 1$, $\alpha = q^{-T - N + S}$, $\beta = q^{ - S - N}$, $\gamma = q^{-T -N  + t}$ and $\delta = \kappa^2 q^{-T + S +N}$.
\end{enumerate}
In all cases the parameter $q$ in the definition of $\mathbb{P}^{qR}$ is the same as the one that is given.
\end{thm}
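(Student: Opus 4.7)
The plan is to condition on the particle configuration at time $t$ and factor the tiling measure along the corresponding vertical line. Since $w(\mathcal T) = \prod_{\diam \in \mathcal T} w(\diam)$ is a product over horizontal lozenges and each horizontal lozenge lies in exactly one of the two sub-trapezoids obtained by cutting the hexagon at time $t$, one has $w(\mathcal T) = w(\mathcal T_L) \cdot w(\mathcal T_R)$ as soon as the particle positions $(x_1,\dots,x_N)$ on the common vertical boundary are fixed. Summing over $\mathcal T$ with prescribed $(x_1^t,\dots,x_N^t)=(x_1,\dots,x_N)$ therefore gives
\begin{equation*}
\mathbb P\bigl((x^t_1,\dots,x^t_N) = (x_1,\dots,x_N)\bigr) \;\propto\; Z_L(x_1,\dots,x_N) \cdot Z_R(x_1,\dots,x_N),
\end{equation*}
where $Z_L,Z_R$ are the partition functions of the two sub-trapezoids with the indicated boundary data.

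The next step is to evaluate $Z_L, Z_R$ by applying the Lindstr\"om--Gessel--Viennot lemma to the non-intersecting up-right path representation of each sub-trapezoid. This produces determinantal formulas $Z_L = \det[\phi_i(x_j)]_{i,j=1}^N$ and $Z_R = \det[\psi_j(x_i)]_{i,j=1}^N$ for explicit single-particle transfer matrices $\phi_i, \psi_j$ built out of the lozenge weight $w(\diam)=\kappa^2q^{j-(c+1)/2}-q^{-j+(c+1)/2}$. The crucial algebraic observation is that this weight is linear in the quadratic-lattice coordinate $\sigma(q^{-j}) = q^{-j} + \gamma\delta q\cdot q^j$ for the unique choice of $\gamma\delta$ matching the $\kappa^2$-coefficient; this is precisely why one encounters Vandermonde factors in the $\sigma(q^{-x_i})$ variables (and not in $q^{-x_i}$) in (\ref{eq:distr}). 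Carrying out the determinant evaluation along the lines of \cite{BGR} yields
\begin{equation*}
\mathbb P\bigl((x^t_1,\dots,x^t_N)=(x_1,\dots,x_N)\bigr) \;\propto\; \prod_{1\leq i<j\leq N}\bigl(\sigma(q^{-x_i}) - \sigma(q^{-x_j})\bigr)^2 \cdot \prod_{i=1}^N W(x_i),
\end{equation*}
and what remains is to identify the single-particle weight $W(x_i)$ with $w^{qR}(x_i)$ from Definition \ref{qrw} for the parameters claimed in each case -- a direct $q$-Pochhammer manipulation in which $\gamma=q^{-M-1}$ records the length of the shorter trapezoid, $\delta$ records $\kappa^2$ times the appropriate power of $q$, and $\alpha,\beta$ record the remaining side lengths.

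The four cases in the statement correspond to the four possible orderings of $t$ relative to $S$ and $T-S$, which determine which of the two sub-trapezoids is taller and whether the particle coordinate must be reflected via the horizontal symmetry $(t,s)\mapsto(T-t,T+S-s)$ of the hexagon. This symmetry of the weight $w(\mathcal T)$, valid because $w(\diam)$ depends only on the vertical coordinate $j$, explains the shift $T-t-S+x^t_k$ appearing in cases (3) and (4) and reduces all four cases to one by composing with the analogous vertical reflection. The main obstacle is the explicit LGV determinant evaluation in the second step: the two-term form of $w(\diam)$ prevents the standard $q$-Hahn or uniform computation from applying directly and forces one to work on the quadratic lattice from the outset. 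Once that evaluation is in hand (as in \cite{BGR}), the identification of parameters in each case is a mechanical bookkeeping exercise.
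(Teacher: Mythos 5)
Your proposal is consistent with the paper's treatment: the paper gives no independent argument, simply citing \cite[Theorem 4.1]{BGR}, while your sketch lays out the cut-at-time-$t$ factorization, the LGV determinant evaluation, and the reflection symmetry for cases (3)--(4) that underlie that reference. Since you also defer the decisive determinant-to-product evaluation to \cite{BGR}, the two proofs are essentially the same.
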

\begin{proof}
This is essentially \cite[Theorem 4.1]{BGR} and we refer to the same paper for the details.
\end{proof}

\subsection{Global asymptotics for $q$-Racah ensembles}\label{Section7.4}
In this section we state a law of large numbers and a central limit theorem for the $q$-Racah ensembles ---  Theorems \ref{BT1} and \ref{QRCLT} below.

We begin by explaining how we are scaling the parameters in the $q$-Racah ensemble.
\begin{defi}\label{ScaleQR} We assume that we have parameters $ {\la}, {\lb}, {\tc}, {\td}, {\tq}$ and ${\tM}$ such that
$$1 > {\lq} > 0 , \hspace{2mm} {\la}, {\lb}, {\lM} > 0, \hspace{2mm}  {\ld} \geq 0, \hspace{2mm}1 > {\lb} {\ld}, \hspace{2mm} {\lc} = {\lq}^{-{\lM}}, \hspace{2mm} {\lb} \geq {\lc},  \hspace{2mm} {\la} \geq {\lc}, \hspace{2mm} \lc \lq > 1.$$
For future reference we denote the set of parameters $ {\la}, {\lb}, {\lc}, {\ld}, {\lq}$ and ${\lM}$  that satisfy the above conditions by ${\tt P}$ and view it as a subset of $\mathbb{R}^6$ with the subspace topology.

In addition, we assume that we have a sequence of parameters $\alpha_N, \beta_N, \gamma_N, \delta_N, q_N$ and $M_N$ that satisfy the conditions in Definition \ref{ParSetQR} and such that for some constant $A>0$ we have
$$\max \left(N\left| q_N - {\lq}^{1/N} \right|, \left| \alpha_N - {\la} \right|, \left|\beta_N -{\lb} \right|, \left|\gamma_N - {\lc} \right|,  \left|\delta_N - {\ld} \right| , \left| N^{-1} M_N - {\lM}\right| \right) \leq A N^{-1}.$$
We let $\mathbb{P}_N$ be the measure from Definition \ref{DefQR} with parameters $\alpha_N, \beta_N, \gamma_N, \delta_N, q_N, M_N$ and $N$.
\end{defi}

\begin{defi}\label{Polys} Suppose we are given parameters $ \pa, \pb, \pc, \pd$ and $\pq$ such that
$$1 > \pq> 0 , \hspace{2mm} \pa, \pb, \pc > 0, \hspace{2mm}  \pd \geq 0, \hspace{2mm}1 > \pb \pd, \hspace{2mm} \pb \geq \pc,  \hspace{2mm} \pa \geq \pc,  \hspace{2mm} \pc\pq \geq 1.$$
 We define the following polynomials
$$\Phi^{+}(z)=(z-\pa)(z-\pb\pd)(z-\pc)(z-\pc\pd), \hspace{5mm} \Phi^{-}(z)=(z-1)(\pa z-\pc \pd)(\pb z-\pc)(z-\pd)$$
$$R(z) = \Phi^+(z) + \Phi^-(z) - (\pa \pb \pq - 1)(\pq^{-1} - 1) (z^2 - \pc \pd)^2, \hspace{5mm} Q(z)^2 = R(z)^2 - 4\Phi^-(z) \Phi^+(z).$$
With the above data we define 
\begin{equation}\label{eq:mu}
\mu(x) = \begin{cases}\frac{1}{\pi} \cdot \mbox{arccos} \left( \frac{R(\pq^{-x})}{2\sqrt{\Phi^-(\pq^{-x}) \Phi^+(\pq^{-x})}} \right)   &\mbox{ when $x \in (0, -\log_{\pq} (\pc) )$}, \\  0 &\mbox{ otherwise}. \end{cases}
\end{equation}
If the expression inside the arccosine is bigger than $1$ we set $\mu = 0$ and if it is less than $-1$ we set $\mu = 1$. The square root is the usual one as on $(0, -\log_q(c))$ both $\Phi^-(\pq^{-x})$ and $\Phi^+(\pq^{-x})$ are positive.
\end{defi}

We also isolate the following fact.
\begin{lem}\label{LemmaQF} The polynomial $Q^2$ from Definition \ref{Polys} factors completely over $\mathbb{R}$. If we enumerate its roots in increasing order $x_1,\dots,x_8$ we get 
\begin{equation}\label{RootsQ}
x_1 = x_2 = - \sqrt{cd}, \hspace{5mm} d \leq x_3  \leq x_4 \leq c d, \hspace{5mm} x_5 = x_6 = \sqrt{c d}, \hspace{5mm} 1 \leq x_7 \leq x_8 \leq c.
\end{equation}
Moreover, we have $x_3 = \frac{cd}{x_8}$ and $x_4 = \frac{cd}{x_7}$.
\end{lem}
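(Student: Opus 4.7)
My plan is to exploit a ``palindromic'' symmetry under $z \mapsto \pc\pd/z$ present in the definitions of $\Phi^\pm(z)$, $R(z)$ and $Q^2(z)$, which reduces the analysis of the degree-$8$ polynomial $Q^2$ to that of an explicit quadratic. First I would verify by direct factor comparison that $\pc^2\pd^2\,\Phi^+(z) = z^4\,\Phi^-(\pc\pd/z)$ and $\pc^2\pd^2\,\Phi^-(z) = z^4\,\Phi^+(\pc\pd/z)$, by pairing $(z-\pa)\leftrightarrow(\pa z-\pc\pd)$, $(z-\pb\pd)\leftrightarrow(\pb z-\pc)$, $(z-\pc)\leftrightarrow(z-\pd)$ and $(z-\pc\pd)\leftrightarrow(z-1)$. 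The remaining summand $(\pa\pb\pq-1)(\pq^{-1}-1)(z^2-\pc\pd)^2$ in $R$ satisfies the analogous identity by a direct check, so $z^4R(\pc\pd/z) = \pc^2\pd^2 R(z)$ and hence $z^8 Q^2(\pc\pd/z) = \pc^4\pd^4 Q^2(z)$. This forces the roots of $Q^2$ (with multiplicity) to come in pairs $(z_0, \pc\pd/z_0)$, which immediately explains the pairings $x_3 x_8 = x_4 x_7 = \pc\pd$ and, since $\pm\sqrt{\pc\pd}$ are the fixed points of the involution, forces them to appear with even multiplicity.

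Next I would check that $\pm\sqrt{\pc\pd}$ are actual roots. At $z = \pm\sqrt{\pc\pd}$ one has $z^2 = \pc\pd$ so the $(z^2-\pc\pd)^2$-term in $R$ vanishes, and the symmetry gives $\Phi^+(\pm\sqrt{\pc\pd}) = \Phi^-(\pm\sqrt{\pc\pd})$, so $R(\pm\sqrt{\pc\pd}) = 2\Phi^+(\pm\sqrt{\pc\pd})$ and $Q^2(\pm\sqrt{\pc\pd}) = 4(\Phi^+)^2 - 4\Phi^+\Phi^- = 0$. To upgrade these to double roots I introduce $w = z + \pc\pd/z$ and observe that the palindromy allows me to write $Q^2(z) = z^4\mathcal{P}(w)$ for a polynomial $\mathcal{P}$ of degree $4$; since $\pm\sqrt{\pc\pd}$ are fixed points of the involution, a simple root of $\mathcal{P}$ at $w = \pm 2\sqrt{\pc\pd}$ lifts to a double root of $Q^2$ at $z = \pm\sqrt{\pc\pd}$. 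Pulling out these factors I write $Q^2(z) = (z^2-\pc\pd)^2 T(z)$ with $T$ a quartic that inherits the palindromic relation $z^4 T(\pc\pd/z) = \pc^2\pd^2 T(z)$, and the same change of variables factors $T(z) = z^2 \tilde T(w)$ with $\tilde T(w) = A w^2 + B w + C$ quadratic. Reading off the $z^8$- and $z^0$-coefficients of $Q^2$ yields $A = (\pa\pb\pq-\pq^{-1})^2 \geq 0$.

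Each real root $w_i$ of $\tilde T$ produces a pair of roots of $T$ via $z^2 - w_i z + \pc\pd = 0$, so $z_\pm$ with $z_+ z_- = \pc\pd$. Under the standing hypotheses $\pc\pq \geq 1$ and $\pb\pd < 1$ one has $\pc\pd < 1 < \pc$, and the map $z\mapsto z+\pc\pd/z$ is an increasing bijection from $[1,\pc]$ onto $[1+\pc\pd,\pc+\pd]$ with partner fiber in $[\pd,\pc\pd]$. Therefore, once I show that $\tilde T$ has two real roots $w_7 \leq w_8$ lying in $[1+\pc\pd, \pc+\pd]$, their preimages split into pairs $(x_3, x_8) = (\pc\pd/z_+(w_8), z_+(w_8))$ and $(x_4, x_7) = (\pc\pd/z_+(w_7), z_+(w_7))$, and the monotonicity of $z_+$ in $w$ delivers the ordering $x_3 \leq x_4$ in $[\pd,\pc\pd]$ and $x_7 \leq x_8$ in $[1,\pc]$, together with $x_3 x_8 = x_4 x_7 = \pc\pd$, exactly as in (\ref{RootsQ}).

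The hard part is therefore verifying that the roots of $\tilde T$ actually lie in $[1+\pc\pd, \pc+\pd]$. I would compute $B$ from the $z^7$-coefficient of $Q^2$ (the only contributions come from cross-products of the $z^4$ and $z^3$ coefficients of $R$ and of $\Phi^+\Phi^-$) and $C$ from the $z^6$-coefficient, and then check in sequence: (i) $\tilde T(1+\pc\pd) \geq 0$ and $\tilde T(\pc+\pd) \geq 0$; (ii) the vertex $-B/(2A) \in [1+\pc\pd,\pc+\pd]$; and (iii) the discriminant $B^2 - 4AC \geq 0$ (which is automatic from (i) and (ii) once $A \geq 0$). Each inequality reduces to a polynomial statement in $\pa,\pb,\pc,\pd,\pq$ which I expect to factor as a nonnegative combination of the manifestly nonnegative quantities $\pa-\pc$, $\pb-\pc$, $1-\pb\pd$ and $\pc\pq-1$ supplied by Definition \ref{Polys}. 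This algebraic bookkeeping is mechanical but somewhat unpleasant, and is the only step where the specific parameter restrictions enter in a nontrivial way.
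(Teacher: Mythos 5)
Your proposal follows essentially the same route as the paper's proof. The paper directly writes $z^{-4}\pq^2 Q(z)^2 = \left(z - \pc\pd z^{-1}\right)^2 Q_0\left(z + \pc\pd z^{-1}\right)$ with $Q_0$ an explicit quadratic (a positive multiple of your $\tilde T$), reads off the leading coefficient $a_2=(\pa\pb\pq^2-1)^2\geq 0$, establishes the vertex condition via $Q_0'(1+\pc\pd)\leq 0$ and $Q_0'(\pc+\pd)\geq 0$ (each reduced to extreme values of $\pc$ and then $\pd$ by linearity), and shows the discriminant is nonnegative by factoring it explicitly into terms of determinable sign. Your condition (i) is in fact needed but left tacit in the paper's write-up; it holds trivially because $\Phi^-(1)=0$ and $\Phi^+(\pc)=0$ make $Q(1)^2=R(1)^2$ and $Q(\pc)^2=R(\pc)^2$ manifest perfect squares.

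One correction: the parenthetical claim that (iii) is automatic from (i), (ii) and $A\geq 0$ is false --- an upward parabola such as $(w-v)^2+\varepsilon$ is nonnegative at both endpoints of any interval containing its vertex $v$ yet has negative discriminant. The discriminant bound is precisely what rules out $Q^2$ having a complex-conjugate quadruple of roots, which would contradict the complete real factorization asserted in the lemma, so it must be verified independently, as you do in fact plan to do and as the paper does via its explicit signed factorization of $a_1^2-4a_0a_2$.
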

\begin{proof}
One readily checks that 
\begin{equation}\label{Q0}
z^{-4} q^2 Q(z)^2 = \left( z - c d z^{-1} \right)^2\cdot   Q_0\left( z + c d z^{-1}\right), \mbox{ where } Q_0(x) = a_2 x^2 + a_1 x + a_0, \mbox{ with }
\end{equation}
\begin{itemize}
\item $a_2 = (ab q^2 - 1)^2$
\item $a_1 = -2q(a^2 b^2 d q^2 + ab^2cd q^2 + a^2b^2 q^2 + a^2 b c q^2 + a b^2 d q^2 + a b c d q^2 + a^2 b q^2 - 2a b^2 d q - 2ab c d q+ a b c q^2 - 2a^2 b q  -2ab c q - 2 a b d q - 2 b c d q  + ab d - 2a b q - 2a c q + b c d + a b + a c + b d + c d + a + c),$
\item $a_0 = 4a^2b^2 c d q^3 - 4a^2 b^2 c d q^4 + 4ab^2c d^2 q^3 + a^2b^2 d^2 q^2 + 4a^2 b^2 d q^3 + 4a^2 bcd q^3 - 2ab^2cd^2q^2 + 4 a b^2 c d q^3 + 4abc^2 d q^3 + b^2 c^2d^2 q^2 - 2a^2 b^2 d q^2 - 2a^2 b c d q^2 + 4a^2 bc q^3 - 2 a b^2 c d q^2 - 2 a b^2 d^2 q^2 - 2a b c^2 d q^2 - 2a b c d^2 q^2 + 4 a b c d q^3 - 2b^2cd^2 q^2 - 2b c^2 d^2 q^2 + a^2 b^2 q^2 - 2a^2 b c q^2 - 2a^2 bd q^2 + a^2c^2 q^2 - 2ab^2 d q^2 - 16 ab c d q^2 - 2a c^2 d q^2 + b^2 d^2 q^2 - 2b d^2 d q^2 - 2b c d^2 q^2 + c^2 d^2 q^2 - 2a^2 b q^2 - 2a^2 c q^2 + 4 ab c d q - 2abc q^2 - 2ab c q^2 - 2a c^2 q^2 - 2ac d q^2 + 4b c d^2 q - 2 b c d q^2 - 2 c^2 d q^2 + a^2q^2 + 4a b d q + 4 ac d q  - 2a c q^2 + 4b c d q + 4 c^2 d q + c^2 q^2 + 4a c q + 4 c d q - 4c d$.
\end{itemize}
Consequently, what remains is to show that $Q_0$ has two real roots $y_1$ and $y_2$ such that $1 + c d \leq y_1 \leq y_2 \leq c + d$. Indeed, if the latter is true we would have that $z +cd z^{-1} = y_1$ (resp. $z + cd z^{-1} = y_2$) has two real roots $x_3, x_8$ (resp. $x_4, x_7$) and these satisfy the conditions of the lemma.

A direct calculation shows that the discriminant of $Q_0$ equals
$$D = a_1^2 - 4a_0 a_2 = 16 (q -1)(b q - 1) (b d q -1) (c q -1 ) (a q -1) (a q - d) (a b q - 1) (a b q - c) \geq 0.$$
Thus $Q_0$ indeed has two real roots and to show that they both lie in the interval $[1 + c d, c + d]$ it suffices to show that $Q_0'(1 + c d) \leq 0 $ and $Q_0'(c + d) \geq 0$. We notice that 
$$F_1 = Q_0'(1 + c d) = 2a_2(1 + c d) + a_1 (1 + c d) \mbox{ and } F_2 =Q_0'(c + d) = 2a_2(c + d ) + a_1 (c + d) $$
are both linear functions of $\gamma $ and by assumption $q^{-1} \leq c \leq \min(a,b)$. In particular, it suffices to check that $F_1 \leq 0$ when $c = q^{-1}$ and $c =b$, while $F_2 \geq 0$ when $c = q^{-1}$ and $c = a$. 

When $c = q^{-1}$ we have that 
$$F_1  = 2 (1 -q^{-1}) (b q -1 )( a b q^2 -1 )(a q -d) \leq 0 \mbox{ and } F_2 = 2 (1 -q^{-1})(a q -1) (a b q^2 -1) (b d q - 1) \geq 0.$$
When $c = b$ we have
$$F_1 = 2(b q -1)(a q -1)G_1 \mbox{ where } G_1 = ab^2 d q^2 + ab q^2 - 2a b q - 2bd q + b d + 1,$$
while when $c = a$ we have
$$F_2 = 2(b q -1)(a q -1)G_2 \mbox{ where } G_2 = a^2b  q^2 + ab d q^2 - 2a b d q - 2a q + a  + d.$$
What remains is to show that $G_1 \leq 0$ and $G_2 \geq 0$. Note that $G_1$ and $G_2$ are linear functions in $d$ and $d \in [0, b^{-1})$. Thus, it suffices to check that $G_1 \leq 0$ and $G_2 \geq 0$ when $d = 0$ and $d  = b^{-1}$.

When $d = 0$ we have  $G_1  =   ab q(q -1)  + (1 - a b q) \leq 0 \mbox{ and } G_2 =  a q( a b q - 1) + a(1 -q)  \geq 0.$
When $d = b^{-1}$ we have $G_1 = 2(q -1)(a b q - 1) \leq 0 \mbox{ and }  G_2 = b^{-1} \left( (a b q -1)^2 + ab (q - 1)^2 \right) \geq 0.$
\end{proof}

The formula for $Q$ that we will use in the paper is
\begin{equation}\label{QSquareRoot}
Q(x) = (abq - q^{-1}) \cdot (z^2 - cd)\cdot \sqrt{(z - x_3)(z - x_4)} \cdot\sqrt{ (z -x_7)(z - x_8)}
\end{equation} 

\subsubsection{Law of large numbers}\label{Section2.3.1} In this section we state a law of large numbers theorem for the $q$-Racah ensembles as Theorem \ref{BT1} below. Its proof will be established in Section \ref{Section9.2}. We assume we have the same parameters and measures $\mathbb{P}_N$ as in  Definition \ref{ScaleQR} and define the empirical measures $\mu_N$
\begin{equation}\label{MUN}
\mu_N = \frac{1}{N} \sum_{i = 1}^N \delta \left( \frac{\lambda_i}{N}\right) \mbox{ where } (\lambda_1,\dots,\lambda_N) \mbox{ is } \mathbb{P}_N-\mbox{distributed}.
\end{equation}

\begin{thm}\label{BT1} Under the assumptions in this section, we have that the measures $\mu_N$ concentrate (in probability) near $\mu(x) dx$, where $\mu(x)$ is as in Definition \ref{Polys} with parameters $a = \la, b = \lb, c =\lc, d = \ld$ and $q = \lq$. More precisely, for each Lipschitz function $f(x)$ defined in a real neighborhood of the interval $[0, \tM]$ and each $\varepsilon > 0$ the random variables
$$N^{1/2 - \varepsilon} \left|\int_{\mathbb{R}} f(x) \mu_N(dx) -  \int_{\mathbb{R}} f(x) \mu(x)dx\right|$$
converge to $0$ in probability and in the sense of moments.
\end{thm}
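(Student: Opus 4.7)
The plan is to deduce Theorem \ref{BT1} by identifying the $q$-Racah ensemble as a special case of the general framework developed in Section \ref{Section2}, and then invoking the general law of large numbers Theorem \ref{GLLN} together with the explicit description of the equilibrium measure from Lemma \ref{Lsupp}. The first step is to rewrite $\mathbb{P}^{qR}$ as a measure of the form \eqref{PDef} with $\theta = 1$, $k = 1$, $u_N = \gamma_N \delta_N q_N$, and the particles $\ell_i = q_N^{-\lambda_i} + u_N q_N^{\lambda_i} = q_N^{-\lambda_i} + \gamma_N \delta_N q_N \cdot q_N^{\lambda_i}$. Observe that the interaction in \eqref{eq:distr} is precisely $(\ell_j - \ell_i)^2 = \bigl(\sigma(q^{-\lambda_i}) - \sigma(q^{-\lambda_j})\bigr)^2$ with $\sigma(z) = z + \gamma\delta q\, z^{-1}$, which matches \eqref{thetaPN2} in the $\theta = 1$ case. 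We identify the single interval $[\hat{a}_1, \hat{b}_1] = [1 + \lu,\, \lq^{-\lM} + \lu\lq^{\lM}]$ (with $\lu = \lc\ld\lq$) and filling fraction $\nu_1 = 1$, and write $w^{qR}(\lambda) = \exp(-N V_N(\ell))$, so as to match the scaling in Assumption 2.

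The second step is to compute $\Phi^{\pm}_N$ from the ratio $w^{qR}(x+1)/w^{qR}(x)$ and check that they take the form prescribed in Assumption 4. Using \eqref{eq:w_qR} together with standard $q$-Pochhammer manipulations, the shift ratio produces, after multiplying numerator and denominator by appropriate factors to pair variables into the Laurent variable $z$ via $\sigma_N(z) = z + u_N z^{-1}$, the polynomials
\begin{equation*}
\Phi^+_N(z) = (z - \alpha_N)(z - \beta_N \delta_N)(z - \gamma_N)(z - \gamma_N \delta_N), \quad \Phi^-_N(z) = (z - 1)(\alpha_N z - \gamma_N \delta_N)(\beta_N z - \gamma_N)(z - \delta_N),
\end{equation*}
which converge to the $\Phi^{\pm}$ of Definition \ref{Polys} with $(a,b,c,d,q) = (\la,\lb,\lc,\ld,\lq)$. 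The vanishing conditions in Assumption 6 hold because $\alpha_N = q_N^{-a_1 - 0}$ (up to the parameter identification), $\gamma_N = q_N^{-M_N - 1}$, and $\delta_N, \beta_N\delta_N/\alpha_N, \gamma_N/\beta_N$ produce the required zeros at $z = q_N^{-M_N-1}, 1, u_N q_N^{-1}, u_N q_N^{M_N}$ respectively; this is a direct verification using Definition \ref{ParSetQR}. Assumptions 1, 2, 3, 7 follow since all parameters are constants or converge at the required rate, $V$ is real analytic with logarithmic bounds on the derivative coming from the $q$-Pochhammer endpoints, and $\Phi^+(x)\Phi^-(x) > 0$ on $(\hat{a}_1, \hat{b}_1)$ by the positivity in Definition \ref{ParSetQR}.

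The third step is to identify the equilibrium measure. By Theorem \ref{GLLN}, the empirical measures $\mu_N$ concentrate near the equilibrium measure $\hat{\mu}$ of Lemma \ref{BL1}, and by Lemma \ref{Lsupp} its density is
\begin{equation*}
\hat{\mu}(y_0 + \lu y_0^{-1}) = \frac{1}{\log(\lq) \pi (y_0 - \lu y_0^{-1})} \cdot \mbox{arccos}\left(\frac{R_\mu(y_0)}{2\sqrt{\Phi^-(y_0)\Phi^+(y_0)}}\right),
\end{equation*}
where $R_\mu$ is the analytic function from \eqref{QRmu} produced by the asymptotics of the Nekrasov equation. By Lemma \ref{AnalRQ}, $R_\mu$ is a polynomial of degree at most $4$, since $\Phi^{\pm}$ have degree $4$. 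To match Definition \ref{Polys} we must show
\begin{equation*}
R_\mu(z) = \Phi^+(z) + \Phi^-(z) - (\la\lb\lq - 1)(\lq^{-1} - 1)(z^2 - \lc\ld)^2.
\end{equation*}
This identification can be obtained by computing the large-$z$ asymptotics of $R_\mu(z)$ from its definition \eqref{QRmu}: $\Phi^{\pm}(z)$ contribute the leading $z^4$ behavior, while $\exp(\pm \log(\lq)(z - \lu z^{-1}) G_\mu(z + \lu z^{-1}))$ expands as $1 \pm \log(\lq)(z - \lu z^{-1})\cdot z^{-1}(1 + O(z^{-2})) + \cdots$ (using the normalization $\int d\mu = 1$), which when combined with the leading coefficients of $\Phi^{\pm}$ yields the correction term $(\la\lb\lq - 1)(\lq^{-1}-1)(z^2 - \lc\ld)^2$. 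Finally, since $\mu_N$ (indexed by $\lambda_i/N$) and the equilibrium measure described above (indexed by $\ell$) are related by the explicit change of variables $\ell = \lq^{-x} + \lu\lq^{x}$, the conclusion of the theorem follows from Theorem \ref{GLLN} by a Lipschitz substitution.

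The main obstacle is the identification of $R_\mu$ with the specific polynomial of Definition \ref{Polys}. In principle $R_\mu$ is characterized only as the degree $\leq 4$ polynomial satisfying the functional equation \eqref{RFE} with $\mathfrak{G}^c_N$ built from the equilibrium measure; matching the five coefficients requires either a careful large-$z$ expansion to fourth order (using that $G_\mu(w) = w^{-1} + m_1 w^{-2} + \cdots$ with $m_1 = \int x\, d\mu$ and higher moments being determined), or equivalently a direct verification that with $R$ as in Definition \ref{Polys}, the function $Q^2 = R^2 - 4\Phi^+\Phi^-$ has the factored form in Lemma \ref{LemmaQF} compatible with Assumption 5, so that the candidate density from \eqref{eqMForm} has total mass $1$ and satisfies the Euler--Lagrange conditions \eqref{DSeq}; by uniqueness in Lemma \ref{BL1}, this pins down $R_\mu = R$.
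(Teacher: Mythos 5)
Your reduction to Theorem \ref{GLLN} and Lemma \ref{Lsupp}, the identification of $\theta=1$, $k=1$, the quadratic lattice parameter $u_N = \gamma_N\delta_N q_N$, and the polynomials $\Phi^\pm_N$ are all correct and match the paper's first two paragraphs of Section \ref{Section9.2} (small quibble: $\lu = \lc\ld$, not $\lc\ld\lq$, since $q_N\to 1$). You also correctly identify the crux: pinning $R_\mu$ down to the explicit degree-four polynomial of Definition \ref{Polys}.

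This is exactly where there is a genuine gap. Your first suggested route --- matching coefficients via the large-$z$ expansion of (\ref{QRmu}) --- cannot work as stated. The $z^{4-j}$ coefficient of $R_\mu$ involves the first $j$ moments of the equilibrium measure $\mu$, and these moments are not independently computable: $\mu$ is precisely the object you are trying to determine. Only the leading ($z^4$) coefficient comes out for free. Your second suggested route --- plugging the explicit $R$ of Definition \ref{Polys} into (\ref{eqMForm}), verifying total mass one and the Euler--Lagrange conditions (\ref{DSeq}), then invoking uniqueness in Lemma \ref{BL1} --- is a coherent strategy in principle, but you have not carried it out, and the verification of (\ref{DSeq}) requires computing the logarithmic potential of the candidate density, which is a substantial piece of analysis.

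The paper closes this gap by a completely different mechanism: Fact \ref{fact_7}, proved in Section \ref{Section10}. There, the Nekrasov function $\tilde{R}_N(z)$ at \emph{finite} $N$ is related via Lemma \ref{AtoR} to the trace of the matrix $A_N(z) = m_N(\sigma(qz)) D(z) m_N^{-1}(\sigma(z))$, where $m_N$ solves a discrete Riemann--Hilbert problem encoding the $q$-Racah orthogonal polynomials. Using the entireness of $A_N$ (Proposition \ref{entireL}), degree bounds, the known $q$-difference equation for the $q$-Racah polynomials (Lemma \ref{qDiffL}), and the explicit leading coefficients (Proposition \ref{leadProp}), the paper computes $\lim_N \tilde{R}_N(z)$ in closed form and finds it equals $R_\infty(z)$. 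Combined with the LLN-side limit $\lim_N \tilde{R}_N(z) = R_\rho(z)$ from (\ref{RLimNek}), uniqueness of limits yields $R_\rho = R_\infty$ without ever solving the variational problem. This Riemann--Hilbert computation is entirely absent from your proposal, and without it (or a completed version of your second alternative) the identification of the equilibrium measure does not go through.
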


\subsubsection{Central limit theorem}\label{Section2.3.2}  In this section we state a central limit theorem for the $q$-Racah ensembles as Theorem \ref{QRCLT} below. Its proof will be established in Section \ref{Section9.2}. We assume the same parameters and measures $\mathbb{P}_N$ as in Definition \ref{ScaleQR}. It turns out that to better see the Gaussian structure of the $q$-Racah ensemble it is convenient to consider a transformed particle system, given by $(y_1, \dots, y_N)$ with $y_i = \sigma_N(q_N^{-\lambda_i})$ and $\sigma_N(z) = z + \gamma_N\delta_Nq_Nz^{-1}$. Then the transformed empirical measure of the system is given by
\begin{equation}\label{RHON}
\rho_N = \frac{1}{N} \sum_{i = 1}^N \delta \left( y_i\right)  = \frac{1}{N} \sum_{i = 1}^N \delta \left( \sigma_N(q_N^{-\lambda_i}) \right) \mbox{ where } (\lambda_1,\dots,\lambda_N) \mbox{ is } \mathbb{P}_N-\mbox{distributed}.
\end{equation}

\begin{thm}\label{QRCLT}
Take $m\geq 1$ polynomials $f_1, \dots, f_m \in  \mathbb{R}[x]$. Let $\rho_N$ be as in (\ref{RHON}) and define 
$$\mathcal L_{f_i}=N \int_\mathbb{R} f_j(x) \rho_N(dx) -N \mathbb E _{\mathbb P_N}  \left[ \int_\mathbb{R} f_j(x) \rho_N(dx) \right] \mbox{ for $i = 1, \dots, m$}.$$
Then the random variables $\mathcal{L}_{f_i}$ converge jointly in the sense of moments to an $m$-dimensional centered Gaussian vector $X = (X_1,\dots, X_m)$ with covariance
$$Cov(X_i, X_j) = \frac{1}{(2 \pi \iu)^2}\oint_{\Gamma} \oint_{\Gamma } f_i(s)f_j(t) \mathcal C (s, t) ds dt,$$
where $\Gamma$ is a positively oriented contour, which encloses the interval $[1 + \tc \td, \tc + \td]$. The covariance kernel $\mathcal C(s,t)$ is given by
\begin{equation}\label{QRCOV}
 \mathcal C(s,t) = -\frac{1}{2(s-t)^2} \left(1 - \frac{(s - a_-)(t- a_+) + (t - a_- )(s-a_+)}{2\sqrt{(s - a_- )(s- a_+) }\sqrt{(t - a_- )(t- a_+)} }\right),
\end{equation}
where $a_- = x_3 + x_8$, $a_+ = x_4 + x_7$ and $x_1, \dots, x_8$ are the ordered roots of the polynomial $Q(z)^2$ from Definition \ref{Polys} with parameters $a = \la, b = \lb, c =\lc, d = \ld$ and $q = \lq$, cf. Lemma \ref{LemmaQF}.
\end{thm}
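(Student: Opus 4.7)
The plan is to realize the $q$-Racah ensemble (after the particle transform $\ell_i = \sigma_N(q_N^{-\lambda_i}) = q_N^{-\lambda_i} + \gamma_N\delta_N q_N \cdot q_N^{\lambda_i}$) as an instance of the general log-gas on a shifted quadratic lattice from Section \ref{Section2.1} with $\theta = 1$ and a single interval $k = 1$, and then to invoke the one-cut central limit theorem \ref{CLTfun1cut}. Under this change of variables the interaction $\prod_{i<j}(\sigma(q^{-\lambda_i})-\sigma(q^{-\lambda_j}))^2$ in (\ref{eq:distr}) coincides with $\prod_{i<j} H(\ell_i,\ell_j)$ from (\ref{PDef}) when $\theta = 1$, the role of the parameter $u$ is played by $\gamma_N\delta_N q_N \to \lc\ld \in [0,1)$ (bounded away from $1$ by $\lb\ld < 1$ and $\lc \le \lb$), and the weight $w^{qR}(\lambda)$ transfers to an $\ell$-variable weight $w(\ell;N) = e^{-NV_N(\ell)}$ via the substitution $\ell = \sigma_N(q_N^{-\lambda})$. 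In particular, $\mathcal L_{f_i}$ is literally the linear statistic of the $\mu_N$ in (\ref{M1}) paired with the polynomial $f_i$, so the conclusion of Theorem \ref{QRCLT} will follow by identifying the covariance produced by Theorem \ref{CLTfun1cut}.

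The next step is to verify Assumptions 1--7 of Section \ref{Section2.2} for this data. Assumption 1 is built into Definition \ref{ScaleQR}, with $\lu = \lc\ld$. For Assumptions 2 and 3 (one-cut with $k = 1$) one takes $\hat a_1 = 1 + \lu$, $\hat b_1 = \lq^{-\lM} + \lu\lq^{\lM}$, $\nu_1 = 1$, and checks that $V_N(\ell)$, obtained from $\log w^{qR}$ via Stirling-type asymptotics for $q$-Pochhammer symbols, has a continuous analytic limit $V(s)$ on $[\hat a_1,\hat b_1]$ with the logarithmic derivative bound at the endpoints. For Assumption 4 one uses the recursion $(y;q)_{k+1} = (1-yq^k)(y;q)_k$ to compute $w^{qR}(\lambda+1)/w^{qR}(\lambda)$ explicitly and then rewrites the resulting product of eight linear $q$-factors in the variable $z = q_N^{-\lambda}$ in the form (\ref{eqPhiN}). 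A direct calculation matches the limiting $\Phi^{\pm}$ with those given in Definition \ref{Polys} (with $\pa=\la$, $\pb=\lb$, $\pc=\lc$, $\pd=\ld$, $\pq=\lq$); Assumption 6 (the vanishing at four specific lattice endpoints) is then read off from the four explicit linear factors of $\Phi^{\pm}_N$, while Assumption 7 (real analyticity and positivity of $\Phi^+\Phi^-$ on the band) follows from Definition \ref{ScaleQR}, which ensures that all eight linear factors have roots outside the spectrum of $\sigma$.

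The main obstacle is Assumption 5, the single-band condition. Here I would use Lemma \ref{LemmaQF}, which factors
\begin{equation*}
Q(z)^2 = (\la\lb\lq - \lq^{-1})^2 (z^2 - \lc\ld)^2 (z - x_3)(z - x_4)(z - x_7)(z - x_8),
\end{equation*}
where $x_3 < x_4$ lie in $[\ld,\lc\ld]$ and $x_7 < x_8$ lie in $[1,\lc]$, paired by the involution $z \mapsto \lc\ld/z$ so that $x_3 x_8 = x_4 x_7 = \lc\ld$. The key observation is that the double zero at $z = \pm\sqrt{\lc\ld}$ is an artifact of the branching of the covering $z \mapsto w = z + \lc\ld z^{-1}$: on the $w$-plane (the physical quadratic lattice) it contributes no zero to $Q_\mu^2(w)$. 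Consequently, under the change of variables $w = z + \lc\ld z^{-1}$ the four roots $x_3,x_4,x_7,x_8$ pair into exactly two $w$-roots
\begin{equation*}
a_- := x_3 + x_8, \qquad a_+ := x_4 + x_7,
\end{equation*}
so that $Q_\mu(w)^2 = H(w)^2 (w - a_-)(w - a_+)$ with $H$ analytic and nonvanishing on the relevant open set. This gives $r_1 = a_-$, $s_1 = a_+$ in Assumption 5, and these are the candidate band endpoints.

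With Assumptions 1--7 verified, Theorem \ref{CLTfun1cut} (the one-cut case $\theta = 1$ of Theorem \ref{CLTfun}) applies and yields the asymptotic joint Gaussianity of $\mathcal L_{f_i}$ with covariance
\begin{equation*}
\mathrm{Cov}(X_i,X_j) = \frac{1}{(2\pi\iu)^2}\oint_\Gamma\oint_\Gamma f_i(s)f_j(t)\,\mathcal C(s,t)\,ds\,dt,
\end{equation*}
where $\mathcal C(s,t)$ has the form (\ref{QRCOVS1}) with $\alpha_1 = a_-$ and $\beta_1 = a_+$. This is precisely (\ref{QRCOV}), completing the proof. The only remaining technical point is to allow the real-analytic test functions required by Theorem \ref{CLTfun1cut} to be replaced by the polynomial $f_i$ appearing in Theorem \ref{QRCLT}; this is immediate since polynomials are entire.
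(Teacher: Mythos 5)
The general structure of your reduction to Theorem \ref{CLTfun1cut} (change of variables, verification of Assumptions 1--4, 6, 7, and applying the covariance formula with $\alpha_1 = a_-$, $\beta_1 = a_+$) mirrors the paper's proof. But there is a genuine gap in your verification of Assumption 5.

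Assumption 5 is a condition on $Q_{\hat\mu_N}$ defined in (\ref{QRmu}) via the \emph{equilibrium measure} $\hat\mu_N$ (the abstract maximizer of the variational functional of Lemma \ref{BL1}) through its Stieltjes transform $G_{\hat\mu_N}$. Lemma \ref{LemmaQF}, on the other hand, factors the \emph{explicit} polynomial $Q(z)^2$ from Definition \ref{Polys}, which is built from a formula for $R(z)$. You invoke Lemma \ref{LemmaQF} directly to read off the band endpoints $a_-, a_+$, but nowhere do you establish that $Q_{\hat\mu_N}$ and the explicit $Q$ are the same object. This is exactly the hard identification step: it requires showing that the function $R_{\hat\mu}$ from (\ref{QRmu}) coincides with the explicit polynomial $R_\infty$ of Definition \ref{Polys}. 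In the paper this is the content of Fact \ref{fact_7}, combined with (\ref{RLimNek}). The proof of Fact \ref{fact_7} is a substantial chunk of the argument (all of Section \ref{Section10}): it builds the matrix $A_N(z)$ from the discrete Riemann--Hilbert problem for $q$-Racah orthogonal polynomials, uses the $q$-difference equation (\ref{QRDiffEq}) to pin down the off-diagonal entries $A_N^{12}, A_N^{21}$ exactly, and passes to the limit to identify $\lim_N \tilde R_N = R_\infty$ via $\mathrm{Tr}\,A_N = \tilde R_N$ (Lemma \ref{AtoR}). Without some replacement for this, you only know (from Lemma \ref{AnalRQ}) that $R_{\hat\mu}$ is \emph{some} degree-$4$ polynomial; you cannot conclude its roots, and hence cannot locate the band endpoints or apply the one-cut form of the covariance.

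Your observation that the double zeros of $Q^2$ at $\pm\sqrt{\lc\ld}$ are branch-point artifacts that cancel under $w = z + \lc\ld z^{-1}$, and that the remaining roots pair up to $a_- = x_3 + x_8$, $a_+ = x_4 + x_7$, is correct and is precisely the final step of the paper's argument (via the explicit form (\ref{QSquareRoot})). That part is fine. The missing ingredient is the identification $R_{\hat\mu} = R_\infty$, which is where the discrete Riemann--Hilbert / orthogonal polynomial machinery enters.
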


\section{Global asymptotics for the $q$-Racah tiling model} \label{Section8}
As discussed in Section \ref{Section7.2} the $1D$ global fluctuations of our model are asymptotically described by an appropriate pullback of the Gaussian free field in $\mathbb{H}$. In Section \ref{Section8.1} we provide some preliminaries on the two-dimensional Gaussian free field. In Section \ref{Section8.2} we describe the complex structure $\Omega$ on the liquid region $\mathcal{D}'$ of our tiling model and show that $\Omega$ defines a bijection between $\mathcal{D}'$ and $\mathbb{H}$. In Section \ref{Section8.3} we give the proof of Theorems \ref{BT1} and \ref{QRCLT}. Finally, in Section \ref{Section8.4} we state our $2$D conjecture and give some numerical evidence that supports it.

\subsection{Gaussian free field}\label{Section8.1}
In this section we briefly recall the formulation and some basic properties of the Gaussian free field (GFF). Our discussion will follow the exposition in \cite[Section 4.5]{BGJ} and for a more thorough background on the subject we refer to \cite{Sheff}, \cite[Section 4]{Dub}, \cite[Section 2]{HMP}, and the references therein.

\begin{defi} The Gaussian free field with Dirichlet boundary conditions in the upper half-plane $\mathbb{H}$ is a (generalized) centered Gaussian field $\mathcal{F}$ on $\mathbb{H}$ with covariance given by
\begin{equation}\label{GFFCov}
\mathbb{E} \left[ \mathcal{F}(z) \mathcal{F}(w)\right] = - \frac{1}{2\pi} \log \left| \frac{z - w}{z - \overline{w}} \right|, \hspace{2mm} z,w \in \mathbb{H}.
\end{equation}
\end{defi}
We remark that $\mathcal{F}$ can be viewed as a probability Gaussian measure on a suitable class of generalized functions on $\mathbb{H}$; however, one cannot define the value of $\mathcal{F}$ at a given point $z \in \mathbb{H}$ (this is related to the singularity of (\ref{GFFCov}) at $z = w$).

Even though $\mathcal{F}$ does not have a pointwise value; one can define the (usual distributional) pairing $\mathcal{F}(\phi)$, whenever $\phi$ is a smooth function of compact support, and the latter is a mean zero normal random variable. In general, one can characterize the distribution of $\mathcal{F}$ through pairings with test functions as follows. If $\{\phi_k\}$ is any sequence of compactly supported smooth functions on $\mathbb{H}$ then the pairings $\{ \mathcal{F}(\phi_k) \}$ form a sequence of centered normal variables with covariance
$$\mathbb{E} \left[ \mathcal{F}(\phi_k) \mathcal{F} (\phi_l)\right] = \int_{\mathbb{H}^2} \phi_k(z) \phi_l(w) \left( - \frac{1}{2\pi} \log \left| \frac{z - w}{z - \overline{w}} \right| \right)|dz|^2|dw|^2.$$

An important property of $\mathcal{F}$ that will be useful for us is that it can be integrated against smooth functions on smooth curves $\gamma \subset \mathbb{H}$. We isolate the statement in the following lemma.
\begin{lem}\label{GFFCH}\cite[Lemma 4.6]{BGJ} Let $\gamma \subset \mathbb{H}$ be a smooth curve and $\mu$ a measure on $\mathbb{H}$, whose support is $\gamma$ and whose density with respect to the natural (arc length) measure on $\gamma$ is a given by a smooth function $g(z)$ such that
\begin{equation}\label{VarCont}
\iint\limits_{\gamma \times \gamma} g(z) g(w) \left(  - \frac{1}{2\pi} \log \left| \frac{z - w}{z - \overline{w}}\right| \right) dz dw < \infty.
\end{equation}
Then
$$ \int_{\mathbb{H}} \mathcal{F} d\mu = \int_\gamma \mathcal{F}(u) g(u) du$$
 is a well-defined Gaussian centered random variable of variance given by (\ref{VarCont}).
 Moreover, if we have two such measures $\mu_1$ and $\mu_2$ (with two curves $\gamma_1$ and $\gamma_2$ and two densities $g_1$ and $g_2$), then $X_1 = \int_{\gamma_1} \mathcal{F}(u) g_1(u)du$, $X_2 = \int_{\gamma_2} \mathcal{F}(u) g_2(u)du$ are jointly Gaussian with covariance 
$$\mathbb{E}[X_1 X_2] = \iint\limits_{\gamma_1 \times \gamma_2} g_1(z) g_2(w) \left(  - \frac{1}{2\pi} \log \left| \frac{z - w}{z - \overline{w}}\right| \right) dz dw.$$
\end{lem}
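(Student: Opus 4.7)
The plan is to construct $\int_{\mathbb{H}} \mathcal{F}\, d\mu$ as an $L^2$-limit of honest pairings $\mathcal{F}(\phi_n)$, where $\phi_n$ are smooth compactly supported functions approximating $g\,d\sigma_\gamma$ (with $\sigma_\gamma$ the arc-length measure on $\gamma$). Concretely, I would fix a standard non-negative mollifier $\eta \in C^\infty_c(\mathbb{R}^2)$ with $\int \eta = 1$, set $\eta_\epsilon(z) = \epsilon^{-2}\eta(z/\epsilon)$, and define $\phi_\epsilon(z) := \int_\gamma g(u)\,\eta_\epsilon(z-u)\,|du|$. For all sufficiently small $\epsilon > 0$ the function $\phi_\epsilon$ is smooth and compactly supported in $\mathbb{H}$ (since $\gamma \subset \mathbb{H}$ is a compact smooth curve, uniformly separated from $\partial\mathbb{H}$ on each compact piece), so $\mathcal{F}(\phi_\epsilon)$ is a well-defined centered Gaussian random variable.

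Next I would compute, using the definition of the GFF and Fubini,
\begin{equation*}
\mathbb{E}\bigl[\mathcal{F}(\phi_{\epsilon_1})\mathcal{F}(\phi_{\epsilon_2})\bigr]
= \iint_{\gamma\times\gamma} g(u)g(v)\, K_{\epsilon_1,\epsilon_2}(u,v)\,|du|\,|dv|,
\end{equation*}
where
\begin{equation*}
K_{\epsilon_1,\epsilon_2}(u,v) = \iint_{\mathbb{H}\times\mathbb{H}} \eta_{\epsilon_1}(z-u)\eta_{\epsilon_2}(w-v)\,K(z,w)\,|dz|^2|dw|^2,
\end{equation*}
and $K(z,w) = -\frac{1}{2\pi}\log\bigl|\tfrac{z-w}{z-\bar w}\bigr|$. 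The key step is to show $K_{\epsilon_1,\epsilon_2}(u,v) \to K(u,v)$ as $\epsilon_1,\epsilon_2 \to 0$ for a.e.\ $(u,v)\in\gamma\times\gamma$ (routine, since $K$ is locally integrable and continuous away from the diagonal and the boundary), and to dominate $|K_{\epsilon_1,\epsilon_2}(u,v)|$ by an integrable majorant. For the latter I would use the local estimate $|K(z,w)|\lesssim 1 + |\log|z-w||$ together with the standard bound that the mollification of $|\log|\cdot||$ is bounded by a constant multiple of $1 + |\log|\cdot||$; combined with hypothesis \eqref{VarCont} this gives an integrable majorant on $\gamma\times\gamma$ and dominated convergence then yields
\begin{equation*}
\lim_{\epsilon_1,\epsilon_2\to 0} \mathbb{E}\bigl[\mathcal{F}(\phi_{\epsilon_1})\mathcal{F}(\phi_{\epsilon_2})\bigr]
= \iint_{\gamma\times\gamma} g(u)g(v)K(u,v)\,|du|\,|dv|.
\end{equation*}

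From this it follows that $\{\mathcal{F}(\phi_{1/n})\}$ is Cauchy in $L^2(\Omega)$, since $\mathbb{E}|\mathcal{F}(\phi_{1/n})-\mathcal{F}(\phi_{1/m})|^2$ is a linear combination of the above quantities that converges to $0$. Hence the limit $X := \lim_n \mathcal{F}(\phi_{1/n})$ exists in $L^2$; it is centered Gaussian because the $L^2$-closure of centered Gaussians is centered Gaussian, and its variance is the integral in \eqref{VarCont}. Independence of the approximating sequence follows by interleaving: any two admissible mollifications $\phi_\epsilon, \tilde\phi_\epsilon$ satisfy $\mathbb{E}|\mathcal{F}(\phi_\epsilon) - \mathcal{F}(\tilde\phi_\epsilon)|^2 \to 0$ by the same dominated-convergence argument. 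We then \emph{define} $\int_\gamma \mathcal{F}(u)g(u)\,du := X$. The covariance formula for two such curve-measures $(\gamma_1,g_1)$ and $(\gamma_2,g_2)$ is obtained by the identical argument applied to the cross-expectation $\mathbb{E}[\mathcal{F}(\phi^{(1)}_\epsilon)\mathcal{F}(\phi^{(2)}_\epsilon)]$, using $\gamma_1\times\gamma_2$ in place of $\gamma\times\gamma$; the hypothesis that both individual variances are finite plus Cauchy–Schwarz ensures the mixed integral is absolutely convergent.

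The main obstacle is establishing the dominated-convergence bound on $K_{\epsilon_1,\epsilon_2}$, since the GFF covariance has two potentially dangerous regions: the diagonal $z=w$ (logarithmic singularity), and the boundary $\mathbb{R}=\partial\mathbb{H}$ (where the denominator $z-\bar w$ can be small). The diagonal singularity is exactly what is controlled by hypothesis \eqref{VarCont}, while the boundary is harmless because $\gamma$ is smooth and therefore uniformly bounded away from $\mathbb{R}$ on compact subarcs; these are the two ingredients that make the whole approximation scheme go through.
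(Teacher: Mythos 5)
The paper itself does not prove this lemma: it is imported verbatim from \cite[Lemma 4.6]{BGJ}, with no proof given in the present text. So there is no in-paper argument to compare your proposal against, and I can only assess it on its own merits.

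Your mollification scheme is the standard and correct way to extend the pairing $\phi \mapsto \mathcal{F}(\phi)$ from compactly supported smooth test functions to measures concentrated on a curve, and the overall architecture (mollify, compute covariances via Fubini, dominated convergence using \eqref{VarCont} as the integrable majorant, $L^2$-Cauchy, define $X$ as the limit, deduce Gaussianity from closure of the Gaussian space) is sound. Two points you should make explicit if this were to be turned into a full proof. First, you tacitly use that $\gamma$ is a \emph{compact} curve staying a positive distance from $\partial\mathbb{H}$; the statement only says ``smooth curve,'' but in the applications (both here and in \cite{BGJ}) $\gamma$ is a compact segment, and compactness is needed both for $\phi_\epsilon$ to be compactly supported in $\mathbb{H}$ and for the boundary term $\log|z-\bar w|$ to be uniformly bounded, so this should be stated. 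Second, your sentence asserting \emph{joint} Gaussianity of $X_1, X_2$ needs one more line: for any $a,b\in\mathbb{R}$ the combination $aX_1+bX_2$ is the $L^2$-limit of $\mathcal{F}(a\phi^{(1)}_\epsilon + b\phi^{(2)}_\epsilon)$, which is Gaussian, hence $(X_1,X_2)$ is a Gaussian vector; the cross-covariance computation alone does not give this. The dominated-convergence majorant is handled correctly: on a compact $\gamma\subset\mathbb{H}$ one has $K(u,v) = -\tfrac{1}{2\pi}\log|u-v| + O(1)$ with $K>0$, so $1+|\log|u-v||\lesssim K(u,v)+1$ and \eqref{VarCont} (plus $\iint g\,g<\infty$) makes the majorant integrable, and the standard bound on mollified logarithms closes the argument. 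So the proposal is essentially correct modulo those small clarifications, and it is the natural route that a proof of the cited lemma would take.
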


Another property of $\mathcal{F}$ that we require is that it behaves well under bijective maps, which leads to the notion of pullback.
\begin{defi}\label{Pullback}
Given a domain $D$ and a bijection $\Omega: D \rightarrow \mathbb{H}$, the pullback $\mathcal{F} \circ \Omega$ is a generalized centered Gaussian field on $D$ with covariance
$$\mathbb{E} \left[\mathcal{F}(\Omega(z)) \mathcal{F}(\Omega(w)) \right] =  - \frac{1}{2\pi} \log \left| \frac{\Omega(z) - \Omega(w)}{\Omega(z) - \overline{\Omega}(w)} \right|, \hspace{2mm} z,w \in D.$$
Integrals of $\mathcal{F} \circ \Omega$ with respect to measures can be computed through
$$ \int_D (\mathcal{F} \circ \Omega)d\mu = \int_{\mathbb{H}} \mathcal{F} d\Omega(\mu),$$
where $d\Omega(\mu)$ stands for the pushforward of the measure $\mu$.
\end{defi}
The above definition immediately implies the following analogue of Lemma \ref{GFFCH}.
\begin{lem}\label{GFFCD}\cite[Lemma 4.8]{BGJ} In the notation of Definition \ref{Pullback}, let $\mu$ be a measure on $D$ whose support is a smooth curve $\gamma$ and whose density with respect to the natural (length) measure on $\gamma$ is given by a smooth function $g(z)$ such that
\begin{equation}\label{VarContD}
\iint\limits_{\gamma \times \gamma} g_1(z) g_2(w) \left(  - \frac{1}{2\pi} \log \left| \frac{\Omega(z) - \Omega(w)}{\Omega(z) - \overline{\Omega}(w)}\right| \right) dz dw < \infty.
\end{equation}
Then
$$ \int_{D} (\mathcal{F} \circ \Omega) d\mu = \int_\gamma \mathcal{F}(\Omega(u)) g(u) du$$
 is a well-defined Gaussian centered random variable of variance given by (\ref{VarContD}).
 Moreover, if we have two such measures $\mu_1$ and $\mu_2$ (with two curves $\gamma_1$ and $\gamma_2$ and two densities $g_1$ and $g_2$), then $X_1 = \int_{\gamma_1} \mathcal{F}(\Omega(u)) g_1(u)du$, $X_2 = \int_{\gamma_2} \mathcal{F}(\Omega(u)) g_2(u)du$ are jointly Gaussian with covariance 
$$\mathbb{E}[X_1 X_2] = \iint\limits_{\gamma_1 \times \gamma_2} g_1(z) g_2(w) \left(  - \frac{1}{2\pi} \log \left| \frac{\Omega(z) - \Omega(w)}{\Omega(z) - \overline{\Omega}(w)}\right| \right) dz dw.$$
\end{lem}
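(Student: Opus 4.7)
My approach is a direct reduction to Lemma~\ref{GFFCH} via pushforward under $\Omega$. The first step would be to invoke Definition~\ref{Pullback}, which gives
\[
\int_D (\mathcal{F}\circ \Omega)\,d\mu \;=\; \int_{\mathbb{H}} \mathcal{F}\, d(\Omega_*\mu),
\]
where $\Omega_*\mu$ denotes the pushforward measure. Thus it suffices to show that the right-hand side is a well-defined centered Gaussian random variable of the variance claimed in (\ref{VarContD}). Since $\gamma$ is smooth and $\Omega$ is a (smooth) bijection onto $\mathbb{H}$, the image $\Omega(\gamma)\subset \mathbb{H}$ is a smooth curve and $\Omega_*\mu$ is supported on it, placing us in the setting of Lemma~\ref{GFFCH}.

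Next I would identify the density $\tilde g$ of $\Omega_*\mu$ with respect to arc length on $\Omega(\gamma)$. Parametrising $\gamma$ by arc length through $u\mapsto z(u)$, the arc-length element along $\Omega(\gamma)$ at the image point $\Omega(z(u))$ is $|\Omega'(z(u))|\,du$, so a change of variables yields
\[
\tilde g\bigl(\Omega(z(u))\bigr) \;=\; \frac{g(z(u))}{|\Omega'(z(u))|}.
\]
Because $\Omega$ is a diffeomorphism and $g$ is smooth, $\tilde g$ is smooth on $\Omega(\gamma)$. Performing the same change of variables $v_i=\Omega(z(u_i))$ in the double integral appearing in the hypothesis (\ref{VarCont}) of Lemma~\ref{GFFCH}, the Jacobians $|\Omega'(z(u_i))|$ cancel the factors in the denominator of $\tilde g$, producing exactly the integral on the right-hand side of (\ref{VarContD}). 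Hence the finiteness assumption in (\ref{VarContD}) is equivalent to the finiteness hypothesis required to apply Lemma~\ref{GFFCH} to $\Omega_*\mu$.

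With the hypotheses of Lemma~\ref{GFFCH} verified, that lemma produces that $\int_{\mathbb{H}}\mathcal{F}\,d(\Omega_*\mu)$ is a centered Gaussian random variable whose variance equals (\ref{VarContD}); combined with Definition~\ref{Pullback} this is precisely the single-measure statement of Lemma~\ref{GFFCD}. For the joint statement I would apply the same argument to the pair $(\Omega_*\mu_1,\Omega_*\mu_2)$: Lemma~\ref{GFFCH} delivers the joint Gaussianity and a covariance $\iint_{\Omega(\gamma_1)\times\Omega(\gamma_2)} \tilde g_1(v_1)\tilde g_2(v_2) K(v_1,v_2)\,dv_1 dv_2$, with $K(z,w)=-(2\pi)^{-1}\log|(z-w)/(z-\overline w)|$, which transforms by the same change of variables into the expression stated in the lemma. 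The only real checkpoint, and the most technical one, is the careful justification that the arc-length Jacobian for the restriction of $\Omega$ to a smooth curve really is $|\Omega'|$ and that the density transform above is the correct one; this is a routine differential-geometric computation and presents no conceptual obstacle, since the substance of the lemma is already contained in Definition~\ref{Pullback} together with Lemma~\ref{GFFCH}.
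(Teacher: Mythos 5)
The paper does not supply a proof of this lemma: it is cited as \cite[Lemma 4.8]{BGJ}, so there is no argument in the text to compare against. Your reduction to Lemma~\ref{GFFCH} via the pushforward $\Omega_*\mu$, with Definition~\ref{Pullback} providing $\int_D (\mathcal{F}\circ\Omega)\,d\mu=\int_{\mathbb{H}}\mathcal{F}\,d(\Omega_*\mu)$, is exactly the right mechanism, and the conclusion you reach is the correct one.

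One small imprecision worth flagging: writing the arc-length Jacobian as $|\Omega'(z(u))|$ tacitly treats $\Omega$ as holomorphic. In this paper $\Omega$ is a real-analytic bijective diffeomorphism from the ellipse $\mathcal{D}'\subset\mathbb{R}^2$ to $\mathbb{H}$ (the solution of a quadratic whose coefficients are affine in $(u,v)$) and is \emph{not} a holomorphic function of a complex variable, so the complex derivative $\Omega'$ is not defined; the arc-length factor is $|D\Omega(z(u))\cdot z'(u)|$ with $D\Omega$ the real Jacobian. This does not damage the argument, because the Jacobian cancels against itself in the change of variables and never needs to be computed. Indeed, the cleanest route bypasses arc-length densities altogether: since
$\iint K\,d(\Omega_*\mu)\,d(\Omega_*\mu)=\iint \bigl(K\circ(\Omega\times\Omega)\bigr)\,d\mu\,d\mu$
for any measurable kernel $K$, the variance formula (\ref{VarContD}) is literally the hypothesis (\ref{VarCont}) of Lemma~\ref{GFFCH} for the measure $\Omega_*\mu$. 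Stating it that way avoids invoking a nonexistent complex derivative; the only genuine point to check is that $\Omega(\gamma)$ is a smooth curve and $\Omega_*\mu$ has a smooth arc-length density, both of which follow from $\Omega$ being a diffeomorphism. The joint-Gaussianity statement then follows identically.
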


We end this section by remarking that the Gaussian free field is conformally invariant: if $\phi$ is an automorphism of $\mathbb{H}$ (i.e. $\phi(z) = \frac{az + b}{cz +d}$ with $a,b,c,d \in \mathbb{R}$ and $ad - bc = 1$) then the distributions of $\mathcal{F}$ and $\mathcal{F} \circ \phi$ are the same.

\subsection{Complex structure}\label{Section8.2} In this section we adopt the same notation as in Section \ref{Section7.2} and formulate the map $\Omega$. We first observe that if $(x,y) \in \mathcal{P}$ then we have that $\phi(x,y) = \pi \cdot \mu\left(  y / \lN\right) $, where $\mu$ is as in Definition \ref{Polys} for the parameters $\pq = \lq^{\lN}$, $\pa = \lq^{-\lS - \lN}$, $\pb = \lq^{\lS - \lT - \lN}$, $\pc = \lq^{-x -\lN}$ and $d = \lk^2 \lq^{-\lS + \lN}$. If we have that $R, Q, \Phi^{\pm}$ are as in Definition \ref{Polys} with the same parameters then the liquid region $\mathcal{D}$ is given by
\begin{equation}
\mathcal{D} = \left\{ (x,y) \in \mathcal{P}: Q(\lq^{-y })^2 < 0 \right\} =  \left\{ x \in (0, \lT) \mbox{ and } y \in (- \log_{\lq} (x_7),- \log_{\lq} (x_8)) \right\},
\end{equation}
where $x_7,x_8$ stand for the two roots of $Q^2$ in $(1, c)$ --- see Lemma \ref{LemmaQF}. From (\ref{Q0}) we know
$$z^{-4} q^2 Q(z)^2 = \left( z - c d z^{-1} \right)^2\cdot   Q_0\left( z + c d z^{-1}\right), $$ 
and  if we set $u = \lq^{-x}$ and $v = \lq^{-y} + \lk^2 \lq^{-\lS - x + y}$ then we see that
$$Q_0\left( \lq^{-y} + c d \lq^{y}\right) = \tilde{Q}(u,v) = \tilde{A} u^2 + \tilde{B} v^2 + \tilde{C} uv + \tilde{D}u + \tilde{E}v + \tilde{F},$$
where $\tilde{A},\tilde{B},\tilde{C},\tilde{D},\tilde{E},\tilde{F}$ are explicit constants that depend only on $\lq, \lS, \lN, \lT$ and $\lk^2$ and not on $x,y$. Combining the last two observations, we see that
\begin{equation}\label{Dprime}
\mathcal{D}' = \left\{ (u,v) : \tilde{Q}(u,v) < 0 \right\} =  \left\{ (u,v): u \in (1, \lq^{-\lT} ) \mbox{ and } v \in \left(x_7 + cd x_7^{-1}, x_8 + cdx_8^{-1}\right) \right\}.
\end{equation}
where we recall from Section \ref{Section7.2} that $\mathcal{D}'$ is the image of $\mathcal{D}$ under the map $\sigma_{\lq}(x,y) = (\lq^{-x}, \lq^{-y} + \lk^2 \lq^{-\lS - x + y})$. In particular, we have that $\mathcal{D}'$ is an ellipse.

We next consider the quadratic equation
\begin{equation}\label{QuadCompl}
\begin{split}
&P(w; u,v):= a_2(u,v) w^2 + a_1(u,v) w + a_0(u,v) = 0, \mbox{ where } a_2 = \lq^{\lN}( v - 1 - \lk^2 \lq^{-\lS}u), \\
&a_1 = v\lq^{\lN}(\lq^{-\lT} - 1) + \left(u(\lq^{-\lS} - \lq^{\lN}) - \lq^{-\lS + \lN} - \lq^{-\lT} + 2\lq^{\lN}\right) + u \lk^2 \lq^{\lN}(\lq^{-\lT} + \lq^{-\lS + \lN} - 2\lq^{-\lS - \lT})\\
&+ \lk^2 \lq^{-\lT + \lN}(\lq^{-\lS} - \lq^{\lN} ) \mbox{ and } a_0 = (u - 1)(\lq^{-\lT} - \lq^{\lN}) (\lq^{-\lS} - 1)(1 - \lk^2 \lq^{-\lT + \lN}).
\end{split}
\end{equation}
For the above equation one calculates $a_1^2 - 4a_2a_0 = \lq^{2\lN} \cdot \tilde{Q}(u,v)$ and so for $(u,v) \in \mathcal{D}'$ we have that the equation has two complex conjugate roots. We define the map $\Omega: \mathcal{D}' \rightarrow \mathbb{H}$ as 
\begin{equation}\label{OmegaDef}
\Omega(u,v) = w(u,v) \mbox{ such that } P(w(u,v); u,v) = 0 \mbox{ and } w(u,v) \in \mathbb{H} \mbox{ for $(u,v) \in \mathcal{D}'$}
\end{equation}
and from our earlier discussion $\Omega$ is well-defined and algebraic. \\

In the remainder of this section we show that $\Omega$ defines a bijective diffeomorphism between $\mathcal{D}'$ and $\mathbb{H}$ satisfies an important property that is used in the proof of Theorem \ref{TCLT} in the next section.

For convenience we denote 
\begin{equation*}
\begin{split}
&\lambda_3 = (1-\lq^{\lN})(1 - \lk^2 \lq^{\lN})(\lq^{-\lS} - \lq^{-\lT}), \hspace{2mm}\lambda_2 = \lk^2 \lq^{\lN} \left(\lq^{-\lS - \lT } -  \lq^{-\lS + \lN} +\lq^{-\lS } - \lq^{-\lT }\right) - \lq^{-\lS} + \lq^{\lN}, \\
&\lambda_1 = -(\lq^{-\lT} - \lq^{\lN})(1 - \lk^2 \lq^{-\lT + \lN})(\lq^{-\lS} - 1)  \mbox{ and } \lambda_0 = -(\lq^{-\lT} - 1)(\lq^{-\lT} - \lq^{\lN})(1 - \lk^2\lq^{-\lT + \lN})(\lq^{-\lS} - 1).
\end{split}
\end{equation*}
Also we define the map $f: \mathbb{H} \rightarrow \mathbb{R}^2 \mbox{ through } f(r + \iu s) = (f_1(r,s), f_2(r,s)) \mbox{ with }$
\begin{equation}\label{inverseOmega}
\begin{split}
&f_1(r,s) = 1 + \frac{\lambda_3(r^2 + s^2)}{\lambda_2 (r^2 + s^2) + 2r \lambda_1 +\lambda_0}, \hspace{2mm } f_2(r,s) =  1 + \lk^2 \lq^{-\lT} + \\
&+ \frac{\lk^2 \lq^{\lN} (\lq^{-\lS} - 1)(\lq^{-\lT} - \lq^{-\lS})(1 - \lk^2 \lq^{-\lT})}{\lambda_2}- \frac{\lambda_1\lambda_3(\lambda_2 + \lk^2\lq^{-\lS + \lN} (\lq^{-\lT} + 2r - 1))}{\lq^{\lN}\lambda_2(\lambda_2 (r^2 + s^2) + 2r \lambda_1 +\lambda_0)}.
\end{split}
\end{equation}
We observe that 
$$\tilde{Q}(f_1(r,s), f_2(r,s)) = - \frac{4 \cdot \lq^{-2\lN} \lambda_3^2 \lambda_1^2 s^2 }{(\lambda_2 (r^2 + s^2) + 2r \lambda_1 +\lambda_0)^2} < 0,$$
and so $f$ maps $\mathbb{H}$ in $\mathcal{D}'$. One directly checks that $f \circ \Omega$ and $\Omega \circ f$ are the identities on $\mathcal{D}'$ and $\mathbb{H}$ respectively, which shows that $\Omega$ has our desired properties. 

\begin{rem}
Let us give some ideas about how the formula for $\Omega$ was discovered. Once the appropriate physical coordinates $u,v$ for the system are found, which lead to the liquid region $\mathcal{D}'$ being an ellipse, one suspects that the map $\Omega$ should be given by the solution in $\mathbb{H}$ of {\em some} quadratic equation $a_2 w^2 + a_1w + a_0$, whose discriminant $D = a_1^2 - 4a_0 a_2$ is negative precisely on $\mathcal{D}'$. In particular, we expect that $a_1^2 - 4a_0 a_2 = \lambda \tilde{Q}(u,v)$ for some positive parameter $\lambda$.

In \cite{P2} the complex structure for the uniform tiling case (this is $\kappa = 0$ and $q = 1$ in our model) was given by a quadratic equation, whose coefficients are {\em linear} in the coordinates of the system. By analogy we guess that $a_i = a_i^1u + a_i^2v + a_i^3$ for $i = 1,2,3$ in our case as well, where the new coefficients do not depend on $u$ and $v$. This gives us a nine parameter system. 

When searching for a map $\Omega$ one has a choice of which point of the boundary of $\mathcal{D}'$ should be sent to infinity. In our case, we choose the boundary point at $(u, 1 + u\lk^2 \lq^{-\lS})$ with $u = 1 + \lambda_3 \cdot \lambda_2^{-1}$  to be sent to infinity, which gives us $2$ equations for our $9$ parameters. In addition, the relationship $a_1^2 - 4a_0 a_2 = \lambda \tilde{Q}(u,v)$ gives an additional $6$ equations (comparing the coefficients in front of $u^iv^j$) and an extra parameter $\lambda$. Overall we have a ten parameter system with eight equations.

The resulting system has a $2$-parameters set of solutions. The extra freedom comes from multiplying $a_2, a_1, a_0$ by the same positive constant and also from multiplying $a_2$ and dividing $a_0$ by the same positive constant. Observe that the resulting complex structures are all equivalent modulo a multiplication by a positive constant, which is an automorphism of $\mathbb{H}$. Our particular, choice for the parameters is dictated by the {\em product form} of the coefficient $a_0$ in (\ref{QuadCompl}).
\end{rem}

\begin{rem}\label{remCS}
As mentioned in Section \ref{Section1.1.2} there is a natural complex coordinate one can define on the liquid region $\mathcal{D}$, called {\em complex slope}. Let us explain how to construct it briefly -- see \cite{KO, K1} for more details. Suppose $(x,y) \in \mathcal{D}$ and set $(p_1, p_2, p_3)$ to be the normal vector to the limit shape $\hat{h}$ at $(x,y)$ such that $p_1 + p_2 + p_3 = 1$. Then the complex slope $z(x,y)$ is the unique point in $\mathbb{H}$ such that the triangle $(0,1,z)$ has angles $(\pi p_1, \pi p_2, \pi p_3)$. In the case of the uniform tilings of the hexagon (and more general domains) it is known that there is an {\em algebraic} relationship between $z(x,y)$ and the complex structure $\Omega(x,y)$ , whose pullback establishes the connection with the GFF on $\mathbb{H}$, \cite{KO, P2}. For the $q$-Racah tiling model an expression for $z(x,y)$ was obtained in \cite[Section 8.1]{BGR} and it is related to $\Omega(x,y)$ from (\ref{OmegaDef}) as follows. If we set 
$$U = U(x,y) = \frac{z(x,y) \lq^{x} - \lk^2 \lq^{-\lS + 2y}}{1 - z(x,y)\lk^2 \lq^{-\lS + 2y - x}} \mbox{ and  } \Omega = \Omega(x,y) \mbox{ then }$$
$$U = \frac{ \Omega ( \lq^{-\lS} - \lq^{\lN}) + (\lq^{-\lS } - 1)( \lq^{-\lT} - \lq^{\lN}) - \lk^2\lq^{\lN} (\Omega +\lq^{-\lT}  - \lq^{\lN})(\Omega \lq^{-\lS} + \lq^{-\lT - \lS}  - \lq^{-\lT})}{(\Omega \lq^{\lN} + \lq^{-\lT} - \lq^{\lN})(\Omega + \lq^{-\lS} -1) - \lk^2  \lq^{\lN - \lT} [ (\lq^{-\lS} - \lq^{\lN})\Omega + (\lq^{-\lT} - \lq^{\lN})(\lq^{-\lS} - 1)]}.$$
\end{rem}

We end the section with the following result that will be required in the next section.
\begin{lem}Suppose that $u, v_1, v_2 \in \mathbb{R}$ are such that $(u, v_1), (u,v_2) \in \mathcal{D}'$. Then we have
\begin{equation}\label{OmegaLog}
-\log\left| \frac{\Omega(u, v_1) - \Omega(u,v_2)}{\Omega(u, v_1) - \overline{\Omega}(u, v_2)}\right| = \log\left| \frac{\sqrt{(v_1-a)(b-v_2)}  + \sqrt{(v_2-a)(b-v_1)}}{\sqrt{(v_1-a)(b-v_2)} -\sqrt{(v_2-a)(b-v_1)}}\right|,
\end{equation}
where $a < b$ denote the intersection points of the vertical line through $u$ with the ellipse $\tilde{Q}(u,v) = 0$.
\end{lem}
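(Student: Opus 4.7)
The plan is to show that, with $u$ fixed, the map $v\mapsto \Omega(u,v)$ differs from the ``model'' map $\psi(v):=\iu\sqrt{(v-a)/(b-v)}$ by a Möbius automorphism of $\mathbb{H}$. Once this is established, invariance of the pseudo-hyperbolic distance $\rho(z,w):=|(z-w)/(z-\bar w)|$ under automorphisms of $\mathbb{H}$ reduces (\ref{OmegaLog}) to a direct computation involving $\psi$ alone.

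First I would carry out the elementary computation for $\psi$: for $v\in(a,b)$ we have $\psi(v)\in\iu\mathbb{R}_{>0}$, and writing $t_i:=\sqrt{(v_i-a)/(b-v_i)}$ gives
$$\rho(\psi(v_1),\psi(v_2))=\left|\frac{\iu t_1-\iu t_2}{\iu t_1+\iu t_2}\right|=\frac{|t_1-t_2|}{t_1+t_2}.$$
Multiplying numerator and denominator by $\sqrt{(b-v_1)(b-v_2)}$ converts this into $|A-B|/(A+B)$ with $A=\sqrt{(v_1-a)(b-v_2)}$ and $B=\sqrt{(v_2-a)(b-v_1)}$, so taking $-\log$ recovers exactly the right-hand side of (\ref{OmegaLog}).

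Next, to relate $\Omega$ to $\psi$, I would exploit the fact that in (\ref{QuadCompl}) the coefficient $a_0$ is independent of $v$ while $a_2$ and $a_1$ are linear in $v$. Substituting the inverse $v=(a-b\psi^2)/(1-\psi^2)$ into $a_2\Omega^2+a_1\Omega+a_0=0$ and clearing the denominator $1-\psi^2$, $v$-linearity regroups the terms into
$$\bigl[a_2(u,a)\Omega^2+a_1(u,a)\Omega+a_0\bigr]=\psi^2\bigl[a_2(u,b)\Omega^2+a_1(u,b)\Omega+a_0\bigr].$$
The discriminant $a_1^2-4a_0a_2=\lq^{2\lN}\tilde{Q}(u,v)$ vanishes precisely at $v\in\{a,b\}$, so each bracket is a perfect square in $\Omega$, and the identity collapses to
$$a_2(u,a)\bigl(\Omega-\Omega(u,a)\bigr)^2=\psi^2\,a_2(u,b)\bigl(\Omega-\Omega(u,b)\bigr)^2.$$
Taking a square root then expresses $\Omega$ as a Möbius function $M(\psi)$. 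Its coefficients are real provided $a_2(u,a)$ and $a_2(u,b)$ have the same sign, which follows from the relation $4a_0a_2(u,v)=a_1(u,v)^2$ at $v\in\{a,b\}$ together with the non-vanishing, via the explicit product form in (\ref{QuadCompl}), of $a_0$ for $u\in(1,\lq^{-\lT})$. The correct branch of the square root is pinned down by the requirement $\Omega(u,v)\in\mathbb{H}$, which simultaneously forces $M$ to be an automorphism of $\mathbb{H}$, hence an isometry of $\rho$. Combining this with the first step and taking $-\log$ yields (\ref{OmegaLog}).

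The main obstacle will be the algebraic collapse of the quadratic in $\Omega$ into the perfect-square form above: it hinges on the combination of $v$-independence of $a_0$ with $v$-linearity of $a_1,a_2$, together with the fact that the discriminant vanishes exactly at $v=a,b$. Once that identity is in hand, the sign check for $a_2(u,a),a_2(u,b)$ and the verification that $M$ preserves $\mathbb{H}$ are both short.
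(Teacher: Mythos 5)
Your proof is correct in its essentials, and it takes a genuinely different route from the paper's. Both arguments rest on the Möbius invariance of the pseudo-hyperbolic quantity $\rho(z,w)=|(z-w)/(z-\bar w)|$ on $\mathbb{H}$, but they get to the reduction in different ways. The paper writes down an \emph{explicit} real Möbius map $\phi$ (built from $a_0(u)$ and the $v$-independent part of $a_1$), checks by hand that its coefficients do not depend on $v_i$, and then computes $\phi(\Omega(u,v_i)) = v_i\lq^{\lN}(\lq^{-\lT}-1) \pm \sqrt{D(u,v_i)}$, from which the ratio falls out directly. You instead prove \emph{a priori} that $\Omega(u,\cdot) = M\circ\psi$ for some real Möbius $M$, by exploiting exactly the two structural features of (\ref{QuadCompl}) that make the paper's $\phi$ work — $a_0$ is $v$-independent, $a_1,a_2$ are $v$-linear — together with the vanishing of the discriminant at $v=a,b$. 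Your regrouping
$$\bigl[a_2(u,a)\Omega^2+a_1(u,a)\Omega+a_0\bigr]=\psi^2\bigl[a_2(u,b)\Omega^2+a_1(u,b)\Omega+a_0\bigr]$$
and the subsequent perfect-square factorization is a clean way to \emph{derive} the Möbius conjugacy rather than guess it, and it explains why the normal form is as simple as it is. The direct computation for $\psi(v)=\iu\sqrt{(v-a)/(b-v)}$ is correct (for $v\in(a,b)$ one has $\psi\in\iu\mathbb{R}_{>0}$, so $\rho(\iu t_1,\iu t_2)=|t_1-t_2|/(t_1+t_2)$, and rescaling gives the stated right-hand side). Your sign argument via $4a_0a_2(u,v)=a_1(u,v)^2$ at $v\in\{a,b\}$ and the product form of $a_0$ is also right: for $u\in(1,\lq^{-\lT})$ each factor of $a_0$ is positive, so $a_0>0$ and hence $a_2(u,a), a_2(u,b)\geq 0$.

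The one caveat you should flag explicitly is the degenerate case where one of $a_2(u,a), a_2(u,b)$ vanishes. This actually can happen: by the construction remark in Section~\ref{Section8.2}, the boundary point $(u^*, 1+u^*\lk^2\lq^{-\lS})$ with $u^*=1+\lambda_3\lambda_2^{-1}$ is sent to infinity, and at that point $a_2=0$ (forcing $a_1=0$ as well, since $a_1^2=4a_0a_2$ on the boundary). There the bracket is not a perfect square in $\Omega$ but the constant $a_0\neq 0$, so your identity becomes, e.g., $a_0=\psi^2 a_2(u,b)(\Omega-\Omega(u,b))^2$, which still yields a real Möbius $M$ (with a pole at $\psi=0$, i.e.\ $\Omega(u,a)=\infty$). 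So the argument survives, but the ``each bracket is a perfect square'' sentence and the ``same sign'' phrasing both need a one-line remark to cover this boundary case (or an appeal to continuity in $u$). This is a small patch, not a gap in the method.
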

\begin{proof}
Note that if $\phi(z)$ is an automorphism of $\mathbb{H}$, i.e. $\phi(z) = \frac{m \cdot z+ n}{k \cdot z + l}$ with $m,n,k,l \in \mathbb{R}$ and $ml - $ $nk = 1$, then the LHS of (\ref{OmegaLog}) is the same upon replacing $\Omega(u,v_i)$ with $\phi(\Omega(u,v_i))$ for $i = 1,2$. Set 
\begin{equation}
m = \frac{-a_1 + v_i\lq^{\lN}(\lq^{-\lT} - 1)}{\sqrt{2 a_0(u,v_i)}}, \hspace{2mm} n = -\sqrt{2a_0(u,v_i)}, \hspace{2mm} k = \frac{1}{\sqrt{2a_0(u,v_i)}}, \hspace{2mm} l = 0,
\end{equation}
and observe that the above do not change if we take $i = 1$ or $2$. Moreover, by our choice of parameters we know that $m,n,k,l$ satisfy the earlier conditions and we let $\phi$ denote the automorphism corresponding to this quadruple. Setting $D(u,v_i) = a_1(u,v_i)^2 - 4a_0(u,v_i)a_2(u,v_i)$ we see that
$$\phi( \Omega(u,v_i)) = \frac{(-a_1 + v_i \lq^{\lN}(\lq^{-\lT} - 1))(-a_1+ \sqrt{D(u,v_i)}) - 4a_0a_1}{- a_1 + \sqrt{D}} = v_i\lq^{\lN}(\lq^{-\lT} - 1) - \sqrt{D},$$
where in the second equality we multiplied the numerator and denominator by $-a_1 - \sqrt{D}$ and used that $a_1^2 - D = 4a_0a_2$. Recalling that $ \sqrt{D(u,v_i)} = \lq^{\lN} \sqrt{\tilde{Q}(u,v_i)} =  \lq^{\lN} (\lq^{-\lT} - 1)\sqrt{ (v_i - a)(v_i - b)}$, where $a,b$ are as in the statement of the lemma, we see that 
$$
 \frac{\Omega(u, v_1) - \Omega(u,v_2)}{\Omega(u, v_1) - \overline{\Omega}(u, v_2)} = \frac{(v_1-v_2) + \sqrt{ (v_1 - a)(v_1 - b)} - \sqrt{ (v_2 - a)(v_2 - b)}}{(v_1-v_2) + \sqrt{ (v_1 - a)(v_1 - b)} + \sqrt{ (v_2 - a)(v_2 - b)}} .$$
Taking absolute value on both sides above and squaring we get
$$\left|\frac{\Omega(u, v_1) - \Omega(u,v_2)}{\Omega(u, v_1) - \overline{\Omega}(u, v_2)} \right|^2 = \frac{(b-v_1)(v_2 -a) + (b-v_2)(v_1 - a) - 2\sqrt{(v_1 - a)(b- v_1) (v_2 - a)(b - v_2)}}{(b-v_1)(v_2 -a) + (b-v_2)(v_1 - a) + 2\sqrt{(v_1 - a)(b- v_1) (v_2 - a)(b - v_2)}}.$$
If we take logarithms on both sides of the above and multiply the result by $-1/2$ we get (\ref{OmegaLog}).
\end{proof}

\subsection{Proof of Theorems \ref{TLLN} and \ref{TCLT} } \label{Section8.3} 

\subsubsection{Proof of Theorem  \ref{TLLN} }
 We suppose that we have a sequence $\varepsilon_k$, which converges to $0^+$ and also sequences $q(\varepsilon_k),  N(\varepsilon_k), T(\varepsilon_k), S(\varepsilon_k)$ and $\kappa(\varepsilon_k)$ as in Definition \ref{ParScale}. Let us define $t(\varepsilon_k) = \lfloor x \varepsilon_k^{-1} \rfloor$ and observe that in this notation we have for all large $k$ that
\begin{equation}\label{LLNeq1}
\varepsilon_k \cdot h\left( \lfloor x \varepsilon_k^{-1} \rfloor , \lfloor y \varepsilon_k^{-1} \rfloor +  1/2 \right) = \varepsilon_k \cdot  N \cdot \int_{\mathbb{R}} {\bf 1}_{\left\{r < y \varepsilon_k^{-1} \right\}} \mu^t(dr),
\end{equation}
where $\mu^t = N^{-1} \sum_{i = 1}^N \delta \left( x^t_i / N\right) $. By possibly passing to a subsequence we may assume that the parameters $t, S,T$ fall into one of the four cases in Theorem \ref{TilingtoParticle}. These cases need to be handled separately, but as the arguments are analogous we assume that we are in the case $t <\min(S, T - S)$.

It follows from Theorem \ref{TilingtoParticle}, (\ref{LLNeq1}) and the definition of $\hat{h}$ that for large $k$ we have
\begin{equation}\label{LLNeq2}
\begin{split}
&p(\varepsilon_k):= \mathbb{P}_{\varepsilon_k} \left( \left|  \varepsilon_k \cdot h\left( \lfloor x \varepsilon_k^{-1} \rfloor , \lfloor y \varepsilon_k^{-1} \rfloor +  1/2 \right)  -  \hat{h}(x,y) \right| > \eta \right) = \\
& \mathbb{P}_N\left( \left|  \varepsilon_k \cdot  N \cdot \int_{\mathbb{R}} {\bf 1}_{\left\{ r < y \varepsilon_k^{-1}N^{-1} \right\}} \mu_N(dr) -  \int_0^y \mu\left(  r / \lN\right) dr \right| > \eta \right),
\end{split}
\end{equation}
where $\mathbb{P}_N$, $\mu_N$ are as in the statement of Theorem \ref{BT1} for the parameters $q_N = q$,  $M_N = t + N - 1$, $\alpha_N = q_N^{-S-N}$, $\beta_N = q_N^{S - T - N}$, $\gamma_N = q_N^{-t - N}$ and $\delta_N = \kappa^2 q_N^{-S + N}$ and $\mu$ is as in Definition \ref{Polys} for the parameters $\pq = \lq^{\lN}$, $\pa = \lq^{-\lS - \lN}$, $\pb = \lq^{\lS - \lT - \lN}$, $\pc = \lq^{-x -\lN}$ and $d = \lk^2 \lq^{-\lS + \lN}$.

For $\tilde{\delta} > 0$ we let $f_{\tilde{\delta}}$ be a smooth function, such that $f_{\tilde{\delta}} = 1$ on $[0,y/\lN]$, its support lies in $(-\tilde{\delta}, y/\lN + \tilde{\delta})$, $f_{\tilde{\delta}}(x) \in [0,1]$ for all $x$. Then choosing $\tilde{\delta}$ sufficiently small we have for all large $k$ that
$$p(\varepsilon_k) < \mathbb{P}_N\left( \left|  \lN \cdot \int_{\mathbb{R}} f_{\tilde{\delta}}(r) \mu_N(dr) - \lN \cdot \int_{\mathbb{R}} f_{\tilde{\delta}}(r) \mu(r) dr \right| > \eta/2 \right),$$
where we used that $N = \lN\epsilon_k^{-1}  + o(1)$, $f_{\tilde{\delta}}(x)$, $\mu(x)$ are both in $[0,1]$ and we performed a change of varibles for the integral involving $\mu$. From Theorem \ref{BT1} we know that the RHS above converges to $0$ as $k\rightarrow \infty$, which proves the theorem. 

\subsubsection{Proof of Theorem  \ref{TCLT} }
We suppose that we have a sequence $\varepsilon_k$, which converges to $0^+$ and also sequences $q(\varepsilon_k),  N(\varepsilon_k), T(\varepsilon_k), S(\varepsilon_k)$ and $\kappa(\varepsilon_k)$ as in Definition \ref{ParScale}. In addition, we assume that $R_i$ are real polynomials such that $R'_i(x) = f_i(x)$ for $i = 1,\dots, m$.

Observe that for all large $k$ we have
\begin{equation}\label{CLTeq1}
\begin{split}
\int_{\mathbb{R}} \left(\mathcal{H}(q^{-t}, v) - \mathbb{E}_{\mathbb{P}_{\varepsilon_k}} \left[ \mathcal{H}(q^{-t}, v) \right] \right) f_i(v)dv = \int_{1}^{R}  \left(\mathcal{H}(q^{-t}, v) -  \mathbb{E}_{\mathbb{P}_{\varepsilon_k}}  \left[ \mathcal{H}(q^{-t}, v) \right] \right) f_i(v)dv,
\end{split}
\end{equation}
where $R = \lq^{-\lS - \lN} + \lk^2 \lq^{-\lS - \lT} + 1$. The latter truncation is allowed since a.s. all particles will have $v$-coordinate in $[1, R]$, which makes the height function $\mathcal{H}$ deterministic outside this interval and the above integrand zero there. In addition, we observe that
\begin{equation}\label{CLTeq2}
\begin{split}
\int_{1}^{R} \mathcal{H}(q^{-t}, v)  f_i(v)dv = \sum_{j= 1}^N \int_{V_j}^{V_{j+1}} j \cdot  f_i(v)dv = - \sum_{j = 1}^N R_i(V_j) + NR_i(R),
\end{split}
\end{equation}
where $V_j = V(t,j)$ for $j = 1, \dots, N$ (see (\ref{transform})) and $V_{N+1} = R$. Combining (\ref{CLTeq1}) and (\ref{CLTeq2}) we conclude that for all large $k$ we have
\begin{equation}\label{CLTeq3}
\begin{split}
\int_{\mathbb{R}} \left(\mathcal{H}(q^{-t}, v) - \mathbb{E}_{\mathbb{P}_{\varepsilon_k}} \left[ \mathcal{H}(q^{-t}, v) \right] \right) f_i(v)dv = - N \hspace{-1mm} \int_\mathbb{R} \hspace{-1mm} R_j(x)\rho^t(dx) + N \mathbb{E}_{\mathbb{P}_{\varepsilon_k}}   \left[ \int_\mathbb{R} \hspace{-1mm} R_j(x) \rho^t(dx) \right] ,
\end{split}
\end{equation}
where $\rho^t = \frac{1}{N} \sum_{j = 1}^N  \delta \left( V_j\right) $. By possibly passing to a subsequence we may assume that the parameters $t, S,T$ fall into one of the four cases in Theorem \ref{TilingtoParticle}. These cases need to be handled separately, but as the arguments are analogous we will assume that we are in the first case $t <\min(S, T - S)$. 

It follows from Theorem \ref{TilingtoParticle} that $\rho^t$ under law $\mathbb{P}_{\varepsilon_k}$ has the same distribution as $\rho_N$ under law $\mathbb{P}_N$, where  $\rho_N$ is as in (\ref{RHON}) and $\mathbb{P}_N$ is as in Definition \ref{ScaleQR} for the parameters $q_N = q$,  $M_N = t + N - 1$, $\alpha_N = q_N^{-S-N}$, $\beta_N = q_N^{S - T - N}$, $\gamma_N = q_N^{-t - N}$ and $\delta_N = \kappa^2 q_N^{-S + N}$. If we denote by $X^{\varepsilon_k}_i$ the RHS of (\ref{CLTeq3}) for $i = 1,\dots, m$ we conclude from Theorem \ref{QRCLT} that $X^{\varepsilon_k}_i$ converge as $k\rightarrow \infty$ to a Gaussian vector $(X_1, \dots,X_m)$ which has zero mean and covariance
$$\mathbb{E} \left[ X_i X_j \right] = \frac{1}{(2\pi \iu)^2} \oint_{\gamma} \oint_{\gamma} \frac{ R_i(v_1)R_j(v_2)}{2(v_1-v_2)^2} \left(-1 + \frac{(v_1 - a)(v_2- b) + (v_2 - a) (v_1-b)}{2\sqrt{v_1 - a}\sqrt{v_1- b}\sqrt{v_2 - a }\sqrt{v_2- b }} \right) dv_1 dv_2,$$
where $\gamma$ is a positively oriented contour, which encloses the interval $[1 +  u\lk^2 \lq^{-\lS},  u\lq^{ -\lN} + \lk^2 \lq^{-\lS + \lN}]$ and the square roots are defined with respect to the principal branch of the logarithm. In deriving the above we implicitly used Lemma \ref{LemmaQF} and (\ref{Dprime}). To complete the proof it suffices to show  
\begin{equation}\label{StartGFF}
\begin{split}
&\frac{\pi}{(2\pi \iu)^2} \oint_{\gamma} \oint_{\gamma}  \frac{R_i(v_1)R_j(v_2)}{2(v_1-v_2)^2} \left(-1 + \frac{(v_1 - a)(v_2- b) + (v_2 - a) (v_1-b)}{2\sqrt{v_1 - a}\sqrt{v_1- b}\sqrt{v_2 - a }\sqrt{v_2- b }} \right)dv_1 dv_2 =\\ 
& \int_a^b \int_a^b R'_i(x) R_j'(y) \left(  - \frac{1}{2\pi} \log \left| \frac{\Omega(u,x) - \Omega(u,y)}{\Omega(u,x) - \overline{\Omega}(u,y)}\right| \right) dx dy, \mbox{where $\Omega$ is as in (\ref{OmegaDef}).}
\end{split}
\end{equation}

We start with the LHS of (\ref{StartGFF}) and deform the $v_2$ contour so that it traverses the segment $[a,b]$ once in the positive and once in the negative direction. Observe the square roots are purely imaginary and come with opposite sign when we approach $[a,b]$ from the upper and lower half-planes. On the other hand, the term $\frac{1}{2(v_1-v_2)^2} $ cancels when we integrate over $[a,b]$ in the positive and negative direction. By Cauchy's theorem we do not change the value of the integral during the deformation and so from the Bounded convergence theorem we see that the LHS of (\ref{StartGFF}) equals
$$\frac{\pi \iu}{(2\pi \iu)^2}\oint_{\gamma} \int_{a}^b  \frac{R_i(v_1)R_j(v_2)}{(v_1-v_2)^2} \cdot \frac{(v_1 - a)(v_2- b) + (v_2 - a) (v_1-b)}{2\sqrt{v_1 - a}\sqrt{v_1- b}\sqrt{v_2 - a }\sqrt{b-v_2 }}dv_2 dv_1.$$
We integrate by parts in the $v_2$ variable and change the order of the integrals, which leads to the following expression for the LHS in (\ref{StartGFF})
\begin{equation}\label{GFF1}
\frac{\pi \iu}{(2\pi \iu)^2} \int_{a}^b\oint_{\gamma} \frac{R_i(v_1)R'_j(v_2)}{(v_1-v_2)}\cdot \frac{\sqrt{v_2 - a }\sqrt{b-v_2 }}{\sqrt{v_1 - a}\sqrt{v_1- b}}dv_1 dv_2.
\end{equation}

At this time we claim that for each $v_2 \in (a,b)$ we have
\begin{equation}\label{GFF2}
\begin{split}
&\oint_{\gamma} \frac{R_i(v_1)}{(v_1-v_2)} \cdot \frac{\sqrt{v_2 - a }\sqrt{b-v_2 }}{\sqrt{v_1 - a}\sqrt{v_1- b}}dv_1  = 2 \iu  \int_{a}^b R'_i(v_1) \times \\
&\left[ 2 \log \left( \sqrt{(v_1-a)(b-v_2)} + \sqrt{(v_2-a)(b-v_1)} \right) - \log|v_1-v_2| - \log(b-a)\right] dv_1.
\end{split}
\end{equation}
We will prove (\ref{GFF2}) below. For now we assume its validity and finish the proof of (\ref{StartGFF}). 

From (\ref{GFF1}) and (\ref{GFF2}) we see that to show (\ref{StartGFF}) it suffices to have
\begin{equation}\label{GFF3}
\begin{split}
-\log\left| \frac{\Omega(u,v_1) - \Omega(u,v_2)}{\Omega(u,v_1) - \overline{\Omega}(u,v_2)}\right| = &2 \log \left( \sqrt{(v_1-a)(b-v_2)} + \sqrt{(v_2-a)(b-v_1)} \right) - \\
&- \log|v_1-v_2| - \log(b-a).
\end{split}
\end{equation}
From (\ref{OmegaLog}) we know that 
$$-\log\left| \frac{\Omega(u,v_1) - \Omega(u,v_2)}{\Omega(u,v_1) - \overline{\Omega}(u,v_2)}\right| = \log\left| \frac{\sqrt{(v_2-a)(b-v_1)}  + \sqrt{(v_1-a)(b-v_2)}}{\sqrt{(v_2-a)(b-v_1)} -\sqrt{(v_1-a)(b-v_2)}}\right|.$$
In addition, one readily checks that $\log|v_1-v_2| +\log(b-a)$ is equal to
$$\log\left|\sqrt{(v_2-a)(b-v_1)}  + \sqrt{(v_1-a)(b-v_2)}\right| + \log\left|\sqrt{(v_2-a)(b-v_1)}  - \sqrt{(v_1-a)(b-v_2)}\right| .$$
The last two statements imply (\ref{GFF3}), which concludes the proof of (\ref{StartGFF}).\\

In the remainder of the section we establish (\ref{GFF2}). Fix $v_2 \in (a,b)$ and let $\varepsilon > 0$ be such that $(v_2- \varepsilon, v_2+ \varepsilon) \subset [a,b]$. For $\delta \in (0,\varepsilon)$ we define the contour $\Gamma_{\delta, \varepsilon}$ as follows. $\Gamma_{\delta, \varepsilon}$ starts from the point $b - \iu \delta$ and follows the circle centered at $b$ with radius $\delta$ counterclockwise until the point $b + \iu \delta$, afterwards it goes to the left along the segment connecting the points $b+ \iu \delta$ and $v_2 + \varepsilon + \iu \delta$; it follows the circle centered at $v_2+ \iu \delta$ and radius $\varepsilon$ counterclockwise until the point $v_2 - \varepsilon + \iu \delta$ and goes to the left along the segment connecting $v_2 - \varepsilon + \iu \delta$ and $a + \iu \delta$; it then follows the circle centered at $a$ with radius $\delta$ counterclockwise until the point $a - \iu \delta$ and then goes to the right along the segment connecting $a - \iu \delta$ and $v_2 - \varepsilon -\iu \delta$; finally, it follows the circle centered at $v_2 - \iu \delta$ and radius $\varepsilon$ counterclockwise until the point $v_2 - \varepsilon - \iu \delta$ and goes to the right along the segment connecting $v_2 - \varepsilon - \iu \delta$ and $b - \iu \delta$, see Figure \ref{S8F1}.
\begin{figure}[h]
\includegraphics[width=0.75\linewidth]{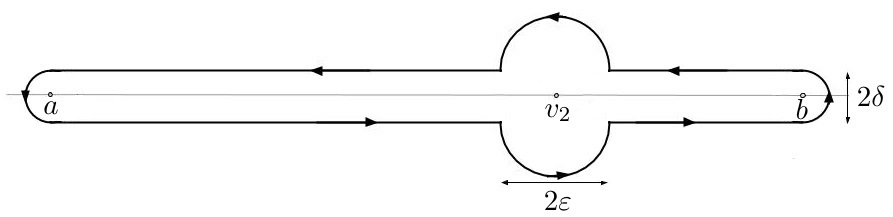}
\caption{The contour $\Gamma_{\delta, \varepsilon}$}
  \label{S8F1}
\end{figure}

By Cauchy's theorem we see that
\begin{equation}\label{GFFS1}
\oint_{\gamma} \frac{R_i(v_1)}{(v_1-v_2)} \cdot \frac{\sqrt{v_2 - a }\sqrt{b-v_2 }}{\sqrt{v_1 - a}\sqrt{v_1- b}}dv_1 = \oint_{\Gamma_{\delta, \varepsilon}} \frac{R_i(v_1)}{(v_1-v_2)} \cdot \frac{\sqrt{v_2 - a }\sqrt{b-v_2 }}
{\sqrt{v_1 - a}\sqrt{v_1- b}}dv_1 
\end{equation}
We next let $\delta$ go to $0^+$ and see that 
\begin{equation}\label{GFF4}
\begin{split}
&\lim_{\delta \rightarrow 0^+} \oint_{\Gamma_{\delta, \varepsilon}}\frac{ R_i(v_1)}{(v_1-v_2)}\cdot\frac{\sqrt{v_2 - a }\sqrt{b-v_2 }}{\sqrt{v_1 - a}\sqrt{v_1- b}}dv_1 = T_1(\varepsilon) + T_2(\varepsilon) + T_3(\varepsilon), \mbox{ where }\\
&T_1(\varepsilon) = 2 \iu  \int_a^{v_2 - \varepsilon}\frac{ R_i(v_1)}{(v_1-v_2)}\cdot \frac{\sqrt{v_2 - a }\sqrt{b-v_2 }}{\sqrt{v_1 - a}\sqrt{b-v_1}}dv_1, \\
& T_2(\varepsilon) = 2 \iu  \int_{v_2 + \varepsilon}^{b} \frac{R_i(v_1)}{(v_1-v_2)}\cdot\frac{\sqrt{v_2 - a }\sqrt{b-v_2 }}{\sqrt{v_1 - a}\sqrt{b- v_1}}dv_1,\\
&T_3(\varepsilon) = \int_{C^+_\varepsilon(v_2)}  \frac{R_i(v_1)}{(v_1-v_2)}\cdot\frac{\sqrt{v_2 - a }\sqrt{b-v_2 }}{\sqrt{v_1 - a}\sqrt{v_1- b}}dv_1 +  \int_{C^-_\varepsilon(v_2)}  \frac{R_i(v_1)}{(v_1-v_2)}\cdot\frac{\sqrt{v_2 - a }\sqrt{b-v_2 }}{\sqrt{v_1 - a}\sqrt{v_1- b}}dv_1 
\end{split}
\end{equation}
with $C^{+}_\varepsilon(v_2),C^{-}_\varepsilon(v_2)$ being positively oriented half-circles of radius $\varepsilon$ around $v_2$ in the upper and lower half-planes respectively. In deriving the above expression we used the Bounded convergence theorem and the fact that the square roots are purely imaginary and come with opposite sign when we approach $[a,b]$ from the upper and lower half-planes.

We next integrate by parts the integrals in $T_1(\varepsilon)$ and $T_2(\varepsilon)$ to get
\begin{equation}\label{GFF5}
\begin{split}
&T_1(\varepsilon) = -2 \iu  \int_a^{v_2 - \varepsilon}R_i'(u)G_v(v_1) dv_1 + 2\iu \cdot [R_i(v_2-\varepsilon)G_{v_2}(v_2-\varepsilon) - R_i(a)G_{v_2}(a)],\\
&T_2(\varepsilon) = -2 \iu  \int_{v_2 + \varepsilon}^{b}R_i'(v_1)G_{v_2}(v_1)dv_1 + 2\iu \cdot [R_i(b)G_{v_2}(b) -  R_i(v_2+\varepsilon)G_{v_2}(v_2+\varepsilon)], \mbox{ where }\\
&G_{v_2}(v_1) = -2 \log \left( \sqrt{(v_1-a)(b-v_2)} + \sqrt{(v_2-a)(b-v_1)} \right) + \log|v_1-v_2|.
\end{split}
\end{equation}
We observe that $G_{v_2}(a) =  -\log(b-a)  = G_{v_2}(b)$ and $R_i(v_2-\varepsilon)G_{v_2}(v_2-\varepsilon) - R_i(v_2+\varepsilon)G_{v_2}(v_2+\varepsilon) = O\left( \varepsilon\log \varepsilon^{-1} \right)$.
The latter statements together with the Dominated convergence theorem imply that
\begin{equation}\label{GFF6}
\lim_{\varepsilon \rightarrow 0^+} T_1(\varepsilon) + T_2(\varepsilon) = -2 \iu \int_a^{b} R_i'(v_1)G_{v_2}(v_1) dv_1 - 2\iu \log(b-a)\cdot [R(b) - R(a)].
\end{equation}

We next turn to $T_3(\varepsilon)$ and parametrize $C^{+}_\varepsilon(v)$ through $v_1 = v_2 + \varepsilon e^{\iu \theta}$ with $\theta \in (0, \pi)$ and $C^{-}_\varepsilon(v)$ through $v_1 = v_2 + \varepsilon e^{\iu \theta}$ with $\theta \in (-\pi, 0)$. This leads to
\begin{equation*}
T_3(\varepsilon) =  \iu \sqrt{v_2 - a }\sqrt{b-v_2 } \cdot \left[ \int_{0}^\pi \frac{R_i(v_2 + \varepsilon e^{\iu \theta} ) d\theta}{\sqrt{\varepsilon e^{\iu \theta} + v_2 - a}\sqrt{\varepsilon e^{\iu \theta} + v_2- b}} + \int_{-\pi}^0 \frac{R_i(v_2 + \varepsilon e^{\iu \theta} ) d\theta}{\sqrt{\varepsilon e^{\iu \theta} + v_2 - a}\sqrt{\varepsilon e^{\iu \theta} + v_2- b}} \right] \hspace{-1.5mm}.
\end{equation*}
We can let $\varepsilon$ converge to $0^+$ above, which by the Bounded convergence theorem implies
\begin{equation}\label{GFF7}
\lim_{\varepsilon \rightarrow 0^+} T_3(\varepsilon) =   \sqrt{v_2 - a }\sqrt{b-v_2 } \cdot \left[ \int_{0}^\pi \frac{R_i(v_2 ) d\theta}{\sqrt{ v_2 - a}\sqrt{b - v_2}} - \int_{-\pi}^0 \frac{R_i(v_2  ) d\theta}{\sqrt{ v_2 - a}\sqrt{b-v_2}} \right] = 0,
\end{equation}
where the sign change came from the fact that we are approaching the real line from the upper and lower half-planes in the two cases. Combining (\ref{GFFS1}, \ref{GFF4}, \ref{GFF6}, \ref{GFF7}) we conclude that
$$\oint_{\gamma} \frac{R_i(v_1)}{(v_1-v_2)} \cdot \frac{\sqrt{v_2 - a }\sqrt{b-v_2 }}{\sqrt{v_1 - a}\sqrt{v_1- b}}dv_1 =  -2 \iu \int_a^{b} R_i'(v_1)G_{v_2}(v_1) dv_1 - 2\iu \log(b-a)\cdot [R(b) - R(a)].$$
The latter is equivalent to (\ref{GFF2}) once we use that $R(b) - R(a) = \int_a^b R'(v_1)dv_1.$

\subsection{Conjectural $2$d fluctuations}\label{Section8.4} We begin our discussion by formulating a two-dimensional (conjectural) extension to Theorem \ref{TCLT}. 

\begin{conjecture}\label{GFFConj}Assume the same notation as in Theorem \ref{TCLT}. Then the centered random height function
$$\sqrt{\pi} \left( \mathcal{H}(q^{-t}, v) - \mathbb{E}_{\mathbb{P}_\varepsilon}\left[ \mathcal{H}(q^{-t}, v) \right] \right) $$
converges to the pullback of the Gaussian free field with Dirichlet boundary conditions on the upper half-plane $\mathbb{H}$ with respect to the map $\Omega$ in the following sense: For any set of polynomials $f_i \in \mathbb{R}[x]$, numbers $u_i \in (1, \lq^{-\lT})$ and sequences $t_i(\epsilon)$ such that $q^{-t_i(\epsilon)} = u_i + O(\epsilon)$ for $i = 1,\dots,m$ the joint distribution of
\begin{equation}\label{GFFConjeq1}
\int_{\mathbb{R}} \sqrt{\pi} \left(\mathcal{H}(q^{-t_i}, v) - \mathbb{E}_{\mathbb{P}_\varepsilon} \left[ \mathcal{H}(q^{-t_i}, v) \right] \right) f_i(v)dv, \hspace{5mm} i = 1,\dots,m,
\end{equation}
converges to the joint distribution of the similar averages 
$$\int_{a(u_i)}^{b(u_i)} \mathcal{F}(\Omega(u, y))f_i(y)dy,\hspace{5mm} i = 1,\dots,m$$
of the pullback of the GFF. In the above formula $a(u), b(u)$ are the $v$-coordinates of the two points where the vertical line through $u$ intersects the ellipse $\tilde{Q}(u,v) = 0$.

Equivalently, the variables in (\ref{GFFConjeq1}) converge jointly to a Gaussian vector $(X_1, \dots, X_m)$ with mean zero and covariance
\begin{equation}\label{conjeq1}
\begin{split}
&\mathbb{E}[X_i X_j] =  \int_{a(u_i)}^{b(u_i)} \int_{a(u_j)}^{b(u_j)} f_i(x) f_j(y) \left(  - \frac{1}{2\pi} \log \left| \frac{\Omega(u_i,x) - \Omega(u_j,y)}{\Omega(u_i,x) - \overline{\Omega}(u_j,y)}\right| \right) dx dy.
\end{split}
\end{equation}
\end{conjecture}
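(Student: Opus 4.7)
The plan is to bootstrap from the one-dimensional statement of Theorem \ref{TCLT} to the two-dimensional joint statement. Since the limiting vector in the conjecture is Gaussian, it suffices to prove joint asymptotic Gaussianity of the $m$ centered linear statistics in (\ref{GFFConjeq1}) and to identify the limiting covariance as (\ref{conjeq1}). The marginal case $i=j$ is already handled by Theorem \ref{TCLT}, so the new content is the joint vanishing of third- and higher-order mixed cumulants across distinct slices $u_i\neq u_j$, together with the identification of the off-diagonal covariances.

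The most natural first attempt is to look for a two-slice (or more generally, multi-slice) analogue of Theorem \ref{NekGen}. The derivation of Nekrasov's equation in Section \ref{Section4} hinges on an exact cancellation of residues driven by the product form of the interaction $H(\ell_i,\ell_j)$ and the boundary vanishing conditions on $\Phi_N^{\pm}$. Since the full two-dimensional $q$-Racah tiling model can be written as a product of single-slice log-gas weights coupled by transition weights that are themselves of interlocking type (and, as indicated in Remark \ref{RemMacKoor}, closely related to Macdonald--Koornwinder polynomials), one should try to write a coupled pair of Nekrasov-type equations whose observables involve Stieltjes-transforms $G_N(z;t_i)$ at several vertical slices. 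If such a coupled equation exists, the machinery of Sections \ref{Section5}--\ref{Section6} (concentration of the empirical measure and iterative self-improvement of cumulant bounds) should extend almost directly to joint cumulants of $G_N(z;t_i)$, yielding asymptotic Gaussianity and the off-diagonal covariance.

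A parallel and possibly more tractable route is analytic. The $q$-Racah tiling model is determinantal with an explicit double-contour integral kernel expressed through $q$-Racah polynomials (see \cite{BGR}). Combining the discrete Riemann--Hilbert analysis developed in Section \ref{Section10} with a steepest-descent argument on the two-slice kernel, one would expect the kernel to converge inside the liquid region to a generalized sine-kernel governed by the critical points of an appropriate action whose saddles are precisely the two roots of $P(w;u,v)$ in (\ref{QuadCompl}); this would be analogous to what is carried out in \cite{BK, P2} for the uniform case. Assuming such asymptotics, the joint cumulant computation for smoothed linear statistics reduces to routine contour manipulations, and the identification of the limiting covariance reduces to the algebraic identity
\begin{equation*}
-\log\left|\frac{\Omega(u_i,x)-\Omega(u_j,y)}{\Omega(u_i,x)-\overline{\Omega(u_j,y)}}\right| = \text{(explicit expression in the saddle points)},
\end{equation*}
which is the two-slice generalization of (\ref{OmegaLog}) and should follow by direct manipulation using the quadratic (\ref{QuadCompl}) that defines $\Omega$.

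The hard part will be step two, in either of its two forms. The Nekrasov equation of Theorem \ref{NekGen} is intrinsically a single-slice object, and even deriving its two-slice coupling appears to require a substantially new algebraic input; a candidate mechanism is the commutation of transfer matrices corresponding to horizontal strips of the hexagon, but turning this into a functional equation of the precise form needed to run the argument of Section \ref{Section5.3} is unclear. On the analytic side, the main difficulty is that the Riemann--Hilbert analysis of Section \ref{Section10} is already delicate for the single-slice observable produced by Nekrasov's equation; extending it uniformly to the joint two-slice kernel, and controlling it near the boundary of the liquid region where the saddles coalesce (the arctic curve), is technically demanding. Either obstacle is, in the authors' estimation, the main conceptual barrier separating Theorem \ref{TCLT} from Conjecture \ref{GFFConj}, which is exactly why the statement is left open in the present paper.
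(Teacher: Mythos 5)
You have correctly recognized that this statement is left as a conjecture in the paper and that no proof is offered; the paper's ``treatment'' consists of stating the covariance formula (\ref{conjeq1}), remarking explicitly that ``our methods only provide access to the global fluctuations at fixed vertical sections of the model,'' and supplying numerical evidence via exact sampling (the tables of empirical-to-predicted covariance ratios $r_{ij}$ at the end of Section \ref{Section8.4}). Your reading of the obstacle matches the paper's: the Nekrasov equation of Theorem \ref{NekGen} is a one-slice object, so the cumulant-estimation machinery of Sections \ref{Section5}--\ref{Section6} controls only the joint law of the Stieltjes transform at several spectral points $v_0,\dots,v_m$ along a single vertical section $t$, and there is at present no functional equation relating observables at two distinct times $t_i\neq t_j$.

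Your two speculative routes are both sensible and worth distinguishing. A two-slice (or dynamical) Nekrasov-type equation would be the cleanest extension of the present framework, but deriving one is genuinely new algebra, and the paper gives no hint that such an equation exists; your suggestion of commuting transfer matrices is a plausible mechanism, but it is far from turning into an identity of the residue-cancelling form used in the proof of Theorem \ref{NekGen}. The determinantal route is available here because the $q$-Racah tiling model has $\theta=1$, so the point process admits an explicit correlation kernel built from $q$-Racah polynomials; however, the paper's Riemann--Hilbert analysis in Section \ref{Section10} targets the single observable $\tilde R_N(z)=\mathrm{Tr}\,A_N(z)$ appearing in Nekrasov's equation and never engages the two-time kernel, and the asymptotic control of that kernel inside the liquid region, including near the arctic boundary where the saddles from (\ref{QuadCompl}) coalesce, is exactly the technical work that would need to be done from scratch. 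In short: your answer is an accurate assessment of why this remains a conjecture, not a proof, which is the most one can honestly say.
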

\begin{rem}
We emphasize that our methods only allow us to study the global fluctuations of the tiling model for a {\em single} vertical section, and in order to establish the above statement one needs to be able to study the {\em joint} distribution of the particles on {\em several} vertical slices. 
\end{rem}

In the remainder of this section we present some numerical evidence supporting Conjecture \ref{GFFConj}. The general strategy is to use the exact sampling algorithm detailed in \cite{BGR} to generate many samples of large $q$-Racah tilings and compare the empirical distribution of the samples with the conjectural one. The limiting distribution in Conjecture \ref{GFFConj} is uniquely specified as being Gaussian with covariance as in (\ref{conjeq1}). While showing Gaussianity numerically is difficult, we will try to match the covariance -- this in particular gives some confidence to our complex structure $\Omega$. 

To be more specific we will take $n$ sample tilings of a hexagon of size $a\times b\times c$ with parameters $q$ and $\kappa.$ We fix $m \geq 1$ and $m$ sections $t_1, \dots, t_m$ and polynomials $f_i(x) \in \mathbb{R}[x]$ for $i = 1,\dots,m$. Set 
$$Y_i = \int_{\mathbb{R}} \sqrt{\pi} \left(\mathcal{H}(q^{-t_i}, v) - \mathbb{E}_{\mathbb{P}_\varepsilon} \left[ \mathcal{H}(q^{-t_i}, v) \right] \right) f_i(v)dv, \hspace{5mm} i = 1,\dots,m.$$
It follows from the strong law of large numbers that if $(Y_1^k, \dots, Y_m^k)$ form an i.i.d. sequence of samples with law $(Y_1,\dots,Y_m)$ then a.s. we have
$$\lim_{ n \rightarrow \infty} \frac{1}{n} \sum_{ k = 1}^n Y^k_i Y^k_j = \mathbb{E}[ Y_i Y_j] \mbox{ for $1 \leq i,j \leq m$}.$$
On the other hand, Conjecure \ref{GFFConj} suggests that if $a,b,c$ are large and $q$ is appropriately close to $1$, then for $1 \leq i,j \leq m$ we have
$$\mathbb{E}[ Y_i Y_j] \approx \int_{a(u_i)}^{b(u_i)} \int_{a(u_j)}^{b(u_j)} f_i(x) f_j(y) \left(  - \frac{1}{2\pi} \log \left| \frac{\Omega(u_i,x) - \Omega(u_j,y)}{\Omega(u_i,x) - \overline{\Omega}(u_j,y)}\right| \right) dx dy,$$
where $u_i = q^{-t_i}$ for $i = 1, \dots,m$. Combining the last two statement we see that
\begin{equation}\label{Numerics}
 \frac{1}{n} \sum_{ k = 1}^n Y^k_i Y^k_j \approx  \int_{a(u_i)}^{b(u_i)} \int_{a(u_j)}^{b(u_j)} f_i(x) f_j(y) \left(  - \frac{1}{2\pi} \log \left| \frac{\Omega(u_i,x) - \Omega(u_j,y)}{\Omega(u_i,x) - \overline{\Omega}(u_j,y)}\right| \right) dx dy 
\end{equation}
with high probability whenever $n$ is large.  Denoting the LHS (\ref{Numerics}) by $Cov_{Emp}(i,j)$ and the right by $Cov_{GFF}(i,j)$ we form the ratios 
$$r_{ij}:= \frac{Cov_{Emp}(i,j)}{Cov_{GFF}(i,j)}.$$
 We want to show through our simulations that if we take $a,b,c$ large, $q$ close to $1$ and a large number of samples $n$ then $r_{ij}$ are all close to one.\\

We cosider two sets of parameters. The first is given by 
$$ q=0.995, \quad \kappa^2=0.005, \quad a=300, \quad b=500, \quad c=200.$$
The slices are $t_1=100, t_2=200, t_3=400, t_4=450$
and polynomials $f_i(x) = x^i$ for $i = 1,2,3,4$. Using the latter parameters we perform $n = 1000$ simulations to obtain $r_{ij}$ and summarize our result in the following table.
\begin{center}
    \begin{tabular}{| l | l | l | l | l | }
    \hline
     $1.05$ & $1.06$ & $0.85$ & $1.07$   \\ \hline
     $1.06$ & $1.00$ & $0.92$ & $0.9$ \\ \hline
      $0.85$  & $0.92$ & $0.94$ & $0.95$ \\ \hline
     $1.07$ & $0.9$ & $0.95$ & $0.97$  \\ 
    \hline
    \end{tabular}
\captionof{table}{The entry on the $i$th row and $j$th column is $r_{ij}$.  } \label{tableTag} 
\end{center}

Another set of parameters we take is 
$$ q=0.99, \quad \kappa^2=0.01, \quad a=500, \quad b=450, \quad c = 350$$
The slices are $t_1=100,$ $t_2=200,$ $t_3=400,$ $t_4=600$ and polynomials $f_i(x) = x^i$ for $i = 1,2,3,4$. Using the latter parameters we perform $n = 1000$ simulations to obtain $r_{ij}$ and summarize our result in the following table.
\begin{center}
    \begin{tabular}{| l | l | l | l | l | }
    \hline
     $1.02$ & $1.03$ & $1.03$ & $1.04$   \\ \hline
     $1.03$ & $1.04$ & $1.05$ & $1.05$ \\ \hline
      $1.03$  & $0.97$ & $1.05$ & $1.05$ \\ \hline
     $1.04$ & $1.05$ & $1.05$ & $1.06$  \\ 
    \hline
    \end{tabular}
\captionof{table}{The entry on the $i$th row and $j$th column is $r_{ij}$.  } \label{tableTag} 
\end{center}
As can be seen from the results, the empirical covariance nicely agrees with the limiting covariance, with error that is around $5$ percent. The above data is just a sample and one obtains similar results for different hexagonal sizes and choices of polynomials.

\section{Connection to log-gases on a quadratic lattice} \label{Section9}
In this section we explain how our model fits into the framework of a discrete log-gas on a quadratic lattice as in Section \ref{Section2}. The latter will allows us to deduce Theorems \ref{BT1} and \ref{QRCLT} as consequences of Theorems \ref{GLLN} and \ref{CLTfun} respectively.

\subsection{Asymptotics of the weight function}\label{Section9.1} We first consider with the weight function $w^{qR}(x)$ of the $q$-Racah ensemble defined by \eqref{eq:w_qR}.  We are interested in understanding the asymptotic behavior of $w^{qR}(x)$ when the parameters $\alpha, \beta, \gamma, \delta, q$ scale as in Definition \ref{ScaleQR}. In order to do this we will need the following technical lemma.

\begin{lem}\label{L1}
Let $a, b \in (0,1)$ and $\lq \in [a,b]$ be given. Suppose that $x_N, q_N$ for $N \in \mathbb{N}$ are sequences such that $\left| q_N -  \lq^{1/N } \right| \leq  AN^{-2}$ and $x_N \in [0, q_N]$, where $A$ is a positive constant. Then we have
\begin{equation}\label{A2}
(x_N;q_N)_{\infty}  = \exp\left( N\cdot \frac{\mbox{ \em Li}(x_N)}{\log(\lq)} + O\left(\log (N)\right)\right),
\end{equation}
where $\mbox{\em Li}(x) =  \sum_{k = 1}^\infty \frac{x^k}{k^2}$ is the dilogarithm function and the constant in the big $O$ notation depends on $a$, $b$ and $A$.
\end{lem}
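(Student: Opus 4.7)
The plan is to take logarithms and work with the resulting double series. Since $|x_N q_N^j|\le q_N<1$, expanding $\log(1-y)=-\sum_{k\ge1} y^k/k$ and swapping summation order gives
\begin{equation*}
-\log(x_N;q_N)_\infty \;=\; -\sum_{j=0}^\infty \log\!\bigl(1-x_N q_N^j\bigr) \;=\; \sum_{k=1}^\infty \frac{x_N^k}{k(1-q_N^k)}.
\end{equation*}
Write $\alpha:=-\log\lq>0$ and $y_N:=-\log q_N$. The hypothesis $|q_N-\lq^{1/N}|\le AN^{-2}$, together with $\lq\in[a,b]\subset(0,1)$ (which keeps $q_N$ in a compact subset of $(0,1)$), yields $y_N=\alpha/N+O(N^{-2})$, hence $1/y_N=N/\alpha+O(1)$. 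Thus (\ref{A2}) is equivalent to showing
\begin{equation*}
(\star)\qquad \sum_{k=1}^\infty \frac{x_N^k}{k(1-e^{-ky_N})} \;=\; \frac{N\,\mathrm{Li}(x_N)}{\alpha} + O(\log N).
\end{equation*}

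The core estimate exploits the Laurent expansion $\frac{1}{1-e^{-u}}=\frac{1}{u}+\frac{1}{2}+\frac{u}{12}+O(u^3)$ for $|u|\le 1$. I will split the sum at $K:=\lfloor N/(2\alpha)\rfloor$, so that $ky_N\le 1$ for $1\le k\le K$ and $N$ large. On this range,
\begin{equation*}
\frac{1}{k(1-e^{-ky_N})} \;=\; \frac{1}{k^2 y_N} + \frac{1}{2k} + O\!\left(\frac{k}{N^2}\right).
\end{equation*}
Multiplying by $x_N^k$ and summing for $k\le K$, the leading term gives $\mathrm{Li}(x_N)/y_N+O(1)$ (the truncation $\sum_{k>K} x_N^k/k^2\le 1/K=O(1/N)$ is amplified by $1/y_N=O(N)$ to $O(1)$), hence $N\mathrm{Li}(x_N)/\alpha+O(1)$ since $\mathrm{Li}(x_N)\le \pi^2/6$. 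The $\frac{1}{2k}$ term contributes $-\tfrac12\log(1-x_N)$, which is $O(\log N)$ because $1-x_N\ge 1-q_N\ge cN^{-1}$ for $N$ large. The $O(k/N^2)$ piece is bounded by $N^{-2}\sum_{k\ge1}kx_N^k=N^{-2}(1-x_N)^{-2}=O(1)$.

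For the tail $k>K$, note $q_N^K\to e^{-1/2}$, so $1-q_N^k\ge 1-e^{-1/2}-\varepsilon>0$ uniformly in $k>K$ and $N$ large; consequently
\begin{equation*}
\sum_{k>K}\frac{x_N^k}{k(1-q_N^k)} \;\le\; C\sum_{k>K}\frac{x_N^k}{k} \;\le\; -C\log(1-x_N) \;=\; O(\log N),
\end{equation*}
while $\sum_{k>K} N x_N^k/(\alpha k^2)\le (N/\alpha)\cdot(1/K)=O(1)$. Combining all pieces establishes $(\star)$ and hence (\ref{A2}). The main obstacle is that $x_N$ may approach $1$ at the critical rate $1-x_N\asymp N^{-1}$ (attained when $x_N=q_N$); this is exactly what converts the harmless $\frac{1}{2k}$ term in the Laurent expansion into a genuine $\log N$ remainder, and explains why the $O(\log N)$ error in (\ref{A2}) cannot be improved without additional hypotheses on $x_N$.
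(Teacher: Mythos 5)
Your proof is correct, and it follows the same overall plan as the paper's: take logarithms, swap the order of summation to get $\sum_k x_N^k / (k(1-q_N^k))$, extract $\mathrm{Li}(x_N)/y_N$ as the main term, split the remaining sum at a threshold $K\asymp N$, handle small $k$ via a Taylor-type expansion and large $k$ via the uniform lower bound on $1-q_N^k$. The technical vehicle differs: the paper compares $\frac{1-q_N}{1-q_N^k}$ with $\frac{1}{k}$ and shows the difference contributes only $O(\log N/N)$ after averaging against $q_N^k/k$, whereas you invoke the explicit Bernoulli/Laurent expansion $\frac{1}{1-e^{-u}} = \frac{1}{u}+\frac{1}{2}+\frac{u}{12}+O(u^{3})$. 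Your route is somewhat cleaner and, as a bonus, makes the source of the $O(\log N)$ error fully transparent (the $\frac{1}{2k}$ term, which becomes $-\frac12\log(1-x_N)$), and your closing remark about sharpness when $1-x_N\asymp N^{-1}$ is a genuine extra observation not present in the paper. One small imprecision: after the Laurent expansion the error you absorb into $O(k/N^2)$ is actually $\frac{y_N}{12}+O(k^2 y_N^3)$, whose constant part $\frac{y_N}{12}\asymp N^{-1}$ is \emph{not} $O(k/N^2)$ for small $k$; this does not affect the argument, since after multiplying by $x_N^k$ and summing, both $\sum_k x_N^k\, y_N \lesssim y_N/(1-x_N)=O(1)$ and $\sum_k x_N^k k^2 y_N^3 \lesssim y_N^3/(1-x_N)^3 = O(1)$, so the contribution is $O(1)$ either way. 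It would be cleaner to keep the $\frac{y_N}{12}$ piece separate rather than asserting a single $O(k/N^2)$ bound.
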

\begin{proof}
Taking logarithm of $ \prod_{i = 1}^\infty ( 1 - x_N q_N^{i-1})$ and power expanding $\log(1 - y)$ for $0 < y < 1$ gives
$$\log \left[(x_N;q_N)_{\infty}\right] = -\sum_{ r= 1}^\infty \sum_{k = 1}^\infty \frac{x_N^k }{k} \cdot q_N^{k(r-1)}.$$
We change the order of the sums and use the geometric series formula to get
$$\log \left[(x_N;q_N)_{\infty}\right]  = - \sum_{k = 1}^\infty \frac{x_N^k}{k} \cdot \frac{1}{1- q_N^k} =- \frac{1}{1 - q_N} \sum_{k = 1}^\infty \frac{x_N^k}{k} \cdot \frac{1 - q_N}{1- q_N^k} = A_N + B_N + C_N,$$
where 
$$A_N = -\frac{1}{1 - q_N} \sum_{k = 1}^\infty \frac{x_N^k}{k} \left[ \frac{1 - q_N}{1- q_N^k} - \frac{1}{k} \right], \hspace{2mm} B_N =- \frac{1}{1 - q_N} \sum_{k = 1}^\infty \frac{x_N^k}{k^2} -  N \frac{\mbox{  Li}(x_N)}{\log(\lq)}, \hspace{2mm} C_N = N\frac{\mbox{  Li}(x_N)}{\log(\lq)}.$$
What we need to show is that $A_N$ and $B_N$ are both $O\left( \log (N)\right)$.

Notice that 
$$|B_N| = \mbox{  Li}(x_N)\left| -\frac{1}{1 - q_N} - \frac{N}{{-\log(\lq)}}\right| \leq \mbox{  Li}(1) + O(1),$$
where we used that $x_N \in [0,1]$ and $1 - q_N  = -\log(\lq)/N + O(N^{-2})$. This proves that $B_N = O(1)$ and we focus on $A_N$ for the remainder.

Combining $ \frac{1 - q_N}{1- q_N^k} - \frac{1}{k}  \geq 0$ with $x_N \in [0, q_N]$ we conclude
$$  -\frac{1}{1 - q_N} \sum_{k = 1}^\infty \frac{q_N^k}{k} \left[ \frac{1 - q_N}{1- q_N^k} - \frac{1}{k} \right] \geq A_N \geq 0.$$
Since $1 - q_N = -\log(\lq)/N + O(N^{-2})$, we see that what remains to be shown is that 
\begin{equation}\label{A3}
\sum_{k = 1}^\infty \frac{q_N^k}{k} \left[ \frac{1 - q_N}{1- q_N^k} - \frac{1}{k} \right] = O\left( \frac{\log (N)}{N}\right).
\end{equation}

Suppose that $-\frac{1}{4 \log(\lq)} \geq \varepsilon_0 > 0$ is sufficiently small so that when $\varepsilon \in \left[0, - \log (\lq)\varepsilon_0\right]$ we have $1 - e^{-\varepsilon} \geq \varepsilon - 2\varepsilon^2 \geq \varepsilon/2$. Using the latter together with the inequality $ \dfrac{1 - q_N}{1- q_N^k} - \dfrac{1}{k}  \geq 0$ we see that
$$\sum_{k \leq \varepsilon_0 N} \frac{q_N^k}{k} \left[\frac{1 - q_N}{-(k/N) \log(\lq) - 2\log(\lq)^2 (k/N)^2} -  \frac{1}{k} \right]  \geq \sum_{k \leq \varepsilon_0 N} \frac{q_N^k}{k} \left[\frac{1 - q_N}{1- q_N^k} -  \frac{1}{k} \right] \geq 0.$$
As $1 - q_N= -\log(\lq)/N + O(N^{-2}) $ the above statement implies that for sufficiently large $C$ we have
$$\frac{C}{N} \sum_{k \leq \varepsilon_0 N} \frac{q_N^k}{k} + \sum_{k \leq  \varepsilon_0 N} \frac{q_N^k}{k} \left[\frac{-  \log(\lq)  }{-k \log(\lq) - 2\log(\lq)^2 k^2N^{-1}} -  \frac{1}{k} \right]    \geq \sum_{k \leq \varepsilon_0 N} \frac{q_N^k}{k} \left[\frac{1 - q_N}{1- q_N^k} -  \frac{1}{k} \right] \geq 0.$$
Notice that
$$\frac{-\log(\lq)  }{k \log(\lq) +2\log(\lq)^2 k^2N^{-1}} -  \frac{1}{k}  = \frac{1  }{k + 2\log(\lq) k^2N^{-1}} -  \frac{1}{k} = -\frac{2\log(\lq)  N^{-1}}{1 + 2\log(\lq) kN^{-1}} \leq  -4\log(\lq)  N^{-1},$$
where the last inequality holds since $kN^{-1} \leq \varepsilon_0 \leq -\frac{1}{4 \log(\q)} $. The latter estimates show that for some (possibly different than before) constant $C > 0$ we have
$$\frac{C}{N} \sum_{k \leq \varepsilon_0 N} \frac{q_N^k}{k}\geq \sum_{k \leq \varepsilon_0 N} \frac{q_N^k}{k} \left[\frac{1 - q_N}{1- q_N^k} -  \frac{1}{k} \right] \geq 0.$$
Since $ \sum\limits_{k \leq \varepsilon_0 N} \frac{q_N^k}{k} \leq - \log( 1 - q_N) = O(\log(N))$ we conclude that
\begin{equation}\label{A4}
\sum_{k \leq \varepsilon_0 N}\frac{q_N^k}{k}\left[ \frac{1 - q_N}{1- q_N^k} - \frac{1}{k} \right] = O\left( \frac{\log (N)}{N}\right).
\end{equation}

We next have that 
$$\frac{1 - q_N}{1 - \lq^{\varepsilon_0}} \sum_{ k \geq \varepsilon_0 N} \frac{q^k_N}{k}\geq \sum_{k \geq \varepsilon_0 N} \frac{q_N^k}{k}\cdot \frac{1 - q_N}{1- q_N^k}  \geq \sum_{k \geq \varepsilon_0 N} \frac{q_N^k}{k} \left[ \frac{1 - q_N}{1- q_N^k} - \frac{1}{k} \right] \geq 0.$$
Since $1 - q_N = O(N^{-1})$ and $ \sum_{k \geq \varepsilon_0 N} \frac{q_N^k}{k} \leq - \log( 1 - q_N) = O(\log(N))$ we conclude that 
\begin{equation}\label{A5}
 \sum_{k \geq \varepsilon_0 N}^\infty \frac{q_N^k}{k} \left[ \frac{1 - q_N}{1- q_N^k} - \frac{1}{k} \right] = O\left( \frac{\log (N)}{N}\right).
\end{equation}
Combining (\ref{A4}) and (\ref{A5}) we conclude (\ref{A3}), which proves the lemma.
\end{proof}

In addition, we require the following alternative formula for the weight $w^{qR}(x)$.
\begin{lem}
Suppose that we have parameters $\alpha, \beta, \gamma, \delta, q$ and $M$ as in Definition \ref{ParSetQR}. Then we have
\begin{equation}\label{wQR}
\begin{split}
&w^{qR}(x)  =  \frac{( \beta \delta q, \gamma \delta q; q)_\infty }{(q, \alpha^{-1}\gamma \delta q, \beta^{-1} \gamma q, \delta q, \alpha^{-1}, \gamma^{-1}; q)_\infty} \tilde{w}^{qR}(x), \mbox{ where }  \\
&\tilde{w}^{qR}(x) = (\gamma/\beta)^x q^{x^2}\frac{1 - \gamma \delta q^{2x +1}}{1 - \gamma \delta q} \frac{(q^{x+1}, \alpha^{-1}\gamma \delta q^{x+1}, \beta^{-1} \gamma q^{x+1}, \delta q^{x+1}, \alpha^{-1}q^{-x}, \gamma^{-1}q^{-x}; q)_\infty}{( \beta \delta q^{x+1}, \gamma \delta q^{x+1}; q)_\infty } .
\end{split}
\end{equation}
\end{lem}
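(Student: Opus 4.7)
The statement is a purely algebraic identity obtained by rewriting every finite $q$-Pochhammer symbol in the definition \eqref{eq:w_qR} as a ratio of infinite ones and then applying a single reflection identity to the two factors whose base parameter is bounded away from zero. My plan is to execute this in three short bookkeeping steps.

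First, I would apply the elementary identity $(y;q)_x = (y;q)_\infty/(yq^x;q)_\infty$ to each of the eight Pochhammer symbols appearing in \eqref{eq:w_qR}. This converts
\begin{equation*}
w^{qR}(x)=\frac{(\alpha q,\beta\delta q,\gamma q,\gamma\delta q;q)_\infty}{(q,\alpha^{-1}\gamma\delta q,\beta^{-1}\gamma q,\delta q;q)_\infty}\cdot\frac{(q^{x+1},\alpha^{-1}\gamma\delta q^{x+1},\beta^{-1}\gamma q^{x+1},\delta q^{x+1};q)_\infty}{(\alpha q^{x+1},\beta\delta q^{x+1},\gamma q^{x+1},\gamma\delta q^{x+1};q)_\infty}\cdot\frac{1-\gamma\delta q^{2x+1}}{(\alpha\beta q)^x(1-\gamma\delta q)}.
\end{equation*}
The four factors $(q^{x+1},\alpha^{-1}\gamma\delta q^{x+1},\beta^{-1}\gamma q^{x+1},\delta q^{x+1};q)_\infty$ already appear in the numerator of the claimed $\tilde w^{qR}(x)$, and $(\beta\delta q^{x+1},\gamma\delta q^{x+1};q)_\infty$ already appears in its denominator; correspondingly the factors $(\beta\delta q,\gamma\delta q;q)_\infty$ and $(q,\alpha^{-1}\gamma\delta q,\beta^{-1}\gamma q,\delta q;q)_\infty$ already appear in the claimed prefactor. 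Thus only the pair $(\alpha q,\gamma q;q)_\infty/(\alpha q^{x+1},\gamma q^{x+1};q)_\infty$ is in the ``wrong'' form; equivalently, I need to re-express the finite products $(\alpha q;q)_x$ and $(\gamma q;q)_x$ so that the prefactor matches the claimed $(\alpha^{-1},\gamma^{-1};q)_\infty$ in the denominator.

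Second, I would apply the standard inversion identity
\begin{equation*}
(yq;q)_x \;=\; (-y)^x\,q^{x(x+1)/2}\,\frac{(y^{-1}q^{-x};q)_\infty}{(y^{-1};q)_\infty},
\end{equation*}
which follows by splitting $(y^{-1}q^{-x};q)_\infty = \prod_{j=1}^x(1-y^{-1}q^{-j})\cdot(y^{-1};q)_\infty$ and using $(1-y^{-1}q^{-j})=-y^{-1}q^{-j}(1-yq^j)$ termwise. Taking $y=\alpha$ and $y=\gamma$ brings the two offending factors into the form $(\alpha^{-1}q^{-x},\gamma^{-1}q^{-x};q)_\infty/(\alpha^{-1},\gamma^{-1};q)_\infty$ required in \eqref{wQR}, at the cost of producing the scalar
\begin{equation*}
(-\alpha)^x(-\gamma)^x\,q^{x(x+1)}.
\end{equation*}

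Third, I would combine this scalar with the leftover $(\alpha\beta q)^{-x}$ from the definition:
\begin{equation*}
\frac{(-\alpha)^x(-\gamma)^x q^{x(x+1)}}{(\alpha\beta q)^x} \;=\; (\gamma/\beta)^x\,q^{x^2}.
\end{equation*}
Together with the untouched factor $(1-\gamma\delta q^{2x+1})/(1-\gamma\delta q)$, this produces exactly the $x$-dependent prefactor of $\tilde w^{qR}(x)$, completing the identity.

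There is no genuine obstacle here: the proof is a direct verification and the only place where care is required is the sign and $q$-power bookkeeping in the inversion identity of the second step. I would present it as a one-paragraph computation culminating in the cancellation $(-\alpha)^x(-\gamma)^x q^{x(x+1)}/(\alpha\beta q)^x=(\gamma/\beta)^x q^{x^2}$.
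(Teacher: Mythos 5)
Your plan reproduces the paper's argument—the same reflection identity $(\alpha q;q)_x = (-\alpha)^x q^{x(x+1)/2}(\alpha^{-1}q^{-x};q)_x$, the same targeted application to the $\alpha$ and $\gamma$ factors, and the same power-of-$q$ bookkeeping. However, the \emph{order} of your steps introduces a genuine flaw that the paper's order avoids. In your first step you convert all eight finite Pochhammers to ratios of infinite ones. For the factor $(\gamma q;q)_x$ this produces $(\gamma q;q)_\infty/(\gamma q^{x+1};q)_\infty$. But Definition \ref{ParSetQR} fixes $\gamma=q^{-M-1}$, so $\gamma q=q^{-M}$ and
$$(\gamma q;q)_\infty=(q^{-M};q)_\infty=\prod_{k\ge 0}\bigl(1-q^{k-M}\bigr)=0,$$
with $(\gamma q^{x+1};q)_\infty=(q^{-M+x};q)_\infty=0$ as well for every $x\in\{0,\dots,M\}$. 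The intermediate display you write down therefore contains a $0/0$ and is not a valid identity. (The $\alpha$ factor does not have this problem unless $\alpha$ happens to be a negative integer power of $q$, but the $\gamma$ factor always does.)

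The fix is exactly what the paper does: apply the reflection identity to the finite products $(\alpha q;q)_x$ and $(\gamma q;q)_x$ \emph{first}, obtaining finite products $(\alpha^{-1}q^{-x};q)_x$ and $(\gamma^{-1}q^{-x};q)_x$ with arguments now in $(0,1)$, and only then pass all remaining finite Pochhammers to infinite ones. You effectively do this in your steps two and three, so the final scalar cancellation $(-\alpha)^x(-\gamma)^x q^{x(x+1)}/(\alpha\beta q)^x=(\gamma/\beta)^x q^{x^2}$ is correct and the proof goes through; but the intermediate expression in step one should be removed rather than ``corrected afterward,'' since it is not merely in the wrong form—it is undefined.
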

\begin{proof}
We recall the definition of $w^{qR}(x)$ from Definition \ref{qrw} for the reader's convenience.
$$w^{qR}(x)=\frac{(\a q,\b\delta q,\gamma q,\gamma \delta q;q)_x}{(q,\a^{-1}\gamma \delta q, \b^{-1}\gamma q, \delta q;q )_x} \cdot \frac{(1-\gamma\delta q^{2x+1})}{(\a\b q)^{x}(1-\gamma\delta q)}.$$
Observe that 
$$(\a q;q)_x = \prod_{i = 1}^x (1 - \a q^i) = q^{x(x+1)/2} \a^x  (-1)^x \cdot \prod_{i = 1}^x (1 - \a^{-1} q^{-i}) = q^{x(x+1)/2} \a^x (-1)^x \cdot (\a^{-1} q^{-x}; q)_x.$$
Similarly, we have $(\gamma q;q)_x = q^{x(x+1)/2} \gamma^x (-1)^x \cdot (\gamma^{-1} q^{-x}; q)_x$. Substituting the latter identities and performing a bit of cancellation we arrive at
$$w^{qR}(x)=\frac{(\a^{-1} q^{-x},\b\delta q,\gamma^{-1} q^{-x},\gamma \delta q;q)_x}{(q,\a^{-1}\gamma \delta q, \b^{-1}\gamma q, \delta q;q )_x}\frac{(1-\gamma\delta q^{2x+1})}{(1-\gamma\delta q)} \cdot (\gamma/\beta)^x q^{x^2}.$$

Observe that for $a\in [0,1)$ we have  $(a;q)_x = \frac{(a;q)_\infty}{(aq^x; q)_\infty}.$  Substituting the latter identity in the above expression we see that $w^{qR}(x)$ equals
$$\frac{(\a^{-1} q^{-x},\b\delta q,\gamma^{-1} q^{-x},\gamma \delta q;q)_\infty}{(q,\a^{-1}\gamma \delta q, \b^{-1}\gamma q, \delta q;q )_\infty}\frac{(q^{x+1},\a^{-1}\gamma \delta q^{x+1}, \b^{-1}\gamma q^{x+1}, \delta q^{x+1};q )_\infty}{(\a^{-1} ,\b\delta q^{x+1},\gamma^{-1},\gamma \delta q^{x+1};q)_\infty}\frac{(1-\gamma\delta q^{2x+1})}{(1-\gamma\delta q)} \cdot (\gamma/\beta)^x q^{x^2 }.$$
From here (\ref{wQR}) is immediate.
\end{proof}

The main statement of this section is the following.
\begin{lem}\label{LM1}
Assume that we have the same notation as in Definition \ref{ScaleQR} and let $\tilde{w}_N^{qR}(x)$ be as in (\ref{wQR}) with parameters $\alpha_N, \beta_N, \gamma_N, \delta_N, q_N$.
Then, for $x \in \{ 0,\dots, M_N\}$ we have the following asymptotic expansion of $\tilde{w}_N^{qR}(x)$
\begin{equation}\label{wQRT}
\begin{split}
 &\tilde{w}_N^{qR}(x) = \exp \left( - N V\left( \frac{x}{N}; \la, \lb, \lc, \ld\right)  + O\left(\log (N)\right)\right), \mbox{ where } \\
& V(s;   \la, \lb, \lc, \ld) = \frac{1}{- \log \lq} \Big{[} \mbox{{\em  Li}}(\lq^{s}) + \mbox{{\em  Li}}( \la^{-1}\lc  \ld \lq^{s}) + \mbox{{\em  Li}}( \lb^{-1}\lc \lq^{s}) + \mbox{{\em  Li}}( \ld \lq^{s} ) + \\
&+   \mbox{{\em  Li}}( \la^{-1} \lq^{-s} ) + \mbox{{\em  Li}}( \lc^{-1}\lq^{-s}) - \mbox{{\em  Li}}( \lb \ld \lq^{s}) - \mbox{{\em  Li}} (\lc \ld \lq^{s})  + \log(\lq)^2 s^2 + s \log(\lq)\log(\lc/\lb)\Big{]} 
\end{split}
\end{equation}
and the constant in the big $O$ notation depends on the parameters $A$ and $\la, \lb, \lc, \ld, \lq, \lM$ and is uniform as the latter vary over compact subsets of ${\tt P}$ (recall that $A$ and ${\tt P}$ were given in Definition \ref{ScaleQR}).
\end{lem}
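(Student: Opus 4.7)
\medskip

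\noindent\textbf{Proof proposal for Lemma \ref{LM1}.} The strategy is to take the logarithm of $\tilde{w}_N^{qR}(x)$ from (\ref{wQR}) and reduce everything to repeated application of Lemma \ref{L1}. Writing
\[
\log \tilde{w}_N^{qR}(x) = x\log(\gamma_N/\beta_N) + x^2 \log q_N + \log\frac{1 - \gamma_N\delta_N q_N^{2x+1}}{1 - \gamma_N\delta_N q_N} + \sum_{\epsilon}\epsilon_j \log (a_j^{(N)}(x); q_N)_\infty,
\]
where the eight signed terms $\epsilon_j \in \{\pm 1\}$ correspond to the eight $q$-Pochhammer factors appearing in (\ref{wQR}), I want to control each piece separately. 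The three polynomial pieces are handled by direct Taylor expansion using the scaling assumptions of Definition \ref{ScaleQR}: for $s = x/N \in [0, M_N/N]$, one has $x \log(\gamma_N/\beta_N) = sN \log(\lc/\lb) + O(1)$, $x^2 \log q_N = s^2 N \log \lq + O(1)$, and the rational factor is $O(1)$ since $\lc\ld < 1$ (which follows from $\ld < 1/\lb \leq 1/\lc$).

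The eight $q$-Pochhammer factors each have argument of the form $a_j^{(N)}(x) = C_j \cdot q_N^{\pm x + \kappa_j}$ for explicit constants $C_j$ and integers $\kappa_j$ determined by the parameters. The first thing to check, before applying Lemma \ref{L1}, is that each $a_j^{(N)}(x) \in [0, q_N]$ for all $x \in \{0, 1, \ldots, M_N\}$. For the arguments of the form $c \cdot q_N^{x+1}$ this is immediate from $q_N^{x+1} \leq q_N$ together with the bounds $\alpha_N^{-1}\gamma_N \leq 1$, $\beta_N^{-1}\gamma_N \leq 1$, $\delta_N < 1$, $\beta_N \delta_N < 1$, and $\gamma_N\delta_N < 1$ coming from Definition \ref{ParSetQR}. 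For the two arguments $\alpha_N^{-1}q_N^{-x}$ and $\gamma_N^{-1} q_N^{-x}$, the worst case is $x = M_N$, where the identity $\gamma_N = q_N^{-M_N - 1}$ yields $\gamma_N^{-1} q_N^{-M_N} = q_N$ and $\alpha_N^{-1} q_N^{-M_N} \leq \gamma_N^{-1} q_N^{-M_N} = q_N$. Having verified the hypotheses, Lemma \ref{L1} gives
\[
\log (a_j^{(N)}(x); q_N)_\infty = N \cdot \frac{\mathrm{Li}(a_j^{(N)}(x))}{\log \lq} + O(\log N),
\]
uniformly. A further Taylor expansion exploiting $|a_j^{(N)}(x) - \hat{a}_j(s)| = O(N^{-1})$, where $\hat{a}_j(s)$ is the corresponding limiting argument (one of $\lq^s$, $\la^{-1}\lc\ld\lq^s$, $\lb^{-1}\lc\lq^s$, $\ld\lq^s$, $\la^{-1}\lq^{-s}$, $\lc^{-1}\lq^{-s}$, $\lb\ld\lq^s$, $\lc\ld\lq^s$), absorbs the discrepancy into the $O(\log N)$ remainder since $\mathrm{Li}$ is Lipschitz on $[0,1]$.

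Summing the contributions with the signs dictated by (\ref{wQR}) and combining with the polynomial pieces yields
\[
\log \tilde{w}_N^{qR}(x) = N s \log(\lc/\lb) + N s^2 \log \lq + \frac{N}{\log \lq}\bigl[\mathrm{Li}(\lq^s) + \cdots - \mathrm{Li}(\lc\ld\lq^s)\bigr] + O(\log N),
\]
which one checks is precisely $-N V(s;\la,\lb,\lc,\ld) + O(\log N)$ after multiplying numerator and denominator by $-1$ to match the form $(-\log \lq)^{-1}$ in the statement. The uniformity of the constant over compact subsets of ${\tt P}$ follows from the uniformity already built into Lemma \ref{L1} together with the compactness of the remaining parameter ranges. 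The main (purely technical) obstacle is the boundary case $\la = \lc$ (respectively $\lb = \lc$), where the argument $\la^{-1}q_N^{-x}$ can approach $q_N$; the hypothesis $a_j^{(N)}(x) \in [0, q_N]$ of Lemma \ref{L1} holds with equality, and one must verify that the uniformity of the constants in Lemma \ref{L1} is preserved in this limiting regime, which is indeed the case since the proof of Lemma \ref{L1} only used $x_N \leq q_N \leq b < 1$ with a fixed $b$.
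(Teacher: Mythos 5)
Your proof follows essentially the same strategy as the paper's: apply Lemma \ref{L1} to each $q$-Pochhammer factor, Taylor-expand the polynomial pieces, and then replace the $N$-dependent dilogarithm arguments by their limiting values. The verification that each argument lies in $[0,q_N]$ is correct, the limiting-argument identification is correct, and the final bookkeeping matches (\ref{wQRT}).

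There is one genuine flaw in the justification. You write that the replacement of $\mathrm{Li}(a_j^{(N)}(x))$ by $\mathrm{Li}(\hat{a}_j(s))$ contributes only $O(1)$ after multiplying by $N$ ``since $\mathrm{Li}$ is Lipschitz on $[0,1]$.'' This is false: $\mathrm{Li}'(x) = -\log(1-x)/x$ diverges as $x \to 1^-$, so $\mathrm{Li}$ is not Lipschitz on the closed interval. And the singularity is not avoidable here --- for $x$ near $0$ the argument $q_N^{x+1}$ approaches $q_N \to 1$, and similarly $\gamma_N^{-1}q_N^{-x} = q_N$ at $x = M_N$. The paper handles this by proving the estimate
\[
\bigl| \mathrm{Li}(x) - \mathrm{Li}(y) \bigr| \leq C\cdot\frac{\log(N)+1}{N} + \frac{1}{N}, \qquad |x-y|\leq CN^{-1},\ x,y\in[0,1],
\]
obtained by splitting $\sum_k (1-(1-\varepsilon)^k)/k^2$ at $k = N$; the harmonic tail $\sum_{k\leq N}1/k$ is exactly where the extra $\log N$ comes from. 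After multiplying by $N$ this still gives an admissible $O(\log N)$ error, so your stated conclusion is correct, but the argument you offer for it is not; a Lipschitz bound would have given $O(1)$, and that claim is simply unavailable. Replace the Lipschitz appeal with the genuine $O(\log N/N)$ modulus-of-continuity estimate and the proof is complete.
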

\begin{proof}
Using Lemma \ref{L1} we have that
$$\tilde{w}^{qR}(x) = (\gamma/\beta)^x q^{x^2}\frac{1 - \gamma \delta q^{2x +1}}{1 - \gamma\delta q} \cdot \exp{\Big (} \frac{N}{\log \q} \Big{[} \mbox{Li}(q^{x+1}) + \mbox{Li}( \alpha^{-1}\gamma \delta q^{x+1}) + \mbox{Li}( \beta^{-1}\gamma q^{x+1}) + $$
$$ + \mbox{Li}( \delta q^{x+1} ) + \mbox{Li}( \alpha^{-1}q^{-x} ) + \mbox{Li}( \gamma^{-1}q^{-x}) - \mbox{Li}( \beta \delta q^{x+1}) - \mbox{Li}(\gamma \delta q^{x+1})\Big{]}  + O(\log(N))\Big{)}, $$
where for brevity we suppressed the dependence of the parameters on $N$. Since ${\tt c}/{\tt b} = \gamma/\beta + O(N^{-1})$, $q = {\tt q}^{1/N} + O(N^{-2})$ and $\gamma\delta q$ is bounded away from $1$ we see that 
$$(\gamma/ \beta)^x q^{x^2}\frac{1 - \gamma \delta q^{2x +1}}{1 - \gamma\delta q}  = \exp \left(  N \cdot  \frac{x}{N} \cdot \log(\lc/ \lb) + N \cdot \frac{x^2}{N^2} \cdot \log( \lq) + O(1)  \right).$$
This handles the first factor in $\tilde{w}^{qR}(x)$ and we only need to match the dilogarithms with (\ref{wQRT}).

We claim that if $x, y \in [0,1]$ and $C > 0$ are such that $|x - y| \leq CN^{-1}$ then 
\begin{equation}\label{A211}
\left| \mbox{Li}(x)  - \mbox{Li}(y) \right| \leq C \cdot  \frac{\log(N) + 1}{N} + \frac{1}{N}.
\end{equation}
It is clear that applying (\ref{A211}) to each of the dilogarithms in $\tilde{w}^{qR}(x)$ we can match the corresponding ones in (\ref{wQRT}) upto an error of order $\log (N)$. Thus, to prove the lemma it suffices to show (\ref{A211}).

Without loss of generality suppose that $ x \leq y$ and set $\varepsilon = y - x$. Then we have 
$$0 \leq \mbox{Li}(y)  - \mbox{Li}(x) = \sum_{k = 1}^\infty \frac{y^k -(y - \varepsilon)^k }{k^2} \leq \sum_{k = 1}^\infty \frac{1 - (1 - \varepsilon)^k}{k^2} ,$$
where we used that the functions $y^k - (y - \varepsilon)^k$ are increasing on $[\varepsilon, 1]$ for $k \geq 1$. In addition,
\begin{equation*}
\begin{split}
\sum_{k = 1}^N \frac{1 - (1 - \varepsilon)^k}{k^2}\leq \sum_{k = 1}^N \frac{k \varepsilon }{k^2} \leq C \cdot  \frac{\log(N) + 1}{N} \mbox{ and }\sum_{k = N + 1}^\infty \frac{1 - (1 - \varepsilon)^k}{k^2} \leq \sum_{k = N + 1}^\infty \frac{1}{k^2} \leq \int_{N}^\infty \frac{dz}{z^2} = \frac{1}{N}.
\end{split}
\end{equation*}
Combining the above inequalities we conclude (\ref{A211}) and hence the lemma.
\end{proof}

\subsection{Proof of Theorems \ref{BT1} and \ref{QRCLT} }\label{Section9.2} 
Our first task is to verify that $\mathbb{P}_N(\lambda_1,\dots,\lambda_N) $  from Definition \ref{ScaleQR} satisfy Assumptions 1-4 and 6-7 in Section \ref{Section2.2}.
In view of (\ref{wQR}) we have the following alternative representation  for $\mathbb{P}_N(\lambda_1,\dots,\lambda_N) $
\begin{equation}\label{S41}
\mathbb{P}_N(\lambda_1,\dots,\lambda_N) = \frac{{\bf 1} _{(0 \leq \lambda_1 < \lambda_2 < \cdots < \lambda_N \leq M_N)}}{\tilde{Z}(N,M_N,\alpha_N,\beta_N,\gamma_N,\delta_N)} \prod_{1 \leq i < j \leq N} \left(\sigma_N(q_N^{-\lambda_i}) - \sigma_N(q_N^{-\lambda_j})\right)^2\prod_{i = 1}^N \tilde{w}_N^{qR}(\lambda_i),
\end{equation}
where $\tilde{Z}$ is a new normalization constant, $\sigma_N(z) = z+ u_N z^{-1}$ with $u_N = \gamma_N \delta_N q_N$ and $\tilde{w}_N^{qR}$ is as in (\ref{wQR}) for the parameters $\alpha_N, \beta_N, \gamma_N, \delta_N, q_N$ and $M_N$. 

If we set $\ell_i = q_N^{-\lambda_i} + u_N q_N^{\lambda_i}$ for $i = 1, \dots, N$ then we see that the induced law on particles $\ell_i$ for $i = 1, \dots,N$ from (\ref{S41}) agrees with (\ref{PDef}) for $\theta = 1$. Specifically, we are in the single-cut case with $a_1(N) = 0 $, $b_1(N) = M_N - N +2$ and setting $w (\ell_i;N) := \tilde{w}_N^{qR}(\lambda_i)$ we have
\begin{equation}\label{S411}
\mathbb{P}_N(\ell_1,\dots,\ell_N) =Z_N^{-1} \cdot {\bf 1}_{\{(\ell_1, \dots, \ell_N) \in \mathfrak{X}^1_N\}} \cdot  \prod_{1 \leq i < j \leq N} \left(\ell_i - \ell_j\right)^2\prod_{i = 1}^N w(\ell_i;N),
\end{equation}

It is clear that Assumptions 1 and 3 in Section \ref{Section2.2} are satisfied in this case. In addition, in view of Lemma \ref{LM1}, we know that Assumption 2 holds for the function $V(x) =  V(\sigma_{\lq}^{-1}(x); \la, \lb, \lc, \ld)$, where $V(\cdot;   \la, \lb, \lc, \ld)$ is as in (\ref{wQRT}) and $\sigma_{\lq}(x) = \lq^{-x} + \lu \lq^{x}$ with $\lu = \lc \ld$. 

Let $ \Phi^+_N, \Phi^-_N$  be as in  Definition \ref{Polys} with parameters $a= \alpha_N, b = \beta_N, c = \gamma_N, d = \delta_N$ and $q = q_N^N$. Observe that  $\Phi^{\pm}_N$ satisfy Assumptions 4 and 6 in Section \ref{Section2.2}. In particular, we have that $\Phi_N^{\pm}$ converge to $\Phi^{\pm}_\infty$, where the latter are as in Definition \ref{Polys} for the parameters $a = \la$, $b =\lb$, $c = \lc$, $d = \ld$ and $q = \lq^{\lN}$. With the same choice of parameters we also define $R_\infty$ and $Q_\infty$ as in that definition and (\ref{QSquareRoot}). Finally, one checks that Assumption 7 holds from the definition of $V$ and $\Phi^{\pm}_\infty$.

Since Assumptions 1-4 and 6 hold we can apply Theorem \ref{NekGen} and obtain that the function $\tilde R_N$ on the right side of (\ref{REQV2}) is a degree $4$ polynomial. We isolate the following asymptotic statement about $\tilde{R}_N$, which will be used here and whose proof is the focus of Section \ref{Section10}.
\begin{fact}\label{fact_7}(see (\ref{proofFact7})) If $R_\infty$ and $\tilde{R}_N$ are as above then for all $z \in \mathbb{C}$ 
\begin{equation}\label{RFE2}
\lim_{N \rightarrow \infty} R_\infty(z) - \tilde{R}_N(z) = 0.
\end{equation}
\end{fact}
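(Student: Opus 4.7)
My plan is to prove Fact \ref{fact_7} by combining the polynomial structure forced by Theorem \ref{NekGen} with an asymptotic analysis of the discrete Riemann--Hilbert problem (DRHP) associated with the $q$-Racah orthogonal polynomials. The key initial observation is that both $\tilde R_N$ and $R_\infty$ are polynomials in $z$ of degree at most four: the former by Theorem \ref{NekGen} since $\Phi^\pm_N$ from Definition \ref{Polys} are degree-four polynomials, the latter by inspection of Definition \ref{Polys}. Thus the desired convergence reduces to convergence of five coefficients, so any mode of pointwise convergence on an open set automatically upgrades to the pointwise convergence on all of $\mathbb{C}$ asserted in the fact.

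Lemma \ref{AnalRQ} already provides such convergence: $\tilde R_N(z)\to R_\mu(z)$ uniformly on compact subsets of $\mathcal M\setminus[\hat a_1,\hat b_1]$, where $\mu$ denotes the equilibrium measure from Theorem \ref{GLLN} in the $\ell$-coordinates and $R_\mu$ is the function defined in \eqref{QRmu}. Moreover, by Lemma \ref{AnalRQ}, $R_\mu$ is itself a polynomial of degree at most four. Consequently it suffices to prove the polynomial identity $R_\mu = R_\infty$.

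For this identification I would invoke the determinantal structure of $\mathbb{P}_N$. The Nekrasov expectations in \eqref{REQV2} admit an exact rewriting in terms of the Christoffel--Darboux kernel of the $q$-Racah polynomials, and hence in terms of the solution $Y_N$ of the standard DRHP for the weight $\tilde w_N^{qR}$ on $\{0,1,\dots,M_N\}$. The natural $g$-function for the Deift--Zhou steepest-descent analysis of this DRHP is precisely the Nekrasov observable $\mathfrak{G}(z)=\log(\lq)(z-\lu z^{-1})G_\mu(z+\lu z^{-1})$ from Lemma \ref{AnalRQ}. The vanishing condition of Assumption 6 pins the two hard edges, while Lemma \ref{LemmaQF} together with Assumption 5 locates the two soft edges at the interior roots $x_7,x_8$ of $Q^2$, along with the auxiliary roots $x_3,x_4=\lc\ld/x_{8,7}$ that reflect the involution $z\mapsto\lu/z$ of the quadratic lattice. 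Constructing the global parametrix in terms of the algebraic function $\sqrt{(z-x_3)(z-x_4)(z-x_7)(z-x_8)}$ from \eqref{QSquareRoot} and performing the small-norm matching furnishes an explicit asymptotic formula for $\tilde R_N$, from which $R_\mu=R_\infty$ follows by a direct coefficient comparison using \eqref{Q0} together with the identity $R_\mu^2-4\Phi^+_\infty\Phi^-_\infty=Q_\infty^2$.

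The main obstacle is the construction of the DRHP and its parametrices in the quadratic-lattice setting: the particles live not on an integer lattice but on the image $\{q^{-x}+u q^x:x\in\mathbb Z\}$, so there are effectively two copies of particles swapped by $z\mapsto\lu/z$, and the Nekrasov observable encodes both. The involution symmetry, already visible in Remark \ref{GREM} and in Lemma \ref{LemmaQF}, is precisely what makes a closed-form parametrix available; but handling the two pairs of endpoints simultaneously and verifying the exact polynomial identity $R_\mu=R_\infty$ (as opposed to only a leading-order match) requires bookkeeping substantially heavier than in the integer-lattice discrete log-gas case treated in \cite{BGG}.
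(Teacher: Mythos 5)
Your opening reduction is sound: by Theorem~\ref{NekGen} the quantity $\tilde R_N$ is a polynomial of degree at most~$4$, $R_\infty$ manifestly is, and by Lemma~\ref{AnalRQ} we have $\tilde R_N\to R_\mu$ on compacts, so the whole Fact does reduce to a finite-dimensional coefficient identification $R_\mu=R_\infty$. The issue is with the route you then propose. The Deift--Zhou machinery needs a $g$-function with verified equality and inequality (Euler--Lagrange) conditions and a global parametrix built on the band $[a_-,a_+]$. In this paper, the band structure for the $q$-Racah case --- i.e.\ the verification of Assumption~5 with $r_1=x_3+x_8$, $s_1=x_4+x_7$ --- is only established \emph{after and using} Fact~\ref{fact_7}: see the proof of Theorem~\ref{QRCLT} in Section~\ref{Section9.2}, where $Q_\rho^2 = R_\rho^2 - 4\Phi^+_\infty\Phi^-_\infty = Q_\infty^2$ is derived from $R_\rho=R_\infty$. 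So invoking Assumption~5 and the roots of $Q_\infty^2$ from Lemma~\ref{LemmaQF} to locate your soft edges and build the parametrix is circular as stated. To close the loop along your lines you would have to independently verify, for the conjectured density in Definition~\ref{Polys}, all the Euler--Lagrange conditions and the correct one-band support --- a substantial step your sketch silently omits and which is exactly the burden the paper is engineered to avoid.

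The paper's actual argument is algebraic and does not pass through the equilibrium measure at all. One introduces $A_N(z)=m_N(\sigma(qz))\,D(z)\,m_N^{-1}(\sigma(z))$, shows $\operatorname{Tr}A_N=\tilde R_N$ (Lemma~\ref{AtoR}), proves $A_N$ is \emph{entire} with explicit degree and leading-coefficient bounds (Propositions~\ref{entireL}, \ref{leadProp}), and then exploits the three-term $q$-difference equation of the $q$-Racah polynomials through the classification Lemma~\ref{qDiffL}. That lemma forces the off-diagonal entries into the closed form \eqref{A12}--\eqref{A21} and simultaneously delivers the scalar identity obtained from \eqref{system4}, namely $B(z)\bigl[A^{22}_N(q^{-1}z)A_N^{12}(z)+A_N^{12}(q^{-1}z)A_N^{11}(z)\bigr]=A_N^{12}(q^{-1}z)\Phi^-(z)\bigl[A(z)+B(z)+C_N(z)\bigr]$. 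Passing to the limit in this \emph{finite-$N$ exact identity} together with \eqref{cnass} immediately yields \eqref{proofFact7}. No steepest descent, no parametrix, no $g$-function. Your approach is plausible in principle and would reproduce the same polynomial, but at the cost of the heavy analytic apparatus and the unresolved verification gap described above; the paper's route is both shorter and logically self-contained.
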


\begin{proof} (Theorem \ref{BT1})
As discussed earlier we know that $\mathbb{P}_N(\ell_1,\dots,\ell_N)  $ in (\ref{S411}) satisfies Assumptions 1-4 and 6-7 in Section \ref{Section2.2}. We conclude from Theorem \ref{GLLN} that the empirical measures 
$$\rho_N =  \frac{1}{N} \sum_{i = 1}^N \delta \left( \sigma_N(q_N^{-\lambda_i}) \right)$$
converge to a limiting measure $\rho$, which by Lemma \ref{Lsupp} has density
\begin{equation}\label{rhoForm}
 \rho(y_0 + \lu y_0^{-1}) =  \frac{1}{ \log (\lq)\pi (y_0 - \lu y_0^{-1}) } \cdot \mbox{arccos} \left( \frac{R_\rho(y_0)}{2 \sqrt{\Phi_\infty^-(y_0)  \Phi_\infty^+(y_0)}}\right)\mbox{ for $y_0 \in [1, \lc]$}.
\end{equation}
Combining (\ref{RLimNek}) and (\ref{RFE2}) we conclude that $R_\rho =  R_\infty$. The latter implies that the measures $\mu_N$ in Theorem \ref{BT1} satisfy the conditions in that theorem for the measure $\mu := \rho \circ \sigma_{\lq}^{-1}$, which in view of (\ref{rhoForm}) agrees with Definition \ref{Polys}.
\end{proof}

\begin{proof} (Theorem \ref{QRCLT}) From the proof of Theorem \ref{BT1} we know that $R_\rho = R_\infty$, and so $Q_\rho^2 = R_\rho^{2} -4 \Phi^{+}_\infty \Phi^-_\infty = Q_\infty^2$. The formula for $Q_\infty$ in (\ref{QSquareRoot}) implies that $Q_\rho$ satisfies Assumption 5 for $r_1 = a_-$, $s_1 = a_+$ and $H(z) = z (z^2 - \lc\ld)(\la \lb \lq^{\lN} - \lq^{-\lN})$. Overall, the measures (\ref{S411}) satisfy Assumptions 1-6 in Section \ref{Section2.2} and so Theorem \ref{QRCLT} follows from Theorem \ref{CLTfun} and Remark \ref{meq1}.
\end{proof}

\section{Proof of Fact \ref{fact_7}}\label{Section10}
The goal of this section is to prove Fact \ref{fact_7}, which is the missing ingredient necessary to complete the proofs of Theorems \ref{BT1} and \ref{QRCLT}. We summarize some basic facts about discrete Riemann-Hilbert problems and $q$-Racah orthogonal polynomials in Section \ref{Section10.1}. In Section \ref{Section10.2} we introduce a matrix-valued function $A_N(z)$ and derive some of its properties. Section \ref{Section10.3} contains some asymptotic results about $A_N(z)$, which suffice to show Fact \ref{fact_7}.

\subsection{Discrete Riemann-Hilbert problems and orthogonal polynomials}\label{Section10.1}

\subsubsection{Discrete Riemann-Hilbert problems}\label{DRHP}
In this section we relate solutions of discrete Riemann-Hilbert problems (DRHP) for jump matrices of a special type to orthogonal polynomials. Our exposition closely follows that in \cite[Section 2]{BB}, which in turn dates back to \cite{B00, B002}. 

Let $\mathfrak X$ be a finite subset of $\mathbb C$ such that $\mbox{card}(\mathfrak X)=M+1<\infty$ and $w: \mathfrak{X} \rightarrow \mbox{Mat}(2, \mathbb{C})$ be any function. We say that an analytic function
$$m\colon \mathbb C \setminus \mathfrak X \rightarrow \mbox{Mat}(2, \mathbb C)$$ 
solves the DRHP$(\mathfrak X, w)$ if $m$ has simple poles at the points of $\mathfrak X$ and its residues at these points are given by the {\em jump} (or {\em residue}) condition 
\begin{equation}\label{jumpCond}
\residue\limits_{\zeta=x}\text{ }m(\zeta)=\lim\limits_{\zeta\rightarrow x}^{ }m(\zeta)w(\zeta),\text{  } x\in \mathfrak X.
\end{equation} 
We will assume that the matrix $w(x)$ depends  on a function $\omega: \mathfrak{X} \rightarrow \mathbb{C}$ and has the form
\begin{equation} \label{eq:jump}
 w(x)=\left [\begin{array}{cc} 0 & \omega(x)\\0 & 0
\end{array}\right],
\end{equation}

 Recall that a collection $\{ P_n(\zeta) \}_{n = 0}^N$ of complex polynomials is called the collection of {\em orthogonal polynomials associated to the weight function} $\omega$ if 
\begin{itemize}
\item $P_n$ is a polynomial of degree $n$ for all $n = 1, \dots, M$ and $P_0 \equiv const$;
\item if $m \neq n$ then  $\sum_{x \in \mathfrak{X}} P_m(x) P_n(x) \omega(x) = 0$.
\end{itemize}
We will always take $P_n$ to be monic, i.e. $P_n(x) = x^n + $ lower terms.

We consider the following inner product on the space $\mathbb C[\zeta]$ of all complex polynomials:
$$\left(f(\zeta), g(\zeta)\right )_{\omega}=\sum\limits_{x\in\mathfrak X}^{ }f(x)g(x)\omega(x).$$
It is clear that there exists a collection of orthogonal polynomials $\{P_n(\zeta)\}_{n = 0}^M$ associated to $\omega$ such that $ (P_n, P_n)_\omega \neq 0$ for all $n = 0, \dots, M$ if and only if the restriction of $(\cdot, \cdot)_\omega$ to the space $\mathbb{C}[\zeta]^{\leq d}$ of polynomials of degree at most $d$ is non-degenerate for all $d = 0, \dots, M$. If this condition holds we say that the function $\omega$ is {\em nondegenerate}, and then it is clear that the collection $\{P_n(\zeta)\}_{n = 0}^M$ is unique. For convenience we isolate the following notation
\begin{equation}\label{DPEN}
c_n := \left( P_n, P_n\right)_{\omega}, \hspace{5mm} H_n (\zeta) : = \sum\limits_{x\in\mathfrak X}\frac{P_{n}(x)\omega(x)}{\zeta - x}, \hspace{5mm} n = 0, \dots, M.
\end{equation}

The connection between the orthogonal polynomials  $\{P_n(\zeta)\}_{n = 0}^M$ and solutions to  DRHP$(\mathfrak X, w)$ is detailed in the following statement.
\begin{theorem} \cite[Lemma 2.1 and Theorem 2.4]{BB}\label{RHPsol} Let $\mathfrak X$  be a finite subset of $\mathbb C$ such that $\mbox{card}(\mathfrak X)=M+1<\infty$ and $w$ be as in (\ref{eq:jump}) with $\omega \colon \mathfrak X\rightarrow \mathbb C$ a nondegenerate weight function. Then for any $k=1,2,\dots,M$ the DRHP$(\mathfrak X, w)$ has a unique solution $m_{\mathfrak X}(\zeta),$ satisfying an assymptotic condition
\begin{equation}\label{DRHP1}
m_{\mathfrak X}(\zeta)\cdot \left [\begin{array}{cc} \zeta^{-k} & 0\\0 & \zeta^{k}
\end{array}\right] =I+O\left( \zeta^{-1} \right)\text{  as  } \zeta\rightarrow \infty,
\end{equation}
where $I$ is the identity matrix. This solution is explicitly given by
$$ m_{\mathfrak X}(\zeta)= \left [\begin{array}{cc} P_k(\zeta) & H_k(\zeta) \\ c_{k-1}^{-1} P_{k-1}(\zeta)   & c_{k-1}^{-1}H_{k-1}(\zeta) 
\end{array}\right], \mbox{ with $c_{n}, H_n$ as in (\ref{DPEN}) } $$  
and satisfies $\mbox{{\em det}} m_{\mathfrak X}(\zeta) \equiv 1$.
\end{theorem}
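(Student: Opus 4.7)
The plan is to establish existence by verifying directly that the explicit matrix in the statement satisfies every required property, and then to derive uniqueness from the nondegeneracy of $\omega$. All computations will reduce to the orthogonality relations defining the polynomials $\{P_n\}$.

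For existence, I would first note that $P_k, P_{k-1}$ are entire while $H_k, H_{k-1}$ are rational functions with simple poles precisely at the points of $\mathfrak{X}$, so $m_{\mathfrak{X}}$ is analytic on $\mathbb{C}\setminus\mathfrak{X}$ with simple poles on $\mathfrak{X}$. Because $w$ has the nilpotent form (\ref{eq:jump}), the residue condition at $x\in\mathfrak{X}$ simplifies to: the first column of $m_{\mathfrak{X}}$ is analytic at $x$, and the residue of the second column at $x$ equals $\omega(x)$ times the value of the first column there. Both assertions are immediate from the defining formula for $H_n$ in (\ref{DPEN}). For the asymptotic condition I would expand $H_n(\zeta)=\sum_{j\geq 0}\zeta^{-j-1}\sum_{x\in\mathfrak{X}}P_n(x)x^j\omega(x)$ at infinity; orthogonality of $P_n$ against $1,x,\dots,x^{n-1}$ kills the first $n$ coefficients, while the fact that $P_n$ is monic of degree $n$ gives the $j=n$ coefficient as $c_n$. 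Combining this with $P_n(\zeta)=\zeta^n+O(\zeta^{n-1})$, a direct computation shows $m_{\mathfrak{X}}(\zeta)\cdot\mathrm{diag}(\zeta^{-k},\zeta^k)=I+O(\zeta^{-1})$.

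For the determinant identity, I would note that $\det m_{\mathfrak{X}}$ has at worst simple poles on $\mathfrak{X}$, but at each $x\in\mathfrak{X}$ the two singular contributions, coming from $P_k\cdot c_{k-1}^{-1}H_{k-1}$ and $H_k\cdot c_{k-1}^{-1}P_{k-1}$, have the same residue $c_{k-1}^{-1}P_k(x)P_{k-1}(x)\omega(x)$ and thus cancel. Hence $\det m_{\mathfrak{X}}$ extends to an entire function; the asymptotic condition forces $\det m_{\mathfrak{X}}(\zeta)\to 1$ as $\zeta\to\infty$, so Liouville yields $\det m_{\mathfrak{X}}\equiv 1$.

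For uniqueness, let $m$ be any solution. The residue condition together with the shape of $w$ forces the first column of $m$ to be entire, and the asymptotic condition forces its entries to be polynomials: $m_{11}$ monic of degree $k$ and $m_{21}$ of degree at most $k-1$. The second column is then fully determined by the residue condition and the decay at infinity, giving $m_{12}(\zeta)=\sum_{x}\omega(x)m_{11}(x)/(\zeta-x)$ and $m_{22}(\zeta)=\sum_{x}\omega(x)m_{21}(x)/(\zeta-x)$. The refined decay $m_{12}(\zeta)=O(\zeta^{-k-1})$ translates, via the Laurent expansion at infinity, into the orthogonality of $m_{11}$ against $1,x,\dots,x^{k-1}$, so by nondegeneracy of $\omega$ we must have $m_{11}=P_k$. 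An analogous argument using $m_{22}(\zeta)=\zeta^{-k}+O(\zeta^{-k-1})$ pins down $m_{21}=c_{k-1}^{-1}P_{k-1}$, and this completes the argument. The main (minor) obstacle is careful bookkeeping of what the residue condition imposes given the special structure of $w$; once that is unpacked, everything becomes a transparent reformulation of the orthogonality relations.
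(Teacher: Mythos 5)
The paper itself provides no proof of this statement: Theorem~\ref{RHPsol} is quoted verbatim from \cite{BB} (Lemma~2.1 and Theorem~2.4), so there is nothing in the paper to compare against. Your argument is correct and is exactly the standard verification-plus-uniqueness scheme for discrete Riemann--Hilbert problems of nilpotent jump type: the upper-triangular form of $w$ decouples the residue condition into ``first column entire'' and ``residue of second column $=\omega\cdot$(first column)''; $H_n$ is built precisely to satisfy the latter; orthogonality of $P_n$ against $1,\dots,x^{n-1}$ together with monicity gives $H_n(\zeta)=c_n\zeta^{-n-1}+O(\zeta^{-n-2})$, which yields the asymptotic normalization; the residues in the two off-diagonal products cancel so $\det m_{\mathfrak X}$ is entire, tends to $1$, and is therefore $\equiv 1$ by Liouville; and for uniqueness the normalization plus residue condition force the first column to be polynomial of the correct degrees, whereupon the required extra decay of the second column encodes the orthogonality relations that, by nondegeneracy of $(\cdot,\cdot)_\omega$ on $\mathbb{C}[\zeta]^{\leq d}$, single out $P_k$ and $c_{k-1}^{-1}P_{k-1}$. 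All the steps you sketch close up: the Liouville argument pinning $m_{12}$ to $\sum_x \omega(x)m_{11}(x)/(\zeta-x)$ is valid because both it and the candidate sum vanish at infinity, and the normalization $a\,c_{k-1}=1$ fixing the scalar $a$ in $m_{21}=aP_{k-1}$ follows from the $j=k-1$ Laurent coefficient of $m_{22}$.
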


\subsubsection{$q$-Racah polynomials}
In this section we recall and establish some basic properties of the $q$-Racah orthogonal polynomials, cf. \cite[Section 3.2]{KS2}. Recall from Definition \ref{qrw} that the $q$-Racah weight function is defined on $\mathfrak{X} = \{q^{-x}+\gamma \delta q^{x+1}: x=0,\dots, M\}$ as
\begin{equation}\label{omegaQR}
\omega^{qR}(q^{-x}+\gamma \delta q^{x+1}) =\frac{(\alpha q,\beta\delta q,\gamma q,\gamma \delta q;q)_x}{(q,\alpha^{-1}\gamma \delta q, \beta^{-1}\gamma q, \delta q;q )_x}\frac{(1-\gamma\delta q^{2x+1})}{(\alpha \beta q)^{x}(1-\gamma\delta q)}.
\end{equation}
We assume the parameters are as in Definition \ref{ParSetQR}. It is well known that $\omega^{qR}$ is a nondegenerate weight function and the orthogonal polynomials $\{ P_n(\zeta) \}_{n = 0}^M$  associated to it are the $q$-Racah orthogonal polynomials. Explicitly, they are given by
\begin{equation}\label{qRacP}
\begin{split}
P_n(q^{-x} + \gamma \delta q^{x+1} ) = {}_4 \tilde{\phi}_3\left( \begin{matrix} q^{-n}, \alpha \beta q^{n+1} , q^{-x},  \gamma\delta q^{x+1} \\ \alpha q, \beta \delta q
, \gamma q\end{matrix} \Big\vert q; q\right), \mbox{ where }\\
{}_4 \tilde{\phi}_3\left( \begin{matrix} a_1, a_2,a_3 ,  a_4 \\ b_1, b_2, b_3 \end{matrix} \Big\vert q; z\right) = \sum_{n = 0}^M \frac{(a_1, a_2, a_3,a_4; q)_k}{(b_1, b_2, b_3; q)_k} \frac{z^k}{(q;q)_k}.
\end{split}
\end{equation}
The $q$-Racah polynomials $P_n$ satisfy the following orthogonality relation. 
\begin{equation}\label{QRorthog}
\begin{split}
&\sum_{ x = 0}^M \omega^{qR}(q^{-x} + \gamma \delta q^{x+1} ) P_m(q^{-x} + \gamma \delta q^{x+1} ) P_n(q^{-x} + \gamma \delta q^{x+1} ) = c_n \cdot \delta_{mn}, \mbox{ where }\\
&c_n = \frac{(\gamma \delta q^2, \alpha^{-1} \beta^{-1} \gamma, \alpha^{-1} \delta, \beta^{-1}; q)_\infty}{(\alpha^{-1} \gamma \delta q, \beta^{-1}\gamma q, \delta q, \alpha^{-1} \beta^{-1} q^{-1}; q)_\infty } \frac{(1 - \alpha \beta q) (\gamma \delta q)^n}{ ( 1 - \alpha \beta q^{2n+1})} \frac{(q, \beta q, \alpha \delta^{-1} q, \alpha \beta \gamma^{-1} q; q)_n }{(\alpha \beta q, \alpha q, \beta \delta q, \gamma q;q)_n}.
\end{split}
\end{equation}
In addition, setting $y(x) = P_n(q^{-x} + \gamma \delta q^{x+1} ) $ we have the following $q$-Difference equations
\begin{equation}\label{QRDiffEq}
\begin{split}
&q^{-n}(1 - q^n) (1 - \alpha \beta q^{n+1}) y(x) = \hat B(x) y(x+1) - [\hat B(x) + \hat D(x)] y(x) + \hat D(x) y(x-1) \mbox{ where } \\
&\hat B(x) = \frac{(1 - \alpha q^{x+1})( 1 - \beta \delta q^{x+1})( 1 - \gamma q^{x+1}) (1 - \gamma \delta q^{x+1}) }{(1 - \gamma \delta q^{2x+1}) (1 - \gamma \delta q^{2x+2})},  \mbox{ and }\\ 
&\hat D(x) = \frac{q(1 - q^x)(1 - \delta q^x) (\beta - \gamma q^x)(\alpha - \gamma \delta q^x)}{( 1 -  \gamma \delta q^{2x}) ( 1  - \gamma \delta q ^{2x+1})}.
\end{split}
\end{equation}

We next prove a couple of easy facts about the $q$-Racah polynomials. For convenience we set $\sigma(z) = z+ uz^{-1}$ with $u = \gamma \delta q$.
\begin{proposition}\label{rootsInt} Each $P_n(\zeta)$ with $M  \geq n \geq 0$ has $n$ roots in the interval $( 1 + u ,  q^{-M} + u q^M)$.  
\end{proposition}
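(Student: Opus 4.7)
The approach is the classical sign-change argument showing that orthogonal polynomials with respect to a strictly positive weight on a finite set of distinct real nodes have all their roots simple and strictly interior to the convex hull of the support. The three ingredients are: (i) strict monotonicity of the map $x \mapsto \sigma(q^{-x})$ on $\{0,1,\dots,M\}$; (ii) positivity of $\omega^{qR}$ on $\mathfrak{X}$; (iii) the orthogonality relation (\ref{QRorthog}).

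First I would verify that the $M+1$ nodes $\sigma(q^{-x}) = q^{-x} + uq^x$, $x = 0, 1, \dots, M$, are distinct and strictly increasing in $x$. A direct computation gives $\sigma(q^{-(x+1)}) - \sigma(q^{-x}) = (q^{-1} - 1)(q^{-x} - uq^{x+1})$, which is positive because $q \in (0,1)$ and $u = \gamma\delta q \in [0,1)$ (the latter using $\beta \geq \gamma$ and $\beta\delta < 1$ from Definition \ref{ParSetQR}, so that $\gamma\delta \leq \beta\delta < 1$). Hence $\mathfrak{X}$ consists of $M+1$ distinct real points in $[1+u,\, q^{-M}+uq^M]$, with extremes $1+u$ and $q^{-M}+uq^M$ and the remaining $M-1$ nodes strictly between.

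Next I would record that $\omega^{qR}(x) > 0$ on $\mathfrak{X}$ under the constraints of Definition \ref{ParSetQR}. The only delicate factors in (\ref{omegaQR}) are $(\alpha q;q)_x$ and $(\gamma q;q)_x$ in the numerator: since $\alpha \geq \gamma = q^{-M-1}$ forces $\alpha q^{i+1} > 1$ for $0 \leq i \leq M-1$, each of these carries a sign $(-1)^x$ and their product is positive; the remaining $q$-Pochhammer symbols in the numerator and denominator are manifestly positive using $\beta\delta < 1$, $\alpha,\beta \geq \gamma$, and $\gamma\delta < 1$, and the factor $(1-\gamma\delta q^{2x+1})/(1-\gamma\delta q)$ is positive for the same reason. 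Positivity of $\omega^{qR}$ together with distinctness of the nodes makes $(\cdot,\cdot)_{\omega^{qR}}$ a positive definite inner product on $\mathbb{C}[\zeta]^{\leq M}$, so the monic orthogonal polynomials $P_0, \dots, P_M$ exist and are uniquely defined.

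Finally, I would run the standard sign-change argument: suppose $P_n$ changes sign at only $k < n$ pairs of consecutive nodes of $\mathfrak{X}$, and pick points $y_1 < \cdots < y_k$ strictly between each such consecutive pair. Then $Q(\zeta) := \prod_{j=1}^k (\zeta - y_j)$ has degree $k < n$, yet $P_n(\zeta)Q(\zeta)$ has constant nonzero sign on $\mathfrak{X}$, so $(P_n,Q)_{\omega^{qR}} = \sum_{x \in \mathfrak{X}} P_n(x)Q(x)\omega^{qR}(x) \neq 0$. This contradicts the orthogonality of $P_n$ against any polynomial of degree less than $n$. Hence $P_n$ changes sign at least $n$ times across consecutive nodes of $\mathfrak{X}$, yielding at least $n$ distinct real roots in the open subintervals of $(1+u,\, q^{-M}+uq^M)$ bounded by those nodes; since $\deg P_n = n$ these account for all roots. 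The argument is textbook, so there is no genuine obstacle; the one point requiring care is the sign bookkeeping for $\omega^{qR}$, which is handled by the parameter constraints of Definition \ref{ParSetQR}.
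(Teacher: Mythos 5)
Your approach is the standard one (orthogonality against a low-degree polynomial built from the sign pattern of $P_n$), and it is close in spirit to the paper's proof. However, there is a genuine gap in the form you have stated it: the claim that ``$P_n(\zeta)Q(\zeta)$ has constant nonzero sign on $\mathfrak{X}$'' is false whenever $P_n$ vanishes at a node, and this can happen for discrete orthogonal polynomials. For instance, with the uniform weight on $\mathfrak{X} = \{0,1,2\}$ one gets $P_1(\zeta) = \zeta - 1$, which is zero at the middle node; here the number of consecutive-pair sign changes in your sense is $k=0<1$, your $Q \equiv 1$, and $(P_1,Q) = -1 + 0 + 1 = 0$, so the contradiction you need never materializes. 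The sign pattern $(-,0,+)$ defeats the counting step, and your argument does not establish the proposition, even though its conclusion happens to hold.

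The paper's proof sidesteps this by never referring to sign changes at all. It sets $m$ to be the number of roots of $P_n$ in the \emph{closed} interval $[1+u, q^{-M}+uq^M]$, counted with multiplicity, and pairs $P_n$ against the monic polynomial $\prod_{i=1}^m(\zeta - x_i)$ built from \emph{exactly those roots}. The product $P_n(\zeta)\prod_i(\zeta-x_i)$ has every root inside the interval with even multiplicity, so it genuinely does not change sign there; orthogonality plus positivity of the weight then forces each summand to vanish, and since the $x_i$ are themselves roots of $P_n$, one deduces $P_n$ vanishes at all $M+1$ nodes --- a contradiction with $\deg P_n = n \le M$. A second application of the same device (excluding $1+u$ from the list of roots, then $q^{-M}+uq^M$) rules out roots at the two endpoints, which is what upgrades the closed-interval statement to the open-interval one you want. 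To repair your version you would have to (i) enlarge $Q$ by a factor $(\zeta - \xi_i)$ for each node $\xi_i$ where $P_n$ vanishes and carry that count $\ell$ through the inequality $k+\ell \ge n$, and (ii) separately argue that $P_n$ cannot vanish at the two extreme nodes --- e.g., if $P_n(\xi_0)=0$ then $(P_n, P_n/(\zeta-\xi_0))=\sum_x(\xi_x-\xi_0)R(\xi_x)^2\omega(\xi_x)>0$, contradicting orthogonality. Both of these are handled cleanly by working with the root set rather than the sign-change count, as the paper does.
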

\begin{proof}
Let $m$ be the number of roots of $P_n(\zeta)$ in the interval $[ 1 + u ,  q^{-M} + u q^M]$ counted with multiplicities. Since $\mbox{deg}(P_n) = n$, we know that $m \leq n$. If $m < n$ let $x_1, \dots, x_m$ the an enumeration of these roots in some order. By (\ref{QRorthog}) we know that 
\begin{equation}\label{orthQR}
 \sum_{x = 0}^M P_n(q^{-x} + u q^x) \omega^{qR}(q^{-x} + u q^x) \cdot \prod_{i = 1}^m (q^{-x} + u q^x - x_i)  = 0.
\end{equation}
Note that the polynomial $P_n(\zeta)\cdot \prod_{i = 1}^m (\zeta - x_i)$ does not change its sign on $[ 1 + u ,  q^{-M} + u q^M]$ and so all of the above summands must be zero. But then $P_n(q^{-x} + u q^x)  = 0$ for $x = 0, \dots, M$ and so $ M + 1 \leq m < n$, contradicting $M \geq n$. We conclude that $m = n$.

Let $x_1, \dots, x_m$ be the roots of $P_n(\zeta)$  that are not equal to $1 + u$. By our work above we know that $x_i \in ( 1 + u ,  q^{-M} + u q^M]$ for $i = 1, \dots ,m$. If again we suppose that $m < n$ then $P_n(\zeta)\cdot \prod_{i = 1}^m (\zeta - x_i)$ does not change its sign on $[ 1 + u ,  q^{-M} + u q^M]$ and so all the terms in the analogous sum (\ref{orthQR}) must be zero, implying $P_n(q^{-x} + u q^x)  = 0$ for $x = 0, \dots, M$. We conclude that $P_n$  has roots at $q^{-x} + u q^x$ for $x = 0, \dots, M$. But this is impossible, since $M \geq n$ leading to too many roots of $P_n$. We reach a contradiction, which arose from our assumption that $1 + u$ is a root of $P_n(\zeta)$. A similar argument shows $q^{-M} + u q^M$ is also not a root of $P_n(\zeta)$. 
\end{proof}

\begin{lemma}\label{commonRoots} Let $M \geq n \geq 1$ be given. If $t > 1$ is such that $P_n(\sigma(t)) = 0$ then $P_n(\sigma(q^{-1}t)) \neq 0$.
\end{lemma}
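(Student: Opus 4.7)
\smallskip
\noindent\textbf{Setup and key identity.} Set $F(t) := P_n(\sigma(t))$ with $u := \gamma\delta q$, so that $\sigma(t) = t + u/t$, $F(u/t) = F(t)$, and $t^n F(t)$ is a polynomial of degree $2n$ in $t$. The constraints of Definition~\ref{ParSetQR} give $\delta \leq 1/\beta \leq q^{M+1}$, hence $\gamma\delta \leq \beta\delta < 1$, so $u < q < 1$. By Proposition~\ref{rootsInt} the $n$ roots of $P_n$ lie strictly inside $(1+u, q^{-M}+uq^M)$; pulling back through the diffeomorphism $\sigma : (\sqrt u, \infty) \to (2\sqrt u, \infty)$ one obtains $n$ \emph{large} roots $s_1 < \cdots < s_n$ of $F$ in $(1, q^{-M})$ together with their mirror images $u/s_i \in (uq^M, u)$. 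In particular, the open gap $(u, 1)$ contains no zeros of $F$; this is the key observation driving the final contradiction. The second-order $q$-difference equation \eqref{QRDiffEq}, applied to $y(x) = F(q^{-x})$, is a polynomial identity in $q^{\pm x}$ and hence extends to the rational-function identity
\begin{equation*}
\lambda_n F(t) = B(t)\bigl[F(q^{-1}t) - F(t)\bigr] + D(t)\bigl[F(qt) - F(t)\bigr], \qquad \lambda_n := q^{-n}(1 - q^n)(1 - \alpha\beta q^{n+1}) \neq 0,
\end{equation*}
where $B, D$ are obtained from $\hat B, \hat D$ by the substitution $q^{-x} \mapsto t$. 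A factor-by-factor inspection using $\delta < 1$, $\beta \geq \gamma$, $\alpha \geq \gamma$ shows that every zero of $D(t)$ satisfies $t \leq 1$, so $D(t) \neq 0$ whenever $t > 1$.

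\smallskip
\noindent\textbf{Chain initiation and interlacing.} Suppose for contradiction that $F(t_0) = F(q^{-1}t_0) = 0$ for some $t_0 > 1$. Evaluating the identity at $t = t_0$ and using both vanishings gives $D(t_0) F(q t_0) = 0$, whence $F(q t_0) = 0$. By Proposition~\ref{rootsInt} we must have $t_0 = s_i$ and $q^{-1} t_0 = s_j$ for some $j > i$. The second ingredient is the standard interlacing for orthogonal polynomials on a positive discrete weight: between any two consecutive zeros of $P_n$ lies at least one point of the support $\{\sigma(q^{-x}) : 0 \leq x \leq M\}$ (obtained by testing $P_n$ for orthogonality against a suitable polynomial of degree $n-2$ built from the remaining zeros of $P_n$, using positivity of $\omega^{qR}$). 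Pushed back through $\sigma$, this says that between consecutive large roots $s_k < s_{k+1}$ lies some lattice point $q^{-\ell} \in (s_k, s_{k+1})$.

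\smallskip
\noindent\textbf{Case analysis and induction.} If $t_0 = q^{-k_0}$ lies on the lattice then $q^{-1} t_0 = q^{-k_0-1}$ is the adjacent lattice point, so the open interval $(\sigma(q^{-k_0}), \sigma(q^{-k_0-1}))$ contains no points of the support yet is bracketed by two zeros of $P_n$, contradicting the interlacing. Otherwise $t_0 = s_i \in (q^{-k_0}, q^{-k_0-1})$ for some $k_0 \geq 0$; the \emph{unique} lattice point in $(s_i, q^{-1} s_i)$ is $q^{-k_0-1}$, and a short interlacing argument forces $j = i+1$, i.e.\ $s_{i+1} = q^{-1} s_i$, together with the symmetric conclusion $s_{i-1} = q s_i$ whenever $q s_i > 1$. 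Feeding $s_{i-1}$ back into the identity produces $F(q^2 s_i) = 0$, and another application of interlacing yields $s_{i-2} = q^2 s_i$. By induction $s_{i-j} = q^j s_i$ for $j = 0, 1, \ldots, k_0$, the chain terminating at $s_{i-k_0} = q^{k_0} s_i \in (1, q^{-1})$.

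\smallskip
\noindent\textbf{Final step and main obstacle.} One last application of the identity at $t = s_{i-k_0}$, using $F(q^{-1} s_{i-k_0}) = F(s_{i-k_0+1}) = 0$, yields $F(q s_{i-k_0}) = 0$. But $q s_{i-k_0} = q^{k_0+1} s_i \in (q, 1) \subset (u, 1)$ (the last inclusion uses $u < q$ from the first paragraph), while $F$ has no zeros in $(u,1)$ -- contradiction. The delicate bookkeeping I anticipate lies in Case B: one has to verify carefully that the single lattice point in $(s_i, q^{-1} s_i)$ really pins down $s_{i+1} = q^{-1} s_i$ (ruling out configurations where extra roots might be squeezed into a sub-interval containing no lattice point), and to check that the descending chain does not accidentally collide with the boundary lattice point $q^{-M}$ or with a zero of $B(t)$ that could abort the induction. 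Both concerns are resolved by the \emph{open} inclusion $s_k \in (1, q^{-M})$ of Proposition~\ref{rootsInt} and the parameter inequalities of Definition~\ref{ParSetQR}, which push the possible bad points outside the range traversed by the chain.
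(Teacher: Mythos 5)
Your proof takes a genuinely different route from the paper's, and the core of it is correct. Both start by reading the $q$-difference equation \eqref{QRDiffEq} as a functional identity in $t$; the paper then passes to the \emph{top} of the maximal chain of consecutive zeros, forces $A(tq^{-k+1})=0$ (since by maximality $Q_n(tq^{-k})\neq 0$), and derives a contradiction with the parameter constraints of Definition \ref{ParSetQR}: a zero of $Q_n$ lies strictly below $q^{-M}$, so the only root of $A$ that could match is $\alpha q$, and that forces $\alpha<\gamma$, contrary to $\alpha\geq\gamma$. You instead descend: the zeros of $D$ all lie in $(0,1]$, so two consecutive vanishings $F(q^{j-1}t_0)=F(q^jt_0)=0$ with $q^jt_0>1$ yield $D(q^jt_0)F(q^{j+1}t_0)=0$ and hence $F(q^{j+1}t_0)=0$; iterating until $q^{j_0}t_0>1\geq q^{j_0+1}t_0$ produces a zero of $F$ in $(q,1]\subset(u,1]$, which is zero-free by Proposition \ref{rootsInt} once you observe $u<q$. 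Both arguments are valid; yours uses $D$ and the zero-free gap, the paper's uses $A$ and the bound $\alpha\geq\gamma$.

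That said, you have burdened the proof with machinery it does not need. The interlacing property, the identification $t_0=s_i$, the lattice versus non-lattice case split, and the pinning down of $s_{i+1}=q^{-1}s_i$ and $s_{i-j}=q^js_i$ are all superfluous. The downward induction never needs to know \emph{which} root of $P_n$ the point $q^jt_0$ is; it needs only $q^jt_0>1$ (to invoke $D\neq 0$) and the zero-free interval $[u,1]$ (to terminate with a contradiction). The ``delicate bookkeeping'' you flag at the end --- extra roots in lattice-free subintervals, collisions with $q^{-M}$, zeros of $B$ --- are artifacts of this excess structure: zeros of $B$ never enter because in your identity $B$ multiplies a vanishing difference, and the descending chain moves away from $q^{-M}$, not toward it. Stripping out the interlacing excursion leaves a proof that is both correct and simpler than what you wrote.
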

\begin{proof}
 Suppose that $\alpha_1, \dots, \alpha_n$ are the roots of $P_n(z)$.  Then (\ref{QRDiffEq}) can be written as
\begin{equation}\label{qDiff}
\begin{split}
&\left[ A(z) + B(z) + C_n(z)\right] Q_n(z) =  q^n A(z) Q_n(q^{-1}z)+ q^{-n}B(z) Q_n(qz), \mbox{ where } \\
&Q_n(z)  = \prod_{i = 1}^n ( z^2 - \alpha_i z + u), \hspace{2mm} A(z) = (z - \alpha q)(z - \beta\delta q)(z - \gamma q)(z - \gamma \delta q)(z^2 - \gamma \delta), \\
&B(z) = q (z - 1)(z - \delta)(z\beta -\gamma)(z\alpha - \gamma \delta)(z^2- \gamma \delta q^2), \\
&C_n(z) = (z^2 - \gamma\delta)(z^2 - \gamma\delta q)(z^2 - \gamma\delta q^2)(q^{-n} - 1)(1 - \alpha\beta q^{n+1}).
\end{split}
\end{equation}
From Proposition \ref{rootsInt} we know that $\alpha_i \in ( 1 + u ,  q^{-M} + u q^M)$ for $i = 1, \dots, n$. Consequently, $z^2 - \alpha_i z + u$ has two real roots $q^{-M} > \beta_i > 1$ and $1 >u \beta_i^{-1} > 0$. 

Suppose that $P_n(\sigma(t)) = 0$ for some $t > 1$. Let $k \geq 1$ be the maximal integer such that $P_n(\sigma(t q^{-i})) = 0$ for $i = 0, \dots, k-1$ and assume for the sake of contradiction that $k \geq 2$. Then we know that $t q^{-k+1}$ is a root of $Q_n(z)$ and $Q_n(qz)$. From the top line in (\ref{qDiff}) we conclude that $A(t q^{-k +1}) Q_n(t q^{-k}) = 0$.
By the maximality of $k$ we must have $A(t q^{-k +1})  = 0$. Since $t q^{-k+1} > 1$ we conclude that $t q^{-k+1} = \alpha q$ and then as $t q^{-k+1} = \beta_j$ for some $n \geq j \geq 1$ we conclude that $q^{-M} > \alpha q$ or $\gamma > \alpha $. This contradicts Definition \ref{ParSetQR} and so $k = 1$ as desired.

\end{proof}

We end this section with a lemma, which classifies the bounded degree polynomials $A(z), B(z), C(z)$ that satisfy the $q$-Difference equation (\ref{qDiff}). 
\begin{lemma}\label{qDiffL}
Fix $M \geq n > 7$ and suppose that $\tilde{A}(z), \tilde{B}(z), \tilde{C}(z)$ are polynomials, each of degree at most $n - 7$ or zero and such that 
\begin{equation}\label{genqDiff}
\left[ \tilde{A}(z) + \tilde{B}(z) + \tilde{C}(z)\right] P_n(\sigma(z)) =  \tilde{A}(z)P_n(\sigma(q^{-1}z))+ \tilde{B}(z) P_n(\sigma(qz)).
\end{equation}
Then
\begin{equation}\label{cr1}
\tilde{B}(z)A(z) =  B(z)\tilde{A}(z) \mbox{ and } \tilde{B}(z)C_n(z) =  B(z)\tilde{C}(z).
\end{equation}
 where $A(z),B(z),C_n(z)$ are as in (\ref{qDiff}).
\end{lemma}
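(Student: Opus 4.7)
The plan is to exploit the known $q$-difference equation (\ref{qDiff}) alongside the hypothesized relation (\ref{genqDiff}), evaluate both at the zeros of $P_n(\sigma(z))$, and close the argument by a degree count. First I would cast (\ref{qDiff}) into the same shape as (\ref{genqDiff}): since $Q_n(z)=z^n P_n(\sigma(z))$, dividing (\ref{qDiff}) by $z^n$ gives
\[
(A(z)+B(z)+C_n(z))\,P_n(\sigma(z)) = A(z)\,P_n(\sigma(q^{-1}z)) + B(z)\,P_n(\sigma(qz)),
\]
so both $(A,B,C_n)$ and $(\tilde A,\tilde B,\tilde C)$ obey a functional equation of identical form.

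Next I would locate the zeros of $P_n(\sigma(z))$. By Proposition \ref{rootsInt}, $P_n$ has $n$ simple zeros $\alpha_1,\dots,\alpha_n$ in $(1+u,\,q^{-M}+uq^M)$; for each $i$ the equation $z^2-\alpha_i z+u=0$ has one solution $\beta_i>1$ and one solution $u/\beta_i<1$ (the bound $u<1$ comes from $u=\gamma\delta q$ combined with $\delta<\beta^{-1}\le\gamma^{-1}=q^{M+1}$, whence $u<q<1$). Hence $P_n(\sigma(z))$ has exactly $2n$ distinct simple zeros. At any such zero $\zeta$, the two functional equations reduce to the homogeneous linear system
\[
\begin{pmatrix} A(\zeta) & B(\zeta) \\ \tilde A(\zeta) & \tilde B(\zeta) \end{pmatrix}\begin{pmatrix} P_n(\sigma(q^{-1}\zeta)) \\ P_n(\sigma(q\zeta)) \end{pmatrix}=0.
\]
The column vector on the right is nonzero by Lemma \ref{commonRoots}: for $\zeta=\beta_i$ the lemma directly yields $P_n(\sigma(q^{-1}\beta_i))\ne 0$, and for $\zeta=u/\beta_i$ one uses the identity $\sigma(qu/\beta_i)=\sigma(q^{-1}\beta_i)$ to transport the same nonvanishing to $P_n(\sigma(q\zeta))$. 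Consequently the determinant $F(z):=A(z)\tilde B(z)-B(z)\tilde A(z)$ vanishes at each of the $2n$ zeros.

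Since $\deg F \le 6+(n-7)=n-1<2n$, I conclude $F\equiv 0$, which is the first identity $A\tilde B=B\tilde A$. For the second identity, this polynomial relation forces $\tilde A/A$ and $\tilde B/B$ to coincide as a rational function $\rho(z)$ (or forces $\tilde A\equiv \tilde B\equiv 0$, in which case (\ref{genqDiff}) reduces to $\tilde C P_n(\sigma(z))\equiv 0$ and the claim is trivial). Substituting $\tilde A=\rho A$ and $\tilde B=\rho B$ into (\ref{genqDiff}) and subtracting $\rho$ times the rewritten (\ref{qDiff}) gives $(\tilde C-\rho C_n)P_n(\sigma(z))\equiv 0$; since $P_n(\sigma(z))\not\equiv 0$ one obtains $\tilde C=\rho C_n$ as rational functions, and multiplying by $B$ yields $B\tilde C=\rho B\,C_n=\tilde B C_n$ in the polynomial ring.

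The main obstacle I expect is the careful application of Lemma \ref{commonRoots} at the ``small'' roots $u/\beta_i<1$: the lemma is stated only for $t>1$, so one must invoke the symmetry $\sigma(qu/\beta_i)=\sigma(q^{-1}\beta_i)$ to convert the statement at $u/\beta_i$ into one at $\beta_i$. A related but routine point is verifying the $2n$ zeros $\beta_i,\,u/\beta_j$ are distinct, which follows at once from $u<1$ together with the simplicity of the $\alpha_i$ guaranteed by Proposition \ref{rootsInt}. Once these are settled, the degree bound $n-1<2n$ closes the argument without further difficulty.
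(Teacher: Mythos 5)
Your proof is correct and rests on the same core mechanism as the paper's: use Lemma~\ref{commonRoots} at zeros of $Q_n(z)=z^nP_n(\sigma(z))$ to force the degree-$\le n-1$ polynomial $A\tilde B - B\tilde A$ to vanish, then substitute back to extract the second identity. The only real divergence is presentational. The paper forms the single linear combination $\tilde B\cdot(\ref{qDiff}) - B\cdot(\ref{genqDiff})$, which cancels the $Q_n(qz)$ term, and then uses just the $n$ roots $\beta_i>1$ of $Q_n$ together with Lemma~\ref{commonRoots} to conclude $\prod_i(z-\beta_i)\mid A\tilde B - B\tilde A$; you instead evaluate both functional equations at all $2n$ zeros of $P_n(\sigma(z))$ and read off the vanishing of the $2\times2$ determinant of the resulting homogeneous system, which is the same polynomial.

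The $2n$-zero variant is unnecessary once one records $\deg(A\tilde B-B\tilde A)\le n-1$, and it quietly assumes $u>0$. Definition~\ref{ParSetQR} allows $\delta=0$, in which case $u=\gamma\delta q=0$, $\sigma(z)=z$, and $P_n(\sigma(z))$ is just $P_n(z)$ with only $n$ zeros; then your count of $2n$ distinct zeros and the transfer identity $\sigma(qu/\beta_i)=\sigma(q^{-1}\beta_i)$ both evaporate. Restricting to the $n$ large roots, as the paper does, repairs this automatically (and also avoids relying on simplicity of the $\alpha_i$, which is true but not what Proposition~\ref{rootsInt} literally asserts; the paper's divisibility phrasing is insensitive to multiplicity, whereas your count of distinct points is not). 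Your derivation of $B\tilde C=\tilde B C_n$ by passing to the rational function field and writing $\rho=\tilde A/A=\tilde B/B$ is correct and fills in what the paper condenses to ``This suffices for the proof.''
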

\begin{proof}
Using the notation from the proof of Lemma \ref{commonRoots} we can alternatively rewrite (\ref{genqDiff}) as
$$\left[ \tilde{A}(z) + \tilde{B}(z) + \tilde{C}(z)\right] Q_n(z) =  q^n\tilde{A}(z)Q_n(q^{-1}z)+ q^{-n}\tilde{B}(z) Q_n(qz).$$
We multiply the above by $B(z)$ and subtract it from the top line of (\ref{qDiff}) multiplied by $\tilde{B}(z)$ 
\begin{equation*}
\left[ \tilde{B}(z)(A(z) + C_n(z)) - B(z)(\tilde{A}(z) + \tilde{C}(z)) \right] Q_n(z) = q^n[\tilde{B}(z) A(z) -  B(z)\tilde{A}(z) ]Q_n(q^{-1}z).
\end{equation*}
As discussed in the proof of Lemma \ref{commonRoots} we know that $Q_n(z)$ has $n$ roots $\beta_1, \dots, \beta_n > 1$ and neither is a root of $Q_n(q^{-1}z)$. This implies that 
$\prod_{i = 1}^n (z - \beta_i) $ divides $\tilde{B}(z) A(z) -  B(z)\tilde{A}(z)$ and as the latter is of degree at most $n-1$ or zero, we conclude that it must be zero. This suffices for the proof.
\end{proof}

\subsection{The matrix $A_N(z)$ }\label{Section10.2}
We go back to the notation of Section \ref{DRHP}.  Let $w$ be as in (\ref{eq:jump}) with $\omega \equiv \omega^{qR}$ as in (\ref{omegaQR}). From Theorem \ref{RHPsol} we know DRHP$(\mathfrak{X}, w)$ has a unique solution $m_N(\zeta)$ with
$$m_N(\zeta)\cdot \left [\begin{array}{cc} \zeta^{-N} & 0\\0 & \zeta^{N}
\end{array}\right] =I+O\left(\zeta^{-1}\right)\text{  as  } \zeta \rightarrow \infty.$$

For the sequel we let $\sigma(z) = z + u z^{-1}$ with $u = \gamma \delta q $ and define the following matrix-valued function that will play a central role in the arguments that follow
\begin{equation}\label{AN}
\begin{split}
A_N(z):=m_N \left( \sigma(qz)\right
) \cdot D(z) \cdot m_ N^{-1} \left ( \sigma(z) \right), \mbox{ where } D(z) = \left [\begin{array}{cc} \Phi^-(z)  & 0\\0 &   \Phi^+(z)
\end{array}\right], \mbox{ where }
\end{split}
\end{equation}
\begin{equation}\label{phis}
\Phi^+(z) =  (z-\alpha )(z-\beta\delta )(z-\gamma )(z-\gamma \delta ) \mbox{ and } \Phi^-(z) =(z-1)(\alpha z - \gamma \delta )(\beta z - \gamma )(z - \delta).
\end{equation}
The significance of the functions $\Phi^{\pm}$ is as follows. If $\mathbb{P}_N$ denotes the measure $\mathbb{P}^{qR}$ as in Definition \ref{DefQR} then $\mathbb{P}_N$ satisfies the Nekrasov's equation, Theorem \ref{NekGen}, for the functions $\Phi^{\pm}$, $\theta = 1$, $k = 1$ and $a_1 = 0$, $b_1 = M- N +2$. In particular, if $\tilde{R}$ is given by
\begin{equation} \label{REQ}
\tilde{R}(z)=\Phi^{-}(z)\cdot \mathbb{E}_{\mathbb{P}_N} \left[ \prod\limits^N_{i=1} \frac{\sigma(qz)-\ell_i}{\sigma(z)-\ell_i}\right]+ \Phi^{+}(z)\cdot
\mathbb{E}_{\mathbb{P}_N} \left[ \prod\limits^N_{i=1} \frac{\sigma(z)- \ell_i}{\sigma(qz)-\ell_i}\right],
\end{equation} 
then $\tilde{R}$ is a degree four polynomial. Fact \ref{fact_7} concerns the asymptotic behavior of $\tilde{R}$ when the parameters $\alpha, \beta, \gamma ,\delta, q$ and $M$ scale as in Definition \ref{ScaleQR} and we can relate it to the matrix $A_N$ through the following result.
\begin{lemma}\label{AtoR} If  $A_N(z)$ and $\tilde{R}(z)$ are as in (\ref{AN})  and  (\ref{REQ}) respectively then
\begin{equation}\label{AtoReq}
 \mbox{Tr}(A_N(z)) = \tilde{R}(z).
\end{equation}
\end{lemma}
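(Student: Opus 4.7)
The strategy is a direct matrix computation combined with a classical identity for orthogonal polynomial ensembles. Substituting the explicit form of $m_N(\zeta)$ from Theorem \ref{RHPsol} and the adjugate formula for $m_N^{-1}$ (immediate from $\det m_N \equiv 1$) into $A_N(z) = m_N(\sigma(qz))D(z)m_N^{-1}(\sigma(z))$ and reading off the diagonal entries, a routine $2\times 2$ block multiplication yields, with $\zeta_1 := \sigma(qz)$ and $\zeta_2 := \sigma(z)$,
\begin{equation*}
\operatorname{Tr}(A_N(z)) = \frac{\Phi^-(z)}{c_{N-1}}\bigl[P_N(\zeta_1)H_{N-1}(\zeta_2) - P_{N-1}(\zeta_1)H_N(\zeta_2)\bigr] + \frac{\Phi^+(z)}{c_{N-1}}\bigl[H_{N-1}(\zeta_1)P_N(\zeta_2) - H_N(\zeta_1)P_{N-1}(\zeta_2)\bigr].
\end{equation*}
The second bracket is the image of the first under $\zeta_1 \leftrightarrow \zeta_2$, exactly mirroring the symmetry between the two expectations in \eqref{REQ}. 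So Lemma \ref{AtoR} reduces to the single scalar identity
\begin{equation*}
G(\zeta_1,\zeta_2) := \mathbb{E}_{\mathbb{P}_N}\!\Bigl[\prod_{i=1}^N \frac{\zeta_1 - \ell_i}{\zeta_2 - \ell_i}\Bigr] = \frac{1}{c_{N-1}}\bigl[P_N(\zeta_1)H_{N-1}(\zeta_2) - P_{N-1}(\zeta_1)H_N(\zeta_2)\bigr] =: F(\zeta_1,\zeta_2).
\end{equation*}

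To prove $F \equiv G$, I would use a Liouville-type argument in the variable $\zeta_2$: both $F$ and $G$ are rational in $\zeta_2$ with at worst simple poles at points of $\mathfrak{X}$ and vanish as $\zeta_2 \to \infty$, so equality follows from matching residues at each $x_0 \in \mathfrak{X}$. The residue of $F$ is immediate: using $\operatorname{Res}_{\zeta_2 = x_0} H_k(\zeta_2) = P_k(x_0)\omega(x_0)$ together with Christoffel--Darboux $c_{N-1}(\zeta_1 - x_0)K_N(\zeta_1,x_0) = P_N(\zeta_1)P_{N-1}(x_0) - P_{N-1}(\zeta_1)P_N(x_0)$ one gets $\operatorname{Res}_{\zeta_2 = x_0} F = \omega(x_0)(\zeta_1 - x_0)K_N(\zeta_1,x_0)$. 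For $G$, the residue at $\zeta_2 = x_0$ extracts configurations with some $\ell_j = x_0$; separating that particle from the OPE density, using the algebraic simplification $(y - x_0)^2/(x_0 - y) = x_0 - y$, and invoking $Z_N/Z_{N-1} = c_{N-1}$ yields
\begin{equation*}
\operatorname{Res}_{\zeta_2 = x_0} G = \frac{(\zeta_1 - x_0)\,\omega(x_0)}{c_{N-1}}\, \mathbb{E}_{\mathbb{P}_{N-1}}\!\Bigl[\prod_{i=1}^{N-1} (\zeta_1 - y_i)(x_0 - y_i)\Bigr],
\end{equation*}
where $\mathbb{P}_{N-1}$ denotes the OPE on $\mathfrak{X}$ with $N-1$ particles and the same weight $\omega$.

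The remaining expectation is evaluated by the ``two-point Heine'' identity $\mathbb{E}_{\mathbb{P}_{N-1}}\!\bigl[\prod_i(\zeta_1-y_i)(\zeta_2-y_i)\bigr] = c_{N-1} K_N(\zeta_1,\zeta_2)$, which I expect to be the main technical obstacle. I would establish it by applying Andreief's identity to the Gram determinant of $\{x^{i-1}\}$ with respect to the inner product weighted by $\omega(x)(\zeta_1 - x)(\zeta_2 - x)$, passing to the monic orthogonal polynomial basis (in which the unweighted Gram matrix diagonalizes to $\operatorname{diag}(c_0,\dots,c_{N-2})$), and collapsing the result via the three-term recurrence and Christoffel--Darboux; as a consistency check, both sides are polynomials of bidegree $(N-1,N-1)$ in $(\zeta_1,\zeta_2)$ with matching leading coefficient $P_{N-1}(\zeta_2)$ in $\zeta_1^{N-1}$. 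Substituting this identity gives $\operatorname{Res}_{\zeta_2 = x_0} G = \omega(x_0)(\zeta_1 - x_0)K_N(\zeta_1, x_0)$, matching the residue of $F$ and thereby completing the proof. All other ingredients are direct consequences of Theorem \ref{RHPsol}, the classical Heine formula $\mathbb{E}_{\mathbb{P}_N}[\prod_i(\zeta - \ell_i)] = P_N(\zeta)$, and Christoffel--Darboux.
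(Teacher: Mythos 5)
Your proposal is correct, and the first half (expanding $A_N(z)$ via Theorem \ref{RHPsol} and $\det m_N \equiv 1$, then taking the trace) is exactly how the paper begins. Where you diverge is that you re-derive the determinantal formula for $\mathbb{E}_{\mathbb{P}_N}\!\bigl[\prod_i\frac{\zeta_1-\ell_i}{\zeta_2-\ell_i}\bigr]$ from scratch, whereas the paper simply cites it as a known result (Theorem 2.13 of Breuer--Duits--Simon, which is precisely your $F\equiv G$ identity). That makes the paper's proof a one-line combination of two formulas, while yours is a self-contained argument: a Liouville-type residue-matching in $\zeta_2$ against the finite pole set $\mathfrak{X}$, with the residue of $G$ reduced via the factorization $\omega(\ell)\prod_{i<j}(\ell_i-\ell_j)^2$ and the normalization $Z_N/Z_{N-1}=c_{N-1}$ (correct, since the ensemble is over ordered tuples so $Z_N=\prod_{k=0}^{N-1}c_k$), and then closed using the Christoffel-type evaluation $\mathbb{E}_{\mathbb{P}_{N-1}}[\prod_i(\zeta_1-y_i)(\zeta_2-y_i)] = c_{N-1}K_N(\zeta_1,\zeta_2)$, which you only sketch but which is indeed standard (it is the $\ell=2$ case of the averaged-characteristic-polynomial formula, equivalent to Christoffel--Darboux via $(P_N(\zeta_1)P_{N-1}(\zeta_2)-P_{N-1}(\zeta_1)P_N(\zeta_2))/(\zeta_1-\zeta_2)$). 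Both routes work; the paper's is shorter because it leans on an existing reference, and yours buys a self-contained verification at the cost of a genuinely nontrivial auxiliary identity whose proof you left as a sketch. If you want to make the argument fully self-contained, the cleanest route to the two-point Heine identity is to deform the residue of $G$ directly into $\frac{(\zeta_1-x_0)\omega(x_0)}{Z_N}\sum_{y_1<\cdots<y_{N-1}}\prod_{i<j}(y_i-y_j)^2\prod_i \tilde\omega(y_i)$ with $\tilde\omega(y)=\omega(y)(\zeta_1-y)(x_0-y)$, recognize this (Andre\'ief) as $\frac{(\zeta_1-x_0)\omega(x_0)}{Z_N}\det[\langle x^{i},x^{j}\rangle_{\tilde\omega}]_{i,j=0}^{N-2}$, and then use the unipotent change of basis to the $P_k$'s to collapse it against Christoffel--Darboux, rather than passing through the intermediate Christoffel formula.
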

\begin{proof}
We first recall from \cite[Theorem 2.13]{BDS} that 
$$
\mathbb{E}_{\mathbb{P}_N} \left[ \prod\limits^N_{i=1} \frac{x-\ell_i}{y-\ell_i}\right] = c^{-1}_{N-1}  \mbox{det} \left [\begin{array}{cc}H_{N-1}(y) & H_{N}(y)\\ P_{N-1}(x) &  P_N(x)
\end{array}\right], \mbox{ with $c_{n}, H_n, P_n$ as in (\ref{DPEN}), (\ref{qRacP}) and (\ref{QRorthog})}.
$$
Combining the above with (\ref{REQ}) we obtain
\begin{equation}\label{RNalt}
\begin{split}
 \tilde{R}(z) = &c_{N-1}^{-1} \cdot \Phi^-(z) \cdot \left[H_{N-1}(\sigma(z))P_N(\sigma(qz)) -H_{N}(\sigma(z))P_{N-1}(\sigma(qz))  \right] +  \\& c_{N-1}^{-1} \cdot \Phi^+(z) \cdot \left[ H_{N-1}(\sigma(qz))P_N(\sigma(z)) -H_{N}(\sigma(qz))P_{N-1}(\sigma(z))\right].
\end{split}
\end{equation}

On the other hand, from (\ref{AN}) and Theorem \ref{RHPsol} we know
\begin{equation}\label{ANalt}
\begin{split}
A_N(z)=&\left [\begin{array}{cc} P_N(\sigma(qz)) & H_{N}(\sigma(qz))\\ c_{N-1}^{-1}P_{N-1}(\sigma(qz))  &
    c_{N-1}^{-1}H_{N-1}(\sigma(qz))
\end{array}\right]\cdot \left [\begin{array}{cc} \Phi^-(z) & 0\\0 & \Phi^+(z)
\end{array}\right] \cdot \\
&\left [\begin{array}{cc} c_{N-1}^{-1}H_{N-1}(\sigma(z)) &
  -H_N(\sigma(z))\\-c_{N-1}^{-1}P_{N-1}(\sigma(z)) & P_N(\sigma(z))
\end{array}\right],
\end{split}
\end{equation}
where we also used that det $m_{N}(\zeta) \equiv 1$. The trace in (\ref{ANalt}) matches the rights side in (\ref{RNalt}).
\end{proof}

In the remainder of this section we establish several properties about the matrix $A_N$. 
\begin{proposition}\label{entireL} The matrix $A_N(z)$ is entire.
\end{proposition}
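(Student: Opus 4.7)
The plan is to examine the finite set of candidate singularities of $A_N(z)$ and show that each is removable. Since $m_N(\zeta)$ is rational with simple poles exactly at the $M+1$ points of $\mathfrak{X}$ (Theorem \ref{RHPsol}), and $\sigma(w)=w+uw^{-1}$ is two-to-one with preimages $\{q^{-y},uq^y\}$, the only candidate poles of $A_N(z)$ are the $2(M+1)$ values $\{q^{-y-1},uq^{y-1}:0\le y\le M\}$ (from $m_N(\sigma(qz))$) together with the $2(M+1)$ values $\{q^{-y},uq^y:0\le y\le M\}$ (from $m_N^{-1}(\sigma(z))$). These split into \emph{pure} poles, at which only one of the two factors is singular, and \emph{mixed} poles, at which both are.

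The key structural input is the nilpotent form $w(\zeta)=\omega(\zeta)E_{12}$ of the jump matrix. The identity $R_x:=\residue_{\zeta=x}m_N(\zeta)=m_N^{\mathrm{reg}}(x)w(x)$ forces $R_x$ to have vanishing first column. Since $\det m_N\equiv 1$, the entries of $m_N^{-1}$ are the entries of $m_N$ rearranged with signs, and the vanishing first column of $R_x$ implies that the second row of $m_N^{-1}$ is regular at $x$; equivalently, the residue $A_x$ of $m_N^{-1}$ at $x$ has vanishing second row. A direct matrix calculation then gives $R\,D(z_0)\,A'=0$ for any diagonal $D(z_0)$, any matrix $R$ with zero first column, and any matrix $A'$ with zero second row. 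Applied to the $(z-z_0)^{-2}$-coefficient of $A_N$ at a mixed pole, this shows the double pole is automatically absent.

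At a pure pole coming only from $m_N(\sigma(qz))$, the residue of $A_N$ reduces to a nonzero scalar multiple of $R_x\,D(z_0)\,m_N^{-1}(\sigma(z_0))$, which by the column structure of $R_x$ is proportional to $\Phi^+(z_0)$. The pure poles of this type occur at $z_0\in\{q^{-M-1},uq^{-1}\}$, which are exactly the roots $z=\gamma$ and $z=\gamma\delta$ of $\Phi^+(z)$ from \eqref{phis}. Symmetrically, the pure poles of $m_N^{-1}(\sigma(z))$ at $z_0\in\{1,uq^M\}=\{1,\delta\}$ are roots of $\Phi^-(z)$. All pure poles are therefore removable.

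The subtlest case is the simple pole at a mixed singularity. Writing $A_N(z)=M(z)D(z)M'(z)$ with $M=m_N(\sigma(q\,\cdot))$ and $M'=m_N^{-1}(\sigma(\cdot))$, the $(z-z_0)^{-1}$-coefficient is $M_{-1}D(z_0)M'_0+M_0D(z_0)M'_{-1}+M_{-1}D'(z_0)M'_{-1}$; the third summand vanishes by the same nilpotent argument that killed the double pole, while the first and second summands are proportional to $\Phi^+(z_0)$ and $\Phi^-(z_0)$ respectively. Their cancellation is equivalent to the identity
\[
\frac{\omega^{qR}(\sigma(z_0))}{\sigma'(z_0)}\,\Phi^-(z_0)\;=\;\frac{\omega^{qR}(\sigma(qz_0))}{q\,\sigma'(qz_0)}\,\Phi^+(z_0),
\]
which is a direct rearrangement (using $\sigma'(w)=(w^2-u)/w^2$) of the Nekrasov relation \eqref{eq:phiV2} with $\theta=1$ that encodes $\Phi^\pm$ as the Nekrasov pair for $\omega^{qR}$. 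Verifying this identity from \eqref{omegaQR} and \eqref{phis} while bookkeeping the $\sigma'$-factors arising from passing to the $z$-variable is the main technical obstacle; alternatively, one can apply the entrywise formula \eqref{ANalt} together with the residue calculation from the proof of Theorem \ref{NekGen} to each of the four entries of $A_N(z)$ separately, which packages the same algebra in a form closely mirroring the published argument for the trace.
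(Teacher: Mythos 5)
Your argument is correct and essentially matches the paper's: both localize the singular parts of $m_N(\sigma(qz))$ and $m_N^{-1}(\sigma(z))$ via the jump condition, use the nilpotent form $w=\omega E_{12}$ to annihilate the second-order pole, invoke the same weight-function identity (which you rightly identify as a rearrangement of the $\theta=1$ Nekrasov relation \eqref{eq:phiV2}) to cancel the first-order pole at the mixed singularities, and exploit the vanishing of $\Phi^+$ at $\gamma,\gamma\delta$ and of $\Phi^-$ at $1,\delta$ to remove the boundary poles. Your explicit pure/mixed dichotomy and the tracing of the cancellation back to \eqref{eq:phiV2} make the structure a bit more transparent than the paper's rather terse treatment of the endpoint cases $x=0$ and $x=M+1$, but the underlying algebra is identical.
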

\begin{proof} The result and its proof are analogous to  \cite[ Proposition 3.3]{BB}. Appealing to (\ref{ANalt}) it is clear that the only possible singularities of $A_N(z)$ are simple poles at $z = q^{-x}$ for $x = 0,1, \dots, M +1$ and $z = u q^y$ for $y =  -1, 0, \dots, M $.

From (\ref{jumpCond}) we have for $z$ near $q^{-x}$ and $x = 1, \dots, M+1$  
\begin{equation} \label{eq1}
m_N(\sigma(qz))=F_1(z)\left (I
+\frac{q^{-x}}{(q^{-(x-1)}-u q^{(x-1)})(z-q^{-x})}w\left(q^{-x+1} + u q^{x - 1} \right)\right ),
\end{equation}
where $F_1(z)$ is an analytic, invertible matrix-valued function defined in a neighborhood of $q^{-x}.$ By definition of $w$ we have $w(q^{1} + uq^{-1}) = 0$ and so (\ref{eq1}) holds near $z = 1$ as well.

For $z$ near $q^{-x}$ and $x = 0, \dots, M$ we can similarly write
\begin{equation} \label{eq2} m^{-1}_N
  (\sigma(z))=\left (I-\frac{q^{-x}}{(q^{-x}-u q^{x})(z-q^{-x})}w(q^{-x} + u q^{x})\right ) F_2(z),
\end{equation}
where $F_2(z)$ is an analytic, invertible matrix-valued function defined in a neighborhood of $q^{-x}.$ By definition of $w$ we have $w(q^{-M-1} + u q^{M+1}) = 0$ and so (\ref{eq2}) holds near $z = q^{-M - 1}$ as well.

Overall, to show that the matrix $A_N(z)$ is analytic at $q^{-x}$ for $x = 0,1, \dots, M+1$ it suffices for
$$X(z):=\left (I
+\frac{q^{-x}}{(q^{-(x-1)}-u q^{(x-1)})(z-q^{-x})}w(\sigma(q x))\right )D(x)\left (I-\frac{q^{-x}}{(q^{-x}-u
      q^{x})(z-q^{-x})}w(x)\right ).$$
to be analytic near those points. Since 
$$X(z)=\left [\begin{array}{cc} \Phi^-(x)
    &\dfrac{1}{(z-q^{-x})}\left(\Phi^+(x)\cdot
      \dfrac{q^{-x}}{(q^{-(x-1)}-u q^{(x-1)})}\omega^{qR}(\sigma(q x))-\dfrac{q^{-x}}{(q^{-x}-u
      q^{x})}\omega^{qR}(\sigma(x)) \Phi^-(x) \right)\\0 & \Phi^+(x)
\end{array}\right]$$
$$\mbox{ and } \frac{\Phi^-(x)}{\Phi^+(x)}=\frac{(q^{-x}-u
      q^{x})\omega^{qR}(\sigma(q x))}{\omega^{qR}(\sigma(x))(q^{-(x-1)}-u q^{(x-1)})},$$
we conclude that $X(z)$ is indeed analytic near $q^{-x}$ for $x = 0, 1 , \dots,M$. 

One verifies the analyticity of $A_N(z)$ at the points $uq^{y}$ for $y =  -1, 0, \dots, M $ analogously.

\end{proof}

\begin{proposition}\label{leadProp} Let $A^{ij}_N(z)$ denote the entry of $A_N(z)$ at the $i$-th row and $j$-th column. Then $A^{11}_N(z), A^{22}_N(z)$ are degree four polynomials and $A^{12}_N(z), A^{21}_N(z)$ are degree three polynomials. If $A^{ij}_N(z) = \sum_k a^{ij}_{N,k} z^k$ then 
\begin{equation}\label{leadcoeff}
a^{11}_{N,4} =\alpha \beta q^{N},  a^{22}_{N,4} =  \alpha \beta q^{-N}, a^{12}_{N,3} = c_{N-1}^{-1}c_N  [ - q^N \alpha\beta +  q^{-N}],   a^{21}_{N,3} =   [  q^{-N+1} \alpha \beta -  q^{N-1} ]
\end{equation} 
\end{proposition}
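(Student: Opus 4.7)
The starting point is the explicit representation \eqref{ANalt}, which writes each entry $A_N^{ij}(z)$ as a linear combination of products $P_a(\sigma(qz))\,\Phi^{\pm}(z)\,H_b(\sigma(z))$ and $H_a(\sigma(qz))\,\Phi^{\pm}(z)\,P_b(\sigma(z))$ with $a,b\in\{N-1,N\}$. Since each $H_n(\zeta)=\sum_{x\in\mathfrak X}P_n(x)\omega(x)/(\zeta-x)$ is rational in $\zeta$, and $\sigma(z)=z+uz^{-1}$ is rational in $z$, every entry of $A_N(z)$ is \emph{a priori} a rational function of $z$. Combined with Proposition~\ref{entireL}, which asserts that $A_N$ is entire, this forces each $A_N^{ij}$ to be a polynomial; the proposition then reduces to identifying its degree and leading coefficient.

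The degree and leading coefficient are to be obtained by an asymptotic expansion as $|z|\to\infty$, built on three ingredients. First, since $P_n$ is monic of degree $n$ and $\sigma(z)=z+uz^{-1}$, one has $P_n(\sigma(z))=z^n+O(z^{n-1})$ and $P_n(\sigma(qz))=q^n z^n+O(z^{n-1})$. Second, expanding $H_n(\zeta)$ in powers of $\zeta^{-1}$ and invoking orthogonality of $P_n$ against $1,x,\dots,x^{n-1}$ gives $H_n(\zeta)=c_n\zeta^{-n-1}+O(\zeta^{-n-2})$, and hence $H_n(\sigma(z))=c_n z^{-n-1}+O(z^{-n-2})$ and $H_n(\sigma(qz))=c_n q^{-n-1}z^{-n-1}+O(z^{-n-2})$. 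Third, from \eqref{phis}, $\Phi^-(z)=\alpha\beta z^4+O(z^3)$ and $\Phi^+(z)=z^4+O(z^3)$. Substituting these expansions into \eqref{ANalt} and tracking the top-order contribution in each of the two summands making up $A_N^{ij}$ yields the claimed leading behavior: for $A_N^{11}$ the summand $\Phi^-(z)P_N(\sigma(qz))H_{N-1}(\sigma(z))$ is $O(z^4)$ while the other summand is only $O(z^2)$, so no cancellation is possible; the situation for $A_N^{22}$ is analogous; and for the off-diagonal entries $A_N^{12}$ and $A_N^{21}$ both summands are of equal order $z^3$ and their leading coefficients simply add.

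The only point requiring real care is to show that each entry genuinely has the stated degree rather than a smaller one, i.e., that the stated leading coefficients are nonzero. For the diagonal entries this is automatic from the degree mismatch between the two contributing summands, combined with $\alpha,\beta,q\neq 0$. For the off-diagonal entries one must verify that the two $O(z^3)$ contributions do not accidentally cancel, which holds for parameters as in Definition~\ref{ParSetQR}. A useful consistency check on the bookkeeping is provided by the identity $\det A_N(z)=\Phi^-(z)\Phi^+(z)$, which follows at once from $\det m_N\equiv 1$ and the block-diagonal form of $D(z)$; this pins down the product $a^{11}_{N,4}\,a^{22}_{N,4}$ as the $z^8$-coefficient of $\Phi^-\Phi^+$. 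The main obstacle is therefore entirely organizational, with no new ideas required beyond the standard asymptotic analysis of orthogonal polynomials and their Cauchy transforms at infinity, together with the explicit formula \eqref{ANalt}.
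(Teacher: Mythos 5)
Your argument is essentially the paper's: both start from the explicit formula \eqref{ANalt}, invoke Proposition~\ref{entireL} for entireness, expand $P_n$, $H_n$ (using orthogonality for the Cauchy transform), and $\Phi^{\pm}$ as $|z|\to\infty$ to identify degrees and leading coefficients, and then upgrade to polynomiality --- you via ``rational and entire implies polynomial,'' the paper via Liouville with the growth bound $O(|z|^d)$, which are interchangeable.

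One point worth your attention: if you actually carry out the determinant consistency check you propose rather than merely mentioning it, it exposes an apparent misprint in the stated coefficients. Since $\Phi^+$ is monic and $\Phi^-$ has leading coefficient $\alpha\beta$, the $z^8$-coefficient of $\det A_N=\Phi^+\Phi^-$ is $\alpha\beta$; yet the stated values give $a^{11}_{N,4}a^{22}_{N,4}=(\alpha\beta q^N)(\alpha\beta q^{-N})=(\alpha\beta)^2$. A direct expansion of the dominant term in $A^{22}_N$ from \eqref{ANentries}, namely $c_{N-1}^{-1}H_{N-1}(\sigma(qz))P_N(\sigma(z))\Phi^+(z)\sim c_{N-1}^{-1}\cdot c_{N-1}q^{-N}z^{-N}\cdot z^N\cdot z^4=q^{-N}z^4$, shows that $a^{22}_{N,4}$ should be $q^{-N}$, not $\alpha\beta q^{-N}$. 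So the check is not just a bookkeeping aid: doing it would have caught the error.
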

\begin{proof}
From equation (\ref{ANalt}) we have
\begin{equation}\label{ANentries}
\begin{split}
A^{11}_N(z) = &c_{N-1}^{-1} \left[ P_N(\sigma(qz)) H_{N-1}(\sigma(z)) \Phi^-(z) -   H_N(\sigma(qz)) P_{N-1}(\sigma(z)) \Phi^+(z)  \right],  \\
A^{22}_N(z) = &c_{N-1}^{-1} \left[ -P_{N-1}(\sigma(qz)) H_{N}(\sigma(z)) \Phi^-(z) +   H_{N-1}(\sigma(qz)) P_{N}(\sigma(z)) \Phi^+(z)  \right],  \\
A^{12}_N(z) = &c_{N-1}^{-1} \left[- P_N(\sigma(qz)) H_{N}(\sigma(z)) \Phi^-(z) +   H_N(\sigma(qz)) P_{N}(\sigma(z)) \Phi^+(z)  \right],   \\
A^{21}_N(z) = &c_{N-1}^{-1} \left[P_{N-1}(\sigma(qz)) H_{N-1}(\sigma(z)) \Phi^-(z) -   H_{N-1}(\sigma(qz)) P_{N-1}(\sigma(z)) \Phi^+(z)  \right].    \\
\end{split}
\end{equation}
We know from Proposition \ref{entireL} that $A_N^{ij}(z)$ are all entire functions. In addition, by Theorem \ref{RHPsol} we know that $c^{-1}_{N-1} P_N(\sigma(qz)) H_{N-1}(\sigma(z))  = \left[(\sigma(qz))^{-N} P_N(\sigma(qz)) \right] \cdot \left[ (\sigma(z))^{N}c^{-1}_{N-1}  H_{N-1}(\sigma(z)) \right] \cdot  \left[ (\sigma(z))^{-N}(\sigma(qz))^{N}\right] \sim q^{N}$
 as $|z| \rightarrow \infty$. Analogous arguments show that 
$$ c_{N-1}^{-1}  H_{N-1}(\sigma(qz)) P_{N}(\sigma(z)) \sim q^{-N}, \hspace{2mm}H_N(\sigma(qz)) P_{N-1}(\sigma(z)) = O\left(|z|^{-2}\right)= P_{N-1}(\sigma(qz)) H_{N}(\sigma(z)) .$$
The above show that $A^{11}_N(z) = O(|z|^4) = A^{22}_N(z)$ and by Liouville's theorem we conclude that they are at most degree $4$ polynomials. Our work above also shows that as $|z| \rightarrow \infty$ we have
$$A^{11}_N(z) \sim  \alpha \beta q^{N} z^4 \mbox{ and } A^{22}_N(z) \sim  \alpha \beta q^{-N} z^4,$$
which establishes the first two equations in (\ref{leadcoeff}).

Also as a consequence of Theorem \ref{RHPsol} we have that $A^{12}_N(z) = O(|z|^{3}) = A^{21}_N(z)$, which by Liouville's theorem implies that they are at most degree three polynomials. What remains is to show that their leading coefficients are as in (\ref{leadcoeff}).

We first note by definition 
$$\zeta^N H_N(\zeta) = \zeta^N \sum_{x = 0}^M \frac{P_N(q^{-x} + u q^x) \omega^{qR}(q^{-x} + u q^x) }{\zeta - q^{-x} - u q^x} = $$
$$ = \zeta^N \sum_{x = 0}^M P_N(q^{-x} + u q^x) \omega^{qR}(q^{-x} + u q^x) \cdot\left[ \frac{1}{\zeta}  + \frac{[q^{-x} + u q^x]}{\zeta^2} + \cdots + \frac{[q^{-x} + u q^x]^N}{\zeta^{N+1}} +  O(\zeta^{-N-2})\right].$$
Using (\ref{QRorthog}) we get
$$ \sum_{x = 0}^M P_N(q^{-x} + u q^x) w^{qR}(q^{-x} + u q^x) [q^{-x} + u q^x]^i = 0 \mbox{ for $i = 0, \dots, N-1$ and }$$
$$ \sum_{x = 0}^M P_N(q^{-x} + u q^x) w^{qR}(q^{-x} + u q^x) [q^{-x} + u q^x]^N = \sum_{x = 0}^N P_N(q^{-x} + u q^x)^2w^{qR}(q^{-x} + u q^x)= c_N.$$
The above implies that as $|z|\rightarrow \infty$ we have
$$P_N(\sigma(qz)) H_N(\sigma(z))=  \left[(\sigma(qz))^{-N} P_N(\sigma(qz)) \right] \cdot \left[ (\sigma(z))^{N}  H_{N}(\sigma(z)) \right] \cdot  \left[ (\sigma(z))^{-N}(\sigma(qz))^{N}\right] \sim z^{-1} c_N q^N.$$
Analogously, $H_N(\sigma(qz)) P_{N}(\sigma(z))  \sim z^{-1} c_N q^{-N}, $ $ P_{N-1}(\sigma(qz)) H_{N-1}(\sigma(z)) \sim z^{-1} c_{N-1}q^{-N+1}, $ \\ $H_{N-1}(\sigma(qz)) P_{N-1}(\sigma(z))  \sim z^{-1} c_{N-1} q^{N-1}.$ The latter identities and (\ref{ANentries}) imply
$$a^{12}_{N,3} = c_{N-1}^{-1}  [ - c_Nq^N \alpha\beta + c_N q^{-N}] \mbox{ and } a^{21}_{N,3} = c_{N-1}^{-1} [  c_{N-1} q^{-N+1} \alpha \beta -  c_{N-1} q^{N-1} ],$$
which establishes the last two equations in (\ref{leadcoeff}).
\end{proof}

We end this section by relating $A_N(z)$ and the polynomials $A(z),B(z),C_n(z)$ in the $q$-Difference equation (\ref{qDiff}).
From (\ref{AN}) we know that $A_N(z) \cdot m_N(\sigma(z)) = m_N(\sigma(qz)) \cdot D(z)$, and identifying the left two entries of the $2\times 2$ matrices on both sides we arrive at
\begin{equation}\label{system1}
\begin{split}
&A_N^{11}(z) P_N(\sigma(z)) + c_{N-1}^{-1}A_N^{12}(z) P_{N-1}(\sigma(z)) = \Phi^{-}(z) P_N(\sigma(qz)), \\
&A_N^{21}(z) P_N(\sigma(z)) + c_{N-1}^{-1}A_N^{22}(z) P_{N-1}(\sigma(z)) = c_{N-1}^{-1} \Phi^{-}(z) P_{N-1}(\sigma(qz)).
\end{split}
\end{equation}
Expressing $c_{N-1}^{-1} P_{N-1}(\cdot)$ from the first equation and substituting it in the second we obtain
\begin{equation}\label{system2}
\begin{split}
&A_N^{21}(z) P_N(\sigma(z)) + A_N^{22}(z) \left[ \frac{ \Phi^{-}(z) P_N(\sigma(qz)) - A_N^{11}(z) P_N(\sigma(z))}{A_N^{12}(z) } \right]  = \\
& \Phi^{-}(z) \cdot  \left[ \frac{ \Phi^{-}(qz) P_N(\sigma(q^2z)) - A_N^{11}(qz) P_N(\sigma(qz))}{A_N^{12}(qz) } \right].
\end{split}
\end{equation}
Replacing $z$ with $q^{-1}z$, and reorganizing terms we obtain
\begin{equation*}\label{system3}
\begin{split}
&P_N(\sigma(z)) \left[\frac{\Phi^-(q^{-1}z) A^{22}_N(q^{-1}z) }{A_N^{12}(q^{-1}z)} + \frac{\Phi^{-}(q^{-1}z) A_N^{11}(z)}{A_N^{12}(z)} \right] = \\
&P_N(\sigma(q^{-1}z)) \left[ \frac{A_N^{22}(q^{-1}z) A_N^{11}(q^{-1}z) - A_N^{12}(q^{-1}z)A_N^{21}(q^{-1}z)}{A_N^{12}(q^{-1}z)} \right] +    P_N(\sigma(qz)) \left[ \frac{\Phi^-(q^{-1}z) \Phi^-(z) }{A_N^{12}(z)}\right] .
\end{split}
\end{equation*}
Note that the first numerator of the second line above is $\mbox{det}A_N(q^{-1}z)$, which we know to equal $\Phi^+(q^{-1}z) \Phi^-(q^{-1}z) $ (recall that $\mbox{det}m_N = 1$ by Theorem \ref{RHPsol}). Making the substitution, and multiplying both sides by $ A_N^{12}(z) A_N^{12}(q^{-1}z) \cdot \Phi^-(q^{-1}z)^{-1}$ we arrive at
\begin{equation}\label{system4}
\begin{split}
&P_N(\sigma(z)) \left[A^{22}_N(q^{-1}z) A_N^{12}(z)+ A_N^{12}(q^{-1}z)A_N^{11}(z)\right] = \\
&P_N(\sigma(q^{-1}z)) A_N^{12}(z)\Phi^+(q^{-1}z) +    P_N(\sigma(qz))A_N^{12}(q^{-1}z)\Phi^-(z) .
\end{split}
\end{equation}

If we alternatively express $c_{N-1}^{-1} P_{N}(\cdot)$ from the second equation in (\ref{system1}), substitute it into the first and perform the same steps we will arrive at
\begin{equation}\label{system5}
\begin{split}
&P_{N-1}(\sigma(z))\left[A^{22}_N(z) A_N^{21}(q^{-1}z)+ A_N^{21}(z)A_N^{11}(q^{-1}z)\right] = \\
&P_{N-1}(\sigma(q^{-1}z)) A_N^{21}(z)\Phi^+(q^{-1}z) +    P_{N-1}(\sigma(qz))A_N^{21}(q^{-1}z)\Phi^-(z).
\end{split}
\end{equation}

In the remainder we assume that $N \geq 14$. Then applying Lemma \ref{qDiffL} to (\ref{system4}) we conclude 
\begin{equation*}
\begin{split}
&q (z - 1)(z - \delta)(z\beta -\gamma)(z\alpha - \gamma \delta)(z^2- \gamma \delta q^2)  \cdot A_N^{12}(z)\Phi^+(q^{-1}z)  = \\
& (z - \alpha q)(z - \beta\delta q)(z - \gamma q)(z - \gamma \delta q)(z^2 - \gamma \delta) \cdot A_N^{12}(q^{-1}z)\Phi^-(z).
\end{split}
\end{equation*}
Replacing the formulas for $\Phi^{\pm}$ from (\ref{phis}) and canceling common terms we arrive at
\begin{equation*}
\begin{split}
&q^{-3}(z^2- \gamma \delta q^2)   \cdot A_N^{12}(z)  = (z^2 - \gamma \delta)\cdot A_N^{12}(q^{-1}z).
\end{split}
\end{equation*}
The above and Proposition \ref{leadProp} imply  
\begin{equation}\label{A12}
A_N^{12}(z) =   c_{N-1}^{-1}c_N  [ - q^N \alpha\beta +  q^{-N}] \cdot z (z^2 - \gamma \delta) .
\end{equation}
If we alternatively apply  Lemma \ref{qDiffL} to (\ref{system5}) and repeat the same argument we arrive at
\begin{equation}\label{A21}
A_N^{21}(z) =    [  q^{-N+1} \alpha \beta -  q^{N-1} ] \cdot z (z^2 - \gamma \delta) .
\end{equation}

\subsection{Asymptotics of $A_N(z)$} \label{Section10.3}
We now assume that $\alpha,\beta, \gamma, \delta, q$ and $M$ all depend on $N$ and scale as in Definition \ref{ScaleQR}. For this choice of parameters we denote $\tilde{R}$ in (\ref{REQ}) by $\tilde{R}_N$ and $\Phi^{\pm}$ by $\Phi^{\pm}_N$. 

Our first goal is to show that under the above parameter scaling $A_N(z)$ converges, to a fixed matrix-valued function as $N \rightarrow \infty$. Let us first consider the off-diagonal entries $A^{12}_N(z),  A^{21}_N(z)$. In view of (\ref{A12}) and (\ref{A21}) we know that
\begin{equation*}
\begin{split}
& A^{12}_N(z) =  c_{N-1}^{-1}c_N [ - q_N^N \alpha_N\beta_N +  q_N^{-N}] \cdot z (z^2 - \gamma_N \delta_N) \mbox{ and }\\
& A_N^{21}(z) =   [  q_N^{-N+1} \alpha_N\beta_N -  q_N^{N-1} ] \cdot z (z^2 - \gamma_N \delta_N) 
\end{split}
\end{equation*}
Using (\ref{QRorthog}) we have
$$c_N c_{N-1}^{-1} = \frac{ q^{-M} (1 - \alpha_N \beta_N q_N^{2N-1})}{1 - \alpha_N \beta_N q_N^{2N+1}} \cdot \frac{(1 - q_N^{N})(1 - \beta_N q_N^N) ( \delta_N - \alpha_N  q_N^N)(1 - \alpha_N\beta_N q^{M+N+1}) }{( 1-  \alpha_N \beta_N q_N^N) (1 - \alpha_N q_N^N) (1 - \beta_N \delta_N q_N^N) (1 - q_N^{-M_N + N})}.$$
Consequently, we see that 
\begin{equation}\label{cnass}
\lim_{N \rightarrow \infty} c_N c_{N-1}^{-1} =  \frac{\lc(1 - \lq)(1 - \lb \lq) ( \ld - \la  \lq)(1 - \la\lb \lq \lc^{-1}) }{( 1-  \la \lb \lq) (1 - \la \lq) (1 - \lb \ld \lq) (1 - \lc \lq )} =:\lambda.
\end{equation}
From the above work we conclude that 
\begin{equation*}
\begin{split}
& \lim_{N \rightarrow \infty}  A^{12}_N(z) = \lambda  [\lq^{-1} - \lq \la \lb] \cdot z (z^2 - \lc \ld) \mbox{ and }\lim_{N \rightarrow \infty} A_N^{21}(z) =   [  \lq^{-1} \la\lb - \lq ] \cdot z (z^2 - \lc \ld) .
\end{split}
\end{equation*}
Next, we know that $\mbox{det}A_N(z) = \Phi^{+}_N(z) \Phi^-_N(z)$ converges to $ (z-\la )(z-\lb\ld )(z-\lc )(z-\lc \ld )(z-1)(\la z - \lc \ld )(\lb z - \lc )(z - \ld)$ as $N\rightarrow \infty$, while from (\ref{AtoReq}) and (\ref{RLimNek}) we know that Tr$A_N(z)$ converges to some degree four polynomial $\tilde{R}_{\infty}(z)$. This implies that $A^{11}_N(z)$ and $A^{22}_N(z)$ converge to some degree four polynomials $A_\infty^{11}(z)$ and $A_\infty^{22}(z)$. \\

We end this section by proving Fact \ref{fact_7}. From Lemma \ref{qDiffL} applied to (\ref{system4}) we have that
$$B(z) \cdot \left[A^{22}_N(q_N^{-1}z) A_N^{12}(z)+ A_N^{12}(q_N^{-1}z)A_N^{11}(z) \right] = A_N^{12}(q_N^{-1}z)\Phi^-(z) \cdot \left[A(z) + B(z) + C_N(z) \right],$$
Taking the limit as $N \rightarrow \infty$ on both sides we get
$$ (z - 1)(z - \ld)(z\lb -\lc)(z\la - \lc \ld)(z^2- \lc \ld) \lambda  [\lq^{-1} - \lq \la \lb] \cdot z (z^2 - \lc \ld) \cdot \left[A^{22}_\infty(z) + A_\infty^{11}(z) \right] = $$
$$ \lambda  [\lq^{-1} - \lq \la \lb]  z (z^2 - \lc \ld)  \cdot(z-1)(\la z - \lc \ld )(\lb z - \lc )(z - \ld) \times $$
$$ [  (z - \la)(z - \lb\ld)(z - \lc )(z - \lc \ld )(z^2 - \lc \ld) +(z - 1)(z - \ld)(z\lb -\lc)(z\la - \lc \ld)(z^2- \lc \ld) + (z^2 - \lc\ld)^{3}(\lq^{-1} - 1)(1 - \la\lb \lq)] .$$ 
Canceling common factors and utilizing (\ref{AtoReq}) we see that
\begin{equation}\label{proofFact7}
\begin{split}
&\lim_{N \rightarrow \infty} \tilde{R}_N(z) = \lim_{N \rightarrow \infty} \mbox{Tr} A_N(z) = A^{22}_\infty(z) + A_\infty^{11}(z) = (z - \la)(z - \lb\ld)(z - \lc )(z - \lc \ld ) + \\
& (z - 1)(z - \ld)(z\lb -\lc)(z\la - \lc \ld)+ (z^2 - \lc\ld)^2(\lq^{-1} - 1)(1 - \la\lb \lq),
\end{split}
\end{equation}
which concludes the proof of Fact \ref{fact_7}.
\begin{rem}
We presented the computation in the $q$-Racah case but the same ideas would
work for other families of classical orthogonal polynomials.
\end{rem}

\bibliographystyle{amsplain}
\bibliography{WholePaper}

\end{document}